\newcommand{\lvt}{\left|\kern-1.35pt\left|\kern-1.3pt\left|}
\newcommand{\rvt}{\right|\kern-1.3pt\right|\kern-1.35pt\right|}
\def\@tocline#1#2#3#4#5#6#7{\relax
  \ifnum #1>\c@tocdepth 
  \else
    \par \addpenalty\@secpenalty\addvspace{#2}%
        \begingroup \hyphenpenalty\@M
    \@ifempty{#4}{%
      \@tempdima\csname r@tocindent\number#1\endcsname\relax
    }{%
      \@tempdima#4\relax
    }%
    \parindent\z@ \leftskip#3\relax \advance\leftskip\@tempdima\relax
    \rightskip\@pnumwidth plus4em \parfillskip-\@pnumwidth
    #5\leavevmode\hskip-\@tempdima #6\nobreak\relax
    \ifnum #1=0
    \hfil\hbox to\@pnumwidth{}
    \else
    \hfil\hbox to\@pnumwidth{\@tocpagenum{#7}}\fi
    \par
    \nobreak
    \endgroup
  \fi}
\newtheorem{thm}{Theorem}[section]
\newtheorem{cor}[thm]{Corollary}
\newtheorem{lem}[thm]{Lemma}
\newtheorem{prop}[thm]{Proposition}
\newtheorem{defn}[thm]{Definition}
\theoremstyle{remark}
\newtheorem{rem}{Remark}[section]
 \def\la{{\langle}}
 \def\ra{{\rangle}}
 \def\ve{{\varepsilon}}
\def\({\left(}
\def \){ \right)}
\def\[{\left[}
\def \]{ \right]}
 \def\d{\mathrm{d}}
 \def\sph{{\mathbb{S}^{d-1}}}
 \def\sE{{\mathsf E}}
 \def\sH{{\mathsf H}}
 \def\sK{{\mathsf K}}
 \def\sL{{\mathsf L}}
 \def\sP{{\mathsf P}}
 \def\sS{{\mathsf S}}
 \def\bs{{\mathsf b}}
 \def\sc{{\mathsf c}}
 \def\sd{{\mathsf d}}
 \def\sm{{\mathsf m}}
 \def\sw{{\mathsf w}}
 \def\fD{{\mathfrak D}}
 \def\a{{\alpha}}
 \def\b{{\beta}}
 \def\g{{\gamma}}
 \def\k{{\kappa}}
 \def\t{{\theta}}
 \def\l{{\lambda}}
 \def\o{{\omega}}
 \def\s{\sigma}
 \def\la{{\langle}}
 \def\ra{{\rangle}}
 \def\ve{{\varepsilon}}
 \def\bb{{\mathbf b}}
 \def\cb{{\mathbf c}}
 \def\kb{{\mathbf k}}
 \def\Eb{{\mathbf E}}
 \def\Hb{{\mathbf H}} 
 \def\Jb{{\mathbf J}}
 \def\Kb{{\mathbf K}}
 \def\Lb{{\mathbf L}}
 \def\Pb{{\mathbf P}}
 \def\Sb{{\mathbf S}}
 \def\CH{{\mathcal H}}
 \def\CP{{\mathcal P}}
 \def\CT{{\mathcal T}}
 \def\CV{{\mathcal V}}
 \def\CW{{\mathcal W}}
 \def\BB{{\mathbb B}}
 \def\NN{{\mathbb N}}
 \def\RR{{\mathbb R}}
 \def\SS{{\mathbb S}}
 \def\VV{{\mathbb V}}
 \def\XX{{\mathbb X}}
      \def\proj{\operatorname{proj}}
\def\lla{\langle{\kern-2.5pt}\langle}      
\def\rra{\rangle{\kern-2.5pt}\rangle}
\newcommand{\wh}{\widehat}
\def\f{\frac}
\begin{document}

\title[Approximation and localized polynomial frame]
{Approximation and localized polynomial frame on conic domains}

\author{Yuan~Xu}
\address{Department of Mathematics, University of Oregon, Eugene, 
OR 97403--1222, USA}
\email{yuan@uoregon.edu} 
\thanks{The author is partially supported by Simons Foundation Grant \#849676.}

\date{\today}  
\subjclass[2010]{41A10, 41A63, 42C10, 42C40}
\keywords{Approximation, conic domain, Fourier orthogonal series, homogeneous spaces, localized kernel, 
orthogonal polynomials, polynomial frame}

\begin{abstract} 
Highly localized kernels constructed by orthogonal polynomials have been fundamental in the 
recent development of approximation and computational analysis on the unit sphere, unit ball, and several
other regular domains. In this work, we first study homogeneous spaces that are assumed to contain highly
localized kernels and establish a framework for approximation and localized tight frames in such spaces, 
which extends recent works on bounded regular domains. We then show that the framework is applicable 
to homogeneous spaces defined on  bounded conic domains, which consists of conic surfaces
and the solid domains bounded by such surfaces and hyperplanes. The highly localized kernels on conic 
domains require precise estimates that rely on recently discovered addition formulas for orthogonal 
polynomials with respect to special weight functions on each domain and an intrinsic distance that takes
into account the boundary of the domain, the latter is not comparable to the Euclidean distance at around 
the apex of the cone. 

The main results provide construction of semi-discrete localized tight frame in weighted $L^2$ norm and 
characterization of best approximation by polynomials on conic domains. The latter is achieved by using
a $K$-functional, defined via the differential operator that has orthogonal polynomials as eigenfunctions, 
as well as a modulus of smoothness defined via a multiplier operator that is equivalent to the $K$-functional. 
Several intermediate results are of interest in their own right, including the Marcinkiewicz-Zygmund inequalities,
positive cubature rules, Christoeffel functions, and Bernstein type inequalities. Moreover, although 
the highly localizable kernels hold only for special families of weight functions on each domain, many 
intermediate results are shown to hold for doubling weights defined via the intrinsic distance on the domain. 
\end{abstract}
 
\maketitle
\tableofcontents
 
\section{Introduction}
\setcounter{equation}{0}

In recent years highly localized kernels constructed via orthogonal polynomials have become 
important tools in approximation theory, both computational and theoretical harmonic analysis, 
and functional analysis. We start with an introduction on the background of this development
in the first subsection, describe our main results in the second subsection and state the organization of
the work in the third subsection. 

\subsection{Background} \label{sec:background}

Let $\Omega$ be a set in $\RR^d$, either an algebraic surface or a domain with non-empty interior,
and let $\varpi$ be a nonnegative weight function defined on $\Omega$, normalized with unit integral 
with respect to the Lebesgue measure $\d x$ on $\Omega$, so that the bilinear form 
\begin{equation*}
    \la f, g\ra_{\varpi} = \int_{\Omega} f(x) g(x) \varpi(x) \d x
\end{equation*}
is a well defined inner product on the space of polynomials restricted to $\Omega$. Let $\CV_n^d(\varpi)$ 
be the space of orthogonal polynomials of degree $n$ with respect to this inner product. Assume that 
$\varpi$ is regular so that the orthogonal decomposition 
$$
   L^2(\Omega, \varpi) = \bigoplus_{n=0}^\infty \CV_n^d(\varpi)
$$
holds. Let $P_n(\varpi; \cdot,\cdot)$ be the reproducing kernel of the space $\CV_n^d(\varpi)$. The projection 
operator $\proj_n: L^2(\Omega, \varpi)\mapsto \CV_n^d(\varpi)$ can be written as 
\begin{equation*}
  (\proj_n f)(x) =\int_\Omega P_n(\varpi; x,y)f(y)\varpi(y) \d y, \qquad f\in L^2(\Omega, \varpi).
\end{equation*}
If $\wh a$ is a cut-off function, defined as a compactly supported function in $C^\infty(\RR_+)$ and $\wh a(t) = 1$
for $t$ near zero, then our highly localized kernels are of the form 
\begin{equation*} 
L_n(\varpi;x,y) := \sum_{j=0}^\infty \wh a \Big(\frac{j}{n}\Big) P_j(\varpi; x,y).
\end{equation*}
For orthogonal systems on several regular domains, the kernel $L_n(\varpi;x,y)$ for a suitable $\wh a$ satisfies 
a {\it localization principle}, meaning that the kernel decays at rates faster than any inverse polynomial 
rate away from the main diagonal $y=x$ in $\Omega \times \Omega$ with respect  to an intrinsic distance 
in $\Omega$. 

To get a sense of the highly localized estimate, we specify to two cases. The first one is the unit sphere
$\sph$ of $\RR^d$ with $\varpi(x) \d x = \d \s$ is the surface measure, for which the kernel $L_n(\cdot,\cdot) =
L_n(\varpi; \cdot,\cdot)$ 
is highly localized in the sense that \cite{NPW1}, for every $\k > 0$,  
\begin{equation} \label{eq:decaySph}
   |L_n(x,y)| \le c_\k \frac{n^{d-1}}{(1+n \,\sd_\SS(x,y))^\k}, \qquad x,y \in \sph,
\end{equation}
where $\sd_\SS$ denotes the geodesic distance 
\begin{equation} \label{eq:distSph}
  \sd_\SS(x,y) = \arccos \la x,y\ra, \qquad x, y \in \sph. 
\end{equation}
For the unit ball, the classical weight function is $\varpi_\mu(x) = (1-\|x\|^2)^{\mu-\f12}$, $\mu > -\f12$. For
$\mu \ge 0$, the kernel $L_n(\varpi_\mu; \cdot,\cdot)$ is highly localized in the sense that \cite{PX2}
\begin{equation} \label{eq:decayBall}
  \left |L_n(\varpi_\mu;x,y)\right| \le c_\k \frac{n^d}{\sqrt{\varpi_\mu(n;x)}\sqrt{\varpi_\mu(n;y)}
  (1+n\, \sd_\BB(x,y))^\k}, \qquad x,y \in \BB^d,
\end{equation}
where $\varpi_\mu(n;x) = (1-\|x\|^2 + n^{-2})^\mu$ and $\sd_\BB$ is the distance on $\BB^d$ defined by 
\begin{equation} \label{eq:distBall}
   \sd_\BB(x,y) = \arccos \left(\la x, y\ra + \sqrt{1-\|x\|^2}\sqrt{1-\|y\|^2}\right).
\end{equation}
By choosing $\k$ large, the kernel decays away form $y =x$ by \eqref{eq:decaySph} and \eqref{eq:decayBall}
faster than any polynomial rate. Furthermore, under a technical restriction on the cut-off function, both estimates 
can be improved to sub-exponential rate \cite{IPX}. The proof of these estimates relies on closed-form formula 
of the reproducing kernel $P_n(\varpi; \cdot,\cdot)$ on the unit sphere and the unit ball; see the next subsection. 

Let us denote the integral operator with $L_n(\varpi)$ as its kernel by $L_n * f$; that is, 
\begin{equation} \label{def:Ln-operator}
   L_n * f (x): = \int_\Omega f(y)L_n(\varpi; x,y) \varpi(y) \d y. 
\end{equation}
If $\wh a$ has the support $[0,2]$ and satisfies $\wh a(t) =1$ for $0\le t \le 1$, then the operator $L_n*f$ 
is a polynomial of degree $2n$ and it reproduces polynomials of degree $n$. Furthermore, in many cases, 
it provides the near best polynomial approximation to $f \in L^p(\Omega, \varpi)$ in the sense that  
$$
   \|L_n * f -f \|_{L^p(\Omega, \varpi)} \le c \inf_{\deg g \le n} \|f - g \|_{L^p(\Omega, \varpi)}, \quad 1 \le p \le \infty.
$$
The rapid decay of the kernel makes it a powerful tool for studying polynomial approximation on $\Omega$. 
Moreover, the operator $L_n* f$ plays an important role in the characterization of best approximation, which is a 
central problem in approximation theory. For the unit sphere, for example, it is used in the classical result
that characterizes the error of best polynomial approximation by a modulus of smoothness defied via spherical 
means or by its equivalent $K$-functional defined via the Laplace-Beltrami operator on the sphere. The theory 
has a long history that spans from \cite{BBP,P} to \cite{Rus} with contributions from many researchers in 
between. It has also been extended, more recently, to weight approximation on the sphere, the unit ball, and 
the simplex in \cite{X05}. 

The highly localized kernels can also be used to construct localized frames via a semi-continuous 
Calder\'on type decomposition based on the decomposition,
$$
  f  = \sum_{j=0}^\infty L_{2^j} \ast L_{2^j} \ast f, \qquad  f\in L^2(\Omega, \varpi), 
$$
which holds if the cut-off function $\wh a$ satisfies $\sum_{k=0}^\infty |\wh a(2^{-k} t)|^2 = 1$. To define the 
frame elements, the integrals in the right-hand side of the above expansion need to be appropriately 
discretized by a cubature rule, established with the help of the highly localized kernels, which states that 
$$
   \int_\Omega f(x)\d x  = \sum_{\xi \in \Xi_j} \l_{\xi} f(\xi), \qquad f \in \Pi_{2^j}(\Omega),
$$
where $\l_{\xi} > 0$, $\Xi_j$ is a finite subset in $\Omega$, and $\Pi_n(\Omega)$ denotes the space of 
polynomials, restricted on $\Omega$, of degree at most $n$. The discretization leads to the tight frame 
$\{\psi_\xi\}_{\xi \in \Xi}$ of $L^2(\Omega, \varpi)$, where $\Xi$ is a discrete set of well separated, with 
respect to the intrinsic distance, points in $\Omega$, and the frame elements $\psi_\xi$, also called 
{\it needlets}, are of the form 
$$
\psi_\xi(x) = \sqrt{\l_{\xi}} L_{2^j} (x,\xi), \qquad \xi \in \Xi,
$$ 
which is highly localized with its center at $\xi$ since $\l_\xi$ can be quantized and $L_{2^j} (x,\xi)$ is
highly localized. The tight frame means that, for all $f \in L^2(\Omega, \varpi)$, 
$$
  f = \sum_{\xi \in \Xi} \la f,\psi_\xi\ra_{\varpi} \psi_\xi \quad \hbox{and} \quad 
   \int_\Omega |f(x)|^2 \varpi(x) \d x = \sum_{\xi \in \Xi} |\la f, \psi_\xi\ra|^2.
$$
Because of their rapid decay, needlets can be used for decomposition of spaces of functions and 
distributions in various settings, including $L^p$, Sobolev, Besov, and Triebel- Lizorkin spaces. 

The above narrative has been carried out in the setting of the regular domains. The localized kernels 
are most heavily studied and used when $\Omega$ is the unit sphere $\sph$ of $\RR^d$, for which 
our outline of the localized frames is initiated in \cite{NPW1, NPW2}. They are further explored in, for 
example, \cite{DaiX,IPX}, on the sphere, and also studied on the interval $[-1,1]$ with the Jacobi weight 
\cite{BD, KPX1, PX1}, the unit ball with the Gegenbauer weight \cite{KPX2,PX2}, and the simplex with 
the multivariable Jacobi weight \cite{IPX} as well as for $\RR_+^d$ with the product Laguerre weight 
and $\RR^d$ with the Hermite weight \cite{Dzub, Epp2, KPPX,PX3} and for product domains \cite{IPX2}. 
The subexponential decay of the kernel was established in \cite{IPX}, following earlier work in \cite{DzH}. 
The needlets on the sphere and the ball have found applications in computational harmonics analysis, 
mathematical physics and statistics; see, for example, \cite{BKMP1,BKMP2,KP1,KP2,LSWW,WLSW}
and references in them. 

\subsection{New contributions} \label{sec:new-results}
Our starting point is the recent study of orthogonal structures on conic domains of $\RR^{d+1}$ in 
\cite{X20a,X20b}, which shows that many properties of spherical harmonics and classical orthogonal 
polynomials on the unit ball have analogs on conic domains. Most importantly, there are families 
of orthogonal polynomials on conic domains that possess properties akin to the addition formula and 
the Laplace-Beltrami operator for the spherical harmonics, the former provides a closed-form formula 
for the reproducing kernels and the latter is a second order differential operator that has orthogonal 
polynomials as eigenfunctions. This paves the way for carrying out the narrative outlined in the previous 
subsection on conic domains, which has been a topic largely untouched hitherto. 
In the present paper, we will deal with two types of conic domains and they are standardized as: 
\begin{enumerate}[\quad (1)]
 \item conic surface $\VV_0^{d+1}$ defined by 
$$
   \VV_0^{d+1}: = \left\{(x,t) \in \RR^{d+1}: \|x\| = t, \, 0 \le t \le 1, \, x\in \RR^d\right\};
$$
\item solid cone $\VV^{d+1}$ bounded by $\VV_0^{d+1}$ and the hyperplane $t =1$.
\end{enumerate}
Two other conic domains, double hyperbolic surface and solid hyperboloid, will be dealt with elsewhere 
because of the page limitation. 
On each domain, a closed-form formula of the reproducing kernels is uncovered for a family of weight
functions, which allows us to study highly localized kernels and carries out the program described in 
Subsection \ref{sec:background}, and a differential operator is defined that has orthogonal polynomials as 
eigenfunctions, which leads to a $K$-functional that is used to characterize the best approximation by
polynomials on the domain. 

Instead of dealing with each case individually, we will develop a unified theory for all homogeneous 
spaces that contain highly localized kernels that we describe in the first two subsections below. 
The unified theory is applicable to conic domains, which is described in the third and the fourth subsections. 

\subsubsection{Localizable homogeneous space}  
A homogeneous space is a measure space $(\Omega, \d \mu, \sd)$, where $\d \mu$ is a nonnegative 
doubling measure with respect to the metric $\sd(\cdot,\cdot)$ on $\Omega$. When $\d \mu = \sw(x) \d x$,
where $\sw$ is a doubling weight function on $\Omega$, we denote the homogenous space by
$(\Omega, \sw, \sd)$. For example, the space $(\sph, \d \s, \sd_\SS)$ is a homogeneous space, so is 
$(\BB^d, \varpi_\mu, \sd_\BB)$ for $\mu > -\f12$. 

We call a homogeneous space $(\Omega, \varpi, \sd)$ {\it localizable} if the orthogonal polynomials with 
respect to $\varpi$ on $\Omega$ admit highly localized kernels, which requires a fast decay estimate 
on $|L_n(\varpi; x, y)|$ as well as on $|L_n(\varpi; x_1,y) -L_n(\varpi; x_2,y)|$; see Subsection 
\ref{sec:localizableHS} for precise definition. Our first main result is to show that the narrative on the 
localized tight frame in the previous subsection can be established in a localizable homogenous space. This 
provides a unified theory that is applicable to conical domains to be studied in later sections. Furthermore, 
it allows us to establish several intermediate results that are of interest in their own right for doubling 
weight functions, including Marcinkiewicz-Zygmund inequalities and positive cubature rules. 

The development generalizes and follows closely recent studies on regular domains, especially those
on the unit sphere and the unit ball. We follow the approach on the unit sphere in \cite{Dai1,DaiX}, which 
is formulated in the homogeneous space on the sphere. While some of the results for the unit sphere can 
be extended easily, others need to be dealt with more carefully. Amongst various reasons, we mention that, 
on the unit sphere, it is the Lebesgue measure $\d \s$ that admits the highly localized kernels. It is, 
however, not the case in general. For example, the highly localized kernels on the conic surface $\VV_0^{d+1}$ 
are established for the weight function $\sw_{-1,\g}(t)= t^{-1}(1-t)^\g$, which does not include the Lebesgue 
measure. While spherical caps of the same radius on $\sph$ have the same constant surface area, this 
is not the case on $\VV_0^{d+1}$, where we need to deal with conical caps defined via the intrinsic distance 
$\sd_{\VV_0}$, defined in \eqref{def:distV0} below, and their volumes are measured by integrals over the caps 
against the weight function $\sw_{-1,\g}$. 

\subsubsection{Best approximation by polynomials}
The characterization of best approximation is at the core of approximation theory (cf. \cite{DL}). The classical
approach on the unit sphere relies essentially on multiplier operators of the Fourier harmonic series and the 
convolution structure arising from the addition formula. We provide a general framework on localized 
homogeneous space by assuming that the reproducing kernel $P_n(\varpi; \cdot,\cdot)$ satisfies an addition 
formula of a specific form satisfied by all known cases, including those on conic domains, and that there is 
a second order differential operator that has orthogonal polynomials as eigenfunctions. The former allows 
us to introduce a convolution structure that can be used to define a modulus of smoothness via a multiplier 
operator, and the latter is used to define a $K$-functional. The two quantities are then shown to be 
equivalent and either one can be used to characterize the error of the best polynomial approximation on the 
domain. The framework generalizes the classical result on the unit sphere and is applicable to other regular 
domains satisfying our assumptions, which are met by all four types of conic domains.

\subsubsection{Conic surface $\VV_0^{d+1}$ and cone $\VV^{d+1}$} 
On the conic surface $\VV_0^{d+1}$, the orthogonal structure is studied in \cite{X20a} for the weight 
function $\sw_{\b,\g}(t) = t^\b (1-t)^\g$ for $\b > - d$ and $\g > -1$. For our study we need an intrinsic 
metric on the conic surface. In contrast to the unit sphere with its geodesic distance $\sd_\SS(\cdot,\cdot)$
that depend only on relative positions of the points, the conic surface $\VV_0^{d+1}$ has a boundary 
at $t =1$ and an apex point at $t=0$ that its metric needs to take into account. The distance that we 
will use is defined by 
\begin{equation}\label{def:distV0}
  \sd_{\VV_0}( (x,t), (y,s)) = \arccos \left[\sqrt{\frac{\la x ,y \ra + t s}{2}}  + \sqrt{1-t} \sqrt{1-s} \right].
\end{equation}
Using this distance function and the closed-form formula of the reproducing kernel proved in \cite{X20a}, 
we shall show that $\sw_{\b,\g}$ is a doubling weight and that the space $(\Omega, \sw_{-1,\g}, \sd_{\VV_0})$ 
is a localizable homogeneous space. These results are established through delicate estimates, partly 
because the distance $\sd_{\VV_0}(\cdot,\cdot)$ is not comparable to the Euclidean distance at around 
the apex of $\VV_0^{d+1}$. With the highly localized kernel, we can then carry out our program and 
establish positive cubature rules, and use them to establish the highly localized frame. Furthermore, 
using the second order differential operator that has orthogonal polynomials with respect to $\sw_{-1,\g}$ 
as eigenfunctions, which is an analog of the Laplace-Beltrami operator in the unit sphere, we can 
define a $K$-functional and use it to establish a characterization of the best approximation by polynomials 
in the weighted $L^p$ norm on $\VV_0^{d+1}$. 

Our study on the cone $\VV^{d+1}$ follows along the similar line. The distance function is defined by 
identifying $\VV^{d+1}$ with a subset of $\VV_0^{d+2}$. The orthogonal structure is studied in 
\cite{X20a} for the weight function $W_{\b,\g,\mu}(x,t) = t^\b(1-t)^\g(t^2-\|x\|^2)^{\mu-\f12}$, which has
a more involved close form formula for its reproducing kernels. While part of the estimate can be 
carried out for the kernel associated with $W_{0,\g,\mu}$, the localizable homogeneous space is 
established for $W_{0,\g,0}$, which nevertheless allows us to complete our program on the solid 
cone $\VV^{d+1}$.

\medskip
Several remarks are in order over the above description of our main results. First of all, analysis on conical 
domains have not been seriously studied in the literature as far as we are aware, and it does pose new 
phenomena and new challenges. In particular, while the framework for localizable 
homogeneous space is applicable to conic domains, the verification of assertions under which the framework 
holds possess considerable difficulty in each case.
Second, there are many intermediate results that are of interests. For example, to quantize the coefficients
in positive cubature rule, we need bounds on the Christoeffel functions 
\begin{align*} 
   \l_n(\sw;x) = \inf_{\substack{g(x) =1 \\ g \in \Pi_n(\Omega)}} \int_{\Omega} |g(x)|^2 \sw(x)  \d x,
\end{align*} 
which are of interest in their own right. See, for example, \cite{CD, DX, Kroo, KrooLub, Pr1, Pr2, X95} and their 
references for recent works on Christoffel functions in several variables. While the lower bound of the Christoeffel 
function on a homogenous space can be derived using highly localized kernels when $\sw$ admits such kernels, 
we will establish the upper bound for all doubling weight under the assumption that certain fast decaying polynomials
exist on $\Omega$. The latter requires the construction of such polynomials in each conic domain. Third, there 
have been several recent works that deal with multivariate approximation by polynomials or Marcinkiewicz-Zygmund 
inequalities and positive cubature rules on either polyhedra domain or $C^2$ domain \cite{DP2,DP3,T1,T2}. 
They do not cover conical domains and our approach is different and relies on a specific orthogonal structure
on the domain, akin to those on the unit sphere and the unit  ball.

Finally, we should mention the important book {\it Analysis on Symmetric Cones } \cite{FK} and the literature around it. 
While the topic works in the theory of Euclidean Jordan algebras and lies in a more abstract setting, our study requires
very specific structures of Fourier orthogonal series and is far less abstract. We have not been able to discern a 
connection between the two topics duo to our lack of background in Jordan algebras. It would be of great interest if 
a connection could be identified. 

\subsection{Organization and convention}
The paper is organized as follows. The localizable homogeneous space will be defined and studied 
in the next section, which consists of results discussed in Subsection 1.2.1. The best approximation by 
polynomials in the homogeneous space is discussed in the third section, which consists of the results 
described in Subsection 1.2.2. 
The results on the conic surface and solid cone will be discussed in Sections 4 and 5, respectively. 
Each section will contain several subsections and its organization will be described in the preamble 
of the section. 

Throughout this paper, we will denote by $L^p(\Omega, \sw)$ the weighted $L^p$ space with respect to
the weight function $\sw$ defined on the domain $\Omega$ for $1 \le p \le \infty$. Its norm will be denote 
by $\|\cdot\|_{p,\sw}$ for $1 \le p \le \infty$ with the understanding that the space is $C(\Omega)$ with 
the uniform norm when $p= \infty$. 

For conical domains, in an attempt to distinguish conic surfaces and solid conic bodies, we shall denote
the operator on the surface in sans serif font, such as $\sP_n$ and $\sL_n$, and the operator on the solid
domains in bold font, such as $\Pb_n$ and $\Lb_n$. 

Finally, we shall use $c$, $c'$, $c_1$, $c_2$ etc. to denote positive constants that depend on fixed parameters and 
their values may change from line to line. Furthermore, we shall write $A \sim B$ if $ c' A \le B \le c A$. 
 
\section{Homogeneous spaces with highly localized kernels} \label{sec:HomogSpace}
\setcounter{equation}{0}

Much of recent work on regular domains, such as the sphere, the ball, and the regular simplex falls in 
the framework of homogeneous space. What distinguishes these regular domains is the existence of 
highly localized kernels constructed via orthogonal polynomials. From the prior work on these regular
domains and what we will prove for the conic domains latter sections, it becomes clear that the desired 
estimates for the highly localized kernels share a common formation. In this section, we work with 
homogeneous spaces that are assumed to contain highly localized kernels and carry out our analysis 
on such spaces. 

The first subsection contains basics on homogeneous space and orthogonal polynomials, and it contains 
three assertions that define highly localized kernels precisely. Three examples are given in the second 
subsection, which also serves as a review of the Jacobi polynomials, spherical harmonics, and orthogonal 
polynomials on the ball that will be used later. An auxiliary maximal function is introduced in the third subsection 
and, with the help of highly localized kernels for $\varpi$, is shown to be bounded in weighted $L^p$ space
with respect to a doubling weight. Using this maximal function, Marcinkiewicz-Zygmund inequalities are 
established in the fourth subsection for all doubling weights. The fifth subsection contains the fourth assertion 
that is used to obtain an upper bound for the Christoeffel function. The latter is needed in the sixth subsection, 
where the positive cubature rules are established. With these preparations, the localized tight frame 
is defined and studied in the seventh subsection.  

The development of this section follows and generalizes the study on the unit sphere. Some parts of
the work follow those on the sphere with little extra effort but by no means all. Since the geodesic 
distance on the sphere and the surface measure on the unit sphere will be replaced by a distance 
function and a weight function that offer no specific geometric information, we will provide complete 
proofs in the most part and will be brief only when the proof follows that on the unit sphere fairly straightforwardly. 

\subsection{Localizable homogeneous spaces}\label{sec:localizableHS}

\subsubsection{Homogeneous spaces}
A homogeneous space is a measure space $(\Omega,\mu, \sd)$ with a positive measure $\mu$ and a metric 
$\sd$ such that all open balls 
$$
B(x,r)=\{y \in \Omega: \sd(x,y) < r\},  
$$ 
are measurable and $\mu$ is a regular measure satisfying the doubling property
$$
   \mu (B(x,2r)) \le c \mu (B(x,r)), \qquad \forall x \in  \Omega, \quad \forall r >0,
$$ 
where $c$ is independent of $x$ and $r$. Such a measure $\mu$ is called a doubling measure. 

Let $\Omega$ be a domain in $\RR^d$. We assume that $\Omega$ is equipped with an intrinsic 
distance $\sd$, so that the Lebesuge measure $\sm$ on $\Omega$ is a doubling measure and,
for each $\ve > 0$, $\min_{x \in \Omega} \sm( B(x,\ve)) \ge c_\ve > 0$. Let $\sw$ be a nonnegative 
integrable function defined on $\Omega$. For a given set $E \subset \Omega$, we define 
$$
       \sw(E) = \int_{E} \sw(x) \d \sm(x).
$$
The weight function $\sw$ is called a doubling weight if its satisfies the doubling condition:  
there exists a constant $L > 0$ such that 
$$
   \sw(B(x,2 r)) \le L \sw(B(x,r)), \qquad \forall x \in \Omega, \quad r \in (0, r_0), 
$$
where $r_0$ is the largest positive number such that $B(x,r) \subset \Omega$. If $\sw$ is a 
doubling weight, then $\sd \mu = \sw(x) \d \sm$ is a doubling measure. We are particularly 
interested in the homogeneous space with its measure so defined, which we denote by 
$(\Omega, \sw, \sd)$.

For a doubling weight $\sw$ on $\Omega$ with respect to $\sd$, we denote by $L(\sw)$ the least constant 
$L$ for the doubling condition $\sw(B(x,2 r)) \le L \sw(B(x,r))$. Let $\a(\sw)$ be a positive number such that 
\begin{equation} \label{eq:alpha(w)}
 \sup_{B(x, r) \subset \Omega} \frac{ \sw(B(x,2^m r))}{ \sw(B(x, r))} \le c_{L(\sw)} 2^{\a(\sw)m}, \quad m=1,2,\ldots.
\end{equation}
Then $\a(\sw) \le \log_2 L(\sw)$, as can be seen by iteration. We call $L(\sw)$ the doubling constant and 
$\a(\sw)$ the doubling index. 

\begin{lem}\label{lem:doublingLem}
Let $\sw$ be a doubling weight on $\Omega$. 
\begin{enumerate}[ \quad (i)]
\item If $0 < r < t$ and $x \in \Omega$, then 
$$
   \sw(B(x,t)) \le c_{L(\sw)} \left( \frac{t}{r} \right)^{\a(\sw)} \sw(B(x,r)). 
$$
\item For $x,y \in \Omega$ and $n=1,2,\ldots$,
$$
   \sw(B(x, n^{-1})) \le c_{L(\sw)} (1+n \sd(x,y))^{\alpha(\sw)} \sw(B(y, n^{-1})). 
$$
\end{enumerate}
\end{lem}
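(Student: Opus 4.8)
The plan is to derive both parts directly from the defining inequality \eqref{eq:alpha(w)} together with the monotonicity of $r \mapsto \sw(B(x,r))$, which is immediate since $B(x,r) \subset B(x,r')$ for $r < r'$.

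For part (i), fix $x$ and $0 < r < t$ with $B(x,t) \subset \Omega$ (the interesting case; otherwise replace $t$ by any admissible radius and note the statement is only meaningful when the balls sit in $\Omega$). Choose the integer $m \ge 0$ with $2^{m} \le t/r < 2^{m+1}$, i.e. $m = \lfloor \log_2(t/r) \rfloor$. Then $t < 2^{m+1} r$, so by monotonicity $\sw(B(x,t)) \le \sw(B(x, 2^{m+1} r))$, and applying \eqref{eq:alpha(w)} with $m+1$ in place of $m$ gives
$$
\sw(B(x,t)) \le \sw(B(x,2^{m+1}r)) \le c_{L(\sw)} 2^{\a(\sw)(m+1)} \sw(B(x,r)).
$$
Since $2^{m} \le t/r$ we have $2^{\a(\sw)(m+1)} = 2^{\a(\sw)} 2^{\a(\sw) m} \le 2^{\a(\sw)} (t/r)^{\a(\sw)}$, and absorbing the fixed factor $2^{\a(\sw)}$ into the constant (it depends only on $L(\sw)$ since $\a(\sw) \le \log_2 L(\sw)$) yields the claim. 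One should also note the trivial case $t/r < 1$ is excluded and when $t/r$ is bounded the estimate is automatic; the $m=0$ endpoint is covered since the displayed bound with $m+1=1$ is exactly a single application of the doubling condition.

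For part (ii), the idea is that $B(y, n^{-1})$ and $B(x, n^{-1})$ are contained in a common ball once its radius exceeds the distance $\sd(x,y)$. Concretely, by the triangle inequality $B(x, n^{-1}) \subset B(y, n^{-1} + \sd(x,y))$. Set $t = n^{-1} + \sd(x,y)$ and $r = n^{-1}$; then $t/r = 1 + n\,\sd(x,y)$. If the larger ball $B(y,t)$ happens to lie in $\Omega$, part (i) applied at the point $y$ gives directly
$$
\sw(B(x,n^{-1})) \le \sw(B(y,t)) \le c_{L(\sw)} (1+n\,\sd(x,y))^{\a(\sw)} \sw(B(y,n^{-1})),
$$
which is the assertion. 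The only real subtlety is that $B(y,t)$ need not be contained in $\Omega$, so part (i) as stated does not literally apply; here the hypothesis that $\sm$ is doubling on all of $\Omega$ and the standing assumption that $r_0$ (the largest radius with $B(y,r)\subset\Omega$) enters the doubling condition must be used. I expect the cleanest route is to observe that for radii exceeding $r_0(y)$ the quantity $\sw(B(y,r))$ is comparable to $\sw(\Omega \cap B(y,r))$ and to $\sw(B(y, r_0(y)))$ up to doubling-controlled factors, so that the chain of dyadic doublings still runs with a constant depending only on $L(\sw)$; alternatively one truncates $t$ at $\min(t, \operatorname{diam}\Omega)$, which only improves the inequality.

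The main obstacle is thus not the arithmetic — which is just "count dyadic scales" — but the bookkeeping around balls that stick out of $\Omega$: making sure that the doubling inequality, which is only assumed for $r \in (0, r_0)$, still propagates to give a clean power-of-$(1+n\sd(x,y))$ bound with a constant independent of $x, y, n$. I would handle this once and for all by noting $\sw(B(x,r)) = \sw(\Omega)$ for $r$ larger than $\operatorname{diam}\Omega$ and that $\sw(\Omega) \le c_{L(\sw)} \sw(B(y, n^{-1}))$ with a controlled power of $n$ from iterating the doubling condition up to scale $r_0$, then one more comparison from $r_0$ to $\operatorname{diam}\Omega$; combining these gives (ii) in full generality, and (i) should be stated and proved with the same understanding that radii are implicitly truncated to admissible ones.
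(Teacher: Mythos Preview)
Your argument is correct and matches the paper's proof: for (i) you count dyadic scales via \eqref{eq:alpha(w)}, and for (ii) you use the inclusion $B(x,n^{-1}) \subset B(y,\sd(x,y)+n^{-1})$ followed by (i) with $t/r = 1+n\,\sd(x,y)$, exactly as the paper does. The paper's proof is two sentences long and simply ignores the issue of balls extending beyond $\Omega$; your extended discussion of that point is more careful than the paper itself, but it is not part of the paper's argument and is not needed for the comparison.
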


\begin{proof}
Assume $2^{m-1} \le t/r \le 2^m$, (i) follows immediately from the definition of $\a(\sw)$. By the triangle 
inequality of the distance function, $\sw(B(x,n^{-1})) \le \sw(B(y, \sd(x,y)+n^{-1}))$, applying (i) to 
bound it by $\sw(B(y, n^{-1}))$ proves (ii).
\end{proof}

Many fundamental results that hold for the Euclidean space can be extended to homogeneous spaces
\cite{Stein}. For example, the Hardy-Littlewood maximal function is defined, for a locally integrable 
function $f$, by 
\begin{equation}\label{def:HL-MF} 
   M_\sw f(x) := \sup_{r> 0} \frac{1}{\sw(B(x,r))} \int_{B(x,r)} |f(y)| \sw(y) \d \sm, \qquad x \in  \Omega,
\end{equation} 
which possesses the usual properties of the maximal function \cite[p. 13]{Stein}. For example, 
for every $f \in L^p(\Omega, \sw)$,
\begin{equation}\label{eq:MaxFuncH}
    \| M_\sm f\|_{p,\sw} \le c_p \|f\|_{p,\sw}, \qquad 1 < p \le \infty,
\end{equation}
where $\|f\|_{p,\sw}$ denotes the $L^p$ norm with respect to the measure $\sw(x) \d \sm$. 

We study homogeneous spaces that contain highly localized kernels, to be defined below. Such 
spaces are known to hold for certain weight functions on several regular domains. We adopt the
convention of denoting the weight function that admits highly localized kernels by $\varpi$ and we also
assume that $\varpi$ is normalized by $\int_\Omega \varpi(x) \d \sm =1$. The kernels are defined via 
orthogonal polynomials with respect to the inner product $\la \cdot,\cdot\ra_\varpi$ defined in terms 
of $\varpi$ in \eqref{def:ipd}.    

\subsubsection{Orthogonal polynomials}
Let $\Pi(\Omega)$ denote the space of polynomials restricted to $\Omega$ and let $\Pi_n(\Omega)$ denote 
its subspace of polynomials of degree at most $n$. If the interior of $\Omega$ is open, then $\Pi(\Omega)$ 
contains all polynomials of degree $n$ in $d$ variables and we write $\Pi_n = \Pi_n(\Omega)$. If $\Omega$ 
is an algebraic surface, such as the sphere $\sph$, then $\Pi_n(\Omega)$ contains all polynomials restricted 
to the surface. 

Let $\varpi$ be a normalized weight function on $\Omega$. Then the bilinear form $\la \cdot,\cdot\ra_\varpi$
\begin{equation}\label{def:ipd}
    \la f, g\ra_{\varpi} := \int_{\Omega} f(x) g(x) \varpi(x) \d x
\end{equation}    
is a well defined inner product on $\Pi_n(\Omega)$.     
Let $\CV_n^d(\Omega,\varpi)$ be the space of orthogonal polynomials of degree $n$ with respect to this inner 
product. We are particularly interested in the case when $\Omega$ is either a quadratic surface, such as 
the unit sphere $\sph$ or the conical surface $\VV_0^{d}$, for which 
\begin{equation} \label{eq:dimVnS}
  \dim \CV_n^d(\Omega,\varpi) = \binom{n+d-2}{n} + \binom{n+d-3}{n-1}, \quad n = 1,2,3,\ldots, 
\end{equation}
where we assume $\binom{n}{k} =0$ if $k < 0$, or a solid domain bounded by a quadratic surface, 
such as the unit ball $\BB^d$ and the solid cone $\VV^d$, for which 
\begin{equation} \label{eq:dimVn}
   \dim \CV_n^d(\Omega,\varpi) = \binom{n+d-1}{n}, \quad n = 0, 1,2,\ldots ,
\end{equation}
The reproducing kernel of $\CV_n^d(\Omega,\varpi)$, 
denoted by $P_n(\varpi; \cdot,\cdot)$, is uniquely determined by
$$
    \int_\Omega f(y) P_n(\varpi; x, y) \varpi(y)\d \sm(y) = f(x), \qquad \forall f\in \CV_n^d(\Omega,\varpi).
$$
If $\{P_{\nu,n}: 1 \le \nu \le \dim \CV_n^d(\Omega,\varpi)\}$ is an orthogonal basis of $\CV_n^d(\Omega,\varpi)$, then 
\begin{equation}\label{eq:reprod-kernel}
P_n(\varpi; x, y) = \sum_{\nu=1}^{\dim\CV_n^d(\Omega,\varpi)} \frac{P_{\nu,n}(x) P_{\nu,n}(y)}{\la P_{\nu,n},P_{\nu,n} \ra_\varpi}. 
\end{equation}
The kernel plays an essential role in the study of the Fourier orthogonal series, since it is the kernel
of the orthogonal projection operator $\proj_n: L^2(\Omega, \varpi)\mapsto \CV_n^d(\Omega, \varpi)$; more
precisely, 
\begin{equation}\label{def:projPn}
  \proj_n(\varpi; f, x) = \int_\Omega f(y) P_n(\varpi; x,y) \varpi(y) \d \sm(y), \quad f\in L^2(\Omega, \varpi).
\end{equation}
The Fourier orthogonal series of $f \in L^2(\Omega,\varpi)$ is defined by 
\begin{equation}\label{def:Fourier}
  f  = \sum_{n=0}^\infty  \proj_n(\varpi; f) = \sum_{n=0}^\infty   \sum_{\nu=1}^{\dim\CV_n^d(\Omega,\varpi)}
   \wh f_{\nu,n} P_{\nu,n}, 
  \quad \wh f_{\nu,n} = \frac{\la f, P_{\nu,n}\ra_\varpi}{\la P_{\nu,n},P_{\nu,n} \ra_\varpi}. 
\end{equation}
For these definitions and orthogonal polynomials of several variables in general, we refer to \cite{DX}. 

Our highly localized kernels are defined by a sampling of the kernels of the Fourier series through a smooth
cut-off function, which we define first.  

\begin{defn}\label{def:cutoff}
A nonnegative function $\wh a \in C^\infty(\RR)$ is said to be admissible if it obeys either one of the conditions
\begin{enumerate}[         $(a)$]
\item $\mathrm{supp}\,  \wh a \subset [0, 2]$ and $\wh a(t) = 1$, $t\in [0, 1]$; or
\item $\mathrm{supp}\,  \wh a \subset [1/2, 2]$. 
\end{enumerate}
\end{defn} 

Let $\wh a$ be an admissible cut-off function. For $n = 0,1,2,\ldots$, we define the kernel function $L_n(\varpi)$ 
on $\Omega\times\Omega$ by
\begin{equation} \label{def:Ln-gen}
   L_n (\varpi; x,y) = \sum_{k=0}^{\infty} \wh a \left(\frac{k}{n}\right) P_n(\varpi; x,y), \qquad x,y \in \Omega,
\end{equation}
Since $\wh a$ is supported on $[0,2]$, this is a kernel of polynomials of degree at most $2n$ in either $x$ or $y$ 
variable. 

\subsubsection{Localizable homogeneous spaces}
The main purpose of this section is to study homogeneous spaces $(\Omega, \varpi, \sd)$ for which the
kernels $L_n(\varpi; \cdot,\cdot)$ are assumed to satisfy the following assertions. 

\begin{defn}\label{def:Assertions}
The kernels $L_n(\varpi; \cdot,\cdot)$, $n=1,2,\ldots$, are called highly localized if they satisfy the following 
assertions: 
\begin{enumerate}[\,  \bf 1]
\item[] \textbf{Assertion 1}. For $\k > 0$ and $x, y\in \Omega$, 
$$
   |L_n(\varpi; x ,y)| \le c_\k \frac{1} {\sqrt{\varpi\!\left(B(x,n^{-1})\right)} \sqrt{\varpi\!\left(B(y,n^{-1})\right)}
      \left(1+n \sd(x,y)\right)^\k}.
$$
\medskip\noindent
\item[] 
\textbf{Assertion 2}.  For $0 < \delta \le \delta_0$ with some $\delta_0<1$ and $x_1 \in B(x_2, \frac{\delta}{n})$, 
$$
   \left|L_n(\varpi; x_1,y) - L_n(\varpi; x_2,y)\right| \le c_\k \frac{n \sd(x_1,x_2)} 
        {\sqrt{\varpi\!\left(B(x_1,n^{-1})\right)} \sqrt{\varpi\!\left(B(x_2,n^{-1})\right)} \left(1+n \sd(x_2,y)\right)^\k}.
$$
\item[]
\textbf{Assertion 3}. For sufficient large $\k >0$, 
\begin{align*}
\int_{\Omega} \frac{ \varpi(y)}{\varpi\!\left(B(y,n^{-1})\right)
    \big(1 + n \sd(x,y) \big)^{\k}}    \d \sm(y) \le c.
\end{align*}
\end{enumerate}
\end{defn}

The third assertion affirms the sharpness of the first two assertions. It implies the following more general 
inequality.

\begin{lem}\label{lem:CorA3}
Let $\varpi$ be a doubling weight that satisfies Assertion 3 with $\k >0$. For $ 0< p < \infty$, let 
$\tau =\k - \f p 2 \a(\varpi) |1-\f{p}{2}| > 0$. Then, for $x\in \Omega$, 
\begin{equation}\label{eq:CorA3}
\int_{\Omega} \frac{ \varpi(y)}{\varpi\!\left(B\left(y,n^{-1}\right)\right)^{\f p 2} \big(1 + n \sd(x,y) \big)^{\tau}} \d \sm(y) \le 
   c \,  \varpi\!\left(B\left(x,n^{-1}\right)\right)^{1-\f p 2}.
\end{equation}
\end{lem}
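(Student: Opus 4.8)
The plan is to reduce \eqref{eq:CorA3} to Assertion 3 by a two-step procedure: first replace the power $\varpi(B(y,n^{-1}))^{p/2}$ in the denominator by $\varpi(B(y,n^{-1}))$ at the cost of an extra factor of $\varpi(B(y,n^{-1}))^{1-p/2}$, then transfer that factor onto the $x$-centered ball using Lemma~\ref{lem:doublingLem}(ii), absorbing the resulting polynomial growth in $n\sd(x,y)$ into the large exponent $\k$. Concretely, I would write
$$
\frac{\varpi(y)}{\varpi\!\left(B(y,n^{-1})\right)^{p/2}} = \frac{\varpi(y)}{\varpi\!\left(B(y,n^{-1})\right)}\,\varpi\!\left(B(y,n^{-1})\right)^{1-p/2},
$$
so the integrand in \eqref{eq:CorA3} becomes the Assertion~3 integrand times $\varpi(B(y,n^{-1}))^{1-p/2}(1+n\sd(x,y))^{\k-\tau}$, and everything hinges on bounding this last product by a constant multiple of $\varpi(B(x,n^{-1}))^{1-p/2}$.

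The key estimate is therefore: for all $x,y\in\Omega$ and $n\ge 1$,
\begin{equation}\label{eq:keyCorA3}
\varpi\!\left(B(y,n^{-1})\right)^{1-p/2}\big(1+n\sd(x,y)\big)^{\,\k-\tau}\le c\,\varpi\!\left(B(x,n^{-1})\right)^{1-p/2}.
\end{equation}
Here I would split into the two cases $0<p\le 2$ and $p>2$. When $p\le 2$ the exponent $1-p/2$ is nonnegative, and Lemma~\ref{lem:doublingLem}(ii) with the roles of $x$ and $y$ interchanged gives $\varpi(B(y,n^{-1}))\le c_{L(\varpi)}(1+n\sd(x,y))^{\a(\varpi)}\varpi(B(x,n^{-1}))$, hence $\varpi(B(y,n^{-1}))^{1-p/2}\le c\,(1+n\sd(x,y))^{\a(\varpi)(1-p/2)}\varpi(B(x,n^{-1}))^{1-p/2}$, which by symmetry of $\sd$ equals $c\,(1+n\sd(x,y))^{(p/2)\a(\varpi)|1-p/2|\cdot\frac{1-p/2}{(p/2)|1-p/2|}}\cdots$ — more simply, the extra power of $(1+n\sd(x,y))$ picked up is exactly $\a(\varpi)(1-p/2)=\a(\varpi)|1-p/2|$, and since $\k-\tau=\tfrac p2\a(\varpi)|1-\tfrac p2|\ge \a(\varpi)|1-\tfrac p2|$ when $p\le 2$ (because $p/2\le 1$ forces the inequality the wrong way — so here one must instead note $\k-\tau = \tfrac p2 \a(\varpi)|1-\tfrac p2|$ and that $(1+n\sd(x,y))$ appears to the power $\a(\varpi)|1-p/2|$ which is $\le \k-\tau$ precisely when $p/2\ge 1$). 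This sign bookkeeping is the point where care is needed: in the case $p\le 2$ one applies Lemma~\ref{lem:doublingLem}(ii) in the direction bounding $\varpi(B(y,n^{-1}))$ by $\varpi(B(x,n^{-1}))$, while for $p>2$ (exponent $1-p/2<0$) one applies it in the reverse direction, bounding $\varpi(B(y,n^{-1}))^{1-p/2}=\varpi(B(y,n^{-1}))^{-|1-p/2|}$ by enlarging $\varpi(B(y,n^{-1}))$ first, i.e. using $\varpi(B(x,n^{-1}))\le c(1+n\sd(x,y))^{\a(\varpi)}\varpi(B(y,n^{-1}))$. Either way the surplus factor is $(1+n\sd(x,y))^{\a(\varpi)|1-p/2|}$, and multiplying by the $(1+n\sd(x,y))^{\k-\tau}$ from the denominator shift, the total surplus power is $\a(\varpi)|1-p/2|+(\k-\tau)$; one checks this is absorbed because $\tau$ was defined so that $\k-\tau=\tfrac p2\a(\varpi)|1-\tfrac p2|\ge$ (the amount needed) — wait, here the bound needs the surplus to be controlled by what Assertion~3 can pay, so in fact the correct reading is that $\tau$ is chosen exactly so that after these manipulations the residual decay exponent equals $\k$, i.e. $\tau + \tfrac p2\a(\varpi)|1-\tfrac p2| = \k$, which is the displayed definition of $\tau$.

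So the clean statement of the argument: the integrand of \eqref{eq:CorA3} equals
$$
\frac{\varpi(y)}{\varpi(B(y,n^{-1}))\,(1+n\sd(x,y))^{\k}}\cdot \varpi(B(y,n^{-1}))^{1-p/2}(1+n\sd(x,y))^{\k-\tau},
$$
and by Lemma~\ref{lem:doublingLem}(ii) the second factor is at most $c\,\varpi(B(x,n^{-1}))^{1-p/2}$ once $\k-\tau=\tfrac p2\a(\varpi)|1-\tfrac p2|$, uniformly in $y$; pulling that constant out of the integral and invoking Assertion~3 (whose hypothesis requires $\k$ large, which we are free to assume) on the remaining integral yields $\le c\,\varpi(B(x,n^{-1}))^{1-p/2}\cdot c$, which is \eqref{eq:CorA3}. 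The main obstacle is purely the case analysis on the sign of $1-p/2$ and keeping the direction of Lemma~\ref{lem:doublingLem}(ii) and the symmetry $\sd(x,y)=\sd(y,x)$ straight so that the exponent arithmetic matches the stated value of $\tau$; there is no analytic difficulty beyond that, since all the hard localization work is already encoded in Assertion~3.
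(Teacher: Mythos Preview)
Your approach is exactly the paper's: factor $\varpi(B(y,n^{-1}))^{-p/2}=\varpi(B(y,n^{-1}))^{-1}\cdot\varpi(B(y,n^{-1}))^{1-p/2}$, trade the second piece for $\varpi(B(x,n^{-1}))^{1-p/2}$ via the two-sided doubling comparison of Lemma~\ref{lem:doublingLem}(ii) (with the case split on the sign of $1-p/2$), and reduce to the $p=2$ integral of Assertion~3. The paper packages this as the single inequality
\[
\varpi\big(B(y,n^{-1})\big)^{p/2}\ \ge\ \frac{\varpi\big(B(y,n^{-1})\big)\,\varpi\big(B(x,n^{-1})\big)^{p/2-1}}{(1+n\sd(x,y))^{\a(\varpi)|1-p/2|}},
\]
which is your doubling step rearranged.

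There is one bookkeeping slip in your ``clean statement'': the factor $\varpi(B(y,n^{-1}))^{1-p/2}(1+n\sd(x,y))^{\k-\tau}$ is \emph{not} uniformly bounded by $c\,\varpi(B(x,n^{-1}))^{1-p/2}$ when $\k-\tau>0$, because the doubling comparison adds a further \emph{positive} power $\a(\varpi)|1-p/2|$ of $(1+n\sd(x,y))$, not a negative one. The fix---and what the paper actually does---is not to bound that factor pointwise but to let the surplus power reduce the decay exponent in the remaining integral: one obtains
\[
J_{p,\tau}\ \le\ c\,\varpi\big(B(x,n^{-1})\big)^{1-p/2}\int_\Omega\frac{\varpi(y)}{\varpi(B(y,n^{-1}))\,(1+n\sd(x,y))^{\tau-\a(\varpi)|1-p/2|}}\,\d\sm(y),
\]
and then applies Assertion~3 to this $J_{2}$-type integral with exponent $\tau-\a(\varpi)|1-p/2|$. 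Since Assertion~3 holds for all sufficiently large exponents, this finishes the argument; the exact formula linking $\tau$ and $\k$ is then just a matter of which exponent one designates as $\k$.
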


\begin{proof}
Denote the left-hand side of \eqref{eq:CorA3} by $J_{p,\tau}$. When $p =2$, the inequality \eqref{eq:CorA3} 
is precisely the Assertion 3. The case $p \ne 2$ can be deduced from the case $d=2$. Indeed, by 
Lemma \ref{lem:doublingLem}, it is easy to see that,
\begin{equation}\label{eq:w(x)/w(y)}
 \big(1+ n \sd(x,y) \big)^{-\a(\varpi)} \le 
  \frac{  \varpi\!\left(B\left(y,n^{-1}\right)\right)}{ \varpi\!\left(B\left(x,n^{-1}\right)\right)}\le \big(1+ n \sd(x,y) \big)^{\a(\varpi)}.
\end{equation}
For $ p \ne 2$, using the right-hand side inequality if $0 < p < 2$ and the left-hand side inequality if $p >2$, 
we obtain
$$
  \varpi\!\left(B\left(y,n^{-1}\right)\right)^{\f{p}2} \ge \frac{ \varpi\!\left(B\left(y,n^{-1}\right)\right) 
   \varpi\!\left(B\left(x,n^{-1}\right)\right)^{\f{p}2 -1}}
         {\big(1+ n \sd(x, y) \big)^{\a(w) |1-\frac{p}2|}},
$$
which gives $J_{p,\tau} \le  \varpi\!\left(B\left(x,n^{-1}\right)\right)^{1-\f{p}2} J_{2,\k}$. Hence, \eqref{eq:CorA3} 
for $p \ne 2$ follows from the case $p =2$. The proof is completed. 
\end{proof}

\begin{cor}
For $0 < p < \infty$ and $x \in \Omega$,  the highly localized kernel satisfies
\begin{equation}\label{eq:Ln-bdd}
   \int_{\Omega} \left|L_n (\varpi;x,y) \right|^p \varpi(y) \d \sm(y)
       \le c \left[ \varpi\!\left(B\!\left(x,n^{-1}\right)\right)\right]^{1-p}. 
\end{equation}
\end{cor}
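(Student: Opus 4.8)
The plan is to combine the pointwise decay of Assertion 1 with the integral estimate of Lemma~\ref{lem:CorA3}, with the exponent $\k$ in Assertion 1 chosen large.

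First I would raise Assertion 1 to the $p$-th power: for any $\k>0$,
$$
  \left|L_n(\varpi;x,y)\right|^p \le c_\k^p\,
   \frac{1}{\varpi\!\left(B(x,n^{-1})\right)^{p/2}\,\varpi\!\left(B(y,n^{-1})\right)^{p/2}\,
   \left(1+n\sd(x,y)\right)^{\k p}}.
$$
Multiplying by $\varpi(y)$, integrating over $\Omega$, and pulling the $x$-dependent factor out of the integral gives
$$
   \int_\Omega \left|L_n(\varpi;x,y)\right|^p \varpi(y)\,\d\sm(y)
   \le c_\k^p\,\varpi\!\left(B(x,n^{-1})\right)^{-p/2}
   \int_\Omega \frac{\varpi(y)\,\d\sm(y)}{\varpi\!\left(B(y,n^{-1})\right)^{p/2}\left(1+n\sd(x,y)\right)^{\k p}}.
$$

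Next I would apply Lemma~\ref{lem:CorA3} to the remaining integral with $\tau=\k p$. Because Assertion~3 holds for all sufficiently large exponents and the parameter $\k$ in Assertion~1 is at our disposal, we may fix $\k$ large enough that $\k p$ is admissible as the $\tau$ of \eqref{eq:CorA3}, i.e.\ $\k p=\k'-\f{p}{2}\a(\varpi)\,\big|1-\f{p}{2}\big|$ with $\k'$ large enough for Assertion~3; the requirement $\tau>0$ is then automatic. Lemma~\ref{lem:CorA3} bounds the integral by $c\,\varpi\!\left(B(x,n^{-1})\right)^{1-p/2}$, and multiplying by the prefactor $\varpi\!\left(B(x,n^{-1})\right)^{-p/2}$ yields exactly $c\,\varpi\!\left(B(x,n^{-1})\right)^{1-p}$, which is \eqref{eq:Ln-bdd}.

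There is essentially no obstacle here beyond bookkeeping of exponents; the only point requiring a word of care is the compatibility of the admissibility condition $\tau>0$ in Lemma~\ref{lem:CorA3} with the "sufficiently large $\k$" hypothesis of Assertion~3, which is immediate since $\k$ ranges over all positive reals in Assertion~1. (For $p\ge 1$ one could alternatively reduce to the case $p=2$ of \eqref{eq:CorA3} after cruder estimates, but the route above treats all $0<p<\infty$ uniformly.)
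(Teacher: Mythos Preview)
Your proposal is correct and is precisely the argument the paper intends: the corollary is placed immediately after Lemma~\ref{lem:CorA3} with no separate proof, so the combination of Assertion~1 (raised to the $p$-th power) with \eqref{eq:CorA3} is exactly what is meant. Your remark on choosing $\k$ large enough to make $\tau=\k p$ admissible in Lemma~\ref{lem:CorA3} is the only point needing care, and you handle it correctly.
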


In particular, for $p =1$, it ensures that the Assertion 1 is sharp in the sense that the estimate guarantees 
the boundedness of $\left|L_n (\varpi;x,y) \right|$ and it ensures that Assertion 2 is sharp in the same sense. 

\begin{defn}
The homogeneous space $(\Omega, \varpi, \sd)$ is called localizable if $\varpi$ is a doubling weight that 
admits highly localized kernels. For convenience, we also say that the domain $\Omega$, or the weight
$\varpi$, admits a localizable homogeneous space without specifying $\varpi$, or $\Omega$. 
\end{defn}

Throughout the rest of this section, we assume that the weight function $\varpi$ admits a localizable 
homogeneous space on $\Omega$. 

\subsection{Examples of localizable homogeneous spaces}

In this subsection, we give three examples of localizable homogeneous spaces: the interval $[-1,1]$
with the Jacobi weight, the unit sphere $\sph$ with the surface measure, the unit ball $\BB^d$ with
its classical weight function. These cases have been thoroughly studied fairly recently in the 
literature, and they motivate our definition of the localizable homogeneous spaces. Moreover, their
corresponding orthogonal polynomials serve as the building blocks for orthogonal polynomials on
the conic domains. 

\subsubsection{The interval $[-1,1]$ with the Jacobi weight} 
For $\a, \b > -1$, the Jacobi weight function is defined by 
$$
      w_{\a,\b}(t):=(1-t)^\a(1+t)^\b, \qquad -1 < x <1. 
$$
Its normalization constant $c'_{\a,\b}$, defined by $c'_{\a,\b}  \int_{-1}^1 w_{\a,\b} (x)dx = 1$, is given by
\begin{equation}\label{eq:c_ab}
 c'_{\a,\b} = \frac{1}{2^{\a+\b+1}} c_{\a,\b} \quad\hbox{with} \quad 
   c_{\a,\b} := \frac{\Gamma(\a+\b+2)}{\Gamma(\a+1)\Gamma(\b+1)}.
\end{equation}
The intrinsic distance of the interval $[-1,1]$ is defined by 
\begin{equation}\label{eq:dist[-1,1]}  
\sd_{[-1,1]}(t,s) = \arccos \left(t s + \sqrt{1-t^2} \sqrt{1-s^2}\right),
\end{equation}
which is the projection of the distance $|\t-\phi|$ on the upper half of the unit circle if we set $t  =\cos \t$ 
and $s = \cos \phi$. The space $([-1,1], w_{\a,\b}, \sd_{[-1,1]})$ is a localizable homogeneous space if
$\a, \b \ge -\f12$. 

The orthogonal polynomials with respect to $w_{\a,\b}$ are the Jacobi polynomials $P_n^{(\a,\b)}$, which
are given by the hypergeometric function as \cite[(4.21.2)]{Sz}
$$
  P_n^{(\a,\b)}(x) = \frac{(\a+1)_n}{n!} {}_2F_1 \left (\begin{matrix} -n, n+\a+\b+1 \\ 
      \a+1 \end{matrix}; \frac{1-x}{2} \right).
$$
These polynomials satisfy the orthogonal relations
$$
c_{\a,\b}' \int_{-1}^1 P_n^{(\a,\b)}(x) P_m^{(\a,\b)}(x) w_{\a,\b}(x) \d x = h_n^{(\a,\b)} \delta_{m,n},
$$
where $h_n^{(\a,\b)}$ is the square of the $L^2$ norm that satisfies
$$
  h_n^{(\a,\b)} =  \frac{(\a+1)_n (\b+1)_n(\a+\b+n+1)}{n!(\a+\b+2)_n(\a+\b+2 n+1)}.
$$
The Jacobi polynomials are eigenfunctions of a second order differential operator: 
$$
    \left[(1-x) \partial^2 - (\a-\b+(\a+\b+2)x ) \partial \right] u = - n (n+\a + \b+1)u, 
$$
where $\partial f= f'$ and $u = P_n^{(\a,\b)}$ for $n =0,1,2,\ldots$. 
The highly localized kernel for the Jacobi polynomials will be denoted by 
\begin{equation}\label{def.L}
L_n^{(\a,\b)}(x,y)=\sum_{j=0}^\infty \wh a \Big(\frac{j}{n}\Big)
       \frac{P_j^{(\a,\b)}(x) P_j^{(\a,\b)}(y)} {h_j^{(\a,\b)}},
\end{equation}
where $\wh a$ is an admissible cutoff function. This kernel decays rapidly in terms of the distance
$\sd_{[-1,1]}(\cdot,\cdot)$. Let the ball $B(x,r)$ be defined with respect to this distance. Then the 
Jacobi weight is a doubling weight and satisfies, 
$$
   w_{\a, \b}(B(x, n^{-1})) \sim n^{-1} w_{\a,\b}(n; t),
$$
where $w_{\a,\b}(n; t)$ is defined by 
\begin{equation*} 
w_{\a,\b}(n; t) = (1-t + n^{-2})^{\a+\f12} (1+t + n^{-2})^{\b+\f12}.
\end{equation*}
The following estimate of the localized kernel can be found in \cite{PX1}.

\begin{thm}\label{thm:Jacobi-kernel}
Let $\alpha, \beta \ge -1/2$ and $0<\ve \le 1$. Then, for any $\k >0$ and $n \ge 1$,
\begin{equation*}
|L_n^{(\a,\b)} (t,s)| \le \frac{c\,n}{\sqrt{w_{\a,\b}(n; t)}\sqrt{w_{\a,\b}(n; s)}}
     \left(1+n \sd_{[-1,1]}(t, s)\right)^{- \k},
\end{equation*}
\end{thm}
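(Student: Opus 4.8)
The plan is to reduce the estimate on the Jacobi localized kernel $L_n^{(\a,\b)}(t,s)$ to a one-dimensional analytic estimate on the Christoffel--Darboux-type sum, using the classical asymptotics of Jacobi polynomials. First I would pass to the trigonometric substitution $t=\cos\t$, $s=\cos\phi$ with $\t,\phi\in[0,\pi]$, so that $\sd_{[-1,1]}(t,s)$ becomes (comparable to) $|\t-\phi|$ on the circle, and the weight factors $w_{\a,\b}(n;t)$ become $\big(\t^2+n^{-2}\big)^{\a+\frac12}\big((\pi-\t)^2+n^{-2}\big)^{\b+\frac12}$ up to constants. The goal estimate then reads
\begin{equation*}
\Big|\sum_{j=0}^\infty \wh a\Big(\tfrac jn\Big)\frac{P_j^{(\a,\b)}(\cos\t)P_j^{(\a,\b)}(\cos\phi)}{h_j^{(\a,\b)}}\Big|
\le \frac{c\,n}{\sqrt{w_{\a,\b}(n;\cos\t)}\sqrt{w_{\a,\b}(n;\cos\phi)}\,(1+n|\t-\phi|)^\k}.
\end{equation*}
Since $\wh a$ is supported in $[0,2]$, only the terms with $j\le 2n$ contribute, so this is a finite sum of at most $2n+1$ terms, and the smoothness of $\wh a$ is what will be converted into decay in $|\t-\phi|$.

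The key technical input is the well-known uniform asymptotic expansion for Jacobi polynomials (Szegő, combined with the Darboux/Bessel-type estimates valid near the endpoints): roughly, on the "bulk" region $cn^{-1}\le\t\le\pi-cn^{-1}$ one has
\begin{equation*}
\frac{P_j^{(\a,\b)}(\cos\t)}{\sqrt{h_j^{(\a,\b)}}}=\sqrt{\tfrac{2}{\pi}}\,(\sin\tfrac\t2)^{-\a-\frac12}(\cos\tfrac\t2)^{-\b-\frac12}\Big(\cos\big((j+\gamma)\t+\delta\big)+O\big(\tfrac1{j\t}\big)\Big),
\end{equation*}
with $\gamma,\delta$ depending on $\a,\b$, together with matching estimates $|P_j^{(\a,\b)}(\cos\t)/\sqrt{h_j^{(\a,\b)}}|\le c\, j^{\a+\frac12}$ for $\t\le j^{-1}$ (and the $\b$-analog near $\pi$). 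Substituting the main cosine term, $P_j^{(\a,\b)}(\cos\t)P_j^{(\a,\b)}(\cos\phi)/h_j^{(\a,\b)}$ splits into a product of the two endpoint-weight prefactors times a sum of terms $\cos\big((j+\gamma)(\t\pm\phi)+\delta'\big)$. The prefactors produce exactly $1/\sqrt{w_{\a,\b}(n;\cos\t)}\cdot 1/\sqrt{w_{\a,\b}(n;\cos\phi)}$ up to constants (using $j\sim n$ on the support of $\wh a(j/n)$, which is what makes the $n^{-2}$ regularization appear and keeps the prefactors finite even at $\t=0$ or $\t=\pi$). What remains is to bound, for each choice of sign, a sum of the form
\begin{equation*}
\Big|\sum_{j} \wh a\Big(\tfrac jn\Big)\,b_j\,\cos\big((j+\gamma)u+\delta'\big)\Big|,\qquad u=\t\pm\phi,
\end{equation*}
where $b_j$ are slowly varying coefficients absorbing the error terms. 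This is a standard "summation by parts $\k$ times" argument: each summation by parts against the smooth, compactly supported weight $\wh a(\cdot/n)$ gains a factor $\tfrac1n$ in the amplitude and a factor $\tfrac1{|1-e^{iu}|}\sim\tfrac1{|u|}$ (for $|u|\le\pi$) from the geometric sum of the exponentials, yielding after $\lceil\k\rceil$ steps the bound $c_\k\, n\,(n|u|)^{-\k}$; combined with the trivial bound $c\,n$ (summing $2n+1$ terms of size $O(1/n)\cdot O(1)$, wait---$b_j$ is $O(1)$ and $\wh a$ is $O(1)$ so the trivial bound is $\lesssim n$, fine) one gets $c_\k\, n\,(1+n|u|)^{-\k}$. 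Finally one observes $|\t-\phi|\le|\t+\phi|$, so the worst term is $u=\t-\phi$, and $1+n|\t-\phi|\sim 1+n\,\sd_{[-1,1]}(t,s)$, completing the estimate.

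The main obstacle will be handling the boundary/transition regions cleanly: the asymptotic for $P_j^{(\a,\b)}(\cos\t)$ changes character around $\t\sim j^{-1}$ (and around $\pi-\t\sim j^{-1}$), so one cannot simply plug in one formula. The standard remedy---which I would follow---is a case analysis on the location of $\t$ and $\phi$ relative to $n^{-1}$ and $\pi-n^{-1}$. In the regions where one variable is within $O(n^{-1})$ of an endpoint, the corresponding Jacobi factor is controlled by the polynomial bound $j^{\a+\frac12}\sim n^{\a+\frac12}$, which matches $1/\sqrt{w_{\a,\b}(n;\cos\t)}$ there (since that quantity is $\sim n^{\a+\frac12}$ when $\t\lesssim n^{-1}$), and the decay in $|\t-\phi|$ is obtained either from the smooth sum structure as above (when the other variable is in the bulk and far away) or is vacuous because $n|\t-\phi|$ is bounded (when both variables are near the same endpoint, so $\sd$ is $O(n^{-1})$). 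Organizing these cases and verifying that the error terms $O(1/(j\t))$ in the asymptotic expansion, after summation, are genuinely lower order than the main term in every regime is the delicate bookkeeping; but since the statement is quoted from \cite{PX1}, I would present this as the established route and refer to that paper for the full details rather than reproducing the endpoint analysis in full.
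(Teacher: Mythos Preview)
The paper does not give a proof here; it quotes the result from \cite{PX1} and indicates that the argument there proceeds by first establishing the special case $s=1$ (the estimate \eqref{eq:Ln(t,1)}, attributed to \cite{BD}) and then deducing the general $(t,s)$ bound from it. That deduction is not via direct asymptotics of $P_j^{(\a,\b)}$ but via the product (addition) formula for Jacobi polynomials, valid for $\a\ge\b\ge-\tfrac12$: one writes $P_j^{(\a,\b)}(t)P_j^{(\a,\b)}(s)/P_j^{(\a,\b)}(1)$ as an integral of $P_j^{(\a,\b)}$ against a probability measure depending on $(t,s)$, sums in $j$ to get $L_n^{(\a,\b)}(t,s)=\int L_n^{(\a,\b)}\big(z(t,s;\cdot),1\big)\,dm$, inserts the $s=1$ bound \eqref{eq:Ln(t,1)}, and estimates the resulting integral. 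This is precisely the template the paper itself runs in Sections~\ref{sec:coneV0} and~\ref{sec:coneV} for the conic domains (see \eqref{eq:Ln-LnJacobi} and the proof of Theorem~\ref{thm:kernelV0} with Lemma~\ref{lem:kernelV0}).

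Your route---Szeg\H{o} asymptotics in the bulk, matched endpoint bounds, and repeated summation by parts against the smooth cutoff---is a legitimate alternative and does appear in parts of the literature. It has the virtue of not invoking a product formula, but pays for this with the endpoint case analysis you flag, and with the need to control finite differences of the $j$-dependent error coefficients $b_j$ (which depend on $\t,\phi$ through the $O(1/(j\t))$ remainder) under iterated summation by parts; that step is more delicate than your sketch suggests. The product-formula route is structurally cleaner: the two-variable problem collapses to the one-variable $s=1$ estimate plus a weighted integral, and the endpoint behavior emerges automatically from that integral rather than from a case split. Since you end by deferring to \cite{PX1} anyway, be aware that the argument there follows the latter pattern rather than the one you outline.
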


This verifies Assertion 1 of the Definition \ref{def:Assertions}. The Assertions 2 and 3 can be found 
in \cite{PX1, KPX1}, respectively. The proof of the estimate relies on the estimate for the special case
$$
L_n^{(\a,\b)}(t) = L_n^{(\a,\b)}(t,1), \qquad -1 \le t \le 1.
$$ 
If $s =1$ and $t= \cos \t$, then $\sd_{[-1,1]}(t,1) = \t \sim  \sin \theta /2 \sim \sqrt{1-t}$ and we obtain \cite{BD}, 
\begin{equation} \label{eq:Ln(t,1)}
\left|L_n^{(\a,\b)}(t) \right|  \le c\, \frac{n^{2 \a +2}}{\left(1+ n  \sqrt{1-t}\right)^\k}, \quad -1 \le t \le 1,
\end{equation}
which is used to establish the estimate in Theorem \ref{thm:Jacobi-kernel}. There is also a more 
general estimate stated under weaker assumption on the cut-off function \cite{BD} and
\cite[Theorem 2.6.7]{DaiX}, which will be useful in later development. 

\begin{thm}
Let $\ell$ be a positive integer and let $\eta$ be a function that satisfy, $\eta\in C^{3\ell-1}(\RR)$, 
$\mathrm{supp}\, \eta \subset [0,2]$ and $\eta^{(j)} (0) = 0$ for $j = 0,1,2,\ldots, 3 \ell-2$. Then, 
for $\a \ge \b \ge -\f12$, $t \in [-1,1]$ and $n\in \NN$, 
\begin{equation} \label{eq:DLn(t,1)}
\left| \frac{d^m}{dt^m} L_n^{(\a,\b)}(t) \right|  \le c_{\ell,m,\a}\left\|\eta^{(3\ell-1)}\right\|_\infty 
    \frac{n^{2 \a + 2m+2}}{(1+n\sqrt{1-t})^{\ell}}, \quad m=0,1,2,\ldots . 
\end{equation}
\end{thm}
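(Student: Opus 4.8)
The plan is to prove the bound \eqref{eq:DLn(t,1)} by expanding $L_n^{(\a,\b)}(t)=\sum_j \wh a(j/n)\,\frac{P_j^{(\a,\b)}(t)\,P_j^{(\a,\b)}(1)}{h_j^{(\a,\b)}}$ and exploiting the fact that $P_j^{(\a,\b)}(1)=\binom{j+\a}{j}$ is explicit, while the $m$-fold derivative of the Jacobi polynomial is again (a multiple of) a Jacobi polynomial, $\frac{d^m}{dt^m}P_j^{(\a,\b)}(t)=\frac{\Gamma(j+\a+\b+1+m)}{2^m\Gamma(j+\a+\b+1)}P_{j-m}^{(\a+m,\b+m)}(t)$. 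So $\frac{d^m}{dt^m}L_n^{(\a,\b)}(t)$ becomes, up to harmless constants, a sum of the shape $n^{2m}\sum_j b(j/n)\,P_{j-m}^{(\a+m,\b+m)}(t)\,j^{?}$, and after reindexing and absorbing the polynomial-in-$j$ factors into a new multiplier it has the same structure as $L_N^{(\a+m,\b+m)}(t)$ with $N\sim n$, but with cut-off $\eta$ replaced by a function built from $\eta$ times monomials. Thus it suffices to prove the $m=0$ case \eqref{eq:Ln(t,1)} but with the sharper dependence on $\|\eta^{(3\ell-1)}\|_\infty$ and the arbitrary exponent $\ell$ in the denominator, for all $(\a,\b)$ with $\a\ge\b\ge-\tfrac12$, since the derivative case then follows by shifting $(\a,\b)\mapsto(\a+m,\b+m)$ and reading off the powers of $n$.

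Next I would set up the one-variable kernel estimate itself. Writing $t=\cos\t$, the standard tool is the asymptotic/integral representation of $P_j^{(\a,\b)}(\cos\t)$ (e.g. the Mehler–Heine–type formula, or Szeg\H{o} \cite[Theorem 8.21.x]{Sz}) which on the bulk $\t\in[c/n,\pi/2]$ gives $P_j^{(\a,\b)}(\cos\t)\sim j^{-1/2}\t^{-\a-1/2}\cos((j+\gamma)\t+\delta)$ plus controlled error, and near $\t=0$ gives $|P_j^{(\a,\b)}(\cos\t)|\le c j^\a$. Insert this into $\sum_j \eta(j/n)P_j^{(\a,\b)}(\cos\t)\cdot j^{\,\a}\cdot(\text{normalizing power of }j)$; collecting the powers of $j$ one sees the sum is, modulo lower-order pieces, $n^{2\a+2}$ times a trigonometric sum $\sum_j g(j/n)\cos((j+\gamma)\t+\delta)$ with $g$ a fixed $C^{3\ell-1}$ function, compactly supported in $[0,2]$, vanishing to order $3\ell-2$ at $0$, and with $\|g^{(3\ell-1)}\|_\infty\lesssim\|\eta^{(3\ell-1)}\|_\infty$. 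For such a sum one gets, by summation by parts iterated $\ell$ times (Abel/partial summation, each application of which produces a factor $1/\t$ from the conjugate Dirichlet kernel bound $|\sum_{j\le k}e^{ij\t}|\le c/\t$ and a factor $1/n$ from a discrete difference of $g$), the bound $c_\ell\|g^{(3\ell-1)}\|_\infty\, n\,(n\t)^{-\ell}$; combined with the trivial bound on $[0,1/n]$ one arrives at $c_\ell\|\eta^{(3\ell-1)}\|_\infty\,\frac{n^{2\a+2}}{(1+n\t)^\ell}$, and $\t\sim\sqrt{1-t}$ finishes \eqref{eq:Ln(t,1)} with $\ell$ arbitrary.

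The main obstacle, and where the hypotheses $\eta\in C^{3\ell-1}$ and $\eta^{(j)}(0)=0$ for $j\le 3\ell-2$ are really used, is making the iterated summation by parts honest. The issue is that the asymptotic expansion of $P_j^{(\a,\b)}(\cos\t)$ is only valid away from the turning point $\t\sim 1/j$, so one must split the $j$-sum at $j\sim 1/\t$: the small-$j$ range $j\lesssim 1/\t$ is handled crudely using $|P_j^{(\a,\b)}(\cos\t)|\le c j^\a$ and the vanishing of $\eta$ at $0$ (which forces $|g(j/n)|\lesssim (j/n)^{3\ell-2}\|\eta^{(3\ell-1)}\|_\infty$, small when $j\ll n$), while the large-$j$ range uses the oscillatory estimate above; one then checks the two contributions match at the cut and that the error terms in the asymptotic expansion, which carry extra powers of $1/(j\t)$, only improve the bound. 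Each summation by parts must be checked to land on differences of the smooth multiplier rather than on the oscillatory factor, and the boundary terms produced at $j\sim 1/\t$ must be controlled using precisely the assumed order of vanishing of $\eta$ at the origin — this bookkeeping is the delicate part. Once the estimate \eqref{eq:Ln(t,1)} is in hand with the explicit $\|\eta^{(3\ell-1)}\|_\infty$ dependence, the derivative estimate \eqref{eq:DLn(t,1)} is immediate from the Jacobi-parameter-shift identity for $\frac{d^m}{dt^m}P_j^{(\a,\b)}$ described above, since differentiating $m$ times multiplies the relevant normalization by $j^{2m}\sim n^{2m}$ and raises $\a$ to $\a+m$ inside $n^{2\a+2}$.
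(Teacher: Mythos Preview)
The paper does not prove this theorem; it is quoted from \cite{BD} and \cite[Theorem~2.6.7]{DaiX}. Your outline follows the same overall strategy as those references: reduce the derivative case to $m=0$ via the identity $\tfrac{d^m}{dt^m}P_j^{(\a,\b)}=2^{-m}(j+\a+\b+1)_m\,P_{j-m}^{(\a+m,\b+m)}$, then write $t=\cos\t$, insert the Szeg\H{o} asymptotics for $P_j^{(\a,\b)}(\cos\t)$, and sum by parts repeatedly to extract decay in $n\t$. The one point your sketch leaves genuinely unexplained is why the hypothesis is $\eta\in C^{3\ell-1}$ rather than, say, $C^{\ell}$: your informal count (``each summation by parts gives one factor of $1/(n\t)$ and costs one discrete difference of $g$'') would predict only about $\ell$ derivatives are needed. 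In the actual argument the additional regularity is consumed by the rational-in-$j$ normalization factors $P_j^{(\a,\b)}(1)/h_j^{(\a,\b)}$ and the amplitude $\t^{-\a-1/2}$ that get folded into the multiplier, together with the error terms in the asymptotic expansion; controlling the iterated differences of this composite sequence, not just of $\eta(j/n)$, is what drives the count up to $3\ell-1$. You correctly flag this as the delicate part, but your sketch does not yet show where the factor of three comes from, so that piece of bookkeeping still needs to be written out rather than asserted.
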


The Jacobi polynomials with equal parameters are the Gegenbauer polynomials $C_n^\l$, which are of
particular interest for our study. These polynomials are orthogonal with respect to the the weight function 
$$
  w_\l(x) = (1-x^2)^{\l-\f12},   \quad -1< x<1, \quad \l > -\tfrac12.
$$
More precisely, the polynomials $C_n^\l$ satisfy the orthogonal relation
\begin{equation} \label{eq:GegenNorm}
  c_{\l} \int_{-1}^1 C_n^{\l}(x) C_m^{\l}(x) w_\l(x) \d x = h_n^{\l} \delta_{n,m}, 
\end{equation}
where the normalization constant $c_\l$ of $w_\l$ and the norm square $h_n^\l$ are given by
\begin{equation}\label{eq:c_l}
    c_{\l} = \frac{\Gamma(\l+1)}{\Gamma(\f12)\Gamma(\l+\f12)} \qquad \hbox{and} \qquad
      h_n^\l = \frac{\l}{n+\l}C_n^\l(1) = \frac{\l}{n+\l}\frac{(2\l)_n}{n!}.
\end{equation}
The polynomial $C_n^\l$ is a constant multiple of the Jacobi polynomial $P_n^{(\l-\f12,\l-\f12)}$ and
it is also related to the Jacobi polynomial by a quadratic transform \cite[(4.7.1)]{Sz}
\begin{equation} \label{eq:Jacobi-Gegen0}
  C_{2n}^\l (x) = \frac{(\l)_n}{(\f12)_n} P_n^{(\l-\f12,\l-\f12)}(2x^2-1).
\end{equation}

\subsubsection{The unit sphere $\sph$}
With the unit weight function, or the Lebesgue measure $\d \s$, the space $(\sph, \d\s, \d_\SS)$ is a 
localizable homogeneous space, where the metric $\d_\SS = \d_{\sph}$ is the geodesic distance defined 
in \eqref{eq:distSph}. The orthogonal polynomials are spherical harmonics. Let $\CP_n^d$ denote the 
space of homogeneous polynomials of degree $n$ in $d$ variables. A spherical harmonics $Y$ of 
degree $n$ is an element of $\CP_n^d$ that satisfies $\Delta Y =0$, where $\Delta$ is the Laplace 
operator of $\RR^d$. If $Y \in \CP_n^d$, then $Y(x) = \|x\|^n Y(x')$, $x' = x/\|x\| \in \sph$, so that $Y$
is determined by its restriction on the unit sphere. 

Let $\CH_n(\sph)$ denote the space of spherical harmonics of degree $n$. Its dimension $\dim \CH_n(\sph)$ 
is given by \eqref{eq:dimVnS}. Spherical harmonics of different degrees are orthogonal on the sphere. For 
$n \in \NN_0$ let $\{Y_\ell^n: 1 \le \ell \le \dim \CH_n(\sph)\}$ be an orthonormal basis of $\CH_n(\sph)$; then 
$$
   \frac{1}{\o_d} \int_\sph Y_\ell^n (\xi) Y_{\ell'}^m (\xi)\d\s(\xi) = \delta_{\ell,\ell'} \delta_{m,n},
$$
where $\o_d$ denotes the surface area $\o_d = 2 \pi^{\f{d}{2}}/\Gamma(\f{d}{2})$ of $\sph$. Let 
$\sP_n(\cdot,\cdot)$ denote the reproducing kernel of the space $\CH_n(\sph)$. Among many properties 
of the spherical harmonics (see, for example, \cite{DaiX, SW}), one characteristic property is a closed-formula 
for this kernel \cite[(1.2.3) and (1.2.7)]{DaiX},  
\begin{equation} \label{eq:additionF}
  \sP_n(x,y) = \sum_{\ell =1}^{\dim \CH_n(\sph)} Y_\ell^n (x) Y_\ell^n(y) = Z_n^{\f{d-2}{2}} (\la x,y\ra), \quad x, y \in \sph, 
\end{equation}
where $Z_n^\l$ is defined in terms of the Gegenbauer polynomial by 
\begin{equation} \label{eq:Zn}
    Z_n^\l(t) = \frac{n+\l}{\l} C_n^\l(t), \qquad \l = \frac{d-2}{2}. 
\end{equation}
Because of the second equal sign, the identity \eqref{eq:additionF} is often called the {\it addition formula} of 
spherical harmonics. The function in its right-hand side is often called zonal harmonic, since it depends only
on $\la x,y\ra$. This formula allows us to use \eqref{eq:Ln(t,1)} to derive the estimate \eqref{eq:decaySph},
which is Assertion 1, for the highly localized kernel
$$
   \sL_n(x,y) = \sum_{k=0}^\infty \wh a \left(\frac{k}{n}\right) \sP_k(x,y), \qquad x, y \in \sph.
$$
The other two assertions also hold; see, for example, \cite[Thm. 6.5]{DaiX} and \cite[Cor. 2.6.6]{DaiX}. 

Another characteristic property of the spherical harmonics is that they are eigenfunctions of the 
Laplace-Beltrami operator $\Delta_0$ on the sphere, which is the restriction of the Laplace operator 
$\Delta$ on the unit sphere; see, for example,  \cite[Section 1.4]{DaiX} for its explicit expression. More 
precisely, it is known \cite[(1.4.9)]{DaiX} that 
\begin{equation} \label{eq:sph-harmonics}
     \Delta_0 Y = -n(n+d-2) Y, \qquad Y \in \CH_n^d,
\end{equation}
so that the eigenvalues depend only on $n$. This relation, as well as the operator $\Delta_0$, plays an 
important role in the approximation theory on the unit sphere. 

\subsubsection{The unit ball $\BB^d$ with classical weight function}
The classical weight function on the unit ball $\BB^d$ of $\RR^d$ is defined by 
\begin{equation}\label{eq:weightB}
  W_\mu(x) = (1-\|x\|)^{\mu-\f12}, \qquad x\in \BB^d, \quad \mu > -\tfrac12.
\end{equation}
Its normalization constant is $b_\mu^\BB = \Gamma(\mu+\f{d+1}{2}) /(\pi^{\f d 2}\Gamma(\mu+\f12))$.
For the ball $B(x, r)$ defined via the distance function $\sd_\BB$ in \eqref{eq:distBall} on 
the unit ball \cite[Lemma 5.1]{PX2}, 
$$
      W_\mu ( B(x,r)) \sim r^d \left(\sqrt{1-\|x\|^2} + r \right)^{2\mu}, \qquad 0 < r \le 1,
$$
so that $W_\mu$ is a doubling weight. The space $(\BB^d, W_\mu, \sd_\BB)$ is a localizable 
homogeneous space for $\mu \ge 0$, where $\sd_\BB (\cdot,\cdot)$ is the distance defined in 
\eqref{eq:distBall} on the unit ball. 

Let $\CV_n^d(\BB^d,W_\mu)$ be the space of orthogonal polynomials of degree $n$ with respect to 
$W_\mu$. An orthogonal basis of $\CV_n^d(\BB^d, W_\mu)$ can be given explicitly in terms of 
the Jacobi polynomials and spherical harmonics. For $ 0 \le m \le n/2$, let $\{Y_\ell^{n-2m}: 1 \le \ell \le
 \dim \CH_{n-2m}(\sph)\}$ be an orthonormal basis of $\CH_{n-2m}^d$. Define \cite[(5.2.4)]{DX}
\begin{equation}\label{eq:basisBd}
  P_{\ell, m}^n (x) = P_m^{(\mu-\f12, n-2m+\f{d-2}{2})} \left(2\|x\|^2-1\right) Y_{\ell}^{n-2m}(x).
\end{equation}
Then $\{P_{\ell,m}^n: 0 \le m \le n/2, 1 \le \ell \le \dim \CH_{n-2m}(\sph)\}$ is an orthogonal basis of 
$\CV_n^d(W_\mu)$.  Let $\Pb_n(W_\mu; \cdot, \cdot)$ denote the reproducing kernel of the 
space $\CV_n(\BB^d, W_\mu)$. This kernel satisfies an analog of the addition formula \cite{X99}: 
for $x,y \in \BB^d$,
\begin{align} \label{eq:additionBall}
      \Pb_n(W_\mu;x,y) = c_{\mu-\f12} 
         \int_{-1}^1 Z_n^{\mu+\f{d-1}{2}} & \Big(\la x, y \ra + u \sqrt{1-\|x\|^2}\sqrt{1-\|y\|^2} \Big) \\
           &  \times (1-u^2)^{\mu-1}\d u, \notag
\end{align}
where $\mu > 0$ and it holds for $\mu =0$ under the limit
\begin{equation}\label{eq:limitInt}
   \lim_{\mu \to 0+}  c_{\mu-\f12} \int_{-1}^1 f(t) (1-t^2)^{\mu-1} \d u = \frac{f(1) + f(-1)}{2}. 
\end{equation}
This is an analog of the addition formula for the unit ball and it allows us to use \eqref{eq:Ln(t,1)} to 
derive the estimate \eqref{eq:decayBall}, which is Assertion 1, for the highly localized kernel now 
denoted by
$$
   \Lb_n(W_\mu; x,y) = \sum_{k=0}^\infty \wh a \left(\frac{k}{n}\right) \Pb_k(W_\mu; x,y).
$$
All three assertions were proved in \cite{PX2}. For the first one, see also  \cite[Thm. 11.5.3]{DaiX}.
 
The orthogonal polynomials with respect to $W_\mu$ on the unit ball are eigenfunctions of a second 
order differential operator \cite[(5.2.3)]{DX}:
\begin{equation}\label{eq:diffBall}
  \left( \Delta  - \la x,\nabla \ra^2  - (2\mu+d-1) \la x ,\nabla \ra \right)u = - n(n+2\mu+ d-1) u
\end{equation}
for all $u \in \CV_n(\BB^d, W_\mu)$. The differential operator in the left-hand side is the analog of
the Laplace-Beltrami operator on the sphere.

\subsection{An auxiliary maximal function}

Some of the polynomial inequalities in the latter sections will be established for norms defined via
a doubling weight. The study of approximation and polynomial inequality with doubling weights 
was pioneered in \cite{MT} for the interval $[-1,1]$ and developed subsequently in \cite{Dai1} for
the unit sphere $\sph$. The main tool for the latter is an auxiliary maximal function. We define its 
analog on the domain $\Omega$ that admits a localizable homogeneous space.  

\begin{defn}\label{defn:maxf*}
For $\b > 0$, $n \in \NN$ and $f \in C(\Omega)$, a maximal function $f_{\b,n}^\ast$ is defined by 
$$
f_{\b,n}^\ast (x) := \max_{y \in \Omega} |f(y)| \big (1+ n \sd(x,y) \big)^{-\b}, \quad x \in \Omega. 
$$
\end{defn}

The development below follows closely \cite[Sect. 5.2 and 5.3]{DaiX} but with a somewhat more 
streamlined proof.  

We clearly have $|f(x)| \le f_{\b,n}^\ast(x)$ for all $x \in \Omega$. The maximal function $f_{\b,n}^\ast$ 
has an upper bound in the Hardy-Littlewood maximum function $M_\mu$ defined in \eqref{def:HL-MF}. 
To see this, we first prove a lemma. For $f\in C(\Omega)$ and $r>0$, we define 
\begin{equation*}
{\rm osc } (f) (x, r) := \sup_{x_1,x_2 \in B(x, r)} |f(x_1)-f(x_2)|, \qquad   x\in\Omega.
\end{equation*}

\begin{lem}\label{lem:osc(f)}
If $f\in \Pi_n(\Omega)$ and $0 < \delta \le 1$, then for any $\b>0$,
$$
{\rm osc } (f) \left(x, \tfrac{\delta}{n}\right) \leq c_\b \delta f_{\b , n}^\ast (x), \qquad  x\in\Omega,
$$
where the constant $c_\b$ depends only on $d$ and $\b$ when $\b$ is large.
\end{lem}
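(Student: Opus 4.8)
The plan is to reduce the oscillation estimate to a bound on the gradient of $f$ along a geodesic and then to control that gradient by a Bernstein-type (Markov-type) inequality, which in turn gets localized via the maximal function $f_{\b,n}^\ast$. First I would fix $x\in\Omega$ and two points $x_1,x_2\in B(x,\delta/n)$; by the triangle inequality $\sd(x_1,x_2)<2\delta/n$, so it suffices to bound $|f(x_1)-f(x_2)|$ by $c_\b\delta f_{\b,n}^\ast(x)$. The idea is to write the difference as an integral of a directional derivative along a path from $x_1$ to $x_2$ of length comparable to $\sd(x_1,x_2)\le 2\delta/n$, and then to invoke the one-dimensional localized Jacobi/Bernstein estimate — the kind encoded in \eqref{eq:DLn(t,1)} — which on the model interval $[-1,1]$ says $|g'(t)|\lesssim n^2 \cdot (\text{local value of }g)$ near the endpoint, with the local factor controlled by $(1+n\sqrt{1-t})$ weights. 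Pulling this back to $\Omega$ via the distance $\sd$, one gets that at each point $z$ on the path, $|\nabla_{\text{tang}} f(z)| \lesssim n \max_{w}|f(w)|(1+n\sd(z,w))^{-\b}$, i.e. $\lesssim n f_{\b,n}^\ast(z)$.

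The second key step is to replace $f_{\b,n}^\ast(z)$ for $z$ on the path by $f_{\b,n}^\ast(x)$ up to a constant. Since every such $z$ lies within distance $O(\delta/n)\le O(1/n)$ of $x$, for any $w\in\Omega$ we have $1+n\sd(x,w)\le 1+n\sd(x,z)+n\sd(z,w)\le C(1+n\sd(z,w))$, hence $(1+n\sd(z,w))^{-\b}\le C^\b(1+n\sd(x,w))^{-\b}$, which gives $f_{\b,n}^\ast(z)\le C^\b f_{\b,n}^\ast(x)$. Combining, $|f(x_1)-f(x_2)|\le \int_{\text{path}} |\nabla_{\text{tang}}f| \lesssim \frac{2\delta}{n}\cdot n \cdot C^\b f_{\b,n}^\ast(x) = c_\b\delta f_{\b,n}^\ast(x)$, as desired. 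The dependence of $c_\b$ only on $d$ and $\b$ (for $\b$ large) comes from tracking constants in the Bernstein estimate and in the doubling/triangle manipulations, none of which depend on $n$ or on $f$.

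The main obstacle I expect is making the "directional derivative along a geodesic" argument rigorous on a general $\Omega$ that is only assumed to carry the intrinsic distance $\sd$ with the stated doubling and volume properties, since we do not a priori have a Riemannian structure or smooth geodesics. The clean way around this — and the one I would actually carry out, following \cite[Sect. 5.2]{DaiX} — is to avoid working intrinsically on $\Omega$ altogether: one shows the oscillation bound first for the model cases (the interval with Jacobi weight, the sphere, the ball, and later the conic domains) where $\sd$ is realized concretely and the curve can be taken as an explicit arc, and in the abstract framework one instead posits or derives a Bernstein-type inequality as a consequence of the highly localized kernel. Concretely, since $f\in\Pi_n(\Omega)$, the reproducing property $L_n\ast f = f$ holds (with an admissible $\wh a$ of type (a)), so $f(x_1)-f(x_2)=\int_\Omega f(y)\big(L_n(\varpi;x_1,y)-L_n(\varpi;x_2,y)\big)\varpi(y)\d\sm(y)$; then Assertion 2 bounds the kernel difference by $c_\k\, n\sd(x_1,x_2)\big(\varpi(B(x_1,n^{-1}))\varpi(B(x_2,n^{-1}))\big)^{-1/2}(1+n\sd(x_2,y))^{-\k}$, and after factoring out $|f(y)|(1+n\sd(x,y))^{-\b}\le f_{\b,n}^\ast(x)$ and using $(1+n\sd(x_2,y))^{-\k+\b}\cdot(\cdots)$ together with Lemma \ref{lem:CorA3} (choosing $\k$ large relative to $\b$ and $\a(\varpi)$) to carry out the $y$-integration against $\varpi(y)/\varpi(B(y,n^{-1}))$, one obtains exactly $|f(x_1)-f(x_2)|\le c_\b\, n\sd(x_1,x_2)\, f_{\b,n}^\ast(x)\le c_\b\delta f_{\b,n}^\ast(x)$. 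This route uses only the stated assertions and Lemma \ref{lem:doublingLem}, so it is the one I would write up; the delicate point is the exponent bookkeeping to ensure $\tau=\k-\frac{1}{2}\cdot 1\cdot\a(\varpi)|1-\frac12|>\b$ so that Lemma \ref{lem:CorA3} applies with room to spare.
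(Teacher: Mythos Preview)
Your final route---writing $f=L_n(\varpi)\ast f$, bounding the kernel difference via Assertion~2, pulling out $|f(y)|(1+n\sd(x,y))^{-\b}\le f_{\b,n}^\ast(x)$, and integrating the remainder via Assertion~3 (equivalently Lemma~\ref{lem:CorA3}, after a doubling step to convert $\varpi(B(x,n^{-1}))$ to $\varpi(B(y,n^{-1}))$) with $\k$ chosen large relative to $\b$ and $\a(\varpi)$---is exactly the paper's argument. The geodesic--Bernstein detour in your first two paragraphs is not needed in this abstract setting, and you are right to discard it.
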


\begin{proof}
We use the highly localized kernel $L_n$ with an admissible cut-off function of type (a). For $\delta >0$ 
and $x,y \in \Omega$, set
\begin{equation*}
A_{n,\delta} (x,y) := \sup_{z \in B (x,\f\delta n)} \left| L_n (\varpi; x,y) - L_n(\varpi; z,y)\right |.
\end{equation*}
Let $L_n(\varpi)*f$ be defined as in \eqref{def:Ln-operator}. Then $L_n(\varpi)*f = f$ for $f \in \Pi_n(\Omega)$.
If $x_1,x_2 \in B(x, r)$, then by the triangle inequality 
$$
    \left| L_n (\varpi; x_1,y) - L_n(\varpi; x_2,y)\right | \le 2 A_{n,\delta}(x,y). 
$$
Hence, it follows by  \eqref{def:Ln-operator} that 
\begin{align*}
  {\rm osc} (f) \left(x, \tfrac \delta n \right) & \leq  2 \int_{\Omega} |f(y)| 
           A_{n,\delta}(x,y) \varpi(y)  \d \sm(y)\\ 
  & \leq  2 f_{\b, n}^\ast (x)   \int_{\Omega} (1+n \sd(x,y))^{\b} A_{n,\delta} (x,y)\,  \d \sm(y)\\
  & \leq c_\b \delta f_{\b, n}^\ast (x),
\end{align*}
where the last step follows from Assertion 1 and Assertion 3. 
\end{proof}

\begin{thm}\label{thm-3-1-ch3}
Let $\sw$ be a doubling weight. For $f\in \Pi_n(\Omega)$, $\b>0$ and setting $\g = \a(\sw)/\b$, then
\begin{equation}\label{3-3-ch3}
    f_{\b,n}^\ast (x) \leq c_{\b,L(\sw)} \left( M_\sw \big(|f|^\g \big) (x) \right)^{1/\g}, \qquad x \in \Omega.
\end{equation}
\end{thm}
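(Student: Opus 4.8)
The plan is to show that $f_{\b,n}^\ast(x)$ is controlled pointwise by a Hardy--Littlewood average of $|f|^\g$ on a ball centered at $x$, and then invoke the maximal inequality \eqref{eq:MaxFuncH}. Fix $x \in \Omega$ and pick $y \in \Omega$ at which the supremum defining $f_{\b,n}^\ast(x)$ is essentially attained, say $|f(y)|(1+n\sd(x,y))^{-\b} \ge \tfrac12 f_{\b,n}^\ast(x)$. Set $r = (1+n\sd(x,y))/n$, so that $y \in B(x,r)$, and $r \ge n^{-1}$. The idea is to use that $f \in \Pi_n(\Omega)$ has controlled oscillation on balls of radius $\le n^{-1}$, from Lemma~\ref{lem:osc(f)}, to propagate the value $|f(y)|$ to a whole ball around $y$ and thus make it visible to the average. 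First I would cover $B(x,r)$ by overlapping balls of radius $\delta/n$ with $\delta = \delta_0$ fixed small, note that $O((nr)^{\log_2(\text{const})})$ such balls suffice by the doubling property of $\sm$, and use Lemma~\ref{lem:osc(f)} along a chain of such balls joining $y$ to any point $z \in B(y,n^{-1})$: at each step the value changes by at most $c_\b\delta_0 f_{\b,n}^\ast(z')$, and since $(1+n\sd(x,z'))^{-\b}$ is comparable along the chain up to a factor $(nr)^\b$, one gets $|f(z)| \ge |f(y)| - c(nr)^\b \delta_0 \cdot (\text{something})$. This crude chaining is in fact the route taken in \cite[Sect. 5.3]{DaiX}, so I would follow it: the cleaner statement is that $|f(z)|^\g \ge \tfrac12 |f(y)|^\g$ on $B(y,n^{-1})$ provided $\g = \a(\sw)/\b$ is chosen so the errors are absorbed, after first disposing of a case split.

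The argument splits naturally according to whether $f_{\b,n}^\ast(x)$ is dominated by the "diagonal" contribution $|f(x')|$ for $x'$ near $x$ or by a far-away $y$. In the first regime one bounds directly; in the second, the key computation is: integrating $|f(z)|^\g$ against $\sw$ over $B(y,n^{-1})$ gives at least $c|f(y)|^\g \sw(B(y,n^{-1}))$, hence
$$
M_\sw(|f|^\g)(x) \ge \frac{1}{\sw(B(x,r))}\int_{B(y,n^{-1})} |f(z)|^\g \sw(z)\,\d\sm(z) \ge c\,|f(y)|^\g\,\frac{\sw(B(y,n^{-1}))}{\sw(B(x,r))}.
$$
By Lemma~\ref{lem:doublingLem}(i) and (ii), $\sw(B(x,r))/\sw(B(y,n^{-1})) \le c (nr)^{\a(\sw)}\cdot(1+n\sd(x,y))^{\a(\sw)} \le c (1+n\sd(x,y))^{2\a(\sw)}$ (and in fact one can arrange a single power $\a(\sw)$ with a sharper application, choosing $r$ optimally). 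Raising to the power $1/\g$ and using $\g\b = \a(\sw)$,
$$
\big(M_\sw(|f|^\g)(x)\big)^{1/\g} \ge c\,|f(y)|\,(1+n\sd(x,y))^{-\a(\sw)/\g} = c\,|f(y)|\,(1+n\sd(x,y))^{-\b} \ge c\, f_{\b,n}^\ast(x),
$$
which is \eqref{3-3-ch3} after renaming constants.

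The main obstacle is the oscillation step: one must pass from $|f(y)|$ to a uniform lower bound for $|f(z)|^\g$ on the full ball $B(y,n^{-1})$, and doing this via Lemma~\ref{lem:osc(f)} requires that the accumulated oscillation error be strictly smaller than $|f(y)|$ itself. This is exactly where the choice $\g = \a(\sw)/\b$ (equivalently, the precise exponent matching) is forced, and where one has to be careful that the maximal function in Lemma~\ref{lem:osc(f)} is taken with the same weight $\sw$ and at nearby points rather than at $x$. I would handle this by the standard device of replacing $f$ by $f/f_{\b,n}^\ast(x)$ to normalize, absorbing the small constant $c_\b\delta_0$ by choosing $\delta_0$ small depending only on $\b$ and $d$, and only at the very end reintroducing the dependence on $L(\sw)$ through $\a(\sw)$; the rest is the doubling bookkeeping already packaged in Lemma~\ref{lem:doublingLem}. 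The dependence of the final constant on $\b$ (for $\b$ large) and on $L(\sw)$ then comes out as claimed, since those are the only parameters entering the covering count and the doubling estimates.
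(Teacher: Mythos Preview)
Your approach is essentially the paper's, and the computation in your second paragraph is correct. The chaining in your first paragraph is an unnecessary detour: for the near-maximizer $y$ one has directly $f^\ast_{\b,n}(y) \le (1+n\sd(x,y))^\b f^\ast_{\b,n}(x) \le 2|f(y)|$, so a \emph{single} application of Lemma~\ref{lem:osc(f)} on $B(y,\delta/n)$ already gives $|f(z)| \ge |f(y)| - c_\b\delta\, f^\ast_{\b,n}(y) \ge (1-2c_\b\delta)|f(y)|$ for all $z \in B(y,\delta/n)$ once $\delta$ is small. The paper packages this same step as an absorption: it writes $|f(y)|^\g \le 2^\g \min_{B(y,\delta/n)}|f|^\g + 2^{-\g}(f^\ast_{\b,n}(x))^\g$, takes the supremum over $y$, and moves the last term to the left. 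Either way, the remainder is exactly your doubling estimate, with the single power $(1+n\sd(x,y))^{\a(\sw)}$ you noted.
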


\begin{proof}
We apply Lemma \ref{lem:osc(f)} with $c_\b \delta = \f14$ to obtain, by $(a+b)^\g \le 2^\g (a^\g+b^\g)$, 
\begin{align*}
 \frac{|f(y)|^{\g} }{( 1+ n \sd(x, y))^{\a(\sw)}}  \leq 
       2^{\g} \frac{\min_{ z\in B(y,\f \delta n)} |f(z)|^{\g} }{( 1+ n \sd(x, y))^{\a(\sw)}} + 
          \frac{1}{2^\g} \left( f_{\b, n}^\ast(x) \right)^{\gamma},
\end{align*}
Taking maximum over $x \in \Omega$, the left-hand side becomes $\big( f_{\b, n}^\ast(x) \big)^{\gamma}$
using $\a(\sw) = \b \g$, so that the inequality can be rearranged to give 
\begin{equation} \label{eq:f_bn-bound}
    \left( f_{\b, n}^\ast(x) \right)^{\gamma} \le  c ( 1+ n \sd(x, y))^{-\a(\sw)}\min_{ z\in B( y,\f \delta n)}  |f(z)|^{\gamma},
\end{equation}
where $c = 2^\g /(1 - 2^{-\g})$. We now estimate $\min_{ z\in B( y,\f \delta n)}|f(z)|^{\gamma}$, following the 
proof in the case of the unit sphere  \cite[Thm. 5.2.2]{DaiX}. 

Let $\t = \max\{ \frac{\delta}{n}, \sd(x,y)\}$. Then $B(y, \f\delta n)\subset B(x, 2\t)\subset B(y,3\t)$. By the
doubling condition for $\sw$ and (ii) of Lemma \ref{lem:doublingLem}, 
\begin{align*}
   \sw \big(B (y,\tfrac \delta n)\big) \ge  \frac{1}{c_{L(\sw)}} \( \f { 3\t n} \delta \)^{-\a(\sw)}
       \sw(B (y, 3 \t) )  \ge  \frac{1}{c_{L(\sw)}} \( \f {3\t n} \delta \)^{-\a(\sw)}  \sw(B(x, 2 \t)), 
\end{align*}
which implies, together with $n \t \le 1+n\sd (x,y)$, the estimate
\begin{align*}
 \min_{z\in B(y,\f \delta n)} |f(z)|^\g  
 & \le \frac{1}{\sw(B(y, \f \delta n))} \int_{B(y,\f \delta n)} | f(z)|^{ \g} \sw(z)\, \d \sm(z) \\
  & \le  c_{L(\sw)}   \( \f { 3\t n} \delta \)^{\a(\sw)}  \frac{1}{\sw(B(x, 2 \t))}
        \int_{B (x,2\t)} |f(z)|^{\gamma} \sw(z)\,  \d \sm(z)\\
   & \leq c_{L(\sw)}  \( \f 3\delta \)^{\a(\sw)}(1+n \sd(x,y))^{\a(\sw)} M_{\sw} \left(|f|^{\gamma}\right)(x).
\end{align*}
Together with \eqref{eq:f_bn-bound}, this completes the proof.  
\end{proof}

Let $\|\cdot\|_{p, \sw}$ denote the norm of $L^p(\Omega, \sw)$ and we adopt this notation for $0 < p <1$,
even though it is no longer a norm. The $L^p$ boundedness of $f_{\b,n}^\ast$ follows from that of 
$M_{\mu}(|f|^\g)$ for $\g p > 1$ or $\b > \a(\sw)/p$. 

\begin{cor}\label{cor:fbn-bound}
If $ 0< p\leq \infty$, $ f\in\Pi_n(\Omega)$ and $\b > \a(\sw)/p$, then
$$
  \|f\|_{p, \sw} \leq \|f_{\b,n}^\ast\|_{p,\sw} \leq c  \|f\|_{p,\sw},
$$
where $c$ depends also on $L(\sw)$ and $\b$ when $\b$ is either large or close to $\a(\sw)/p$.
\end{cor}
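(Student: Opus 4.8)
The plan is to deduce Corollary~\ref{cor:fbn-bound} from Theorem~\ref{thm-3-1-ch3} together with the $L^p$ boundedness of the Hardy--Littlewood maximal function $M_\sw$. The lower bound $\|f\|_{p,\sw}\le\|f^\ast_{\b,n}\|_{p,\sw}$ is immediate from the pointwise inequality $|f(x)|\le f^\ast_{\b,n}(x)$, which was noted right after Definition~\ref{defn:maxf*}, so the content is entirely in the upper bound.

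For the upper bound I would argue as follows. Fix $\b>\a(\sw)/p$ and set $\g=\a(\sw)/\b$, so that $\g<p$, equivalently $p/\g>1$. Theorem~\ref{thm-3-1-ch3} gives the pointwise bound
\begin{equation*}
  f^\ast_{\b,n}(x)\le c_{\b,L(\sw)}\bigl(M_\sw(|f|^\g)(x)\bigr)^{1/\g},\qquad x\in\Omega,
\end{equation*}
valid for $f\in\Pi_n(\Omega)$. Raising to the $p$-th power and integrating against $\sw\,\d\sm$,
\begin{equation*}
  \|f^\ast_{\b,n}\|_{p,\sw}^p\le c^p\int_\Omega \bigl(M_\sw(|f|^\g)(x)\bigr)^{p/\g}\sw(x)\,\d\sm(x)
   = c^p\,\bigl\|M_\sw(|f|^\g)\bigr\|_{p/\g,\sw}^{p/\g}.
\end{equation*}
Since $p/\g>1$, the maximal inequality \eqref{eq:MaxFuncH} (applied with exponent $p/\g$ in place of $p$, and which holds for $M_\sw$ on the homogeneous space $(\Omega,\sw,\sd)$) yields
\begin{equation*}
  \bigl\|M_\sw(|f|^\g)\bigr\|_{p/\g,\sw}\le c_{p/\g}\bigl\||f|^\g\bigr\|_{p/\g,\sw}=c_{p/\g}\|f\|_{p,\sw}^{\g},
\end{equation*}
because $\bigl\||f|^\g\bigr\|_{p/\g,\sw}^{p/\g}=\int_\Omega|f|^p\sw\,\d\sm=\|f\|_{p,\sw}^p$. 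Combining the last three displays gives $\|f^\ast_{\b,n}\|_{p,\sw}\le c\,\|f\|_{p,\sw}$ with $c$ depending on $\b$, $L(\sw)$, and $p/\g$; the latter dependence is what makes the constant blow up as $\b\downarrow\a(\sw)/p$ (then $p/\g\downarrow1$ and the maximal constant $c_{p/\g}$ degenerates) or as $\b\to\infty$ (through $c_{\b,L(\sw)}$ in Theorem~\ref{thm-3-1-ch3}), which accounts for the stated qualification on $c$. The case $p=\infty$ is handled separately and is trivial: $f^\ast_{\b,n}(x)=\max_{y}|f(y)|(1+n\sd(x,y))^{-\b}\le\|f\|_{\infty,\sw}$, since the factor $(1+n\sd(x,y))^{-\b}\le1$.

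I do not anticipate a genuine obstacle here — the statement is essentially a packaging of Theorem~\ref{thm-3-1-ch3} and \eqref{eq:MaxFuncH}. The only point requiring a word of care is that \eqref{eq:MaxFuncH} as quoted is stated for $M_\sm$ and for $1<p\le\infty$; I would invoke it with the doubling measure $\sd\mu=\sw(x)\d\sm$ and exponent $p/\g>1$, which is legitimate because the Hardy--Littlewood maximal operator is bounded on $L^q$ of any doubling measure space for every $q>1$ (this is the standard theory referenced via \cite{Stein}). One should also note that $f\in\Pi_n(\Omega)$ ensures $f^\ast_{\b,n}$ is finite and measurable and that $|f|^\g\in L^{p/\g}(\Omega,\sw)$, so every quantity above is finite; these are routine.
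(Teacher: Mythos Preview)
Your proof is correct and matches the paper's approach exactly: the paper states just before the corollary that the $L^p$ boundedness of $f_{\b,n}^\ast$ follows from that of $M_\sw(|f|^\g)$ for $\g p>1$, i.e., $\b>\a(\sw)/p$, which is precisely the argument you wrote out. Your remark that \eqref{eq:MaxFuncH} should be read with $M_\sw$ (the maximal operator on the doubling space $(\Omega,\sw,\sd)$) rather than $M_\sm$ is on point and is indeed what is intended.
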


\subsection{Marcinkiewicz-Zygmund inequality}
These inequalities are between the $L^p$ norm and a discrete $L^p$ norm defined with respect to 
a well-distributed set of points. We start with the definition of well-separated sets of points. 
 
\begin{defn}\label{defn:separated-pts}
Let $\Xi$ be a discrete set in $\Omega$. 
\begin{enumerate} [  \quad (a)]
\item A finite collection of subsets $\{S_z: z \in \Xi\}$ is called a partition of $\Omega$ if $S_z^\circ\cap 
S_y^\circ  = \emptyset$ when $z \ne y$ and $\Omega = \bigcup_{z \in \Xi} S_z$. 
\item Let $\ve>0$. A discrete subset $\Xi$ of $\Omega$ is called $\ve$-separated if $\sd(x,y) \ge\ve$
for every two distinct points $x, y \in \Xi$. 
\item $\Xi$ is called maximal if there is a constant $c_d > 1$ such that 
\begin{equation}\label{eq:def-pts2}
  1 \le  \sum_{z\in \Xi} \chi_{B(z, \ve)}(x) \le c_d, \qquad \forall x \in \Omega,
\end{equation}
where $\chi_E$ denotes the characteristic function of the set $E$.
\end{enumerate}
\end{defn} 

Evidently \eqref{eq:def-pts2} implies $\Omega = \bigcup_{z \in \Xi} B(z,\ve)$. Moreover, it implies 
that the cardinality $\# \Xi$ of $\Xi$ satisfies 
$$
      c_1 \frac{\sm(\Omega)}{\max_{z \in \Xi}  \sm(B(z,\ve))} \le \# \Xi \le c_2 
         \frac{\sm(\Omega)}{\min_{z \in \Xi}  \sm(B(z,\ve))}.
$$

For the unit sphere, all balls of the same radius have the same volume; that is, $\s (B(x,r)) = \s(B(0,1)) r^{d-1}$
for all $x \in \sph$, where $\s$ is the surface measure on $\sph$. This allows us to deduce \eqref{eq:def-pts2} 
from $\Omega = \bigcup_{x \in \Xi} B(x,\ve)$ and the cardinality of the maximal $\ve$-separated set 
$\Xi_\SS$ on the sphere $\sph$ satisfies $c_d' \ve^{-d+1} \le \# \Xi_\SS \le c_d \ve^{-d+1}$; see, 
for example, \cite[p. 114]{DaiX}. 

As we shall show in the next section that the volume of the ball $B(x,r)$ on the conic surface depends on 
the position of $x$. In particular, we no longer have $\varpi(B(x,\ve)) \sim \varpi(B(y,\ve))$ for all 
$x, y$ in the domain in general. 

\begin{thm}\label{thm:osc}
Let $\sw$ be a doubling weight on $\Omega$. Let $\ve = \f \delta n$ for $n = 1,2, \ldots$ and $\delta \in (0,1)$.
If $\Xi$ is a maximal $\ve$-separated subset of $\Omega$, then for all $f\in\Pi_n(\Omega)$ and  
$0 < p < \infty$,
\begin{equation}\label{eq:osc}
    \bigg( \sum_{y \in \Xi} \left|{\rm osc} (f)\left(y, \tfrac \delta n\right)\right|^p
       \sw\!\left (B(y,\tfrac \delta n)\right)\bigg)^{\f1p}  \le c_p \delta \|f\|_{p, \sw},
\end{equation}
where  $c_p$ depends on $p$, when $p$ is close to $0$, and on $d$ and $L(\sw)$.
\end{thm}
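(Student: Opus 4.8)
The plan is to bound the discrete $\ell^p$-sum on the left of \eqref{eq:osc} by a continuous $L^p$-integral against the doubling weight $\sw$, using two ingredients: the pointwise estimate ${\rm osc}(f)(y,\tfrac\delta n)\le c_\b\delta f_{\b,n}^\ast(y)$ from Lemma \ref{lem:osc(f)}, and the bounded-overlap property \eqref{eq:def-pts2} of the maximal $\ve$-separated set $\Xi$. First I would fix $\b>\a(\sw)/p$ (so that Corollary \ref{cor:fbn-bound} applies), and by Lemma \ref{lem:osc(f)} reduce the left-hand side of \eqref{eq:osc} to $c_\b\delta\big(\sum_{y\in\Xi}(f_{\b,n}^\ast(y))^p\sw(B(y,\tfrac\delta n))\big)^{1/p}$. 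The task is then to show
\begin{equation*}
   \sum_{y\in\Xi}(f_{\b,n}^\ast(y))^p\,\sw\!\left(B(y,\tfrac\delta n)\right)\le c\,\|f_{\b,n}^\ast\|_{p,\sw}^p\le c'\,\|f\|_{p,\sw}^p,
\end{equation*}
where the last inequality is exactly Corollary \ref{cor:fbn-bound}.

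For the first inequality the key observation is that $f_{\b,n}^\ast$ is, up to a constant, essentially constant on each ball $B(y,\tfrac\delta n)$: indeed for $z\in B(y,\tfrac\delta n)$ one has $1+n\sd(x,z)\le 1+n\sd(x,y)+\delta\le (1+\delta)(1+n\sd(x,y))$, hence $f_{\b,n}^\ast(y)\le (1+\delta)^\b f_{\b,n}^\ast(z)\le 2^\b f_{\b,n}^\ast(z)$ for all $z\in B(y,\tfrac\delta n)$. Therefore
\begin{equation*}
  (f_{\b,n}^\ast(y))^p\,\sw\!\left(B(y,\tfrac\delta n)\right)\le 2^{\b p}\int_{B(y,\frac\delta n)}(f_{\b,n}^\ast(z))^p\,\sw(z)\,\d\sm(z).
\end{equation*}
Summing over $y\in\Xi$ and invoking the upper bound in \eqref{eq:def-pts2}, which says that the balls $\{B(y,\tfrac\delta n)\}_{y\in\Xi}$ cover $\Omega$ with overlap at most $c_d$, gives $\sum_{y\in\Xi}(f_{\b,n}^\ast(y))^p\sw(B(y,\tfrac\delta n))\le 2^{\b p}c_d\int_\Omega(f_{\b,n}^\ast(z))^p\sw(z)\,\d\sm(z)=2^{\b p}c_d\|f_{\b,n}^\ast\|_{p,\sw}^p$. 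Combining the three displayed estimates and applying Corollary \ref{cor:fbn-bound} yields \eqref{eq:osc} with a constant of the stated dependence; one should check that the constant $c_\b$ from Lemma \ref{lem:osc(f)} and the constant from Corollary \ref{cor:fbn-bound} both remain under control for $\b$ chosen, say, as a fixed multiple of $\a(\sw)/p$, which accounts for the dependence of $c_p$ on $p$ near $0$ and on $L(\sw)$.

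I do not expect a serious obstacle here: the argument is the standard "sample-and-cover" comparison between a discrete and a continuous norm, and both of the non-trivial inputs (the oscillation bound and the $L^p$-boundedness of the maximal function $f_{\b,n}^\ast$) have already been established in this section. The one point requiring a little care is the bookkeeping of constants — ensuring that choosing $\b$ large enough to satisfy $\b>\a(\sw)/p$ does not blow up $c_\b\delta$ in Lemma \ref{lem:osc(f)} uncontrollably, and that the final constant depends only on $p$, $d$, and $L(\sw)$ as claimed. Since $\delta\in(0,1)$ throughout, no compatibility issue arises between the choice of $\b$ and the factor $\delta$ on the right-hand side.
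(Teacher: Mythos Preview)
Your proposal is correct and is essentially the same argument as the paper's: apply Lemma~\ref{lem:osc(f)} to bound ${\rm osc}(f)(y,\tfrac\delta n)$ by $c_\b\delta f_{\b,n}^\ast(y)$, use that $f_{\b,n}^\ast$ is essentially constant on $B(y,\tfrac\delta n)$ to pass to an integral, sum using the bounded overlap \eqref{eq:def-pts2}, and finish with Corollary~\ref{cor:fbn-bound}. The paper simply fixes the concrete choice $\b=2\a(\sw)/p$ and writes the same three steps in two lines.
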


\begin{proof}
If $ y \in B(x, \frac{ \delta}{n})$, then $f_{2 \a(\sw)/p, n}^\ast (y) \sim f_{2 \a(\sw)/p, n}^\ast (x)$. 
Hence, it follows from Lemma \ref{lem:osc(f)} that 
\begin{align*}
\sum_{y \in \Xi} \left|{\rm osc} (f)\left(y, \tfrac \delta n\right)\right|^p  \sw\!\left (B(y,\tfrac \delta n)\right)
  &  \le c (c_p \delta)^p \sum_{y  \in \Xi} \int_{B(y, \frac \delta n)} 
         \left( f_{2\a(\sw)/p, n}^\ast (x)\right)^p \sw(x) \,  \d \sm(x) \\
       & \leq ( c_p \delta)^p \int_{\Omega}  \left( f_{2\a(\sw)/p,n}^\ast (x)\right )^p \sw(x)\,  \d \sm(x). 
\end{align*}  
The last integral is bounded by $\|f\|_{p, \sw}^p$ by Corollary \ref{cor:fbn-bound}. 
\end{proof}

We now prove the Marcinkiewicz-Zygmund inequality for polynomials.

\begin{thm} \label{thm:MZinequality}
Let $\sw$ be a doubling weight on $\Omega$. Let  $\Xi$ be a maximal $\f \delta n$-separated 
subset of $\Omega$ and let $\delta > 0$ and $\delta \le 1$. 
\begin{enumerate}[$(i)$]
\item For all $0<p<\infty$ and $f\in\Pi_m(\Omega)$ with $n \le m \le c n$,
\begin{equation}\label{eq:MZ1}
  \Bigg(\sum_{z \in \Xi} \Big( \max_{x\in B(z, \f \delta n)} |f(x)|^p \Big)
     \sw\!\left(B(z, \tfrac \delta n) \right)\Bigg)^{\f1 p} \leq c_{\sw,p} \|f\|_{p,\sw},
\end{equation}
where $c_{\sw,p}$ depends on $L(\sw)$ and on $p$ when $p$ is close to $0$.
\item For $0 < r < 1$, there is a $\delta_r > 0$ such that for $\delta \le \delta_r$, $r \le p < \infty$ and 
$f \in \Pi_n(\Omega)$,  
\begin{align}\label{eq:MZ2}
  \|f\|_{p,\sw} \le c_{\sw,r} \bigg(\sum_{z \in\Xi}
       \bigg(\min_{x\in B\bigl(z, \tfrac{\delta}n\bigr)} |f(x)|^p\bigg)
          \sw\bigl(B(z, \tfrac \delta n)\bigr)\Bigg)^{\f1 p},
\end{align}
where $c_{\sw,r}$ depends only on $L(\sw)$ and on $r$ when $r$ is close to $0$. Furthermore,
\begin{equation}\label{eq:MZ3}
    \|f\|_\infty \le c \max_{z\in \Xi} |f(z)|. 
\end{equation}
\end{enumerate}
\end{thm}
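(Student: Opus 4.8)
The plan is to extract all three inequalities from the material of the two preceding subsections — the small-gain oscillation bound in Theorem~\ref{thm:osc}, the pointwise bound in Lemma~\ref{lem:osc(f)}, and the $L^p$-boundedness of the auxiliary maximal function in Corollary~\ref{cor:fbn-bound} — combined with the covering and bounded-overlap property \eqref{eq:def-pts2} of a maximal $\tfrac\delta n$-separated set, used in the form $1\le\sum_{z\in\Xi}\chi_{B(z,\tfrac\delta n)}\le c_d$ on $\Omega$. No partition subordinate to $\Xi$ is needed.

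For (i) I would take $\b=2\a(\sw)/p$ (so $\b>\a(\sw)/p$), pick $x_z\in B(z,\tfrac\delta n)$ realizing $\max_{x\in B(z,\tfrac\delta n)}|f(x)|$, and note that for every $y\in B(z,\tfrac\delta n)$ the triangle inequality gives $n\sd(x_z,y)<2\delta\le2$, whence $\max_{x\in B(z,\tfrac\delta n)}|f(x)|\le 3^{\b}f_{\b,n}^\ast(y)$. Raising to the $p$-th power, averaging in $y$ over $B(z,\tfrac\delta n)$ against $\sw$, multiplying by $\sw(B(z,\tfrac\delta n))$, summing over $z$, and using the bounded overlap, the left side of \eqref{eq:MZ1} is dominated by $c_d\,3^{\b p}\int_\Omega(f_{\b,n}^\ast)^p\sw\,\d\sm$, which is $\le c\|f\|_{p,\sw}^p$ by Corollary~\ref{cor:fbn-bound}; the constant depends on $p$ only through $\b$ as $p\to0$. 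For $f\in\Pi_m$ with $n\le m\le cn$ one applies the Corollary at degree $m$, using $f_{\b,n}^\ast\sim f_{\b,m}^\ast$ since $1+n\sd\sim1+m\sd$.

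For (ii) the starting point is that, since $\Omega=\bigcup_{z\in\Xi}B(z,\tfrac\delta n)$ with $\sum_z\chi_{B(z,\tfrac\delta n)}\ge1$,
\begin{equation*}
\|f\|_{p,\sw}^p\le\sum_{z\in\Xi}\int_{B(z,\tfrac\delta n)}|f|^p\sw\,\d\sm\le\sum_{z\in\Xi}\Big(\max_{x\in B(z,\tfrac\delta n)}|f(x)|\Big)^p\sw\big(B(z,\tfrac\delta n)\big).
\end{equation*}
Bounding $\max_{x\in B(z,\tfrac\delta n)}|f(x)|\le\min_{x\in B(z,\tfrac\delta n)}|f(x)|+{\rm osc}(f)(z,\tfrac\delta n)$, splitting the $p$-th power by Minkowski in $\ell^p(\sw)$ (for $p\ge1$) or by $(a+b)^p\le a^p+b^p$ (for $0<p<1$), and invoking Theorem~\ref{thm:osc} for the oscillation part, one gets $\|f\|_{p,\sw}\le c_p'\big(\sum_z\min_{B(z,\tfrac\delta n)}|f|^p\,\sw(B(z,\tfrac\delta n))\big)^{1/p}+c_p'c_p\delta\|f\|_{p,\sw}$, where for $p\ge r$ the constants $c_p,c_p'$ are bounded in terms of $r,d,L(\sw)$. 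Choosing $\delta_r$ so small that $c_p'c_p\delta\le\tfrac12$ for all $\delta\le\delta_r$ and $p\ge r$, the last term is absorbed, which is \eqref{eq:MZ2}. For \eqref{eq:MZ3} I would take $x^\ast$ with $|f(x^\ast)|=\|f\|_\infty$ and $z\in\Xi$ with $x^\ast\in B(z,\tfrac\delta n)$, so that $\|f\|_\infty\le|f(z)|+{\rm osc}(f)(z,\tfrac\delta n)\le\max_{y\in\Xi}|f(y)|+c_\b\delta\|f\|_\infty$ by Lemma~\ref{lem:osc(f)} and $\|f_{\b,n}^\ast\|_\infty=\|f\|_\infty$ (any fixed $\b>0$), and absorb after shrinking $\delta_r$ further.

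The one genuinely delicate point is the absorption step in (ii): it relies entirely on the extra factor $\delta$ in Theorem~\ref{thm:osc}, and one must verify that the constants in Theorem~\ref{thm:osc}, Lemma~\ref{lem:osc(f)} and Corollary~\ref{cor:fbn-bound} stay uniformly bounded over $p\in[r,\infty)$ — which they do, because the exponent $2\a(\sw)/p$ controlling them stays in a bounded range there — so that a single threshold $\delta_r$ works simultaneously for all such $p$. The rest is a routine combination of the covering property with the maximal-function and oscillation inequalities already established.
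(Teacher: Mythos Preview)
Your argument is correct and matches the paper's proof almost exactly: both (i) and (ii) are handled the same way (pointwise comparison with $f_{\b,n}^\ast$ plus bounded overlap for (i); the covering bound, the max $\le$ min $+$ osc split, Theorem~\ref{thm:osc}, and absorption for (ii)). The only difference is in \eqref{eq:MZ3}: you give a direct pointwise argument via Lemma~\ref{lem:osc(f)}, while the paper simply lets $p\to\infty$ in \eqref{eq:MZ2}, using that the constant $c_{\sw,r}$ there is independent of $p$ --- a slightly slicker route that avoids the separate absorption.
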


\begin{proof}
For every $y \in\Xi$, choose $z_y \in B(y,\f \delta n)$ such that $|f(z_y)| = \sup_{x \in B(y, \f \delta n)} |f(x)|$.
For $f \in \Pi_m(\Omega)$, we have $|f(z_y)| \le f_{\b,m}^\ast(z_y) \le c f_{\b,n}^\ast(y)$. Hence, 
\begin{align*}
  \sum_{z \in \Xi} \Big( \max_{x\in B(z, \f \delta n)} |f(x)|^p \Big) \sw\! \left(B(z, \tfrac \delta n) \right)
   & \le c  \sum_{z \in \Xi} \big(f_{\b,n}^\ast(y)\big)^p \sw\! \left(B(z, \tfrac \delta n) \right) \\
   & \le c  \int_\Omega \big( f_{\b,n}^\ast(x)\big)^p \sw(x)  \d \sm (x),
\end{align*}
from which (i) follows from the bound in Corollary \ref{cor:fbn-bound}. We now prove (ii). Since $\Xi$ is 
maximal $\f \delta n$-separated, 
\begin{align*}
 \|f\|_{p, w}^p & \le \sum_{z \in \Xi}  \int_{B(z,\f \delta n)} |f(x)|^p \sw(x)  \d \sm (x) \\
  & \le 2^p  \sum_{z \in \Xi} |\mathrm{osc}(f)\left(z, \tfrac{\delta}{n}\right)|^p \sw\left(B\big(z,\tfrac \delta n\big)\right)
      + 2^p  \sum_{z \in \Xi} \bigg(\min_{x\in B\bigl(z, \tfrac{\delta}n\bigr)} |f(x)|^p\bigg)\sw\left(B\big(z,\tfrac \delta n\big)\right).
\end{align*}
The first term in the right-hand side is bounded by $( 2 c_r \delta)^p \|f\|_{p,w}^p$ by Lemma \ref{lem:osc(f)}. 
Hence, choosing $\delta_r = 1/(4 c_r)$ so that $(2 c_r \delta)^p \le 2^{-p}$ for $\delta \le \delta_r$, we conclude
that 
$$
(1-  2^{-p}) \|f\|_{p,\sw}^p \le  2^p \sum_{z \in \Xi} \bigg(\min_{x\in B\bigl(z, \tfrac{\delta}n\big)} |f(x)|^p\bigg)
   \sw\big(B(z,\tfrac \delta n)\big).
$$
Taking the power of $1/p$ proves \eqref{eq:MZ2}. In particular, since the constant in \eqref{eq:MZ2} is independent 
of $p$, it readily implies \eqref{eq:MZ3}. This completes the proof. 
\end{proof}

For the unit sphere without weight and the unit ball with the classical weight, the MZ inequality was established 
in \cite{BD, MNW, NPW1,PX2}. For the unit sphere with the doubling weight, it was established in \cite{Dai1}.

\subsection{Christoffel function for doubling weight} \label{subsect:CFunction}

Let $\sw$ be a doubling weight on $\Omega$. The Christoffel function $\l_n(\sw;\cdot)$ is defined by 
\begin{align}\label{eq:ChristoffelF}
   \l_n(\sw;x): = \inf_{\substack{g(x) =1 \\ g \in \Pi_n(\Omega)}} \int_{\Omega} |g(x)|^2 \sw(x)  \d \sm(x).
\end{align} 
It is known that the Christoffel function is closely related to the kernel $K_n(\sw;\cdot,\cdot)$ of the partial
sum operator 
$$
   K_n(\sw;x,y) = \sum_{k=0}^n P_k(\sw; x,y), \qquad x, y \in \Omega.
$$
More precisely, it is known \cite[Theorem 3.6.6.]{DX} that, for $n=0,1,2,\ldots$,  
\begin{align}\label{eq:ChristoffelF2} 
   \l_n(\sw;x) = \frac{1}{K_n(\sw; x,x)}, \qquad x \in \Omega. 
\end{align}    

The Christoffel function encodes essential information on weighted approximation by polynomials on 
the domain $\Omega$. It will also be useful in the study of cubature rules in the next subsection, for 
which we need an upper bound of $\l_n(\sw; x)$. We establish this bound for all doubling weights on 
the localizable homogeneous space under the assumption that there exist fast decaying polynomials on 
$\Omega$. More precisely, we make the following assertion. 

\medskip
{\bf Assertion 4}.  {\it Let $\Omega$ be compact. For each $x \in \Omega$, there is a nonnegative 
polynomial $T_x$ of degree at most $n$ that satisfies 
\begin{enumerate}[   (1)]
\item $T_x(x) =1$, $T_x(y) \ge \delta > 0$ for $y \in B(x,\f 1 n)$ for some $\delta$ independent of $n$,
and, for each $\g > 1$,  
$$
     0 \le  T_x(y) \le c_\g (1+ n \sd(x,y))^{-\g}, \qquad y \in \Omega; 
$$
\item there is a polynomial $q_n$ such that $q_n(x) T_x(y)$ is a polynomial of degree at most $r n$,
for some positive integer $r$, in $x$-variable and $c_1 \le q_n(x) \le c_2$ for $x \in \Omega$ for some
positive numbers $c_1$ and $c_2$. 
\end{enumerate}
}

\medskip

The fast decaying polynomials of one variable are studied in \cite{KT}, see also \cite{ST}, which leads 
to fast decaying polynomials on the unit sphere by the addition formula \eqref{eq:additionF}. For the 
unit ball, such polynomials are constructed in \cite{PX2}. We use these polynomials to derive an upper 
bound for the Christoffel function. First, we need a lemma. 

\begin{lem}\label{lem:Ass4Q}
Assume Assertion 4. Let $\a> 0$ be a positive number and let $x \in \Omega$ be fixed. For a doubling 
weight $\sw$ on $\Omega$, there is a polynomial $Q_x$ of degree $n$ such that, for all $y \in \Omega$, 
\begin{equation} \label{eq:Ass4Q}
    c_1 (1+n \sd(x,y))^{\a} \sw(B(y,\tfrac{1}{n}))\le Q_x(y) \le c_2 (1+n \sd(x,y))^{\a} \sw(B(y,\tfrac{1}{n})), 
\end{equation}
where $c_1$ and $c_2$ are positive constant independent of $n$, $x$ and $y$. 
\end{lem}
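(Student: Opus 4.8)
\textbf{Proof proposal for Lemma~\ref{lem:Ass4Q}.}
The plan is to build $Q_x$ as a sum of dilated copies of the fast-decaying polynomials $T_z$ supplied by Assertion 4, centered at the points of a well-separated net, with each term weighted by the local mass of $\sw$ and by a power of the distance to $x$. Concretely, first I would fix a maximal $\tfrac1n$-separated set $\Xi \subset \Omega$ (Definition~\ref{defn:separated-pts}) and, for each $z \in \Xi$, take the polynomial $q_n(\cdot)\, T_z(\cdot)^{\ell}$ for a large integer $\ell$ to be chosen; by part~(2) of Assertion 4 this is a polynomial of degree at most $r\ell n$ in the base variable, which after rescaling $n \mapsto n/(r\ell)$ produces a polynomial of degree at most $n$ enjoying the same decay with exponent $\ell\gamma$ for any $\gamma$. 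Then I would set
\begin{equation*}
   Q_x(y) := \sum_{z \in \Xi} (1+n\,\sd(x,z))^{\a}\, \sw\!\left(B(z,\tfrac1n)\right) \big(q_n(y) T_z(y)\big)^{\ell},
\end{equation*}
which is a polynomial of degree at most $n$ in $y$ (after the rescaling just described, absorbed into the degree of $T_z$).

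The upper bound is the routine direction: using the decay $\big(q_n(y)T_z(y)\big)^\ell \le c\,(1+n\sd(y,z))^{-\ell\gamma}$, the quasi-triangle inequalities $1+n\sd(x,z) \lesssim (1+n\sd(x,y))(1+n\sd(y,z))$ and $\sw(B(z,\tfrac1n)) \lesssim (1+n\sd(y,z))^{\a(\sw)} \sw(B(y,\tfrac1n))$ from Lemma~\ref{lem:doublingLem}(ii), I would bound $Q_x(y)$ by $(1+n\sd(x,y))^{\a}\sw(B(y,\tfrac1n))$ times $\sum_{z\in\Xi}(1+n\sd(y,z))^{\a+\a(\sw)-\ell\gamma}$; choosing $\ell\gamma$ large this last sum over the separated net converges, uniformly in $y$ and $n$, by a standard comparison of the sum with the integral in Assertion 3 (each annulus $2^k/n \le \sd(y,z) < 2^{k+1}/n$ contributes $O(\sw(B(y,2^k/n))/\sw(B(y,1/n)))$ times $2^{-k\ell\gamma}$-type decay, and Lemma~\ref{lem:doublingLem}(i) controls the volume ratio polynomially in $2^k$).

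For the lower bound I would keep only the single term $z_\ast \in \Xi$ with $\sd(y,z_\ast) < \tfrac1n$ (such a $z_\ast$ exists since $\Xi$ is maximal): by part~(1) of Assertion 4, $\big(q_n(y)T_{z_\ast}(y)\big)^\ell \ge (c_1\delta)^\ell > 0$, and by the doubling property together with Lemma~\ref{lem:doublingLem}(ii) one has $\sw(B(z_\ast,\tfrac1n)) \sim \sw(B(y,\tfrac1n))$ and $1+n\sd(x,z_\ast) \sim 1+n\sd(x,y)$; hence this one term already dominates $(1+n\sd(x,y))^\a \sw(B(y,\tfrac1n))$ up to a constant, giving the claimed lower bound.

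The main obstacle, and the point requiring the most care, is the summability estimate in the upper bound: one must verify that a single exponent $\ell\gamma$ can be chosen (depending only on $\a$, $\a(\sw)$, $L(\sw)$, and $\dim\Omega$) so that $\sum_{z\in\Xi}(1+n\sd(y,z))^{-\beta}$ with $\beta := \ell\gamma - \a - \a(\sw)$ is bounded independently of $y$ and $n$. This is a discrete analogue of Assertion 3 / Lemma~\ref{lem:CorA3}; I would derive it by partitioning $\Xi$ into the annuli $A_k = \{z : 2^{k}\le 1+n\sd(y,z) < 2^{k+1}\}$, noting that separation forces $\#A_k \lesssim \sw(B(y,2^{k+1}/n))/\min_{z\in A_k}\sw(B(z,1/n)) \lesssim 2^{k\a(\sw)}$ after another application of Lemma~\ref{lem:doublingLem}, and then summing the geometric series $\sum_k 2^{k\a(\sw)}2^{-k\ell\gamma}$, which converges once $\ell\gamma > \a + 2\a(\sw)$, say. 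A secondary bookkeeping point is tracking how the degree inflates under $T_z \mapsto T_z^\ell$ and under the factor $q_n$; this is handled once and for all by the rescaling $n \mapsto \lfloor n/(r\ell)\rfloor$ at the start, at the cost of constants depending on $r$ and $\ell$, which are admissible.
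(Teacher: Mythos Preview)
Your discrete-sum construction is correct in substance and yields the lemma, but it differs from the paper's route and carries some unnecessary baggage. The paper instead defines
\[
   Q_x(y) \;=\; q_n(y)\int_\Omega T_y(v)\,(1+n\,\sd(x,v))^{\a}\,\sw(v)\,\d\sm(v),
\]
with $y$ playing the role of the \emph{center} of the fast-decaying polynomial; it is precisely Assertion~4(2) that makes $q_n(y)T_y(v)$, and hence the whole integral, a polynomial in $y$. The upper bound then follows from the decay of $T_y$ together with the triangle inequality and a single dyadic estimate (the paper's display \eqref{eq:int-dm}), and the lower bound from $T_y(v)\ge\delta$ on $B(y,\tfrac1n)$ --- no separated net, no discrete summability, no $\ell$-th powers.

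Two points in your write-up should be cleaned up. First, your appeal to Assertion~4(2) is misplaced: part~(2) asserts that $q_n(z)T_z(y)$ is a polynomial in the \emph{center} variable $z$, whereas you need $Q_x(y)$ to be a polynomial in the evaluation variable $y$; that is already guaranteed by part~(1), which says $T_z$ is a polynomial of degree at most $n$. The factor $q_n(y)$ in your formula is therefore superfluous (harmless since $q_n\sim 1$, but it inflates the degree and obscures the logic). Second, raising $T_z$ to the $\ell$-th power is unnecessary: Assertion~4(1) already provides the decay $(1+n\,\sd(z,y))^{-\gamma}$ for \emph{every} $\gamma>1$ with the same degree-$n$ polynomial, so you can simply take $\gamma$ large. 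With these simplifications your argument is sound; the residual summability step $\sum_{z\in\Xi}(1+n\,\sd(y,z))^{-\beta}\le c$ is a standard dyadic count over a $\tfrac1n$-separated net (using the doubling of $\sm$, not Assertion~3). The paper's integral construction is shorter because it bypasses the net altogether, while your approach has the mild advantage of using only part~(1) of Assertion~4.
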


\begin{proof}
Let $T_x$ and $q$ be polynomials in Assertion 4 of degree at most $\lfloor \frac{n}{2}\rfloor$. 
For fixed $x$, we define $Q_x$ by 
\begin{equation*} 
   Q_x(y) = q_n(y) \int_\Omega  T_y(v) (1+n \sd(x,v))^{\a} \sw(v)\d \sm(v), \qquad y \in \Omega. 
\end{equation*}
Since $q_n(y)T_y(v)$ is a polynomial of degree at most $n$ in $y$, it is easy to see that 
$Q_x$ is a polynomial of degree at most $n$. Using Assertion 4 with $\g > \a + 1$ and using
the triangle inequality $1+n \sd(x,v)\le  (1+n \sd(x,y))(1+n \sd(y,v))$, we obtain
\begin{align*}
   Q_x(y) \, & \le  c (1+n \sd(x,y))^\a \int_\Omega \frac{1}{(1+n \sd(y,v))^{\g-\a}} \sw(v) \d \sm(v)\\
       & \le c  (1+n \sd(x,y))^\a \sw\big(B (y,\tfrac 1 n)\big), 
\end{align*}
since, by the doubling property of $\sm$ and choosing $\g$ sufficiently large, 
\begin{align} \label{eq:int-dm}
  \int_\Omega \frac{ \sw(v)\d \sm(v)}{(1+n \sd(y,v))^{\g-\a}} \, & \le 
   \sum_{k=1}^\infty \int_{B(y,\frac{2^k} n)\setminus B(y,\frac{2^{k-1}} n)} \frac{ \sw(v)\d \sm(v)}{(1+n \sd(y,v))^{\g-\a}} \\
       & \le c  \sum_{k=1}^\infty \frac{1}{(1+ 2^{k-1})^{\g-\a} } \sw\!\left( B\big (y,\tfrac{2^{k}} n\big) \right)\notag \\
       & \le c\, \sw\! \left( B\big (y, \tfrac1 n \big) \right) \sum_{k=1}^\infty \frac{ L(\sw)^{k-1} }{(1+ 2^{k-1})^{\g-\a} } \notag\\
       & \le c\, \sw\! \left( B\big (y, \tfrac1 n \big) \right). \notag
 \end{align}
This establishes the upper bound of $Q_x$. In the other direction, we use $q_n(y) \ge c_1$, 
$ T_y(v) \ge \delta > 0$ for $v \in B(y,\f1n)$ and $ T_y(v) \ge 0$ to obtain
\begin{align*}
  Q_x(y) \, & \ge  (1+n \sd(x,y))^\a \int_{\Omega}  T_y(v) (1+n \sd(y,v))^{- \a}  \sw(v) \d \sm(v) \\
     &  \ge c\delta (1+n \sd(x,y))^\a \int_{B(y,\f1n)} (1+n \sd(y,v))^{- \a}  \sw(v) \d \sm(v) \\
     &  \ge c \delta (1+n \sd(x,y))^\a  \sw\! \left( B\big (y, \tfrac1 n \big) \right),
\end{align*}
since the last integral is trivially bounded below by $c \sw\! \left( B\big (y, \tfrac1 n \big) \right)$. 
This completes the proof. 
\end{proof}

\begin{prop}\label{prop:ChristF1}
Assume Assertion 4 holds. Let $\sw$ be a doubling weight on the domain $\Omega$. Then
$$
    \l_n(\sw; x) \le c\, \sw\! \left(B(x,\tfrac 1n) \right), \qquad x\in \Omega.
$$  
\end{prop}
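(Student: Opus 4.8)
The plan is to estimate $\l_n(\sw;x)$ directly from its definition by exhibiting a single good competitor polynomial. Recall that
$$
  \l_n(\sw;x) = \inf_{\substack{g(x)=1\\ g\in\Pi_n(\Omega)}} \int_\Omega |g(y)|^2\sw(y)\,\d\sm(y),
$$
so it suffices to produce, for each fixed $x\in\Omega$, a polynomial $g\in\Pi_{n}(\Omega)$ with $g(x)=1$ whose weighted $L^2$-mass is at most $c\,\sw(B(x,\tfrac1n))$. The natural candidate is built from the fast-decaying polynomial $T_x$ supplied by Assertion 4: since $T_x(x)=1$ and $0\le T_x(y)\le c_\g(1+n\sd(x,y))^{-\g}$ for every $\g>1$, I would take $g = T_x^{\,2}$ (or simply $T_x$, but squaring makes the decay exponent freely large while keeping $g(x)=1$ and $g\ge0$), which lies in $\Pi_{2\lfloor n/2\rfloor}(\Omega)\subset\Pi_n(\Omega)$ after choosing $T_x$ of degree $\le\lfloor n/2\rfloor$.

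The key computation is then
$$
  \int_\Omega T_x(y)^2\,\sw(y)\,\d\sm(y) \le c_\g^2\int_\Omega \frac{\sw(y)}{(1+n\sd(x,y))^{2\g}}\,\d\sm(y),
$$
and this last integral is handled exactly by the dyadic-annulus argument already carried out in the proof of Lemma~\ref{lem:Ass4Q} (the display \eqref{eq:int-dm}): splitting $\Omega$ into $B(x,\tfrac1n)$ and the shells $B(x,\tfrac{2^k}n)\setminus B(x,\tfrac{2^{k-1}}n)$, using the doubling property $\sw(B(x,\tfrac{2^k}n))\le c_{L(\sw)}L(\sw)^k\sw(B(x,\tfrac1n))$ on each shell, and choosing $\g$ large enough (say $2\g>\log_2 L(\sw)+1$, i.e. $2\g>\a(\sw)+1$) so that the geometric series $\sum_k L(\sw)^{k}(1+2^{k-1})^{-2\g}$ converges. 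This yields
$$
  \int_\Omega T_x(y)^2\,\sw(y)\,\d\sm(y)\le c\,\sw\!\left(B(x,\tfrac1n)\right),
$$
and since $T_x(x)^2=1$, the polynomial $g=T_x^2$ is admissible in the infimum defining $\l_n(\sw;x)$, giving the claimed bound. Here part~(2) of Assertion 4 is not even needed, because we are only using $T_x$ as a function of $y$ for a \emph{fixed} $x$; it is part~(1) that does all the work.

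I do not expect a genuine obstacle here: the only point requiring a little care is making sure the degree of $T_x$ is taken $\le\lfloor n/2\rfloor$ so that $T_x^2\in\Pi_n(\Omega)$, and that $\g$ is chosen after $\sw$ (hence after $L(\sw)$) so the series converges — both are cost-free since Assertion 4 allows any $\g>1$. One could alternatively avoid squaring by using $g=T_x$ with $\g>\tfrac12(\a(\sw)+1)$ directly, but squaring is cleaner. The constant $c$ depends on $d$, on $L(\sw)$, and on the constants $c_\g,\delta$ from Assertion 4 and the admissible-cutoff data, all of which are fixed once $\Omega$, $\sd$ and the weight class are fixed.
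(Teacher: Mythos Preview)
Your argument is correct and in fact more economical than the paper's. One tiny slip: if you take $g=T_x^2$, then the Christoffel-function integrand is $|g|^2=T_x^4$, not $T_x^2$. What you actually compute, $\int T_x^2\sw$, corresponds to the competitor $g=T_x$ (of degree $\le n$), and that is all you need since the definition of $\l_n$ already squares $g$. Your own parenthetical acknowledges this option, so the inconsistency is harmless; just take $g=T_x$ directly.

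The paper reaches the same conclusion by a considerably longer route: it discretizes $\int T_x^2\,\sw$ via the Marcinkiewicz--Zygmund inequality \eqref{eq:MZ2}, transfers weights from $\sw$ to the Lebesgue measure $\sm$ using Lemma~\ref{lem:doublingLem}, reassembles the sum into an integral against $\sm$ via \eqref{eq:MZ1} together with the auxiliary polynomial $Q_x$ from Lemma~\ref{lem:Ass4Q}, and only then appeals to the dyadic estimate \eqref{eq:int-dm}. That detour invokes part~(2) of Assertion~4 (through $Q_x$) and the full MZ machinery, which itself rests on Assertions~1--3 for the reference weight $\varpi$. Your direct approach bypasses all of this: it uses only part~(1) of Assertion~4 and the doubling of $\sw$, feeding straight into the shell computation \eqref{eq:int-dm}. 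So your proof is strictly more self-contained and shows that the upper Christoffel bound is really a consequence of Assertion~4 alone, independent of the localized-kernel structure; the paper's proof, by contrast, exercises the MZ framework already built and illustrates how those tools interlock.
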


\begin{proof}
Let $m = \lfloor \frac{n}3 \rfloor$. Let $ T_x(y)$ be the polynomial of degree $m$ in Assertion 4. Let 
$\Xi$ be a maximal $\frac{\delta}{n}$-separated set in $\Omega$ with $\delta \le \delta_0$ as in 
Theorem \ref{thm:MZinequality}. By the inequality \eqref{eq:MZ2} with $p = 1$ and (ii) of 
Lemma \ref{lem:doublingLem} for both $\sm$ and $\sw$, we obtain 
\begin{align*}
\int_\Omega [ T_x(y)]^2 \sw(y)\d \sm(y) \, & 
      \le c \sum_{z \in \Xi}  [ T_x(z)]^2 \sw \big(B(z,\tfrac \delta n)\big) \\
  &  \le c \frac{\sw\big(B(x,\tfrac \delta n)\big)}{[\sm\big(B(x,\tfrac \delta n)\big)]^2}
        \sum_{z \in \Xi}  [ T_x(z)]^2 (1+ n \sd (x,z))^{\a} [\sm\big(B(z,\tfrac \delta n)\big)]^2,     
\end{align*}
where $\a = \a(\sw)+ 2 \a(\sm)$. Let $Q_x$ be the polynomial of degree $m$ that satisfies
\eqref{eq:Ass4Q} with $\sm$ in place of $\sw$. Using the lower bound of $Q_x$ in \eqref{eq:Ass4Q} 
and applying the inequality \eqref{eq:MZ1} with $p=1$ on the polynomial $[ T_x(z)]^2 Q_y(z)$ of degree
 $n$ with respect to the measure $\d \sm$, we obtain
\begin{align*}
\int_\Omega [ T_x(y)]^2 \sw(y)\d \sm(y)
 &  \le c \frac{\sw\big(B(x,\tfrac \delta n)\big)}{[\sm\big(B(x,\tfrac \delta n)\big)]^2}
            \sum_{z \in \Xi} [ T_x(z)]^2 Q_x(z) \sm (B(z,\tfrac{\delta}{n})) \\ 
 &  \le c\, \frac{\sw\big(B(x,\tfrac \delta n)\big)}{[\sm\big(B(x,\tfrac \delta n)\big)]^2} 
           \int_\Omega [ T_x(v)]^2 Q_x(v) \d \sm(v). 
\end{align*}
Using now the upper bound of $Q_x$ in \eqref{eq:Ass4Q}, again with $\sm$ in place of $\sw$, and the 
upper bound of $T_x(v)$ in Assertion 4, and (ii) of Lemma \ref{lem:doublingLem} for the measure $\d \sm$, 
we further deduce that the last integral is bounded by
\begin{align*}
  \int_\Omega [ T_x(v)]^2 Q_x(v) \d \sm(v) & \le c \int_\Omega \frac{\sm(B(v,\tfrac{\delta}{n}))}
       { (1+ n \sd (x,v))^{2 \g - \a }}  \d \sm(v) \\
       & \le c \sm(B(x,\tfrac{\delta}{n})) \int_\Omega \frac{1} { (1+ n \sd (x,z))^{2 \g - \a - \a(\sm)}}  \d \sm(v) \\
       & \le c \big[\sm\big(B(x,\tfrac{\delta}{n})\big)\big]^2,
\end{align*}
where the last step follows from \eqref{eq:int-dm}. Together, from the last two displayed inequalities follows
that 
$$
\int_\Omega [ T_x(y)]^2 \sw(y)\d \sm(y)  \le c \sw\big(B(x,\tfrac \delta n)\big). 
$$
Since $ T_x(x) =1$, this completes the proof by \eqref{eq:ChristoffelF}.
\end{proof}

\begin{prop}\label{prop:ChristF2}
If $\sw$ is a weight function for which Assertion 1 holds, then 
$$
    \l_n(\sw; x) \ge c\, \sw\! \left(B(x,\tfrac 1n) \right), \qquad x\in \Omega.
$$  
\end{prop}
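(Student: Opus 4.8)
The plan is to reduce the claimed lower bound on $\l_n(\sw;\cdot)$ to an upper bound on the diagonal of the partial-sum kernel $K_n(\sw;\cdot,\cdot)$, using the classical identity $\l_n(\sw;x)=1/K_n(\sw;x,x)$ recorded in \eqref{eq:ChristoffelF2}. Thus it suffices to prove
$$
   K_n(\sw;x,x)=\sum_{k=0}^n P_k(\sw;x,x)\ \le\ \frac{c}{\sw\!\left(B(x,n^{-1})\right)},\qquad x\in\Omega.
$$
To this end I would fix an admissible cut-off function $\wh a$ of type $(a)$ in Definition \ref{def:cutoff}, so that $\wh a\ge 0$, $\wh a(t)=1$ on $[0,1]$ and $\mathrm{supp}\,\wh a\subset[0,2]$, and let $L_n(\sw;\cdot,\cdot)$ be the corresponding localized kernel for which Assertion 1 is assumed to hold (Assertion 1 is independent of the particular admissible cut-off, so we are free to take one of type $(a)$; in any case $\mathrm{supp}\,\wh a\subset[0,2]$ makes the defining sum of $L_n$ finite).

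The main point is a pointwise comparison on the diagonal. From the formula \eqref{eq:reprod-kernel} for the reproducing kernel, $P_k(\sw;x,x)=\sum_{\nu}P_{\nu,k}(x)^2/\la P_{\nu,k},P_{\nu,k}\ra_\sw\ge 0$ for every $k$ and every $x\in\Omega$. Since $\wh a\ge 0$ everywhere and $\wh a(k/n)=1$ for $0\le k\le n$, splitting the sum defining $L_n$ at $k=n$ gives
$$
  K_n(\sw;x,x)=\sum_{k=0}^n P_k(\sw;x,x)\le\sum_{k=0}^n\wh a\Big(\tfrac kn\Big)P_k(\sw;x,x)+\sum_{k>n}\wh a\Big(\tfrac kn\Big)P_k(\sw;x,x)=L_n(\sw;x,x).
$$
Now I apply Assertion 1 with $y=x$ and, say, $\k=1$: since $\sd(x,x)=0$,
$$
  L_n(\sw;x,x)\le c_1\,\frac{1}{\sqrt{\sw\!\left(B(x,n^{-1})\right)}\sqrt{\sw\!\left(B(x,n^{-1})\right)}}=\frac{c_1}{\sw\!\left(B(x,n^{-1})\right)}.
$$
Combining the last two displays yields $K_n(\sw;x,x)\le c_1\,\sw\!\left(B(x,n^{-1})\right)^{-1}$, and then \eqref{eq:ChristoffelF2} gives $\l_n(\sw;x)=K_n(\sw;x,x)^{-1}\ge c_1^{-1}\,\sw\!\left(B(x,n^{-1})\right)$, which is the assertion.

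There is no real obstacle here; the argument uses only Assertion 1. The only items requiring a word of care are the nonnegativity of the diagonal reproducing kernels (immediate from \eqref{eq:reprod-kernel}) and the legitimacy of dominating $K_n$ by $L_n$ on the diagonal, which rests solely on $\wh a\ge 0$ and $\wh a\equiv 1$ on $[0,1]$. An alternative, essentially equivalent route is to use that $L_n(\sw)\ast g=g$ for $g\in\Pi_n(\Omega)$ with a type $(a)$ cut-off: for any such $g$ with $g(x)=1$, Cauchy--Schwarz together with $\int_\Omega|L_n(\sw;x,y)|^2\sw(y)\,\d\sm(y)\le c\,\sw\!\left(B(x,n^{-1})\right)^{-1}$ (the case $p=2$ of \eqref{eq:Ln-bdd}) forces $\int_\Omega|g|^2\sw\,\d\sm\ge c^{-1}\sw\!\left(B(x,n^{-1})\right)$, and taking the infimum over such $g$ gives the bound; I would present the first argument, since it is shorter and avoids invoking Assertion 3.
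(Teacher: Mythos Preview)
Your proof is correct and follows exactly the same route as the paper: use a type-$(a)$ cut-off so that $K_n(\sw;x,x)\le L_n(\sw;x,x)$ by nonnegativity of the diagonal reproducing kernels, then apply Assertion~1 at $y=x$ and invert via \eqref{eq:ChristoffelF2}. Your write-up is simply a more detailed version of the paper's two-line argument.
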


\begin{proof}
Let $L_n(\sw; \cdot,\cdot)$ be the localized kernel defined with a cut-off function of type (a). It 
follows immediately that $K_n(\sw; x,x) \le L_n(\sw; x,x)$. Hence, if $\sw$ admits Assertion 1, then
$$
    K_n(\sw; x,x)  \le L_n(\sw;x,x) \le  \frac{c}{\sw  (B(x,\tfrac 1n))}, 
$$ 
which is equivalent to the lower bound $\l_n(\sw; x)$ by \eqref{eq:ChristoffelF2}.
\end{proof}

Together, the last two propositions give both upper and lower bounds for the Christoffel functions. 
Such bounds are known for regular domains, see for example \cite{DX, KrooLub, X95} and the references 
therein, and it has been established recently for fairly general convex domains \cite{Kroo,Pr1} and 
planar domains with piecewise boundary \cite{Pr2}. None of the previous results, however, apply to 
conic domains that will come out as special cases of the above theorem. 

\subsection{Positive cubature rules}
Let $\sw$ be a doubling weight on $\Omega$. A cubature rule of degree $n$ for $\sw$ is a finite linear
combination of point evaluations that satisfies 
$$
  \int_\Omega f(x) \sw(x)  \d \sm(x) = \sum_{k=1}^N \l_k f(x_k), \qquad  \forall f \in \Pi_n(\Omega),
$$
where $\l_k \in \RR$ and $x_k \in \Omega$. When $\l_k > 0$ for all $k$, the cubature rule is called 
positive. Given a maximal $\ve$-separated subset $\Xi$ on $\Omega$, we establish the existence 
of a positive cubature rule based on the points in $\Xi$, which plays an essential role for discretizing 
our near best approximation $L_n * f$.  

The proof will use the Farkas lemma (cf. \cite[Lemma 6.3.2]{DaiX}) stated below.

\begin{lem}\label{prop:Farkas}
Let $V$ be a finite dimensional real Hilbert space with inner product $\la \cdot,\cdot\ra$. Then
for any points $a^1, a^2, \dots, a^m$ and $\zeta \in V$, exactly of the following two systems has a 
solution: 
\begin{enumerate}[     \rm (1)]
\item $\displaystyle{\sum_{j=1}^m \mu_j  a^j = \zeta}, \qquad  0 \le \mu_1,\mu_2, \ldots, \mu_m \in \RR$.
\item $\la a^j,x\ra \ge 0$, $j =1,2,\ldots, m$ and $\la \zeta, x \ra < 0$ for some $x \in V$. 
\end{enumerate}
\end{lem}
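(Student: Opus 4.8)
The plan is to prove the dichotomy in the standard two steps. First I would check that (1) and (2) cannot both be solvable: if $\mu_1,\dots,\mu_m\ge 0$ solve (1) and $x$ satisfies $\la a^j,x\ra\ge 0$ for all $j$, then $\la\zeta,x\ra=\sum_{j=1}^m\mu_j\la a^j,x\ra\ge 0$, which contradicts $\la\zeta,x\ra<0$. Hence at most one of the two systems has a solution, and it remains to show that the unsolvability of (1) forces (2) to be solvable; this is the substantive half.

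For that I would pass to the finitely generated convex cone
$$
  C:=\Big\{\,\sum_{j=1}^m\mu_j a^j\ :\ \mu_j\ge 0,\ j=1,\dots,m\,\Big\}\subset V,
$$
so that unsolvability of (1) means precisely $\zeta\notin C$. The key preliminary fact I would establish is that $C$ is closed, and I expect this to be the main obstacle, since it is the one step that genuinely uses finite generation rather than soft convexity. The cleanest route is a Carath\'eodory-type reduction: any member of $C$ can be rewritten as a nonnegative combination of a linearly independent subset of $\{a^1,\dots,a^m\}$ --- given a representation supported on a linearly dependent set, a dependence relation lets one drive some coefficient to $0$ while keeping the remaining coefficients nonnegative and the represented vector unchanged, and one iterates. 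Hence $C$ is the union of the finitely many cones $C_S$ over linearly independent subsets $S$, and each $C_S$, being the image of the closed orthant $\RR_{\ge 0}^{|S|}$ under an injective linear map (hence a homeomorphism onto its range in a finite dimensional space), is closed; a finite union of closed sets is closed.

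With $C$ closed, convex, nonempty and $\zeta\notin C$, I would let $p$ be the metric projection of $\zeta$ onto $C$ and set $x:=p-\zeta\ne 0$. The variational inequality $\la y-p,\zeta-p\ra\le 0$ for all $y\in C$ rewrites as $\la y-p,x\ra\ge 0$; taking $y=0\in C$ gives $\la p,x\ra\le 0$, and for $y\in C$, replacing $y$ by $ty$ with $t>0$ and letting $t\to\infty$ forces $\la y,x\ra\ge 0$ for every $y\in C$, in particular $\la a^j,x\ra\ge 0$ for each $j$. Finally $\la\zeta,x\ra=\la p-x,x\ra=\la p,x\ra-\|x\|^2\le-\|x\|^2<0$, so $x$ solves (2). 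An alternative to the cone-closedness step would be an induction on $m$ in Fourier--Motzkin style, eliminating one generator at a time, but the projection route is shorter and is what I would write.
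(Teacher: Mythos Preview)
Your proof is correct and complete; it is the standard separation argument for the Farkas alternative, with the closedness of the finitely generated cone handled via Carath\'eodory reduction. The paper, however, does not prove this lemma at all --- it merely states it with a reference (\cite[Lemma 6.3.2]{DaiX}) and then uses it as a tool in the proof of Theorem~\ref{thm:cubature}. So there is nothing to compare against in the paper itself, and your write-up supplies what the paper omits.
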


We use the lemma to prove the existence of the positive cubature rules. For the unit sphere and the unit 
ball, this was established in \cite{MNW} and later in \cite{BD, NPW1, PX2}. We follow the proof in  
\cite[Theorem 6.3.3]{DaiX} for the doubling weight on the unit sphere.

\begin{thm}\label{thm:cubature}
Let $\sw$ be a doubling weight on $\Omega$. Let $\Xi$ 
be a maximum $\frac{\delta}{n}$-separated subset of $\Omega$. There is a $\delta_0 > 0$ 
such that for $0 < \delta < \delta_0$ there exist positive numbers $\l_z$, $z \in \Xi$, so that 
\begin{equation}\label{eq:CFgeneral}
    \int_{\Omega} f(x) \sw(x)  \d \sm(x) = \sum_{z \in \Xi }\l_z f(z), \qquad 
            \forall f \in \Pi_n(\Omega).
\end{equation}
Moreover, $\l_z \ge c_1 \sw\!\left(B(z, \tfrac{\delta}{n})\right)$ and, if Assertion 4 holds, then 
$\l_z \sim \sw\!\left(B(z, \tfrac{\delta}{n})\right)$.
\end{thm}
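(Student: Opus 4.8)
The plan is to deduce the cubature rule from the Farkas lemma (Lemma \ref{prop:Farkas}) applied in the finite dimensional space $V = \Pi_n(\Omega)$ equipped with the inner product $\la \cdot,\cdot \ra_{\sw}$. I would take the points $a^z = \big(\sw(B(z,\tfrac\delta n))\big) \delta_z$ regarded as functionals on $V$ via the reproducing kernel $K_n(\sw;\cdot,\cdot)$ — more precisely, let $a^z \in V$ be the polynomial $x \mapsto \sw(B(z,\tfrac\delta n)) K_n(\sw; z, x)$, so that $\la a^z, g\ra_{\sw} = \sw(B(z,\tfrac\delta n)) g(z)$ for every $g \in \Pi_n(\Omega)$, and let $\zeta = 1 \in \Pi_n(\Omega)$ be the constant polynomial, for which $\la \zeta, g\ra_{\sw} = \int_\Omega g(x)\sw(x)\d\sm(x)$. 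Then alternative (1) in Farkas' lemma is exactly the assertion that there are nonnegative weights $\mu_z$ with $\sum_z \mu_z \sw(B(z,\tfrac\delta n)) g(z) = \int_\Omega g \sw \d\sm$ for all $g \in \Pi_n(\Omega)$; setting $\l_z = \mu_z \sw(B(z,\tfrac\delta n))$ gives \eqref{eq:CFgeneral}. So it suffices to rule out alternative (2).

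Alternative (2) would produce a $g \in \Pi_n(\Omega)$ with $\sw(B(z,\tfrac\delta n)) g(z) \ge 0$ for every $z \in \Xi$ — hence $g(z) \ge 0$ for all $z \in \Xi$ — but $\int_\Omega g(x)\sw(x)\d\sm(x) < 0$. To derive a contradiction I would apply the Marcinkiewicz–Zygmund machinery to the polynomial $g$: write $\int_\Omega g\,\sw\,\d\sm = \sum_{z\in\Xi}\int_{B(z,\frac\delta n)} g(x)\sw(x)\d\sm$ and estimate, for each cell, $\big|\int_{B(z,\frac\delta n)}(g(x)-g(z))\sw(x)\d\sm\big| \le {\rm osc}(g)(z,\tfrac\delta n)\,\sw(B(z,\tfrac\delta n))$, so that
\begin{equation*}
  \int_\Omega g\,\sw\,\d\sm \ge \sum_{z\in\Xi} g(z)\,\sw\!\big(B(z,\tfrac\delta n)\big) - \sum_{z\in\Xi}{\rm osc}(g)\big(z,\tfrac\delta n\big)\,\sw\!\big(B(z,\tfrac\delta n)\big).
\end{equation*}
The first sum is $\ge 0$ since $g(z)\ge 0$; I need to bound it from below. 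By Theorem \ref{thm:osc} (with $p=1$) the second sum is at most $c_1\delta\|g\|_{1,\sw}$. For the first sum, note $g(z) \ge \min_{x\in B(z,\frac\delta n)}|g(x)| - {\rm osc}(g)(z,\tfrac\delta n)$ is not quite what I want; instead I would argue more directly: from $\|g\|_{1,\sw} = \int_\Omega |g|\sw\d\sm \le \sum_z \int_{B(z,\frac\delta n)}|g(x)|\sw(x)\d\sm \le \sum_z\big(|g(z)| + {\rm osc}(g)(z,\tfrac\delta n)\big)\sw(B(z,\tfrac\delta n))$ combined with $g(z)\ge 0$ (so $|g(z)| = g(z)$ on $\Xi$) and Theorem \ref{thm:osc}, I get $\|g\|_{1,\sw} \le \sum_z g(z)\sw(B(z,\tfrac\delta n)) + c_1\delta\|g\|_{1,\sw}$, hence $\sum_z g(z)\sw(B(z,\tfrac\delta n)) \ge (1-c_1\delta)\|g\|_{1,\sw}$. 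Plugging back,
\begin{equation*}
  \int_\Omega g\,\sw\,\d\sm \ge (1-c_1\delta)\|g\|_{1,\sw} - c_1\delta\|g\|_{1,\sw} = (1-2c_1\delta)\|g\|_{1,\sw},
\end{equation*}
which is $\ge 0$ once $\delta_0$ is chosen so that $2c_1\delta_0 < 1$ (using $g \not\equiv 0$, else both sides vanish), contradicting $\int_\Omega g\,\sw\,\d\sm < 0$. Thus alternative (1) holds and the cubature rule exists.

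For the weight bounds: the lower bound $\l_z \ge c_1\sw(B(z,\tfrac\delta n))$ comes from testing \eqref{eq:CFgeneral} against a well-chosen nonnegative test polynomial peaked at $z$. I would use the fast-decaying polynomial $T_z$ of degree $\lfloor n/2\rfloor$ from Assertion 4 — actually, to get the bound without Assertion 4 one uses instead $|L_m(\sw;z,\cdot)|^2$-type polynomials, or more simply applies \eqref{eq:CFgeneral} to a polynomial $g_z \in \Pi_n$ with $g_z \ge 0$, $g_z(z) \sim 1$, $g_z$ supported in spirit near $B(z,\tfrac\delta n)$ in the sense that $\int_\Omega g_z \sw\d\sm \le c\,\sw(B(z,\tfrac\delta n))$ while $g_z(y) \le c$ for all $y$; such a polynomial is furnished by the square of a localized kernel or by Assertion 4's $T_z$, giving $\l_z g_z(z) \le \sum_{y}\l_y g_z(y) \le c\|g_z\|$-type estimates — I would follow the argument in \cite[Theorem 6.3.3]{DaiX} verbatim here. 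For the matching upper bound $\l_z \le c\,\sw(B(z,\tfrac\delta n))$ under Assertion 4: apply \eqref{eq:CFgeneral} to $[T_z]^2$ (degree $\le 2\lfloor n/2\rfloor \le n$ after adjusting the degree of $T_z$ to $\lfloor n/2\rfloor$ relative to a slightly larger ambient degree, or take $T_z$ of degree $\lfloor n/3\rfloor$ and argue as in Proposition \ref{prop:ChristF1}), use $[T_z(z)]^2 = 1$ so $\l_z \le \sum_{y\in\Xi}\l_y [T_z(y)]^2 = \int_\Omega [T_z(v)]^2\sw(v)\d\sm(v)$, and this last integral is $\le c\,\sw(B(z,\tfrac\delta n))$ by exactly the computation already carried out in the proof of Proposition \ref{prop:ChristF1}. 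The main obstacle is the bookkeeping around degrees — ensuring all the test polynomials ($T_z$, $T_z^2$, and the polynomial realizing the MZ inequality in the contradiction step) stay within $\Pi_n(\Omega)$ while the MZ inequalities and oscillation estimates are invoked at the correct scale $n$ — but this is a routine rescaling of $n$ by a fixed constant, and the substantive analytic inputs (Theorem \ref{thm:osc}, Theorem \ref{thm:MZinequality}, Assertion 4, and the integral estimate \eqref{eq:int-dm}) are already in place.
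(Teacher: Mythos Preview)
Your Farkas setup with $\zeta = 1$ and $a^z = \sw(B(z,\tfrac\delta n)) K_n(\sw;z,\cdot)$ yields only $\l_z \ge 0$, not the claimed lower bound $\l_z \ge c_1\,\sw(B(z,\tfrac\delta n))$. The testing argument you sketch for the lower bound is in fact an \emph{upper}-bound argument: from a nonnegative peaked $g_z$ with $g_z(z)\sim 1$ one gets $\l_z \lesssim \sum_y \l_y g_z(y) = \int g_z\,\sw$, which bounds $\l_z$ from above, not below. Getting a lower bound on an individual $\l_z$ by testing would require controlling the contribution of the \emph{other} nodes, which in turn needs the upper bound on those $\l_y$ --- circular. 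The paper avoids this by building the lower bound directly into the Farkas alternative: it takes
\[
  \zeta(x) = \int_\Omega K_n(\varpi;x,y)\,\sw(y)\,\d\sm(y) - \frac{1}{2c_d}\sum_{j} K_n(\varpi;x,z_j)\,\sw\!\big(B(z_j,\tfrac\delta n)\big),
\]
so that alternative (1) produces $\mu_j \ge 0$ with $\l_{z_j} = (2c_d)^{-1}\sw(B(z_j,\tfrac\delta n)) + \mu_j$, and the lower bound is automatic. Your contradiction step (ruling out alternative (2)) is essentially the paper's argument and is fine in spirit, but note that the identity $\int_\Omega g\,\sw\,\d\sm = \sum_z \int_{B(z,\frac\delta n)} g\,\sw\,\d\sm$ you wrote is false for overlapping balls; you need to divide by the overlap count $\sum_{z}\chi_{B(z,\frac\delta n)}(x)\in[1,c_d]$ (as the paper does) or pass to a genuine partition $\{S_z\}$.

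For the upper bound under Assertion~4, the paper does not use $T_z^2$ directly but rather tests the cubature on $q_z^2$ where $q_z(x)=K_m(\sw;x,z)/K_m(\sw;z,z)$ with $m=\lfloor n/2\rfloor$, obtaining $\l_z \le 1/K_m(\sw;z,z) = \l_m(\sw;z)$, and then invokes Proposition~\ref{prop:ChristF1}. Your $T_z^2$ route reaches the same conclusion and is a legitimate alternative; the degree bookkeeping you flag is indeed routine.
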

 
\begin{proof}
Let $\varpi$ be the weight function that admits highly localized kernels on $\Omega$. We use 
Lemma \ref{prop:Farkas} with the inner product defined by $\la \cdot,\cdot\ra_{\varpi}$. Recall that 
$K_n(\varpi; \cdot,\cdot)$ denotes the reproducing kernel of $\Pi_n(\Omega)$. Let $\{z_k: 1 \le k \le N\}$
be an enumeration of $\Xi$. We define the functions $a^j$ and $\zeta$ by 
\begin{align*}
  a^j(x) \, & = K_n\big(\varpi; x, z_j\big), \qquad 1 \le j \le N, \\
  \zeta(x) & = \int_{\Omega} K_n\big(\varpi; x, y\big) \sw(y)  \d \sm (y) - 
     \frac{1}{2 c_d} \sum_{j=1}^N K_n\big(\varpi; x, z_j\big) \sw\big(B(z_j, \tfrac{\delta}{n}) \big),
\end{align*}
where $c_d$ is the constant in \eqref{eq:def-pts2}. For every $f \in \Pi_n(\Omega)$, the reproducing property of 
$K_n(\varpi; \cdot,\cdot)$ implies that 
\begin{align*}
   \la f, a^j\ra_{\varpi}  & =  f(z_j), \qquad 1 \le j \le N, \\ 
   \la f, \zeta\ra_{\varpi} & = \int_{\Omega}f(y) \sw(y)  \d \sm(y) 
            - \frac{1}{2 c_0} \sum_{j=1}^N f (z_j) \sw\!\big(B(z_j), \tfrac{\delta}{n}\big).
\end{align*}
Assume $\la f, a^j\ra_\varpi \ge 0$ for all $j$, so that $f$ is nonnegative on $\Xi$.  Then, by
\eqref{eq:def-pts2},
\begin{align*}
  \int_\Omega   f(x) & \sw(x)  \d \sm(x)  = \sum_{j=1}^N \int_{B(z_j,\frac{\delta}{n})}  f(x)
       \frac{\sw(x)}{\sum_{z \in \Xi} \chi_{B(y,\frac{\delta}{n})}(x)}  \d \sm(x) \\
    & \ge \frac{1}{c_d}  \sum_{j=1}^N f(z_j) \int_{B(z_j,\frac{\delta}{n})} \sw(x)  \d \sm(x)
        - \sum_{j=1}^N \int_{B(z_j,\frac{\delta}{n})} \mathrm{osc}(f)(x,\tfrac \delta n) \sw(x)  \d \sm(x) \\
    & \ge  \frac{1}{c_d} \sum_{j=1}^N f(z_j)  \sw\big(B(z_j,\tfrac{\delta}{n})\big) - c_1\delta \int_{\Omega} |f(x)| 
      \sw(x)  \d \sm(x)
\end{align*}
by \eqref{eq:osc}. Hence, using the Marcinkiewicz-Zygmund inequality in (ii) of Theorem \ref{thm:MZinequality}
with $p = 1$, we conclude that 
\begin{align*}
    \la f, \zeta\ra_{\varpi}  \ge \left((2 c_d)^{-1} - c_{\sw,1} c_1 \delta\right)\sum_{j=1}^N f(z_j)  
       \sw\big(B(z_j,\tfrac{\delta}{n})\big)  \ge 0
\end{align*}
if we choose $\delta \le (2 c_d)^{-1} /(c_{\sw,1} c_1)$. Consequently, (2) of Farkas lemma does not hold so that
(1) must hold, which shows that there exist $\mu_j \ge 0$ such that 
$$ 
      \int_\Omega   f(x)  \sw(x)  \d \sm(x)  - \frac{1}{2 c_0} \sum_{j=1}^N f (z_j) \sw\!\big(B(z_j, \tfrac{\delta}{n})\big)
          = \sum_{j=1}^N \mu_j f(z_j). 
$$ 
This establishes the cubature rule \eqref{eq:CFgeneral} with $\l_{z_j} = (2c_0)^{-1}\sw(B(z_j, \tfrac{\delta}{n}))+\mu_j$.

It follows immediately that $\l_z \ge c \sw(B(z, \tfrac{\delta}{n}))$ for all $z \in \Xi$. To prove the upper bound 
of this inequality, we set $m = \lfloor \frac{n}{2} \rfloor$ and use the reproducing kernel $K_m(\sw; \cdot,\cdot)$ of
$\Pi_n(\Omega)$ with respect to $\sw$. Since $K_m(\sw; z,z) > 0$, we define a polynomial 
$$
     q_z(x) =  \frac{K_m(\sw; x,z)}{K_m(\sw; z,z)}, \qquad z \in \Xi.
$$
For each $z$, the polynomial $q_z^2$ is a polynomial of degree at most $n$ and $q_z(z) =1$. 
Hence, by \eqref{eq:CFgeneral}, 
$$
   \l_z  \le \sum_{y \in \Xi} \l_y  [q_z(y)]^2 
       = \int_{\Omega}  [ q_z(x) ]^2 \sw(x)  \d \sm(x) = \int_{\Omega}   \frac{K_m(\sw; x,z)^2}{K_m(\sw; z,z)^2} \sw(x)  \d \sm(x).
$$
Hence, using the reproducing property of the kernel $K_m(\sw; \cdot,\cdot)$, 
$$
  \int_\Omega K_m(\sw; x,z)^2 \sw(x)  \d \sm(x) =K_m(\sw; z,z),
$$
we conclude that $\l_z \le c [K_m(\sw; z,z)]^{-1}= \l_n(\sw; z)$. Hence, $\l_z \le c\, \sw(B(z, \tfrac{\delta}{n}))$ by
Proposition \ref{prop:ChristF1}.
\end{proof}

\subsection{Tight polynomial frames}
The highly localized kernels are powerful tools when they exist. We use them to study approximation behavior
of the integral operator $L_n(\varpi)*f$ that has $L_n\big(\varpi; \cdot,\cdot)$ as its kernel, or more precisely, 
\begin{equation}\label{eq:Lnf}
   L_n(\varpi)* f (x) =  \int_{\Omega} f(y) L_n\big(\varpi; x, y) \varpi(y)  \d \sm(y). 
\end{equation}
The fast decaying of the kernel indicates that $L_n(\varpi)*f$ provides a good approximation to the 
function $f$. Its approximation property will be studied in the next section. For now, we observe that 
$L_n(\varpi)*f$ is a polynomial of degree at most $2n$ by the property of the cut-off function. 

We now use the operator $L_n(\varpi)* f$ and the positive cubature rules based on the discrete set
$\Xi$ to construct tight polynomial frames in $L^2(\Omega, \varpi)$. The construction is fairly 
standard by now, so is the proof of the tight frame. See, for example, \cite{DaiX, NPW1, PX1, PX2} 
for the case of the interval, the unit sphere and the unit ball. We recall the procedure and will be brief in proof. 

Let $(\Omega, \varpi, \sd)$ be a localizable homogeneous space. For the highly localized kernels
$L_n(\varpi;x,y)$, we assume its cut-off function $\wh a$ is of type (b) and satisfies 
\begin{align} \label{eq:a-frame}
\begin{split} 
 \wh a(t)\ge \rho > 0, \qquad & \mbox{if $t \in [3/5, 5/3]$},\\
 [\wh a(t)]^2 + [\wh a(2t)]^2 =1, \qquad & \mbox{if $t \in [1/2, 1]$.}
\end{split}
\end{align}
If $g$ is a nonnegative even function in $C^\infty(\RR)$, so that $\mathrm{supp}\, g = [-1, 1]$, $g(0) =1$ and 
$|g(t)|^2 + |g(t+1)|^2 =1$ on  $[-1, 0]$, then $\wh a(t) := g(\log_2t)$ has the desired properties. The last 
assumption on $\wh a$ implies 
\begin{equation}\label{eq:a3}
\sum_{j=0}^\infty  \left[ \wh a\left( \frac{t}{2^{j} } \right) \right]^2 = 1, \quad t \in [1, \infty).
\end{equation}
 
Let $\sw$ be the doubling weight that admits Assertion 4. Then the cubature rules in 
Theorem \ref{thm:cubature} are well established for $\sw$. For $j =0,1,\ldots,$ let 
$\ve_j = \frac \delta {2^{j}}$ and let $\Xi_j$ be a maximal $\ve_j$-separated subset in $\Omega$, 
where $\delta$ is chosen so that the cubature rule in Theorem \ref{thm:cubature} holds; 
that is, there are $\l_{z,j} > 0$ for $z \in \Xi_j$ such that 
\begin{equation}\label{eq:cuba-frame}
  \int_\Omega f(x) \sw(x)  \d \sm(x) = \sum_{z \in \Xi_j} \Xi_{z,j} f(z), \qquad f \in \Pi_{2^j} (\Omega); 
\end{equation}
moreover, $\l_{z,j} \sim \sw(B(z, 2^{-j}))$ since we assume that Assertion 4 holds for $\sw$. In terms of 
the kernel $L_n(\sw;\cdot,\cdot)$ defined via the cut-off function $\wh a$, we define 
$$
  F_0(x,z):=1, \quad \hbox{and}\quad F_j(x,z): = L_{2^{j-1}}(\sw; x, z), \quad j = 1, 2, 3, \ldots. 
$$
Accordingly, we define $F_j * f = L_{2^{j-1}} * f$, that is, 
$$
   F_j * f (x): = \int_\Omega f(y) F_j(x,y) \sw(y)  \d \sm(y), \qquad j = 0,1,2,\ldots. 
$$

Now, for $z \in \Xi_j$ and $j = 1, 2,\ldots$, we define our frame elements by 
$$ 
      \psi_{z,j}(x):= \sqrt{\l_{z,j}} F_j(x, z). 
$$ 
Then $\Psi:= \big\{\psi_{z,j}: z \in \Xi_j, \quad 1 \le j \le \infty\big\}$ is a frame system. 

By the orthogonality of the reproducing kernel,  it follows readily that
\begin{align*}
F_j*F_j * f = \sum_{k=1}^{2^{j}} \left|\wh a\left (\frac{k}{2^{j-1}}\right)\right|^2 \proj_k(\sw; f).
\end{align*}
Hence, the following semi-discrete Calder\'{o}n type decomposition follows from \eqref{eq:a3}, 
\begin{equation}\label{eq:f=F*F*f}
   f  =  \sum_{k=0}^\infty \proj_k(\sw;f) = \sum_{j=0}^\infty F_j* F_j * f, \qquad f\in L^2(\Omega,\sw). 
\end{equation}
Furthermore, the system $\Psi$ is a tight frame in $L^2(\Omega, \sw)$ norm. 
 
\begin{thm}\label{thm:frame}
Assume that $\Omega$ admits a localizable homogenous space. Let $\sw$ be a doubling weight 
satisfying Assertion 4. If $f\in L^2(\Omega, \sw)$, then
\begin{equation} \label{eq:f=frame}
   f =\sum_{j=0}^\infty \sum_{z \in\Xi_j}
            \langle f, \psi_{z, j} \rangle_\sw \psi_{z,j}  \qquad\mbox{in $L^2(\Omega, \sw)$}
\end{equation}
and
\begin{equation} \label{eq:tight-frame}
\|f\|_{2, \sw}  = \Big(\sum_{j=0}^\infty \sum_{z \in \Xi_j} |\langle f, \psi_{z,j} \rangle_\sw|^2\Big)^{1/2}.
\end{equation}
\end{thm}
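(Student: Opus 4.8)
The plan is to derive the tight frame identities \eqref{eq:f=frame} and \eqref{eq:tight-frame} from the semi-discrete Calderón decomposition \eqref{eq:f=F*F*f} together with the positive cubature rules \eqref{eq:cuba-frame} supplied by Theorem \ref{thm:cubature}. First I would observe that, since $\wh a$ is of type (b), the kernel $F_j(\cdot,z) = L_{2^{j-1}}(\sw;\cdot,z)$ is a polynomial of degree at most $2^j$ in each variable; hence for a fixed $f \in L^2(\Omega,\sw)$ the product $(F_j * f)(x)\, F_j(x,z)$ — viewed as a function of $x$ — is a polynomial of degree at most $2^{j+1}$ (one must enlarge the cubature to degree $2^{j+1}$, which only changes $\delta$ by a fixed factor, so I will assume $\Xi_j$ is chosen accordingly). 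Applying the cubature rule \eqref{eq:cuba-frame} to this polynomial and using the reproducing property of $L_{2^{j-1}}(\sw;\cdot,\cdot)$ against itself, I get
\begin{align*}
  F_j * F_j * f (x) = \int_\Omega (F_j * f)(y)\, F_j(x,y)\, \sw(y)\, \d\sm(y)
    = \sum_{z \in \Xi_j} \l_{z,j}\, (F_j*f)(z)\, F_j(x,z).
\end{align*}
Since $\la f, \psi_{z,j}\ra_\sw = \sqrt{\l_{z,j}}\,(F_j * f)(z)$ by the self-adjointness of the kernel, the right-hand side is exactly $\sum_{z \in \Xi_j} \la f,\psi_{z,j}\ra_\sw\, \psi_{z,j}(x)$. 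Summing over $j$ and invoking \eqref{eq:f=F*F*f} yields \eqref{eq:f=frame}, with convergence in $L^2(\Omega,\sw)$ inherited from that decomposition.

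For the norm identity \eqref{eq:tight-frame}, the clean route is to exploit that $\{F_j * F_j * f\}_{j \ge 0}$ are mutually orthogonal in $L^2(\Omega,\sw)$: because $F_j * F_j * f = \sum_k |\wh a(k/2^{j-1})|^2 \proj_k(\sw;f)$ and the supports of $\wh a(\cdot/2^{j-1})$ for distinct $j$ overlap only on at most two consecutive dyadic blocks, a short computation using $[\wh a(t)]^2 + [\wh a(2t)]^2 = 1$ shows that in fact $\la F_j*F_j*f, F_{j'}*F_{j'}*f\ra_\sw$ telescopes correctly and $\|f\|_{2,\sw}^2 = \sum_j \la F_j*F_j*f, f\ra_\sw = \sum_j \|F_j * f\|_{2,\sw}^2$ — the last equality again by self-adjointness of $F_j$ and the fact that $F_j * F_j = F_j^2$ as an operator on its range. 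Finally, applying the cubature rule \eqref{eq:cuba-frame} one more time to the polynomial $|F_j * f|^2$ (of degree at most $2^{j+1}$) gives $\|F_j * f\|_{2,\sw}^2 = \sum_{z \in \Xi_j} \l_{z,j}\, |(F_j*f)(z)|^2 = \sum_{z \in \Xi_j} |\la f,\psi_{z,j}\ra_\sw|^2$, which assembles into \eqref{eq:tight-frame}.

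The step I expect to be the main obstacle is the rigorous bookkeeping of the degree count so that the cubature rule of Theorem \ref{thm:cubature} genuinely applies to the relevant products: $F_j * f$ need not itself be a polynomial when $f \in L^2$ is arbitrary, so one cannot naively say "$|F_j*f|^2$ is a polynomial of degree $2^{j+1}$." The correct argument is to note $F_j * f = L_{2^{j-1}}(\sw) * f \in \Pi_{2^j}(\Omega)$ because the kernel $L_{2^{j-1}}(\sw;\cdot,y)$ already lies in $\Pi_{2^j}(\Omega)$ in its first variable for every fixed $y$, and integration against $\sw(y)\,\d\sm(y)$ preserves this; then the products above really are polynomials of degree $\le 2^{j+1}$. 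A secondary subtlety is justifying the interchange of the sum over $j$ with the $L^2$ inner product in passing from \eqref{eq:f=F*F*f} to \eqref{eq:tight-frame}, which follows from the orthogonality of the blocks $F_j*F_j*f$ plus Parseval for the underlying Fourier orthogonal expansion; I would spell this out briefly but it is standard. Everything else is routine manipulation of the reproducing kernels, and the cut-off conditions \eqref{eq:a-frame}–\eqref{eq:a3} are exactly what make the telescoping work.
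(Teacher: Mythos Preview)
Your approach is correct and essentially the same as the paper's: discretize $F_j*F_j$ via the cubature rule, then sum using the semi-discrete Calder\'on decomposition \eqref{eq:f=F*F*f}. One correction: the pieces $F_j*F_j*f$ are \emph{not} mutually orthogonal in $L^2(\Omega,\sw)$ (consecutive $j$'s overlap on a dyadic block, as you yourself note), but you do not need this---the chain $\|f\|_{2,\sw}^2 = \sum_j \la F_j*F_j*f, f\ra_\sw = \sum_j \|F_j*f\|_{2,\sw}^2$ that you write down follows immediately from the $L^2$-convergence in \eqref{eq:f=F*F*f} together with the self-adjointness of $F_j$, with no orthogonality or telescoping required; drop that framing. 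The paper's route to \eqref{eq:tight-frame} sidesteps this detour entirely: it simply computes $\la f, S_R f\ra_\sw = \sum_{j\le R}\sum_{z\in\Xi_j} |\la f,\psi_{z,j}\ra_\sw|^2$ directly from the definition of the partial sum $S_R$ and passes to the limit, which is a line shorter than going through $\|F_j*f\|_{2,\sw}^2$ and a second application of cubature. Your degree-count remark (that one needs the cubature exact to degree $2^{j+1}$) is well taken; the same adjustment is implicit in the paper.
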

 
\begin{proof}
Since $F_j(x,\cdot) F_j(\cdot,y)$ is a polynomial of degree $2^j$, we can apply the cubature rule \eqref{eq:cuba-frame}
to discretize $F_j*F_j$, which gives 
$$
     F_j * F_j(x,y) = \int_\Omega F_j(x,z)F_j(z,y) \sw(z)  \d \sm(z) =   \sum_{z \in \Xi_j} 
        \psi_{z,j}(x)\psi_{z,j}(y),
$$
from which the identity \eqref{eq:f=frame} follows from \eqref{eq:f=F*F*f} right away. Furthermore, 
let $S_R f$ be the sum $S_R f = \sum_{j=0}^R \sum_{z \in\Xi_j} \langle f, \psi_{z, j} \rangle_\sw \psi_{z,j}$. Then, 
it follows immediately
$$
   \la f, S_R f \ra_\sw = \sum_{j=0}^R  \sum_{z \in\Xi_j} \left |\langle f, \psi_{z, j} \rangle_\sw\right|^2.
$$
Taking the limit $R\to \infty$ we obtain the tight frame identity \eqref{eq:tight-frame}.
\end{proof}

For the weight function $\varpi$ that admits the highly localized kernels, the frame element $\psi_{z,j}$ 
has near exponential rate of decay away from its center with respect to the distance $\sd(\cdot, \cdot)$ 
on $\Omega$. 

\begin{thm}\label{p:localization}
Let $(\Omega,\varpi, \sd)$ be a localizable homogeneous space. There, there is a constant $c_\k >0$ 
depending only on $\k$, $d$, $\varpi$ and $\wh a$ such that for $z \in \Xi_j$, $j=0, 1, \dots$,  
\begin{equation} \label{est.needl}
   |\psi_{z,j}(x)| \le c_\k \frac{1}{\sqrt{\varpi(B(z, 2^{-j}))} (1+ 2^j \sd(x,z))^\k}, \quad x\in \Omega.
\end{equation}
\end{thm}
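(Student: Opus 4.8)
The plan is to derive \eqref{est.needl} directly from the definition $\psi_{z,j}(x) = \sqrt{\l_{z,j}}\, F_j(x,z)$ together with Assertion~1 for the kernel $L_n(\varpi;\cdot,\cdot)$ and the size estimate for the cubature weights $\l_{z,j}$. First I would dispose of the trivial case $j=0$, where $F_0(x,z)=1$ and $\l_{z,0}$ is comparable to $\sw(\Omega)$, so \eqref{est.needl} holds with a crude constant. For $j\ge 1$, write $\psi_{z,j}(x)=\sqrt{\l_{z,j}}\,L_{2^{j-1}}(\sw;x,z)$ and apply Assertion~1 with $n=2^{j-1}$ to bound
$$
  |L_{2^{j-1}}(\sw;x,z)| \le c_\k \frac{1}{\sqrt{\sw(B(x,2^{-(j-1)}))}\sqrt{\sw(B(z,2^{-(j-1)}))}\,(1+2^{j-1}\sd(x,z))^\k}.
$$
Note the kernel here is built from the doubling weight $\sw$, but since $\sw$ is assumed to satisfy Assertion~4 and the cubature rule, it is the same weight appearing in $\l_{z,j}$, so everything is internally consistent; I will phrase the statement and proof for $\varpi$ specialized appropriately, or more honestly, note that \eqref{est.needl} is claimed for $\varpi$ and use the fact (from the frame construction) that $\sw$ itself admits the highly localized kernels in the cases of interest. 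The cleanest route is to prove the estimate with $\sw$ in place of $\varpi$ throughout, which is what the frame elements actually involve.

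The key steps, in order, are: (1) insert the bound $\l_{z,j}\sim \sw(B(z,2^{-j}))$ from Theorem~\ref{thm:cubature}, which applies because $\sw$ satisfies Assertion~4; (2) observe that $\sw(B(x,2^{-(j-1)}))$ and $\sw(B(x,2^{-j}))$ are comparable by the doubling property (Lemma~\ref{lem:doublingLem}(i) with $t/r=2$), and likewise for $z$, so the denominator $\sqrt{\sw(B(x,2^{-(j-1)}))}\sqrt{\sw(B(z,2^{-(j-1)}))}$ may be replaced by $\sqrt{\sw(B(x,2^{-j}))}\sqrt{\sw(B(z,2^{-j}))}$ up to a constant; (3) combine with $\sqrt{\l_{z,j}}\sim \sqrt{\sw(B(z,2^{-j}))}$ so that one factor of $\sqrt{\sw(B(z,2^{-j}))}$ cancels, leaving
$$
  |\psi_{z,j}(x)| \le c_\k \frac{1}{\sqrt{\sw(B(x,2^{-j}))}\,(1+2^{j-1}\sd(x,z))^\k};
$$
(4) swap $\sw(B(x,2^{-j}))$ for $\sw(B(z,2^{-j}))$ using Lemma~\ref{lem:doublingLem}(ii), which costs a factor $(1+2^j\sd(x,z))^{\a(\sw)}$ in the numerator, and absorb it by running the argument with $\k+\a(\sw)$ in place of $\k$ from the start; (5) finally note $1+2^{j-1}\sd(x,z)\ge \tfrac12(1+2^j\sd(x,z))$ to replace $2^{j-1}$ by $2^j$ inside the decay factor. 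Since $\k>0$ is arbitrary in Assertion~1, the resulting constant depends only on $\k$, $d$, $\sw$ (equivalently $\varpi$), and $\wh a$, as claimed.

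There is no serious obstacle here; the statement is essentially a bookkeeping consequence of Assertion~1 and the quantitative control $\l_{z,j}\sim \sw(B(z,2^{-j}))$. The one point requiring a little care is the interchange between balls of radius $2^{-j}$ and $2^{-(j-1)}$, and between balls centered at $x$ and at $z$; both are handled by the doubling lemma, and the price paid (a polynomial factor in $2^j\sd(x,z)$) is harmless because the exponent $\k$ in Assertion~1 can be taken as large as we wish. I would also remark that exactly the same argument, applied to the difference bound in Assertion~2, yields the companion estimate $|\psi_{z,j}(x_1)-\psi_{z,j}(x_2)| \le c_\k\, 2^j\sd(x_1,x_2)\,\big[\sw(B(z,2^{-j}))\big]^{-1/2}(1+2^j\sd(x_2,z))^{-\k}$ for $x_1\in B(x_2,2^{-j}\delta_0)$, which is the Lipschitz-type localization of the needlets used subsequently; this can be stated as a remark following the proof.
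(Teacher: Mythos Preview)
Your proposal is correct and follows exactly the approach the paper intends: combine Assertion~1 with the cubature bound $\l_{z,j}\sim \sw(B(z,2^{-j}))$, using doubling to adjust radii and centers. The paper's own proof is a single sentence (``This follows readily from the fast decaying of the kernel and $\l_{z,j}\sim \sw(B(z,2^{-j}))$''), so you have simply written out the bookkeeping it omits; your observation about the $\sw$ versus $\varpi$ ambiguity is also apt, since the frame is built for a general doubling $\sw$ but the localization estimate in Theorem~\ref{p:localization} only makes sense when $\sw=\varpi$ admits the highly localized kernels.
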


This follows readily from the fast decaying of the kernel and $\l_{z,j} \sim \sw(B(z, 2^{-j}))$. It shows
that the frame $\Psi$ is highly localized, which makes it a powerful tool for various applications, such
as decompositions of functions spaces and computational harmonic analysis, on more specific domains.  

\section{Polynomial Approximation on homogeneous spaces}
\setcounter{equation}{0}
We consider approximation by polynomials in the space $L^p(\Omega,\varpi)$ when the weight function
$\varpi$ posses two additional properties that are analogs of two characteristics of spherical harmonics: 
addition formula for the reproducing kernels and a differential operator that has orthogonal polynomials 
as eigenfunctions. With appropriate weight functions, these properties are shared by orthogonal polynomials
on the unit sphere, on the unit ball, and on the simplex. The addition formula premises that the reproducing 
kernels possess a one-dimensional character, which leads to a convolution structure that allows us to reduce
much of the study of the Fourier orthogonal series to that of the Fourier-Jacobi series of one variable. 
The differential operator that has orthogonal polynomials as eigenfunctions leads to a $K$-functional. 
Together they provide us with tools for characterizing the best approximation by polynomials, which is a 
central problem in approximation theory. The goal of this section is to develop this framework, which  
extends the results on the unit sphere and the unit ball and it is applicable on the conic domains. 

The two characteristic properties are defined and discussed in the first subsection. The first one is 
used to define the convolution structure that leads to the definition of the modulus of smoothness in the 
second subsection. The second one is used to define a $K$-functional in the third subsection, where the main 
results on the characterization of the best polynomial approximation are stated and discussed. The 
near-best approximation operator, used to provide a direct estimate, is studied in the fourth subsection 
with the help of the differential operator, where some of the results are established with norms defined 
via a doubling weight. The Bernstein inequality, essential for proving the inverse estimate, is proved in 
the fifth subsection, together with the Nikolskii inequality. Finally, the proof of the main results is given in 
the seventh subsection. 

\subsection{Addition formula and differential operator} \label{sec:two-properties}

Let $\varpi$ be a weight function defined on $\Omega$. For studying approximation by polynomials, 
we require two more properties on the orthogonal structure of $L^2(\Omega,\varpi)$. These properties
are analogs to the two characteristics of spherical harmonics on the unit sphere: the Laplace-Beltrami 
operator \eqref{eq:sph-harmonics} and the addition formula \eqref{eq:additionF}, which serve as the 
quintessential examples in the study \cite{X20a,X20b} as well as in this subsection.  These properties 
are shared by several other domains, including the unit ball, the simplex, and conic domains. Below we 
assume these properties and use them to carry out our subsequent study.

The first property that we assume is an analog of the Laplace-Beltrami operator. 
Recall that $\CV_n(\Omega, \varpi)$ denote the space of orthogonal polynomials of degree $n$ with 
respect to $\varpi$ on $\Omega$. 
 
\begin{defn}\label{def:LBoperator}
Let $\varpi$ be a weight function on $\Omega$. We denote by $\fD_\varpi$ the second order 
derivation operator that has orthogonal polynomials with respect to $\varpi$ as eigenfunctions; 
more precisely, 
\begin{equation}\label{eq:LBoperator}
  \fD_{\varpi} Y = - \mu(n) Y, \qquad \forall\, Y \in \CV_n(\Omega,\varpi),
\end{equation}
where $\mu$ is a nonnegative quadratic polynomial. 
\end{defn}

For the unit sphere $\sph$, the operator $\fD_\varpi$ is the Laplace-Beltrami operator given by 
\eqref{eq:sph-harmonics}. For the unit ball $\BB^d$, $\fD_\varpi$ is the operator given by 
\eqref{eq:diffBall}. In both cases, the operator is a differential operator. For the unit sphere 
and the unit ball with reflection invariant weight functions, $\fD_\varpi$ is a differential-difference 
operator, called Dunkl Laplacian \cite{Dunkl, DX}. For $d =2$ and $\Omega$ is a domain with non-empty
interior, the existence of a second order differential operator that satisfies \eqref{eq:LBoperator} was 
characterized in \cite{KS}. There are essentially five cases with positive weight functions on regular 
domains but only two are compact domains: disk and triangle. The characterization for $d > 2$ remains
an open problem. The operators for the conic domains are the recent additions, discovered in \cite{X20a,X20b} 
and will be recalled in later sections. 
 
The second property that we assume is the addition formula, which gives a closed-form formula
for the reproducing kernel $P_n(\varpi; \cdot,\cdot)$ of $\CV_n(\Omega, \varpi)$.

\begin{defn}\label{defn:additionF}
Let $\varpi$ be a weight function on $\Omega$. The reproducing kernel $P_n(\varpi; \cdot,\cdot)$ 
is said to satisfy an addition formula if, for some $\a \ge \b \ge - \f12$,
\begin{equation*}
   P_n (\varpi; x, y) = \int_{[-1,1]^m} Z_n^{(\a,\b)} \big(\xi(x, y; u) \big) \d \tau (u), \qquad
          Z_n^{(\a,\b)}(t) = \frac{P_n^{(\a,\b)}(1) P_n^{(\a,\b)}(t)}{h_n^{(\a,\b)}},
\end{equation*}
where $m$ is a positive integer; $\xi(x, y; u)$ is a function of $u \in [-1,1]^m$, symmetric 
in $x$ and $y$, and $\xi(x, y; u) \in [-1,1]$; moreover, $\d \tau$ is a probability measure on 
$[-1,1]^m$, which can degenerate to have a finite support. 
\end{defn}

The classical addition formula is the one for the unit sphere $\sph$, given in \eqref{eq:additionF}, which 
is the degenerate case, with $\a=\b = \frac{d-3}2$. The one for the classical weight $\varpi_\mu$ on the 
unit ball $\BB^d$ is given by \eqref{eq:additionBall}, which has $m =1$ and $\a = \b = \mu+\frac{d-2}{2}$. 
Similar formula holds for reflection invariant weight functions on the unit sphere and on the unit ball 
(cf. \cite[p. 221 and p. 265]{DX}). Furthermore, for the simplex in $\RR^d$ (cf. \cite[p. 275]{DX}) and the
conic domains $\VV_0^{d+1}$ and $\VV^{d+1}$, the polynomial $Z_n^{(\a,\b)}$ is given by $Z_{2n}^\l$ for 
some $\l \ge 0$, so that, by the quadratic transform \eqref{eq:Jacobi-Gegen0}, it is given by the Jacobi 
polynomial with $\a = \l-\f12$ and $\b= -\f12$. 

The addition formula premises a one-dimensional structure for the reproducing kernel. In all known cases,
the estimates of the highly localized kernel $L_n(\varpi; \cdot,\cdot)$ are established with the help of this
formula, since the addition formula implies 
\begin{equation}\label{eq:Ln-LnJacobi}
   L_n(\varpi; x, y)  = \int_{[-1,1]^m} L_n^{(\a,\b)} \big(\xi(x, y; u) \big) \d \tau (u),
\end{equation}
where the function $t \mapsto L_n^{(\a,\b)}(t) = L_n^{(\a,\b)}(t,1)$ is the kernel defined in 
\eqref{def.L} for the Jacobi polynomials. 

In the rest of this section, we assume that the orthogonal polynomials with respect to $\varpi$ posses
both the derivation operator and the addition formula. We do not, however, require that $\varpi$ admits
highly localized kernels that satisfy Assertions 1--3. In fact, some of our results hold if $\varpi$ satisfies
Assertions 1 and 3, but need not satisfy Assertion 2. 

\subsection{Convolution structure}
Making use of the one-dimensional structure premised by the addition formula, we define a convolution 
operator on $\Omega$. 
 
\begin{defn}\label{defn:7convol}
Assume $\varpi$ admits the addition formula. For $f \in L^1(\Omega, \varpi)$ and 
$g \in L^1([-1,1],w_{\a,\b})$, we define the convolution of $f$ and $g$ by 
$$
  (f \ast_\varpi  g)(x) :=   \int_{\Omega} f(y) T^{(\a,\b)} g (x,y) \varpi(y) \d \sm(y),
$$
where the operator $g\mapsto T^{(\a,\b)} g$ is defined by 
\begin{align*} 
   T^{(\a,\b)} g(x,y) := \int_{[-1,1]^m}  g \big( \xi (x, y; u)\big)  \d \tau(u).
\end{align*}
\end{defn}

The addition formula shows that the projection operator $\proj_n (\varpi; f)$ and the reproducing kernel 
$P_n(\varpi; \cdot,\cdot)$ satisfy 
\begin{equation}\label{eq:proj=f*g}
     \proj_n(\varpi; f, x) = f*_\varpi Z_n^{(\a,\b)} \quad\hbox{and}\quad P_n(\varpi; x,y) = T^{(\a,\b)} \left( Z_n^{(\a,\b)}\right).
\end{equation}

The operator $T^{(\a,\b)}$ is defined in \cite{X20b} with a more generic orthogonal polynomial in place of 
$P_n^{(\a,\b)}$, where the following boundedness of the operator is established \cite[Lemma 6.3]{X20b}. 

\begin{lem} \label{lem:translateT}
Let $g \in L^1([-1,1],w_{\a,\b})$. Then, for each $Q_n \in \CV_n (\Omega, \varpi)$, 
\begin{equation}\label{eq:FHformula}
     \int_{\Omega}  T^{(\a,\b)} g (x,y) Q_n (y) \varpi(y) \d y  =  \Lambda_n^{(\a,\b)} (g) Q_n(x),
\end{equation}
where 
$$
 \Lambda_n^{(\a,\b)} (g) =  c_{\a,\b} \int_{-1}^1 g(t) R_n^{(\a,\b)}(t)  w_{\a,\b}(t)\d t 
 \quad \hbox{with} \quad R_n^{(\a,\b)}(t) : = \frac{P_n^{(\a,\b)}(t)}{P_n^{(\a,\b)}(1)}
$$
Furthermore, for $1\le p \le \infty$ and $x \in \Omega$, 
\begin{equation}\label{eq:Tbd}
  \left \| T^{(\a,\b)} g (x, \cdot)\right\|_{L^p(\Omega, \varpi)} \le \|g \|_{L^p([-1,1],w_{\a,\b})}.
\end{equation}
\end{lem}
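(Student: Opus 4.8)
The plan is to prove \eqref{eq:FHformula} first for polynomial $g$, then to extract from its $n=0$ instance a change-of-variables identity that yields \eqref{eq:Tbd}, and finally to pass to general $g\in L^1([-1,1],w_{\a,\b})$ by density. For the first step, by linearity of $g\mapsto T^{(\a,\b)}g$ it suffices to take $g=P_k^{(\a,\b)}$, $k\ge0$. Since $Z_k^{(\a,\b)}=\tfrac{P_k^{(\a,\b)}(1)}{h_k^{(\a,\b)}}P_k^{(\a,\b)}$, the addition formula of Definition~\ref{defn:additionF} gives $T^{(\a,\b)}(P_k^{(\a,\b)})(x,y)=\tfrac{h_k^{(\a,\b)}}{P_k^{(\a,\b)}(1)}P_k(\varpi;x,y)$; in particular $T^{(\a,\b)}g(x,\cdot)$ is a polynomial of degree $\le\deg g$ in $y$. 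Integrating this against $Q_n\in\CV_n(\Omega,\varpi)$ and using that the kernel $P_k(\varpi;x,\cdot)$ reproduces $\CV_k(\Omega,\varpi)$ and annihilates $\CV_n(\Omega,\varpi)$ for $n\ne k$, the left-hand side of \eqref{eq:FHformula} equals $\delta_{k,n}\tfrac{h_n^{(\a,\b)}}{P_n^{(\a,\b)}(1)}Q_n(x)$; on the other side, $R_n^{(\a,\b)}=P_n^{(\a,\b)}/P_n^{(\a,\b)}(1)$ together with the Jacobi orthogonality relation shows $\Lambda_n^{(\a,\b)}(P_k^{(\a,\b)})$ equals the same scalar. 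Hence \eqref{eq:FHformula} holds for all polynomials $g$.

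For the change-of-variables identity, fix $x\in\Omega$ and let $\nu_x$ be the push-forward of the probability measure $\varpi(y)\,\d y\otimes\d\tau(u)$ on $\Omega\times[-1,1]^m$ under the map $(y,u)\mapsto\xi(x,y;u)\in[-1,1]$. By Tonelli, $\int_\Omega(T^{(\a,\b)}h)(x,y)\varpi(y)\,\d y=\int_{[-1,1]}h\,\d\nu_x$ for every bounded measurable $h$ on $[-1,1]$, whereas taking $Q_0\equiv1$ in the polynomial case just proved gives $\int_\Omega(T^{(\a,\b)}p)(x,y)\varpi(y)\,\d y=\Lambda_0^{(\a,\b)}(p)=c_{\a,\b}\int_{-1}^1p(t)w_{\a,\b}(t)\,\d t$ for polynomials $p$ (note $\Lambda_0^{(\a,\b)}(1)=1$, since $T^{(\a,\b)}1\equiv1$ and $\varpi$ is normalized). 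As polynomials are dense in $C[-1,1]$ and both $\nu_x$ and $c_{\a,\b}w_{\a,\b}(t)\,\d t$ are finite Borel measures, they coincide by the Riesz representation theorem; hence $\int_{[-1,1]}h\,\d\nu_x=c_{\a,\b}\int_{-1}^1h(t)w_{\a,\b}(t)\,\d t$ for all $h\in L^1([-1,1],w_{\a,\b})$. Now $|T^{(\a,\b)}g(x,y)|\le(T^{(\a,\b)}|g|)(x,y)$, and for $1\le p<\infty$ Jensen's inequality for the probability measure $\d\tau$ gives $|T^{(\a,\b)}g(x,y)|^p\le(T^{(\a,\b)}|g|^p)(x,y)$; integrating against $\varpi(y)\,\d y$ and applying the identity above with $h=|g|^p$ produces \eqref{eq:Tbd}, the case $p=\infty$ being immediate.

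Finally, given arbitrary $g\in L^1([-1,1],w_{\a,\b})$, pick polynomials $g_j\to g$ in $L^1([-1,1],w_{\a,\b})$. Both sides of \eqref{eq:FHformula} are continuous in this norm: for the right side, $|R_n^{(\a,\b)}(t)|\le1$ for $t\in[-1,1]$ (a classical extremal property of $P_n^{(\a,\b)}$, valid since $\a\ge\b\ge-\tfrac12$) gives $|\Lambda_n^{(\a,\b)}(g-g_j)|\le c_{\a,\b}\|g-g_j\|_{L^1([-1,1],w_{\a,\b})}$; for the left side, the case $p=1$ of \eqref{eq:Tbd} gives $|\int_\Omega T^{(\a,\b)}(g-g_j)(x,y)Q_n(y)\varpi(y)\,\d y|\le\|Q_n\|_\infty\|g-g_j\|_{L^1([-1,1],w_{\a,\b})}$. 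Letting $j\to\infty$ in the polynomial identity proves \eqref{eq:FHformula} for general $g$. The one genuinely delicate point is justifying the interchange of the $u$- and $y$-integrations for merely integrable $g$; this is precisely what the identification $\nu_x=c_{\a,\b}w_{\a,\b}(t)\,\d t$ supplies, and it is the single place where the exact shape of the addition formula, rather than just its validity, enters. The remaining steps are bookkeeping with the Jacobi orthogonality relations and the reproducing property of $P_n(\varpi;\cdot,\cdot)$.
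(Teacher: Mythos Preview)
Your proof is correct. The paper itself does not prove this lemma; it simply cites \cite[Lemma~6.3]{X20b} for the result, so there is no in-paper argument to compare against.

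Your route is clean and self-contained: verify \eqref{eq:FHformula} on the basis $g=P_k^{(\a,\b)}$ using the addition formula and the reproducing property of $P_k(\varpi;\cdot,\cdot)$; then exploit the $n=0$ case to identify the push-forward measure $\nu_x$ of $(y,u)\mapsto\xi(x,y;u)$ with the normalized Jacobi measure, which is the crux that makes \eqref{eq:Tbd} and the Fubini issues transparent; finally close by density. Two small remarks. First, the constant in $\Lambda_n^{(\a,\b)}$ should be the \emph{normalized} Jacobi constant (the paper's $c'_{\a,\b}$ rather than $c_{\a,\b}$); your consistency check $\Lambda_0^{(\a,\b)}(1)=1$ implicitly detects and corrects this, so your argument is unaffected. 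Second, once $\nu_x$ equals the normalized Jacobi measure, the identity $\int h\,\d\nu_x=c'_{\a,\b}\int_{-1}^1 h\,w_{\a,\b}$ holds for every nonnegative measurable $h$ by monotone convergence, which in particular guarantees that $T^{(\a,\b)}g(x,\cdot)$ is defined $\varpi$-a.e.\ for $g\in L^1$; you allude to this but it is worth stating explicitly since it underlies both \eqref{eq:Tbd} and the very definition of the left-hand side of \eqref{eq:FHformula} for non-polynomial $g$.
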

 
The convolution operator $f \ast_\varpi g$ is also defined in \cite{X20b} and shown to satisfy
the usual Young's inequality. 

\begin{thm} \label{thm:Young}
Let $p,q,r \ge 1$ and $p^{-1} = r^{-1}+q^{-1}-1$. For $f \in L^q(\Omega, \varpi)$ and
$g \in L^r([-1,1]; w_{\a,\b})$, 
\begin{equation} \label{eq:Young}
  \|f \ast_\varpi g\|_{L^p (\Omega, \varpi)} \le \|f\|_{L^q(\Omega,\varpi)}\|g\|_{L^r( [-1,1];\varpi)}.
\end{equation}
\end{thm}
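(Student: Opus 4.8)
The plan is to deduce Young's inequality from the $L^p$-boundedness~\eqref{eq:Tbd} of the translation operator $T^{(\a,\b)}$ together with the fact that the function $\xi(x,y;u)$ appearing in the addition formula (Definition~\ref{defn:additionF}) is symmetric in $x$ and $y$, by running the classical kernel-form proof of Young's inequality. Write $K(x,y):=T^{(\a,\b)}g(x,y)$, so that $(f\ast_\varpi g)(x)=\int_\Omega f(y)K(x,y)\varpi(y)\,\d\sm(y)$ by Definition~\ref{defn:7convol}. Since $\xi(x,y;u)=\xi(y,x;u)$ we have $K(x,y)=K(y,x)$, so~\eqref{eq:Tbd} applied with exponent $r$ yields both the row bound
\begin{equation*}
   \int_\Omega |K(x,y)|^r\,\varpi(y)\,\d\sm(y)\le\|g\|_{L^r([-1,1],w_{\a,\b})}^r\qquad(x\in\Omega)
\end{equation*}
and, after relabelling, the column bound $\int_\Omega |K(x,y)|^r\,\varpi(x)\,\d\sm(x)\le\|g\|_{L^r([-1,1],w_{\a,\b})}^r$ for every $y\in\Omega$. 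These are the only two properties of $K$ that the argument will use, so it is insensitive to the precise shape of $\xi$.

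For the principal case $1\le q,r<p<\infty$ I would set $s:=\tfrac{pq}{p-q}$ and $t:=\tfrac{pr}{p-r}$, so that $\tfrac1p+\tfrac1s+\tfrac1t=1$ by the hypothesis $\tfrac1p=\tfrac1q+\tfrac1r-1$, and apply H\"older's inequality (with respect to $\varpi(y)\,\d\sm(y)$) with these three exponents to the splitting
\begin{equation*}
  |f(y)|\,|K(x,y)|=\bigl(|f(y)|^q|K(x,y)|^r\bigr)^{1/p}\cdot|f(y)|^{1-q/p}\cdot|K(x,y)|^{1-r/p}.
\end{equation*}
Since $(1-q/p)s=q$ and $(1-r/p)t=r$, the second factor contributes $\|f\|_{L^q(\Omega,\varpi)}^{1-q/p}$ and, via the row bound, the third contributes $\|g\|_{L^r([-1,1],w_{\a,\b})}^{1-r/p}$, which leaves the pointwise estimate
\begin{equation*}
  |(f\ast_\varpi g)(x)|\le\Bigl(\int_\Omega|f(y)|^q|K(x,y)|^r\,\varpi(y)\,\d\sm(y)\Bigr)^{1/p}\,\|f\|_{L^q(\Omega,\varpi)}^{1-q/p}\,\|g\|_{L^r([-1,1],w_{\a,\b})}^{1-r/p}.
\end{equation*}
Raising this to the power $p$, integrating in $x$ against $\varpi(x)\,\d\sm(x)$, interchanging the order of integration by Tonelli's theorem, and using the column bound collapses the double integral to $\|g\|_{L^r([-1,1],w_{\a,\b})}^r\,\|f\|_{L^q(\Omega,\varpi)}^q$; taking $p$-th roots then gives exactly~\eqref{eq:Young}.

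It remains to treat the configurations left out by $q,r<p<\infty$, namely $p=q$ (forcing $r=1$), $p=r$ (forcing $q=1$), and $p=\infty$ (forcing $\tfrac1q+\tfrac1r=1$). In the first two, one of $s,t$ is infinite and the corresponding degenerate factor in the splitting drops out, so the same computation goes through with two-exponent H\"older in place of three; when $p=\infty$ one instead estimates $|(f\ast_\varpi g)(x)|\le\|f\|_{L^q(\Omega,\varpi)}\,\|K(x,\cdot)\|_{L^{q'}(\Omega,\varpi)}\le\|f\|_{L^q(\Omega,\varpi)}\,\|g\|_{L^{q'}([-1,1],w_{\a,\b})}$ directly from H\"older's inequality and~\eqref{eq:Tbd} with exponent $q'=r$. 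I do not expect any genuine obstacle here: all the analytic substance lives in Lemma~\ref{lem:translateT}, and the one point deserving care is the passage from the single bound~\eqref{eq:Tbd} to its transpose, which is exactly where the symmetry of $\xi$ in its two spatial arguments is used.
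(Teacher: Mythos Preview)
Your argument is correct. The paper itself does not give a proof of this theorem: it merely states that the convolution was defined in \cite{X20b} and that Young's inequality was established there, so there is no in-text proof to compare against. What you have written is the standard kernel-form derivation of Young's inequality, and the key observation you isolate---that the symmetry of $\xi(x,y;u)$ in $x$ and $y$ (built into Definition~\ref{defn:additionF}) upgrades the single bound~\eqref{eq:Tbd} to both a row and a column bound on $K=T^{(\a,\b)}g$---is exactly the point that makes the classical three-exponent H\"older argument go through. Your treatment of the endpoint configurations is also fine.
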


As an application of these results, we consider the Ces\`aro $(C,\delta)$ means 
$S_n^\delta (\varpi;f)$ of the Fourier orthogonal series with respect to $\varpi$. For $\delta > 0$, 
the operator $S_n^\delta(\varpi)$ is defined by 
$$
 S_n^\delta (\varpi;f) := \f{1}{\binom{n+\delta}{n}} \sum_{k=0}^n \binom{n-k+\delta}{n-k} \proj_k(\varpi; f).
$$
It can be written as an integral operator with the kernel $K_n^\delta(\varpi; \cdot,\cdot)$ being 
the $(C,\delta)$ mean of the reproducing kernel $P_n(\varpi;\cdot,\cdot)$. By the addition formula, 
the kernel $K_n^\delta(\varpi; \cdot,\cdot)$ can be written as 
$$
K_n^\delta(\varpi; x,y) = \int_{[-1,1]^m} k_n^{(\a,\b), \delta}(\xi(x,y;u), 1) \d \tau(u),
$$
where $k_n^{(\a,\b), \delta}(s,t)$ denotes the kernel of the $(C,\delta)$ mean of the Fourier-Jacobi series. 
In particular, using the positivity of the kernel $k_n^{(\a,\b), \delta}$ \cite{Ask} and the boundedness
of the kernel \cite[Theorem 9.1.3]{Sz}, we obtain the following theorem. 
 
\begin{thm}\label{thm:cesaro}
Let $\varpi$ be a weight function that admits the addition formula. The Ces\`aro $(C,\delta)$ means 
of the Fourier orthogonal series with respect to $\varpi$ satisfy 
\begin{enumerate}
\item If $\delta \ge \a+\b+2$, then $S_n^\delta(\varpi; f)$ is a nonnegative operator;
\item If $\delta > \a+\f12$, then for $n= 0,1,2,\ldots$,
$$
    \left \|S_n^\delta(\varpi; f)\right \|_{p,\varpi} \le  \|f\|_{p,\varpi}, \qquad 1 \le p \le \infty.
$$
\end{enumerate}
\end{thm}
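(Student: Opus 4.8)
The plan is to transfer both assertions from $\Omega$ to the interval $[-1,1]$ with the Jacobi weight $w_{\a,\b}$, using the addition formula together with the convolution structure of Definition~\ref{defn:7convol}, so that (1) follows from Askey's positivity of the $(C,\delta)$ Jacobi kernel and (2) from the classical endpoint $L^1$ bound for that kernel. The first step is to record the one-variable form of the Ces\`aro means: by the first identity in \eqref{eq:proj=f*g} and linearity of $\ast_\varpi$,
\[
   S_n^\delta(\varpi;f) = f \ast_\varpi g_n, \qquad
   g_n := \f{1}{\binom{n+\delta}{n}} \sum_{k=0}^n \binom{n-k+\delta}{n-k}\, Z_k^{(\a,\b)},
\]
where $g_n = k_n^{(\a,\b),\delta}(\cdot,1)$ is precisely the kernel of the $(C,\delta)$ means of the Fourier--Jacobi series at the endpoint $t=1$. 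Applying the same $(C,\delta)$ averaging term by term to the addition formula for $P_n(\varpi;\cdot,\cdot)$ (legitimate, the sum being finite) gives the representation $K_n^\delta(\varpi;x,y) = \int_{[-1,1]^m} k_n^{(\a,\b),\delta}(\xi(x,y;u),1)\,\d\tau(u)$ used below.

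For part (1) I would invoke Askey's theorem \cite{Ask}: for $\a \ge \b \ge -\tfrac12$ and $\delta \ge \a+\b+2$ one has $k_n^{(\a,\b),\delta}(s,t) \ge 0$ for all $s,t \in [-1,1]$ and all $n$. In particular $k_n^{(\a,\b),\delta}(\xi(x,y;u),1) \ge 0$, and since $\d\tau$ is a probability measure the representation above yields $K_n^\delta(\varpi;x,y) \ge 0$ on $\Omega\times\Omega$. Hence $S_n^\delta(\varpi;f)(x) = \int_\Omega f(y)\,K_n^\delta(\varpi;x,y)\,\varpi(y)\,\d\sm(y) \ge 0$ whenever $f \ge 0$, which is the claimed nonnegativity.

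For part (2) I would apply Young's inequality (Theorem~\ref{thm:Young}) with $q=p$, $r=1$ to $S_n^\delta(\varpi;f)=f\ast_\varpi g_n$, obtaining
\[
   \|S_n^\delta(\varpi;f)\|_{p,\varpi} \;\le\; \|f\|_{p,\varpi}\,\|g_n\|_{L^1([-1,1],w_{\a,\b})}, \qquad 1 \le p \le \infty
\]
(equivalently, the $L^1$ mapping property \eqref{eq:Tbd} together with Fubini handles $p=1$ and $p=\infty$ directly, and interpolation covers the rest). It thus remains to bound $\|g_n\|_{L^1([-1,1],w_{\a,\b})}$ uniformly in $n$. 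When $\delta \ge \a+\b+2$ the kernel $g_n$ is nonnegative by part (1) and $S_n^\delta$ reproduces constants, so $\|g_n\|_{L^1(w_{\a,\b})}=1$; in the wider range $\delta > \a+\tfrac12$ (with $\a \ge \b \ge -\tfrac12$) the required uniform $L^1$ bound for the $(C,\delta)$--Jacobi kernel at the endpoint is the classical estimate \cite[Theorem 9.1.3]{Sz}.

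The real content is entirely classical and quoted: Askey's positivity for (1) and the one-dimensional endpoint $L^1$ estimate for (2); the remainder is bookkeeping, namely checking that the $(C,\delta)$ average commutes with the integral in the addition formula and that Young's inequality and Lemma~\ref{lem:translateT} apply verbatim with $g=g_n\in L^1([-1,1],w_{\a,\b})$. I do not anticipate a genuine obstacle; the single hardest ingredient is Askey's theorem, which is not reproved here.
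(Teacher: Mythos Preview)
Your proposal is correct and follows essentially the same approach as the paper: both reduce via the addition formula to the one-dimensional Jacobi setting, invoke Askey's positivity \cite{Ask} for part~(1), and the classical $L^1$ endpoint bound \cite[Theorem~9.1.3]{Sz} for part~(2). Your version is in fact more explicit than the paper's, which dispatches the theorem in a single sentence citing the same two references; your use of Young's inequality to package the $L^1$ bound is the natural way to spell out what the paper leaves implicit in the phrase ``boundedness of the kernel.''
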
 

To further explore the one-dimensional structure that leads to the definition of the convolution, we
define an operator $S_{\t,\varpi} f$ as follows. 

\begin{defn}
Assume the addition formula holds for the weight function $\varpi$. For $0\le \t \le \pi$, the translation 
operator $S_{\t,\varpi}$ is defined by 
\begin{equation}\label{eq:Stheta}
    \proj_n (\varpi; S_{\t, \varpi} f) = R_n^{(\a,\b)}(\cos \t) \proj_n(\varpi; f), 
      \quad n = 0,1,2,\ldots.
\end{equation}
\end{defn}

As a corollary of Theorem \ref{thm:cesaro}, a function $f \in L^1(\Omega,\varpi)$ is uniquely determined
by its orthogonal projections $\proj_n (\varpi; f)$, $n \ge 0$. Hence, the operator $S_{\t,\varpi}$ is well 
defined for all $f\in L^1(\Omega, \varpi)$. For the unit sphere with the Lebesgue measure $\d\s$, the 
operator $S_\t$ is an integral operator given by \cite[(2.16)]{DaiX}
$$
    S_{\t, \d\s} f(x) = \frac{1}{\o_{d-1}(\sin \t)^{d-1}} \int_{c(x,\t)} f(y) \d \ell(y),
$$
where $c(x,\t) = \{y: \la x,y\ra =\cos \t\}$ is the spherical cap and $\d \ell$ denotes the Lebesgue measure 
on the cap. For the unit ball $\BB^d$ with the classical weight $W_\mu$, the operator $S_{\t,W_\mu}$ 
can also be written as an explicit integral operator \cite[Theorem 11.2.6]{DaiX}.

\begin{prop}\label{prop:Stheta}
The operator $S_{\t,\varpi}$ satisfies the following properties:
\begin{enumerate}[    \rm (i)]
\item For $f \in L^2(\Omega, \varpi)$ and $g \in L^1([-1,1], w_{\a,\b})$, 
$$
   (f*_\varpi g)(x) = c_{\a,\b} \int_0^\pi S_{\t, \varpi} f(x) g(\cos\t) w_{\a,\b}(\cos \t) \sin \t \d \t. 
$$
\item $S_{\t, \varpi} f$ preserves positivity; that is, $S_{\t, \varpi}  f \ge 0$ if $f \ge 0$. 
\item For $f\in L^p(\varpi; \Omega)$, if $1 \le p \le \infty$, or $f \in C(\Omega)$ if $p =\infty$, 
$$
  \|S_{\t, \varpi} f \|_{p,\varpi} \le \|f\|_{p, \varpi} \quad \hbox{and} \quad \lim_{\t\to 0} \|S_{\t, \varpi}  f - f\|_{\varpi,p} =0.
$$
\end{enumerate}
\end{prop}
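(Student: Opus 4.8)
The plan is to derive all three properties from the spectral definition \eqref{eq:Stheta} together with the convolution identity of part (i), the boundedness \eqref{eq:Tbd} of the translation operator $T^{(\a,\b)}$, and the density/approximation results already available (the Ces\`aro means of Theorem \ref{thm:cesaro}). First I would establish (i) by unwinding the definitions: writing $f \ast_\varpi g$ via $T^{(\a,\b)}g$ as in Definition \ref{defn:7convol}, and then using the fact from Lemma \ref{lem:translateT} that, acting on $\CV_n(\Omega,\varpi)$, the operator $g \mapsto T^{(\a,\b)}g$ has multiplier $\Lambda_n^{(\a,\b)}(g) = c_{\a,\b}\int_{-1}^1 g(t) R_n^{(\a,\b)}(t) w_{\a,\b}(t)\,\d t$. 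Expanding $f$ in its Fourier orthogonal series and comparing multipliers, $(f\ast_\varpi g)$ has $n$-th projection $\Lambda_n^{(\a,\b)}(g)\,\proj_n(\varpi;f)$; on the other hand the right-hand side integral operator $\int_0^\pi S_{\t,\varpi}f\; g(\cos\t) w_{\a,\b}(\cos\t)\sin\t\,\d\t$ has $n$-th projection $\big(c_{\a,\b}\int_0^\pi R_n^{(\a,\b)}(\cos\t) g(\cos\t) w_{\a,\b}(\cos\t)\sin\t\,\d\t\big)\proj_n(\varpi;f)$, and the substitution $t=\cos\t$ shows the two scalar multipliers coincide. Since (by Theorem \ref{thm:cesaro} and the remark after it) an $L^1$ function is determined by its projections, (i) follows.

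For (ii), I would represent $S_{\t,\varpi}f$ concretely as an integral against $T^{(\a,\b)}$ applied to an approximate identity and pass to the limit, or more cleanly: take $g$ in (i) to run over a $\delta$-approximation of the point mass at $\cos\t$ built from nonnegative Ces\`aro-type kernels (the kernels $k_n^{(\a,\b),\delta}$ are nonnegative for $\delta\ge\a+\b+2$ by Theorem \ref{thm:cesaro}(1)), so that $f\ast_\varpi g$ is nonnegative whenever $f\ge0$ — because $T^{(\a,\b)}$ preserves positivity, being an average over $u$ of composition with $\xi(x,y;u)$ — and then $S_{\t,\varpi}f$ arises as a limit of such nonnegative quantities; alternatively one inverts (i) by choosing $g$ concentrating at $\t$ and uses that $(f\ast_\varpi g)/(\text{mass of }g)\to S_{\t,\varpi}f$. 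Either route gives positivity. For the contraction bound in (iii), the cleanest argument is: from (i), test $S_{\t,\varpi}f$ against arbitrary $h\in L^{p'}(\Omega,\varpi)$ and use the mixed-norm estimate coming from \eqref{eq:Tbd}; more directly, observe that $S_{\t,\varpi}f(x) = \int_{[-1,1]^m} f\big(\text{point determined by }\xi(\cdot,\cdot;u)\text{ at level }\cos\t\big)\,\d\tau(u)$ in the cases where this has a kernel, and in general use the operator-norm bound $\|T^{(\a,\b)}g(x,\cdot)\|_{L^p(\Omega,\varpi)}\le\|g\|_{L^p([-1,1],w_{\a,\b})}$ from \eqref{eq:Tbd} together with a representation of $S_{\t,\varpi}$ as $T^{(\a,\b)}$ applied to the (signed, but $L^1$-normalized-to-$1$) "evaluation at $\cos\t$" — concretely obtained as a weak-$*$ limit of the nonnegative normalized kernels above — so that $\|S_{\t,\varpi}f\|_{p,\varpi}\le\|f\|_{p,\varpi}$; the $p=1$ and $p=\infty$ endpoints follow from the probability-measure normalization of $\d\tau$ and the preservation of positivity, and interpolation (or a direct Jensen/Minkowski argument) covers $1<p<\infty$.

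Finally, for the convergence $\lim_{\t\to0}\|S_{\t,\varpi}f-f\|_{p,\varpi}=0$: on a polynomial $Q\in\Pi_N(\Omega)$ this is immediate from \eqref{eq:Stheta}, since $\proj_n(\varpi;S_{\t,\varpi}Q)=R_n^{(\a,\b)}(\cos\t)\proj_n(\varpi;Q)$ and $R_n^{(\a,\b)}(\cos\t)\to R_n^{(\a,\b)}(1)=1$ as $\t\to0$ for each of the finitely many $n\le N$, whence $S_{\t,\varpi}Q\to Q$ uniformly, hence in $L^p$. For general $f$, approximate by a polynomial $Q$ with $\|f-Q\|_{p,\varpi}<\ve$ (density of polynomials, which follows from Theorem \ref{thm:cesaro}(2)), and use the uniform contraction bound from (iii):
\[
\|S_{\t,\varpi}f-f\|_{p,\varpi}\le\|S_{\t,\varpi}(f-Q)\|_{p,\varpi}+\|S_{\t,\varpi}Q-Q\|_{p,\varpi}+\|Q-f\|_{p,\varpi}\le 2\ve+\|S_{\t,\varpi}Q-Q\|_{p,\varpi},
\]
and let $\t\to0$. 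The case $p=\infty$ is identical with $C(\Omega)$ in place of $L^\infty$. The main obstacle is making the "evaluation at $\cos\t$" heuristic rigorous — i.e. justifying that $S_{\t,\varpi}$ really is given by $T^{(\a,\b)}$ applied to a (limit of) nonnegative kernels with the stated boundedness — since $\delta_{\cos\t}$ is not in $L^1([-1,1],w_{\a,\b})$; the safe way is to define everything as weak-$*$ limits of the Ces\`aro kernels of part (i), carry the bounds of Lemma \ref{lem:translateT} and Theorem \ref{thm:Young} through the limit, and only then read off positivity and the contraction estimate.
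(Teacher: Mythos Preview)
Your proposal is correct and follows essentially the same route as the paper: (i) is obtained by comparing projections via \eqref{eq:FHformula}; (ii) and the contraction in (iii) are obtained by realizing $S_{\t,\varpi}f$ as a limit of $f\ast_\varpi g_n$ with nonnegative $g_n$ concentrating at $\cos\t$ (the paper uses generic bump functions rather than Ces\`aro kernels, which is simpler since the Ces\`aro kernels peak at $1$, not at an arbitrary $\cos\t$), then passing the Young bound \eqref{eq:Young} through the limit via Fatou; and the convergence in (iii) is exactly your density-plus-contraction argument, with the paper taking $Q=S_n^\delta(\varpi;f)$ as the approximating polynomial.
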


\begin{proof}
From the Fubini theorem and  \eqref{eq:FHformula}, it is easy to see that 
\begin{align*}
  \proj_n (\varpi; f*_\varpi g) \, & = \Lambda_n^{(\a,\b)}(g) \proj_n (\varpi; f) \\ 
     & = c_{\a,\b} \int_0^\pi \proj_n (\varpi; S_{\t,\varpi} f) g(\cos \t) w_{\a,\b}(\cos \t) \sin \t \d \t, 
\end{align*}
from which (i) follows. To prove (ii), we let $g_n$ be a non-negative function such that $g_n(\cos \t)$ is 
supported on $[-\f1n, \f1n]$ and $\int_0^\pi g_n(\cos \t) w_{\a,\b}(\t) \sin \t \d \t = 1$. Then, using
the expression in (i), $f \ast_\varpi g_n$ converges to $S_{\t,\varpi} f$, which proves (ii). Moreover, 
by \eqref{eq:Young}, $\|f\ast_\varpi g_n \|_{p, \varpi} \le \|f\|_{p,\varpi}$, so that by the Fatou lemma, 
$\|S_{\t, \varpi} f \|_{p,\varpi} \le \|f\|_{p, \varpi}$. Finally, if $f_n = S_n^\delta(\varpi;f)$ with $\delta \ge 
\a+\b+2$, then $f_n \ge 0$ and $f_n \to f$ in $L^p(\Omega, \varpi)$; furthermore, by \eqref{eq:Stheta},
$S_{\t,\varpi} f_n$ converges to $f_n$ for $\t \to 0$; consequently, by the triangle inequality, 
$\|S_{\t, \varpi}  f - f\|_{p,\varpi}$ converges to $0$ when $\t \to 0$. This completes the proof. 
\end{proof}

The operator $S_{\t,\varpi}$ can be used to define a modulus of smoothness. For $r > 0$, we defined 
the $r$-th difference operator 
$$
  \triangle_{\t,\varpi}^r = \left(I - S_{\t,\varpi}\right)^{r/2}  =\sum_{n=0}^\infty (-1)^n \binom{r/2}{n} (S_{\t,\varpi})^n,
$$
where $I$ denote the identity operator, in the distribution sense, by
\begin{equation}
\proj_n \left(\varpi; \triangle_{\t,\varpi}^r f\right)= \left(1- R_n^{(\a,\b)}(\cos \theta)\right)^{r/2} \proj_n(\varpi; f),
         \quad n=0,1,2,\cdots.
\end{equation}

\begin{defn} \label{def:modulus}
Let $r >0$ and $0 < \t < \pi$. For $f\in L^p(\Omega,\varpi)$ and $1\leq p<\infty$ or $f\in C(\Omega)$ and $p=\infty$,
the weighted $r$th order modulus of smoothness is defined by
\begin{equation}\label{moduli1}
\omega_r(f,t)_{p,\varpi}:= \sup_{0< \theta \le t} \left\|\triangle_{\t,\varpi}^r f\right\|_{p,\varpi}, \quad  0 < t <\pi.
\end{equation}
\end{defn}

For the unit sphere with the Lebesuge measure, the definition of this modulus of smoothness is classical. 
For weighed approximation on the unit sphere, the unit ball, and the simplex, it is given in \cite{X05}. 
The proof of the following lemma is standard, see \cite[Proposition 10.1.2]{DaiX} for example, 
and will be omitted. 

\begin{prop}\label{prop-1-2-ch10}
Let $f\in L^p(\Omega,\varpi)$ if $1\leq p<\infty$ and $f\in C(\Omega)$ if $p=\infty$. Then
\begin{enumerate}
\item $\o_r(f,t)_{p,\varpi}\leq 2^{r+2} \|f\|_{p,\varpi}$;
\item $\omega_r(f,t)_{p,\varpi} \to 0$ if $t \to 0+$;
\item $\omega_r(f,t)_{p,\varpi}$ is monotone nondecreasing on $(0,\pi)$;
\item $\omega_r(f+g,t)_{p,\varpi} \le \omega_r(f,t)_{\kappa,p}+ \omega_r(g,t)_{p,\varpi}$;
\item For $0 < s < r$,
$$
\omega_r(f,t)_{p,\varpi} \le 2^{(r-s)+2} \omega_s(f,t)_{p,\varpi}.
$$
\end{enumerate}
\end{prop}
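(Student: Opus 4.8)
The plan is to reduce all five assertions to two structural facts about the operators $(I-S_{\t,\varpi})^{\rho}$, $\rho>0$, which the paper has defined in the multiplier sense by $(I-S_{\t,\varpi})^{\rho}f=\sum_{n=0}^\infty(-1)^n\binom{\rho}{n}(S_{\t,\varpi})^nf$ (so that $\triangle^r_{\t,\varpi}=(I-S_{\t,\varpi})^{r/2}$): first, that each $(I-S_{\t,\varpi})^{\rho}$ is bounded on $L^p(\Omega,\varpi)$, $1\le p\le\infty$ (with $C(\Omega)$ when $p=\infty$), with operator norm at most $2^{\rho+1}$; and second, that these operators compose additively in the exponent, so that in particular $\triangle^r_{\t,\varpi}=(I-S_{\t,\varpi})^{(r-s)/2}\circ\triangle^s_{\t,\varpi}$ for $0<s<r$. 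Once these are in hand, each item falls out quickly, so the bulk of the work is in establishing them.

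For the norm bound, I would invoke Proposition \ref{prop:Stheta}(iii), iterated, to get $\|(S_{\t,\varpi})^nf\|_{p,\varpi}\le\|f\|_{p,\varpi}$; the binomial series then converges absolutely in $L^p(\Omega,\varpi)$ with $\|(I-S_{\t,\varpi})^{\rho}f\|_{p,\varpi}\le\bigl(\sum_{n\ge0}|\binom{\rho}{n}|\bigr)\|f\|_{p,\varpi}$. To estimate the sum I would write $\rho=m+\beta$ with $m=\lfloor\rho\rfloor$ and $0\le\beta<1$; since for $0<\beta<1$ one has $(1-x)^\beta=1-\sum_{n\ge1}|\binom{\beta}{n}|x^n$ with nonnegative coefficients whose sum is $1-(1-1)^\beta=1$ by Abel's theorem (and the sum is trivially $1$ when $\beta=0$), in every case $\sum_{n\ge0}|\binom{\beta}{n}|\le2$; combining this with Vandermonde's identity $\binom{\rho}{n}=\sum_k\binom{m}{k}\binom{\beta}{n-k}$ gives $\sum_{n\ge0}|\binom{\rho}{n}|\le2^m\cdot2\le2^{\rho+1}$. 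Taking $\rho=r/2$ yields $\|\triangle^r_{\t,\varpi}f\|_{p,\varpi}\le2^{r/2+1}\|f\|_{p,\varpi}\le2^{r+2}\|f\|_{p,\varpi}$, and a supremum over $0<\t\le t$ proves (1). For the composition, I would observe that both $\triangle^r_{\t,\varpi}f$ and $(I-S_{\t,\varpi})^{(r-s)/2}(\triangle^s_{\t,\varpi}f)$ — the latter well defined in $L^p$ by (1) with exponent $s$ — have $n$th orthogonal projection equal to $(1-R_n^{(\a,\b)}(\cos\t))^{r/2}\proj_n(\varpi;f)$ (here one uses $|R_n^{(\a,\b)}(\cos\t)|\le1$, valid since $\a\ge\b\ge-\f12$, to identify the scalar series $\sum_k(-1)^k\binom{(r-s)/2}{k}R_n^k$ with $(1-R_n^{(\a,\b)}(\cos\t))^{(r-s)/2}$); since an $L^1(\Omega,\varpi)$ function is determined by its projections (a corollary of Theorem \ref{thm:cesaro}), the two functions coincide, and the norm bound with $\rho=(r-s)/2$ then gives $\|\triangle^r_{\t,\varpi}f\|_{p,\varpi}\le2^{(r-s)/2+1}\|\triangle^s_{\t,\varpi}f\|_{p,\varpi}\le2^{(r-s)+2}\|\triangle^s_{\t,\varpi}f\|_{p,\varpi}$, and a supremum over $\t$ proves (5). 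Items (3) and (4) are then immediate: (3) because $\omega_r(f,t)$ is a supremum over $\{0<\t\le t\}$, a set that increases with $t$; (4) because $\triangle^r_{\t,\varpi}$ is linear, so $\|\triangle^r_{\t,\varpi}(f+g)\|_{p,\varpi}\le\|\triangle^r_{\t,\varpi}f\|_{p,\varpi}+\|\triangle^r_{\t,\varpi}g\|_{p,\varpi}\le\omega_r(f,t)_{p,\varpi}+\omega_r(g,t)_{p,\varpi}$ for each $\t\le t$, whence the supremum.

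For (2) I would argue by density of polynomials. If $P$ has degree $N$, then $\triangle^r_{\t,\varpi}P=\sum_{n=0}^N(1-R_n^{(\a,\b)}(\cos\t))^{r/2}\proj_n(\varpi;P)$ is a finite sum, and since $R_n^{(\a,\b)}$ is continuous with $R_n^{(\a,\b)}(1)=1$, each scalar coefficient tends to $0$ as $\t\to0$, so $\|\triangle^r_{\t,\varpi}P\|_{p,\varpi}\to0$. For arbitrary $f$ and $\ve>0$, I would pick a polynomial $P$ with $\|f-P\|_{p,\varpi}<\ve$ (polynomials are dense, e.g. because the Ces\`aro means of Theorem \ref{thm:cesaro} are uniformly bounded and reproduce polynomials), so that by (1),
\begin{equation*}
\|\triangle^r_{\t,\varpi}f\|_{p,\varpi}\le\|\triangle^r_{\t,\varpi}(f-P)\|_{p,\varpi}+\|\triangle^r_{\t,\varpi}P\|_{p,\varpi}\le2^{r+2}\ve+\|\triangle^r_{\t,\varpi}P\|_{p,\varpi}.
\end{equation*}
Letting $\t\to0$ gives $\limsup_{\t\to0}\|\triangle^r_{\t,\varpi}f\|_{p,\varpi}\le2^{r+2}\ve$ for every $\ve$, hence $\|\triangle^r_{\t,\varpi}f\|_{p,\varpi}\to0$; together with the monotonicity (3) this forces $\omega_r(f,t)_{p,\varpi}\to0$ as $t\to0+$.

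The routine parts are the algebraic properties (3) and (4) and the density argument for (2). The one place that calls for genuine care is the fractional-order bookkeeping in the first step: absolute convergence of the binomial series defining $\triangle^r_{\t,\varpi}$, the uniform bound $\sum_{n\ge0}|\binom{r/2}{n}|\le2^{r/2+1}$, and the legitimacy of composing the multipliers $(I-S_{\t,\varpi})^{\rho}$, which rests on the uniqueness of the orthogonal expansion. When $r$ is an even integer all of this collapses to a finite binomial sum, and the whole proposition reduces to the contractivity and approximate-identity properties of $S_{\t,\varpi}$ from Proposition \ref{prop:Stheta} together with the elementary manipulations above.
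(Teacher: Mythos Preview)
Your proof is correct and follows the standard approach; the paper itself omits the proof entirely, deferring to \cite[Proposition 10.1.2]{DaiX}, and your argument is precisely the one found there: bound $\sum_n|\binom{\rho}{n}|\le 2^{\rho+1}$ via the fractional binomial series and Vandermonde, use contractivity of $S_{\t,\varpi}$ from Proposition~\ref{prop:Stheta}, compose multipliers via uniqueness of the orthogonal expansion, and handle (2) by density of polynomials.
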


\subsection{Characterization of best approximation}\label{sec:bestapp}

Let $\sw$ be a doubling weight on $\Omega$. For $f\in L^p(\Omega, \sw)$, we denote by 
$E_n(f)_{p, \sw}$ the best approximation to $f$ from $\Pi_n(\Omega)$ in the norm $\|\cdot\|_{p, \sw}$ 
of $L^p(\Omega, \sw)$; that is, 
$$
      E_n(f)_{p, \sw}:= \inf_{g \in \Pi_n(\Omega)} \|f - g\|_{p, \sw}, \qquad 1 \le p \le \infty.
$$
A central problem of approximation theory is to character this quantity by the smoothness of the functions, 
usually in terms of a modulus of smoothness or a $K$-functional, which are often equivalent. In this 
subsection, we state our main results on the characterization of the best approximation. 

The modulus of smoothness is already defined. We now define the $K$-functional via the differential 
operator $\fD_\varpi$ in \eqref{eq:LBoperator}. Since the operator $-\fD_\varpi$ has nonnegative 
eigenvalues, it is a non-negative operator. A function $f \in L^p(\Omega;\varpi)$ belongs to the Sobolev 
space $W_p^r (\Omega; \varpi)$ if there is a function $g \in L^p(\Omega; \varpi)$, which we denote by 
$(-\fD_\varpi)^{\f r 2}f$, such that  
\begin{equation} \label{eq:fracDiff}
  \proj_n\left(\varpi; (-\fD_\varpi)^{\f r 2}f\right) = \mu(n)^{\f r 2} \proj_n(\varpi; f),
\end{equation}
where we assume that $f, g \in C(\Omega)$ when $p = \infty$. The fractional differential operator 
$(-\fD_\varpi)^{\f r 2}f$ is a linear operator on the space $W_p^r (\Omega; \varpi)$ defined by 
\eqref{eq:LBoperator}.

Let $\sw$ be a doubling weight. We denote by $W_p^r(\Omega, \sw)$ the Sobolev space that
consists of $f \in L^p(\Omega, \sw)$ such that $(-\fD_\varpi)^{\f r 2}f \in L^p(\Omega, \sw)$. The  
space is well defined as we shall see later in Theorem \ref{thm:BernsteinLB}. We define our $K$-functional
as follows. 

\begin{defn}
Let $r > 0$ and $1 \le p \le \infty$. The $r$-th $K$-functional of $f\in L^p(\Omega, \sw)$ is defined by 
$$
   K_r(f,t)_{p,\sw} : = \inf_{g \in W_p^r(\Omega, \sw)}
      \left \{ \|f-g\|_{p,\sw} + t^r\left\|(-\fD_\varpi)^{\f r 2}g \right\|_{p,\sw} \right \}.
$$
\end{defn}

We first state of our characterization theorem in terms of the $K$-functional, which contains two parts, 
the first part is the direct estimate or the Jackson inequality, and the second part is the inverse estimate. 
For the first part, we need another assertion to ensure that a Bernstein inequality holds. This is 
Assertion 5 and it will be stated in Subsection \ref{set:Bernstein}.

\begin{thm}\label{thm:Enf-Kfunctional}
Let $\varpi$ be a weight function that admits Assertions 1, 3 and 5. Let $f \in L^p(\Omega, \varpi)$ 
if $1 \le p < \infty$ and $f\in C(\Omega)$ if $p = \infty$. Then for $r > 0$ and $n =1,2,\ldots$, 
\begin{enumerate} [   (i)]
\item the direct estimate   
$$
    E_n(f)_{p,\varpi} \le c K_r (f;n^{-1})_{p,\varpi};
$$
\item the inverse estimate  
$$
   K_r(f;n^{-1})_{p,\varpi} \le c n^{-r} \sum_{k=0}^n (k+1)^{r-1}E_k(f)_{p, \varpi}.
$$
\end{enumerate}
\end{thm}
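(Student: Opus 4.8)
The plan is to prove the two estimates essentially independently, each by reducing to the well-understood Jacobi case via the convolution structure of Subsection~3.2. For the direct estimate (i), the strategy is to use the near-best approximation operator $L_n(\varpi)*f$ (with a cut-off of type (a)), which reproduces $\Pi_n(\Omega)$ and is a polynomial of degree at most $2n$. First I would show $E_n(f)_{p,\varpi}\le\|f-L_n(\varpi)*f\|_{p,\varpi}\le c\,\|f-g\|_{p,\varpi}+c\,\|g-L_n(\varpi)*g\|_{p,\varpi}$ for any $g\in W_p^r(\Omega,\varpi)$, using the $L^p$-boundedness of $L_n(\varpi)*$ (which follows from Assertions 1 and 3 via \eqref{eq:Ln-bdd} with $p=1$ and Young-type estimates). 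The first term is already of the right form; the second requires a Jackson-type inequality $\|g-L_n(\varpi)*g\|_{p,\varpi}\le c\,n^{-r}\|(-\fD_\varpi)^{r/2}g\|_{p,\varpi}$. To get this, I would express $g-L_n(\varpi)*g=\sum_k\big(1-\wh a(k/n)\big)\proj_k(\varpi;g)$, insert the factor $\mu(k)^{-r/2}\mu(k)^{r/2}$, recognize $\mu(k)^{r/2}\proj_k(\varpi;g)=\proj_k(\varpi;(-\fD_\varpi)^{r/2}g)$ by \eqref{eq:fracDiff}, and bound the resulting multiplier operator with symbol $\big(1-\wh a(k/n)\big)\mu(k)^{-r/2}$ — which is supported on $k\ge n$ and behaves like $n^{-r}$ times a smooth admissible-type multiplier — using the convolution/addition-formula machinery (the Jacobi multiplier estimates underlying Theorem~\ref{thm:cesaro} and Lemma~\ref{lem:translateT}). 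Taking the infimum over $g$ then yields $E_n(f)_{p,\varpi}\le cK_r(f;n^{-1})_{p,\varpi}$.

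For the inverse estimate (ii), the plan is the classical telescoping argument built on a Bernstein inequality. Let $g_k\in\Pi_{2^k}(\Omega)$ be a near-best approximant with $\|f-g_k\|_{p,\varpi}\le cE_{2^k}(f)_{p,\varpi}$. Choosing $m$ with $2^m\le n<2^{m+1}$, I would estimate $K_r(f;n^{-1})_{p,\varpi}$ by testing with $g=g_m$: the term $\|f-g_m\|_{p,\varpi}$ is controlled by $E_{2^m}(f)_{p,\varpi}$, and for the term $n^{-r}\|(-\fD_\varpi)^{r/2}g_m\|_{p,\varpi}$ I would write $g_m=g_0+\sum_{j=1}^m(g_j-g_{j-1})$, apply the Bernstein inequality $\|(-\fD_\varpi)^{r/2}h\|_{p,\varpi}\le c\,N^r\|h\|_{p,\varpi}$ for $h\in\Pi_N(\Omega)$ (Assertion~5, stated in Subsection~\ref{set:Bernstein}) to each piece $g_j-g_{j-1}\in\Pi_{2^j}(\Omega)$, and use $\|g_j-g_{j-1}\|_{p,\varpi}\le c\,E_{2^{j-1}}(f)_{p,\varpi}$. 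This gives $n^{-r}\|(-\fD_\varpi)^{r/2}g_m\|_{p,\varpi}\le c\,n^{-r}\sum_{j=0}^m 2^{jr}E_{2^{j-1}}(f)_{p,\varpi}$, and a standard discretization/regrouping (monotonicity of $E_k$ in $k$) converts the dyadic sum $\sum_j 2^{jr}E_{2^{j-1}}$ into $\sum_{k=0}^n(k+1)^{r-1}E_k(f)_{p,\varpi}$, yielding the claimed bound.

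The main obstacle I expect is the Jackson-type multiplier estimate in part (i), i.e.\ establishing that the operator with symbol $\big(1-\wh a(k/n)\big)\mu(k)^{-r/2}$ is bounded on $L^p(\Omega,\varpi)$ with norm $O(n^{-r})$, uniformly in $n$ and $p\in[1,\infty]$. Because $\mu$ is only a quadratic polynomial and $r>0$ is arbitrary (in particular fractional and possibly small), the symbol $\mu(k)^{-r/2}$ is not itself compactly supported or even smooth at $k=0$, so one must first peel off the low-frequency part (a fixed polynomial, handled trivially) and then argue that $n^r\big(1-\wh a(k/n)\big)\mu(k)^{-r/2}$ is, after rescaling, a function of $k/n$ of the form covered by the Jacobi-multiplier transference lemma — this requires care about the behavior of $\mu(k)^{-r/2}$ as $k\to\infty$ (it decays like $k^{-r}$, matching the $n^{-r}$ gain) and about uniform control of derivatives. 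This is where the precise hypotheses ($\mu$ a nonnegative quadratic, the addition-formula boundedness \eqref{eq:Tbd}, and the Jacobi kernel estimates of Theorem~\ref{thm:Jacobi-kernel}) must be combined carefully. The inverse estimate (ii), by contrast, is routine once Assertion~5 is in hand; its only subtlety is the bookkeeping in converting the dyadic sum to the stated form, which is entirely standard.
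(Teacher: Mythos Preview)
Your proposal is correct and follows essentially the same route as the paper. For part (i) the paper does exactly what you outline, packaging the Jackson step as Theorem~\ref{thm:f-LnfD}: the multiplier $\big(1-\wh a(k/n)\big)\mu(k)^{-r/2}$ is handled by an $(\ell+1)$-fold summation by parts against the Ces\`aro $(C,\ell)$ means, whose uniform $L^p$-boundedness comes from Theorem~\ref{thm:cesaro}; since the support of $1-\wh a(k/n)$ is $k\ge n+1$, the behaviour of $\mu(k)^{-r/2}$ near $k=0$ never enters, and the finite-difference estimate $|\triangle^{\ell+1}b_k|\le ck^{-r-\ell-1}$ is elementary. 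For part (ii) the paper uses $g_m=L_{2^m}(\varpi)*f$ rather than an abstract near-best approximant, but otherwise the telescoping and Bernstein argument is identical to yours; note that Assertion~5 is the \emph{kernel} estimate for $L_n^{(r)}(\varpi;\cdot,\cdot)$, from which the Bernstein inequality $\|(-\fD_\varpi)^{r/2}h\|_{p,\varpi}\le cN^r\|h\|_{p,\varpi}$ for $h\in\Pi_N(\Omega)$ follows (for the weight $\varpi$ itself) directly by combining Assertion~5 with Assertion~3 at $p=1,\infty$ and interpolating.
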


It is worth mentioning that the direct estimate holds for the weight norm of $\varpi$, whereas the inverse 
estimate holds for all doubling weight. We can also replace the above characterization by the modulus of 
smoothness $\varpi_r (f;t)_{p,\varpi}$, since the two quantities will be shown to be 
equivalent.

\begin{thm}\label{thm:K=omega}
Let $\varpi$ be a weight function that admits Assertions 1, 3 and 5. Let $f \in L^p(\Omega,\varpi)$ if 
$1 \le p < \infty$ and $f\in C(\Omega)$ if $p = \infty$. If $0 < \t \le \pi/2$ and $r > 0$, then 
\begin{equation}\label{eq:K=omega}
   c_1 K_r(f;\t)_{p,\varpi} \le \o_r(f;\t)_{p,\varpi} \le c_2 K_r(f;\t)_{p,\varpi}. 
\end{equation}
\end{thm}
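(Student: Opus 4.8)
The plan is to prove the two inequalities separately, exploiting the fact that both quantities are defined via Fourier multipliers acting on the orthogonal projections $\proj_n(\varpi;f)$, so that everything reduces, via the addition formula and the convolution structure of Subsection~3.2, to a Jacobi (one-dimensional) multiplier computation combined with the boundedness estimates already available: Young's inequality (Theorem~\ref{thm:Young}), the boundedness of $S_{\t,\varpi}$ on $L^p$ (Proposition~\ref{prop:Stheta}(iii)), and the Bernstein and Jackson machinery from Theorem~\ref{thm:Enf-Kfunctional}. First I would record the spectral description: writing $\eta_n = \mu(n)^{1/2}$ and $\rho_n(\t) = 1 - R_n^{(\a,\b)}(\cos\t)$, the operators satisfy $\proj_n(\varpi;(-\fD_\varpi)^{r/2}g) = \eta_n^{r}\proj_n(\varpi;g)$ and $\proj_n(\varpi;\triangle_{\t,\varpi}^r f) = \rho_n(\t)^{r/2}\proj_n(\varpi;f)$. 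The key analytic input is the two-sided comparison $\rho_n(\t) \sim \min\{1, n^2\t^2\}$ for $0<\t\le\pi/2$, i.e.\ $1 - R_n^{(\a,\b)}(\cos\t) \sim (n\t)^2$ when $n\t \le 1$ and $\sim 1$ otherwise; this is a classical Jacobi estimate (cf.\ the behaviour of $P_n^{(\a,\b)}(\cos\t)/P_n^{(\a,\b)}(1)$ near $\t=0$) and should be cited or proved in a short lemma.

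For the upper bound $\o_r(f;\t)_{p,\varpi} \le c\,K_r(f;\t)_{p,\varpi}$: given any $g \in W_p^r(\Omega,\varpi)$, I would estimate $\|\triangle_{\t,\varpi}^r f\|_{p,\varpi} \le \|\triangle_{\t,\varpi}^r(f-g)\|_{p,\varpi} + \|\triangle_{\t,\varpi}^r g\|_{p,\varpi}$. The first term is $\le c\|f-g\|_{p,\varpi}$ because $\triangle_{\t,\varpi}^r = (I - S_{\t,\varpi})^{r/2}$ is a bounded operator on $L^p(\Omega,\varpi)$ uniformly in $\t$ — this follows by writing its kernel, through the addition formula, as $T^{(\a,\b)}$ applied to a Jacobi-side function whose $L^1(w_{\a,\b})$ norm is bounded independently of $\t$ (using $\|S_{\t,\varpi}\|_{p\to p}\le 1$ and the binomial expansion, which converges absolutely since $\sum_n |\binom{r/2}{n}| < \infty$ when... actually the safe route is to represent $(I-S_{\t,\varpi})^{r/2}$ via a Jacobi multiplier and invoke a Marcinkiewicz-type multiplier bound for Cesàro-summable Jacobi expansions, or simply to bound it by $2^{r/2}$ times an appropriate average). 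The second term requires a "Bernstein-type" bound $\|\triangle_{\t,\varpi}^r g\|_{p,\varpi} \le c\,\t^r\|(-\fD_\varpi)^{r/2}g\|_{p,\varpi}$, which on the Fourier side is the pointwise inequality $\rho_n(\t)^{r/2} \le c\,\t^r \eta_n^r$, i.e.\ $\rho_n(\t) \le c(n\t)^2$, valid for all $n$ and $\t \le \pi/2$ by the comparison lemma (using $\mu(n)\sim n^2$); turning this multiplier bound into an $L^p$ operator bound again goes through the convolution structure and a Jacobi multiplier estimate. Taking the infimum over $g$ and then the supremum over $0<\t'\le\t$ gives the claim.

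For the lower bound $K_r(f;\t)_{p,\varpi} \le c\,\o_r(f;\t)_{p,\varpi}$: the natural choice of near-extremal $g$ is $g = L_n * f$ (or a near-best polynomial approximant) with $n \sim 1/\t$. Then $\|f - g\|_{p,\varpi} \le c\,E_{\lfloor c/\t\rfloor}(f)_{p,\varpi}$, and by the direct estimate of Theorem~\ref{thm:Enf-Kfunctional}(i) combined with the already-established equivalence $E_n(f)_{p,\varpi} \le c\,\o_r(f;1/n)_{p,\varpi}$ — which itself follows from the direct estimate plus the upper bound already proved in this theorem, or more directly by estimating $\|f - L_n*f\|_{p,\varpi}$ through the Jacobi-side decomposition of $L_n^{(\a,\b)}$ and the definition of $\triangle_{\t,\varpi}^r$ — one gets $\|f-g\|_{p,\varpi} \le c\,\o_r(f;\t)_{p,\varpi}$. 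For the second term $\t^r\|(-\fD_\varpi)^{r/2}g\|_{p,\varpi}$ with $g$ a polynomial of degree $\le 2n$, I would use a Bernstein inequality for $(-\fD_\varpi)^{r/2}$ on polynomials, namely $\|(-\fD_\varpi)^{r/2}P\|_{p,\varpi} \le c\,n^r\|P\|_{p,\varpi}$ for $P \in \Pi_{n}$ (this is part of the Bernstein package underlying Theorem~\ref{thm:Enf-Kfunctional}, Assertion~5), applied to $P = L_n*f$; combined with an inverse-type estimate relating $\|L_n*f\|$ or rather $\|L_{2n}*f - L_n*f\|$-blocks to $\o_r$, this yields $\t^r\|(-\fD_\varpi)^{r/2}g\|_{p,\varpi} \le c\,\o_r(f;\t)_{p,\varpi}$. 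A cleaner packaging: use the standard telescoping $g = L_1*f + \sum_{j}(L_{2^{j+1}}*f - L_{2^j}*f)$ up to level $\sim\log(1/\t)$, apply Bernstein to each dyadic block, and sum using the monotonicity of $\o_r$ and part (5) of Proposition~\ref{prop-1-2-ch10}.

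I expect the main obstacle to be the Jacobi multiplier estimates needed to pass from the pointwise multiplier inequalities $\rho_n(\t)^{r/2} \lesssim (n\t)^r \wedge 1$ (and its reverse) to genuine $L^p(\Omega,\varpi)$ operator bounds for fractional powers $(I - S_{\t,\varpi})^{r/2}$ and for $(-\fD_\varpi)^{-r/2}$ restricted to polynomial subspaces — i.e.\ controlling non-integer powers of these operators uniformly. The convolution structure reduces this to multiplier theorems for Fourier–Jacobi expansions, but those require either a known Marcinkiewicz multiplier theorem on $([-1,1],w_{\a,\b})$ or an explicit kernel estimate; I would isolate this as a lemma and either cite it from the Jacobi literature or prove it by writing $(I-S_{\t,\varpi})^{r/2}$ as a Cesàro-type average whose Jacobi-side kernel has bounded $L^1(w_{\a,\b})$ norm uniformly in $\t$, then invoking \eqref{eq:Tbd} and Young's inequality \eqref{eq:Young}. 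Everything else — the telescoping, the use of the direct and inverse estimates, the monotonicity properties of $\o_r$ — is routine once this multiplier lemma is in place.
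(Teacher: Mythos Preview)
Your overall strategy is sound and matches the paper's in outline, but there are two concrete gaps.

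First, in the lower bound you write that $E_n(f)_{p,\varpi} \le c\,\o_r(f;1/n)_{p,\varpi}$ ``follows from the direct estimate plus the upper bound already proved in this theorem.'' This is circular: the direct estimate gives $E_n \le c\, K_r$, and the upper bound you have just established is $\o_r \le c\, K_r$; neither of these, nor their combination, yields $E_n \le c\,\o_r$. What is actually needed is $\|f - L_n(\varpi)*f\|_{p,\varpi} \le c\,\|(I-S_{\t,\varpi})^{r/2}f\|_{p,\varpi}$ proved \emph{directly}. The paper does this by setting $F = (I-S_{\t,\varpi})^{r/2}f$ and $\rho_j = 1-R_j^{(\a,\b)}(\cos\t)$, writing
\[
f - L_n(\varpi)*f = \sum_{j>n} \bigl(1-\wh a(j/n)\bigr)\rho_j^{-r/2}\proj_j(\varpi;F),
\]
and then inserting the algebraic identity $(1-r)^{-1} = \sum_{i=0}^{m+\ell} r^i + r^{m+\ell+1}(1-r)^{-1}$ with $r = 1-\rho_j^{r/2}$. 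This splits the expression into a finite sum of bounded operators applied to $F$ plus a tail; the tail is controlled by the finite-difference estimate \eqref{eq:Rus2} of Lemma~\ref{lem:Rus2} together with Abel summation and Ces\`aro boundedness. Your telescoping alternative over dyadic blocks could be made to work, but each block estimate $\|L_{2^{j+1}}*f - L_{2^j}*f\|_{p,\varpi} \le c\,\o_r(f;2^{-j})_{p,\varpi}$ requires essentially the same multiplier argument, so nothing is saved.

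Second, and this is the point you yourself flag as the main obstacle, the conversion of the pointwise comparison $\rho_n(\t) \sim \min\{1,(n\t)^2\}$ into $L^p$ operator bounds is not handled here by a Marcinkiewicz multiplier theorem nor by an $L^1(w_{\a,\b})$-kernel bound via \eqref{eq:Tbd}. The paper's mechanism is Abel summation by parts $\ell+1$ times combined with the uniform $L^p$ boundedness of the Ces\`aro $(C,\ell)$ means (Theorem~\ref{thm:cesaro}); this reduces everything to showing that the iterated forward differences of the relevant multiplier sequences are summable against $(j+1)^\ell$, which is precisely the content of \eqref{eq:Rus1a} and \eqref{eq:Rus1b} in Lemma~\ref{lem:Rus1}. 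Once these two Jacobi-specific lemmas are isolated, both directions follow cleanly: \eqref{eq:Rus1a} gives $\|(I-S_{\t,\varpi})^{r/2}g\|_{p,\varpi} \le c\,\t^r\|(-\fD_\varpi)^{r/2}g\|_{p,\varpi}$ for polynomials $g$ (your ``Bernstein-type'' bound), and \eqref{eq:Rus1b} gives $n^{-r}\|(-\fD_\varpi)^{r/2}L_n(\varpi)*f\|_{p,\varpi} \le c\,\|(I-S_{\t,\varpi})^{r/2}f\|_{p,\varpi}$ directly, with no need for a separate Bernstein inequality plus telescoping.
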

 
For the unit sphere and the unit ball, these theorems are known to hold when $\varpi$ being reflection 
invariant weight functions. For the unit sphere with the Lebesgue measure, the characterization of best
approximation has a long history. Starting from \cite{BBP, P} for $r =2$, the problem was studied by a number
of authors and finally completed in \cite{Rus}; we refer to \cite[p. 102 and p. 263]{DaiX} for historical notes.
These results are extended to the weighted case, with reflection invariant weight functions, on the unit 
sphere, the unit ball, and the simplex, in \cite{X05}. 

Another pair of $K$-functional and modulus of smoothness was used to characterize the best approximation
for the unit sphere with the Lebesgue measure in \cite{DaiX2}. The approach appears to be very much 
domain-specific and relies on the geometric and the differential structure on the unit sphere.  

We will give the proof of these two theorems in Subsection \ref{sect:bestapprox-proof}, after proving several 
preliminary results that are of interest in their own right. 

\subsection{Near best approximation operator}
In this subsection we study the approximation property of the operator $L_n(\varpi)*f$, defined in \eqref{eq:Lnf}, 
that has the highly localized kernel as its kernel. By \eqref{eq:Ln-LnJacobi}, the operator can be written in 
terms of the convolution operator by 
$$
  L_n(\varpi) *f =  f \ast_\varpi L_n^{(\a,\b)}, \qquad n =0,1,2,\ldots.
$$

We now show that $L_n(\varpi)*f$ provides near-best approximation to $f$. 

\begin{thm} \label{thm:nearbest}
Let $\wh a$ be admissible of type $(a)$. Assume $\varpi$ admits Assertions 1 and 3. Then the operator 
$L_n *f$ satisfies 
\begin{enumerate}[\quad (a)]
\item $L_n(\varpi)*f$ is a polynomial of degree at most $2n$;
\item $L_n(\varpi)*g = g$ for any polynomial $g \in \Pi_n(\Omega)$ of degree $n$;
\item For $f \in L^p(\Omega, \varpi)$, $1 \le p \le \infty$,
\begin{equation*} 
\|L_n(\varpi)*f\|_{p, \varpi} \le c \|f\|_{p, \varpi} \quad \hbox{and} \quad
\|L_n(\varpi)*f - f \|_{p, \varpi} \le c\, E_n (f)_{p, \varpi}.
\end{equation*}
\end{enumerate}
\end{thm}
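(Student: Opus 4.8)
For part (a), since $L_n(\varpi)*f = f *_\varpi L_n^{(\a,\b)}$ and $L_n^{(\a,\b)}(t) = \sum_{j} \wh a(j/n) Z_j^{(\a,\b)}(t)$ is, by the support condition $\mathrm{supp}\,\wh a \subset [0,2]$, a polynomial of degree at most $2n$ in $t$. Plugging into the definition of the convolution, $L_n(\varpi)*f(x) = \int_\Omega f(y) T^{(\a,\b)} L_n^{(\a,\b)}(x,y) \varpi(y)\,\d\sm(y)$, and since $T^{(\a,\b)}$ integrates $g(\xi(x,y;u))$ against $\d\tau(u)$ with $\xi(x,y;u)$ a polynomial in $x$ (here one uses that on the relevant domains $\xi$ is affine or quadratic in the coordinates, matching the $Z_{2n}^\l$ structure and the stated degree bounds \eqref{eq:dimVnS}, \eqref{eq:dimVn}), the result is a polynomial in $x$ of degree at most $2n$. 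Concretely, $T^{(\a,\b)}(Z_j^{(\a,\b)})(x,y) = P_j(\varpi;x,y)$ by \eqref{eq:proj=f*g}, so $L_n(\varpi)*f = \sum_j \wh a(j/n)\proj_j(\varpi;f)$, a finite sum of projections onto $\CV_j$ with $j \le 2n$, hence a polynomial of degree at most $2n$.

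For part (b), using the same expansion $L_n(\varpi)*f = \sum_{j=0}^{2n} \wh a(j/n)\proj_j(\varpi; f)$, and the type-$(a)$ condition $\wh a(t) = 1$ for $t \in [0,1]$, we get $\wh a(j/n) = 1$ for $0 \le j \le n$. If $g \in \Pi_n(\Omega)$, then $\proj_j(\varpi;g) = 0$ for $j > n$, so $L_n(\varpi)*g = \sum_{j=0}^n \proj_j(\varpi;g) = g$ by the orthogonal decomposition.

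For part (c), the norm bound $\|L_n(\varpi)*f\|_{p,\varpi} \le c\|f\|_{p,\varpi}$ follows from the representation $L_n(\varpi)*f(x) = \int_\Omega f(y) L_n(\varpi;x,y)\varpi(y)\,\d\sm(y)$ together with Schur's test: by Assertion 1 and Assertion 3 (with $\k$ large), $\sup_x \int_\Omega |L_n(\varpi;x,y)|\varpi(y)\,\d\sm(y) \le c$ and by symmetry the same bound holds with the roles of $x,y$ swapped, using additionally Lemma \ref{lem:doublingLem}(ii) (equivalently \eqref{eq:w(x)/w(y)}) to absorb the mismatch between $\varpi(B(x,n^{-1}))$ and $\varpi(B(y,n^{-1}))$ into the polynomial decay factor; this gives $L^1 \to L^1$ and $L^\infty \to L^\infty$ boundedness, and $L^p$ boundedness for $1 < p < \infty$ by interpolation. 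The near-best estimate then follows from (a), (b): for any $g \in \Pi_n(\Omega)$, $L_n(\varpi)*f - f = L_n(\varpi)*(f-g) - (f-g)$, so $\|L_n(\varpi)*f - f\|_{p,\varpi} \le (1 + c)\|f-g\|_{p,\varpi}$; taking the infimum over $g \in \Pi_n(\Omega)$ yields $\|L_n(\varpi)*f - f\|_{p,\varpi} \le c\, E_n(f)_{p,\varpi}$.

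The main obstacle here is the uniform $L^1$-type bound underpinning part (c): one must verify that the kernel estimate in Assertion 1 really does integrate to a bounded quantity after accounting for the asymmetry of the weights $\sqrt{\varpi(B(x,n^{-1}))}\sqrt{\varpi(B(y,n^{-1}))}$ in the denominator. This is exactly what Lemma \ref{lem:CorA3} and its Corollary \eqref{eq:Ln-bdd} (with $p=1$) are designed to supply, so the argument reduces to invoking those. Everything else is a formal consequence of the cut-off function properties and the orthogonal decomposition.
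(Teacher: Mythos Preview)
Your proof is correct and follows essentially the same route as the paper: parts (a) and (b) via the expansion $L_n(\varpi)*f = \sum_j \wh a(j/n)\proj_j(\varpi;f)$ together with the support and type-(a) properties of $\wh a$, and part (c) via the kernel bound from Assertion~1 combined with \eqref{eq:CorA3} (i.e.\ the $p=1$ case of \eqref{eq:Ln-bdd}) to get $L^1$ and $L^\infty$ boundedness, then interpolation, then the standard triangle-inequality argument using (b). One small remark: your detour in (a) through the addition formula and the polynomial structure of $\xi(x,y;u)$ is unnecessary and not quite justified in this generality (Definition~\ref{defn:additionF} does not assert that $\xi$ is polynomial in $x$); the clean argument is exactly the one you give at the end of that paragraph, using $T^{(\a,\b)}(Z_j^{(\a,\b)}) = P_j(\varpi;\cdot,\cdot)$ to reduce to a finite sum of projections.
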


\begin{proof}
The first two properties are immediate consequences of the definition. For the third property, in the case $p=1$
we obtain 
$$
\|L_n(\varpi)*f\|_{1,\varpi} \le c  \|f\|_{1,\varpi} \max_{x\in \Omega}  \int_{\Omega} \left| L_n\big(\varpi; x,y)\right|
    \varpi(y) \d \sm(y).
$$
By Assertion 1 and \eqref{eq:CorA3} with $p =1$ and a large $\tau$, the integral in the right-hand side is 
bound by
\begin{align*}
 \max_{x\in \Omega} \int_{\Omega} \frac{ \varpi(y) \d \sm(y)}
       {\sqrt{\varpi\left(B\left(y,n^{-1}\right)\right)}\sqrt{\varpi\left(B\left(x,n^{-1}\right)\right)}       
        \big(1 + n \sd(x,y) \big)^{\tau}}  \le c,
\end{align*}
which shows that $\|L_n(\varpi) *f\|_{p, \sw}$ is bounded for $p =1$. The same inequality also shows 
that the boundedness holds for $p = \infty$. The case $1< p < \infty$ follows from the Riesz-Thorin interpolation 
theorem. By item (b), the boundedness of $\|L_n(\varpi)*f\|_{p,\varpi}$ implies, by the triangle inequality, 
$$
  \|L_n(\varpi)*f - f \|_{p, \varpi} \le \|L_n(\varpi)* (f - g) \|_{p, \varpi} + \|f - g \|_{p, \varpi} \le c \|f-g\|_{p,\varpi}
$$
for any polynomial $g \in \Pi_n(\Omega)$. Taking infimum over $g$ completes the proof. 
\end{proof}
 
Because of property (c), we call the operator $L_n(\varpi)*f$ near best approximation polynomial. Such 
operators have been used extensively in approximation theory and computational harmonic analysis on the unit 
sphere and the unit ball (see \cite{DaiX}, for example), and in various other domains.  

\begin{thm} \label{thm:f-LnfD}
Let the assumption be the same as in the previous theorem. For $f \in W_p^r(\Omega, \varpi)$, $1\le p\le \infty$, 
\begin{equation} \label{eq:f-LnfD}
 \|f - L_n(\varpi)*f\|_{p,\varpi} \le c n^{-r} \left\| (-\fD_{\varpi})^{\f r 2}f \right\|_{p,\varpi}, 
 \qquad n =1,2,\ldots.
\end{equation}
\end{thm}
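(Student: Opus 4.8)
The plan is to exploit the Littlewood--Paley type decomposition induced by the cut-off function together with the spectral definition of $(-\fD_\varpi)^{r/2}$. First I would record that, because $\wh a$ is of type $(a)$ (so $\wh a(t)=1$ for $t\in[0,1]$ and $\mathrm{supp}\,\wh a\subset[0,2]$), the operator $I-L_n(\varpi)*(\cdot)$ annihilates $\Pi_n(\Omega)$, and moreover at the level of Fourier coefficients it acts on $\proj_k$ by the multiplier $1-\wh a(k/n)$, which vanishes for $k\le n$. Hence, introducing a second cut-off $\wh b\in C^\infty(\RR)$ with $\wh b(t)=1-\wh a(t)$ on a neighbourhood of $[1,\infty)$ and $\mathrm{supp}\,\wh b\subset[1,\infty)$, we may write $f-L_n(\varpi)*f$ as a sum of dyadic blocks $\sum_{j\ge 0} \Lambda_{2^j n}(\varpi)*f$ where each $\Lambda_m$ is a Littlewood--Paley block whose symbol is supported in $k\sim m$ and which reproduces nothing but still has the same kernel estimates (Assertions 1 and 3) as $L_n$; this is the standard device, and the kernel of each block satisfies the analogue of \eqref{eq:Ln-bdd} with $n$ replaced by $2^j n$.

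The key step is then: on the block at scale $m=2^j n$, write $\Lambda_m(\varpi)*f = \Lambda_m(\varpi)*\big((-\fD_\varpi)^{-r/2}(-\fD_\varpi)^{r/2} f\big)$, where $(-\fD_\varpi)^{-r/2}$ is the multiplier $\mu(k)^{-r/2}$ acting on $\proj_k$. Absorbing the factor $\mu(k)^{-r/2}$ into the symbol of the block produces a new block operator $\widetilde\Lambda_m$ whose symbol $\mu(k)^{-r/2}\wh b(k/m)\cdots$ is still smooth, supported in $k\sim m$, and — crucially — of size $\sim m^{-r}$ there, since $\mu$ is a quadratic polynomial so $\mu(k)\sim k^2\sim m^2$ on the support. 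By a Marcinkiewicz-type multiplier argument in the Fourier--Jacobi variable (available here via the addition formula \eqref{eq:Ln-LnJacobi} and the one-dimensional kernel estimates \eqref{eq:DLn(t,1)}, exactly as in Theorem~\ref{thm:nearbest}), the operator $\widetilde\Lambda_m$ has kernel satisfying Assertion-1 type bounds with an extra factor $m^{-r}$, hence is bounded on $L^p(\Omega,\varpi)$ for every $1\le p\le\infty$ with norm $\le c\,m^{-r}$. Therefore
\[
  \|\Lambda_{2^j n}(\varpi)*f\|_{p,\varpi}
  = \big\|\widetilde\Lambda_{2^j n}(\varpi)*\big((-\fD_\varpi)^{r/2}f\big)\big\|_{p,\varpi}
  \le c\,(2^j n)^{-r}\,\big\|(-\fD_\varpi)^{r/2}f\big\|_{p,\varpi}.
\]
Summing the geometric series $\sum_{j\ge 0}(2^j n)^{-r}\le c\,n^{-r}$ yields \eqref{eq:f-LnfD}.

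The main obstacle I expect is the uniform-in-$p$ boundedness of the modified block multiplier $\widetilde\Lambda_m$ with the gain $m^{-r}$, in particular controlling the smooth but non-polynomial factor $\mu(k)^{-r/2}$ for non-integer $r$: one must verify that multiplying the admissible symbol by $\mu(k)^{-r/2}$ keeps enough derivative bounds (of the form $|\partial^\ell \widetilde\eta| \le c_\ell m^{-r-\ell}$ on $k\sim m$) to rerun the kernel estimate of Theorem~\ref{thm:Jacobi-kernel}/\eqref{eq:DLn(t,1)} on the Jacobi side and transfer it through the addition formula. This is where the hypothesis that $\mu$ is a (nonnegative) quadratic polynomial, bounded below by $c k^2$ on the relevant range, is essential. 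Once that symbol calculus is in place, the rest — the dyadic decomposition, the $L^p$ boundedness via Assertions 1 and 3 (through Lemma~\ref{lem:CorA3} and the argument of Theorem~\ref{thm:nearbest}), and the summation — is routine. For $p=1$ and $p=\infty$ the kernel bound gives it directly, and $1<p<\infty$ follows by Riesz--Thorin interpolation, exactly as in the proof of Theorem~\ref{thm:nearbest}.
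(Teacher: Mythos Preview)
Your plan is correct, but it takes a genuinely different route from the paper's proof. The paper does \emph{not} set up a dyadic Littlewood--Paley decomposition and does not re-derive kernel localization for the modified symbol $\mu(k)^{-r/2}\wh b(k/m)$. Instead it writes
\[
  f-L_n(\varpi)*f=\sum_{k>n}\Big(1-\wh a\big(\tfrac{k}{n}\big)\Big)\mu(k)^{-r/2}\,\proj_k\big(\varpi;(-\fD_\varpi)^{r/2}f\big)
  =\sum_{k>n}b_k\,\proj_k(\varpi;F),
\]
performs an $(\ell+1)$-fold Abel summation by parts, and then invokes the uniform $L^p$ boundedness of the Ces\`aro $(C,\ell)$ means $S_k^\ell(\varpi;\cdot)$ already established in Theorem~\ref{thm:cesaro}; the finite-difference bound $|\triangle^{\ell+1}b_k|\le c\,k^{-r-\ell-1}$ then gives the $n^{-r}$ decay immediately. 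The trade-off is this: the paper's argument is shorter and stays entirely within results already on the shelf (the addition formula enters only through the Ces\`aro bound), whereas your approach must re-enter the proof machinery behind Assertion~1 --- via the addition formula and the Jacobi estimate \eqref{eq:DLn(t,1)} --- to justify the highly-localized kernel bound for each block $\widetilde\Lambda_m$ with a non-admissible symbol. You correctly flagged this symbol-calculus step as the main obstacle, and it does go through (the derivative bounds on $\mu(k)^{-r/2}$ restricted to $k\sim m$ are elementary since $\mu$ is quadratic and nonvanishing there); but it is extra work the paper sidesteps. Your approach, on the other hand, is closer in spirit to Calder\'on--Zygmund theory and would export more readily to settings where Ces\`aro summability is not already available.
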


\begin{proof}
Let $\ell$ be a positive integer. Without loos of generality, we can assume $n > \ell$. Since 
$L_n(\varpi)*f = f$ if $f$ is a polynomial of degree $n$, we can write
\begin{align*}
 f - L_n(\varpi)*f \, & = \sum_{k=n+1}^\infty \left(1 - \wh a \left(\frac{k}{n} \right) \right)\proj_k (\varpi; f) \\
  & = \sum_{k=n+1}^\infty\! \left(1 - \wh a \!\left(\frac{k}{n} \right) \right) \mu(k)^{-\f r 2} 
        \proj_k\! \left(\varpi;F\right),
\end{align*}
where $F =\left(-\fD_\varpi\right)^{\f r 2} f$. Summation by parts $\ell+1$ times, we obtain 
$$
  f -L_n(\varpi)*f = \sum_{k=n+1}^\infty b_k  \proj_k\! \left(\varpi;F\right)
      = \sum_{k=n+1}^\infty \left( \triangle^{\ell+1} b_k\right) A_k^\ell S_k^\ell \left(\varpi; F\right),
$$
where $b_k = (1 - \wh a(\frac{k}{n} )) \mu(k)^{-\f r 2}$, $A_k^\ell = \binom{k+\ell}{k} \sim k^\ell$ 
and $S_k^\ell \left(\varpi; F\right)$ denotes the $k$-th Ces\`aro $(C,\ell)$ mean of the Fourier orthogonal 
series with respect to $\varpi$. Since $\wh a$ is in $C^\infty$ and its support is $[0,2]$, it is easy to see 
that $|\triangle^{\ell+1} b_k| \le c k^{-r-\ell-1}$. Hence, it follows that 
$$
 \sum_{k=n+1}^\infty \left| \triangle^{\ell+1} b_k\right| A_k^\ell  \le 
    c \sum_{k=n+1}^\infty k^{-r-\ell-1}  k^\ell  \le c n^r.
$$
Since the choice of $\ell$ implies that $\|S_k^\ell \left(\varpi; F\right)\|_{p, \varpi} \le \|F\|_{p, \varpi}$ by the 
convergence of the C\`esaro means, we obtain
$$
\|f -  L_n(\varpi)*f\|_{p, \varpi} \le cn^{-r} \|F\|_{p,\varpi},
$$
which is what we need to prove. 
\end{proof}

\subsection{Bernstein and Nikolskii inequalities}\label{set:Bernstein}

The Bernstein inequality is essential for studying approximation by polynomials. We establish
such an inequality for the deviation operator $\fD_\varpi$ defined by \eqref{eq:LBoperator}. 
First we prove a proposition that is of independent interest.  

\begin{prop}\label{prop:Tfw-norm}
Let $(\Omega, \varpi, \sd)$ be a localizable homogeneous space and assume Assertion 4 holds. 
Let $G_n(\cdot,\cdot): \Omega \times \Omega\mapsto \RR$ be a kernel such that $G_n(x,y) = G_n(y,x)$ 
for all $x, y\in \Omega$ and $G_n$ is a polynomial of degree $n$ in either of its variables. Let 
$T: f\mapsto Tf$ be the operator defined by 
$$
       T f(x) = \int_\Omega f(y)  G_n(x,y) \varpi(y) \d \sm(y). 
$$
If $f$ is a polynomial of degree $n$, then for $1 \le p \le \infty$ and any doubling weight $\sw$, 
\begin{equation}\label{eq:Tfw-norm}
  \|T f\|_{p,\sw} \le c \|f\|_{p,\sw}  
       \max_{z\in\Omega} \int_\Omega | G_n(x,z)| (1+ n \sd (z,x))^{\a(\sw)+\a(\varpi)} \varpi(x) \d \sm (x).
\end{equation}
\end{prop}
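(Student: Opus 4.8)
The plan is to discretize the integral defining $Tf$ using a positive cubature rule, bound the resulting discrete sum by the auxiliary maximal function, and then use the $L^p$-boundedness of that maximal function. First I would pick a maximal $\frac{\delta}{n}$-separated set $\Xi$ with $\delta$ small enough that the positive cubature rule of Theorem~\ref{thm:cubature} holds for the weight $\varpi$ (which admits the localizable homogeneous space, so cubature is available). Since $f$ is a polynomial of degree $n$ and $G_n(x,\cdot)$ is a polynomial of degree $n$, the product $f(y)G_n(x,y)$ has degree $2n$ in $y$; however, the cubature rule in Theorem~\ref{thm:cubature} is exact on $\Pi_n(\Omega)$, so I would instead run the cubature for a maximal $\frac{\delta}{2n}$-separated set so that it is exact on $\Pi_{2n}(\Omega)$, or equivalently rescale $n$ by a fixed constant; this is harmless. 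Thus
$$
  Tf(x) = \int_\Omega f(y) G_n(x,y)\varpi(y)\,\d\sm(y) = \sum_{z\in\Xi}\l_z f(z) G_n(x,z),
$$
with $\l_z \sim \varpi(B(z,\tfrac1n))$ by the second half of Theorem~\ref{thm:cubature} (Assertion~4 is assumed).

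Next I would estimate $\|Tf\|_{p,\sw}$ for a doubling weight $\sw$. Writing $|Tf(x)| \le \sum_{z\in\Xi}\l_z |f(z)|\,|G_n(x,z)|$, I would replace $|f(z)|$ by $f^\ast_{\b,n}(z)$ for a suitably large $\b$ (to be chosen $\b > \a(\sw)/p$ so Corollary~\ref{cor:fbn-bound} applies), and then use that $f^\ast_{\b,n}(z)\le c\,f^\ast_{\b,n}(x)(1+n\sd(x,z))^\b$ by the definition of the maximal function together with the triangle inequality for $\sd$. This transfers the ``center'' of the maximal function from $z$ to $x$ at the cost of a factor $(1+n\sd(x,z))^\b$, which gets absorbed into $G_n$. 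For $p=1$ and $p=\infty$ the bound is then a direct computation: for $p=\infty$,
$$
  |Tf(x)| \le c\, f^\ast_{\b,n}(x)\sum_{z\in\Xi}\l_z |G_n(x,z)|(1+n\sd(x,z))^\b,
$$
and since $\l_z\sim\varpi(B(z,\tfrac1n))$ and $\Xi$ is maximal, the sum $\sum_{z\in\Xi}\varpi(B(z,\tfrac1n))\,h(z)$ is comparable to $\int_\Omega h(y)\varpi(y)\,\d\sm(y)$ for nonnegative $h$ (by the finite-overlap property and doubling of $\varpi$); this converts the discrete sum back into the integral $\int_\Omega |G_n(x,y)|(1+n\sd(x,y))^\b\varpi(y)\,\d\sm(y)$, which is the claimed maximal integral once $\b$ is matched to $\a(\sw)+\a(\varpi)$ — note the extra $\a(\varpi)$ arises because transferring the $\varpi$-ball weights via Lemma~\ref{lem:doublingLem}(ii) costs another power of $(1+n\sd)^{\a(\varpi)}$. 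For $p=1$ one does the symmetric computation integrating in $x$ first, using $G_n(x,z)=G_n(z,x)$. The general $1<p<\infty$ case then follows by Riesz--Thorin interpolation between the $p=1$ and $p=\infty$ endpoint bounds, exactly as in Theorem~\ref{thm:nearbest}; alternatively one can interpolate using a weighted Schur-type test. Finally, $\|f^\ast_{\b,n}\|_{p,\sw}\le c\|f\|_{p,\sw}$ by Corollary~\ref{cor:fbn-bound}, provided $\b>\a(\sw)/p$, giving the result.

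The main obstacle I anticipate is bookkeeping the exponent in $(1+n\sd(x,y))$ through all these steps so that it comes out exactly as $\a(\sw)+\a(\varpi)$: one picks up one power $\a(\varpi)$ when replacing the discrete $\varpi$-ball masses $\varpi(B(z,\tfrac1n))$ by $\varpi(B(x,\tfrac1n))$-normalized quantities (or when passing between the two using Lemma~\ref{lem:doublingLem}(ii) / the two-sided inequality \eqref{eq:w(x)/w(y)}), and the power $\a(\sw)$ comes from needing $\b>\a(\sw)/p$ in Corollary~\ref{cor:fbn-bound} combined with the transfer $f^\ast_{\b,n}(z)\le c f^\ast_{\b,n}(x)(1+n\sd(x,z))^\b$. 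A second, more delicate point is that the maximal-function argument naturally gives $\b = $ something like $2\a(\sw)/p$ when one invokes Theorem~\ref{thm-3-1-ch3} or the MZ machinery directly; I would avoid that route and instead use the cleaner observation above (transfer the center of $f^\ast_{\b,n}$ directly), so that the $(1+n\sd)$-power is precisely $\b$ and can be taken to be $\a(\sw)+\a(\varpi)$ by choosing the cut-off decay $\k$ in $G_n$'s implicit estimates large enough — but note $G_n$ is an arbitrary symmetric polynomial kernel here, so in fact the $(1+n\sd)$-power is simply carried along symbolically and lands in the statement's right-hand side as written. The remaining steps are routine given the apparatus already developed in Section~2.
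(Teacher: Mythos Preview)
Your central idea---pull $|f(y)|$ through the maximal function $f^\ast_{\b,n}$ and transfer its center to $x$---is correct and in fact yields a shorter argument than the paper's. But two detours in your execution introduce real gaps.

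First, the cubature discretization is unnecessary. Directly from the definition of $f^\ast_{\b,n}(x)$ you have $|f(y)|\le f^\ast_{\b,n}(x)\,(1+n\sd(x,y))^\b$ for every $y\in\Omega$, so without ever discretizing,
\[
|Tf(x)|\le f^\ast_{\b,n}(x)\int_\Omega|G_n(x,y)|(1+n\sd(x,y))^\b\,\varpi(y)\,\d\sm(y).
\]
Taking the $L^p(\sw)$ norm and invoking Corollary~\ref{cor:fbn-bound} with $\b=\a(\sw)+\a(\varpi)>\a(\sw)\ge\a(\sw)/p$ gives \eqref{eq:Tfw-norm} for every $1\le p\le\infty$ at once; no interpolation is needed, and Assertion~4 is never invoked.

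Second, once you do discretize, your step ``convert the sum back to an integral via finite overlap and doubling'' is not justified: that comparison only holds when the summand is essentially constant on balls of radius $1/n$, which fails for $|G_n(x,\cdot)|(1+n\sd(x,\cdot))^\b$ since the polynomial factor can vanish. Making this step rigorous requires replacing $(1+n\sd(x,\cdot))^\a$ by a genuine polynomial via Lemma~\ref{lem:Ass4Q} and then applying \eqref{eq:MZ1}---precisely the device the paper uses. Separately, your ``symmetric $p=1$ computation by integrating in $x$ first'' does not give the stated bound: Fubini produces $\int|G_n(z,x)|\,\sw(x)\,\d\sm(x)$, which carries the wrong weight.

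For comparison, the paper never touches $f^\ast_{\b,n}$ directly: it discretizes both $\|Tf\|_{p,\sw}^p$ and $|Tf(z)|$ by two applications of \eqref{eq:MZ2}, applies H\"older to the resulting double sum, swaps the order of summation while shifting ball measures via Lemma~\ref{lem:doublingLem}(ii) for both $\sw$ and $\varpi$ (this is where the exponent $\a(\sw)+\a(\varpi)$ arises), and finally converts the inner sum back to an integral using the polynomial $Q_u$ from Lemma~\ref{lem:Ass4Q} together with \eqref{eq:MZ1}. Your streamlined maximal-function route is more direct and, as noted, does not actually require Assertion~4.
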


\begin{proof}
Since $T$ is a linear integral operator, if $f$ is a polynomial of degree $n$ then $T f$ is a polynomial of the 
same degree. Let $\Xi$ be an $\ve$-separated set of $\Omega$ for $\ve = \frac{\delta}{2n}$ so
that the Marcinkiewicz-Zygmund inequalities in Theorem \ref{thm:MZinequality} hold for all polynomials 
of degree $2n$. The inequality is trivial for $p = \infty$ since it holds without $ (1+ n \sd (z,x))^{\a(\sw)}$
term. For $1\le p < \infty$, by \eqref{eq:MZ2},
\begin{align*}
   \|T f\|_{p,\sw}^p \le c \sum_{z \in \Xi} | T f(z)|^p \sw \left (B(z,\tfrac{\delta} {2n})\right). 
\end{align*}
Since $f(y) G_n(z,y)$ is a polynomial of degree $2n$ in $y$ variable, applying \eqref{eq:MZ2} again, we obtain
$$
 | T f(z)| \le c \sum_{u \in \Xi}  |f(u)| |G_n(u, z)| \varpi\left (B(u,\tfrac{\delta} {2n})\right). 
$$
Assume $1\le p < \infty$. By the H\"older's inequality,  
\begin{align*}
   |Tf(z)|^p \,& \le c \sum_{u \in \Xi} |f(u)|^p |G_n(z,u)|\varpi\left (B(u,\tfrac{\delta} {2n})\right)
       \left( \sum_{u \in \Xi} |G_n(z,u)| \varpi\left (B(u,\tfrac{\delta} {2n})\right)\right)^{\f p q} \\
         & \le c \sum_{u \in \Xi} |f(u)|^p |G_n(z,u)|\varpi\left (B(u,\tfrac{\delta} {2n})\right)
            \|G_n(z,\cdot)\|_1^{\f p q}, 
\end{align*}
where we used \eqref{eq:MZ1} in the second step. Using (ii) of Lemma \ref{lem:doublingLem} for 
both $\varpi$ and $\sw$, we see that 
\begin{align*}
   & \sum_{z\in \Xi} |T f(z)|^p  \sw\!\left(B(z,\tfrac{\delta} {2n})\right) 
     \le c \max_{z \in \Xi} \|G_n(z,\cdot)\|_1^{\f p q} \\
   & \qquad  \times \sum_{u \in\Xi} |f(u)|^p   \sw\!\left(B(u,\tfrac{\delta} {2n})\right)   
      \sum_{z\in \Xi}  |G_n(z,u)| (1+ n \sd (z,u))^{\a(\sw)+\a(\varpi)} \varpi\!\left (B(z,\tfrac{\delta} {2n})\right).  
 \end{align*}
By Lemma \ref{lem:Ass4Q}, there is a nonnegative polynomial $Q_u$ of degree $n$ that satisfies 
\eqref{eq:Ass4Q} with $\a =\a(\sw)+ \a(\varpi)$, which allows us to apply \eqref{eq:MZ1} on the polynomial 
$z \mapsto G(z,u) Q_u(z)$ to show that the last sum in the right-hand side is bounded by 
\begin{align*}
  c \sum_{z\in \Xi} |G_n(z,u)| Q_u(z)  \varpi\! \left (B(z,\tfrac{\delta} {2n})\right)  
   & \le c  \int_\Omega | G_n(x,u)| Q_u(x) \varpi(x) \d \sm (x)  \\
   & \le c  \int_\Omega | G_n(x,u)| (1+ n \sd (x,u))^{\a(\sw)+\a(\varpi)} \varpi(x) \d \sm (x).  
\end{align*}
Putting these together and using \eqref{eq:MZ1} and \eqref{eq:MZ2}, we have proved that 
\begin{align*}
 \|T f\|_{p,\sw}^p \,&  \le c  \|f\|_{p,\sw}^p  \max_{z \in \Xi} \|G_n(z,\cdot)\|_1^{\f p q}  \\
     & \qquad  \times 
     \max_{u\in \Xi} \int_\Omega | G_n(x,u)| (1+ n \sd (x,u))^{\a(\sw)+\a(\varpi)} \varpi(x) \d \sm (x)\\
       & \le c  \|f\|_{p,\sw}^p  \left(\max_{u \in \Xi}
         \int_\Omega | G_n(x,u)| (1+ n \sd (x,u))^{\a(\sw)+\a(\varpi)} \varpi(x) \d \sm (x) \right)^{1+\f p q},
\end{align*}
where we have used the assumption that $G_n$ is symmetric in its variables. Since $1+ \f p q = p$, this
proves the stated inequality.
\end{proof}

For $r > 0$, we denote by $L_{n}^{(r)}(\varpi; \cdot,\cdot)$ the kernel defined by 
\begin{equation} \label{eq:Ln_r}
   L_{n}^{(r)}(\varpi; x,y) = \sum_{n=0}^\infty \wh a\left( \frac{k}{n} \right) [\mu(k)]^{r/2} P_k(\varpi; x,y),
\end{equation}
which is the kernel $\fD_\varpi^{r/2} L_n(x,y)$ with $\fD_\varpi^{r/2}$ applying on $x$ variable. Our Bernstein 
inequality is proved under the following assumption on the decaying of this kernel. 

\medskip
{\bf Assertion 5.} For $r > 0$ and $\k > 0$, the kernel $ L_{n}^{(r)}(\varpi)$ satisfies, for $x,y \in \Omega$,
$$
  \left|  L_{n}^{(r)}(\varpi; x,y) \right| \le c_\k \frac{n^r}{\sqrt{\varpi(B(x,n^{-1}))}\sqrt{\varpi(B(y,n^{-1}))}}
     (1+n\sd(x,y))^{-\k}.
$$
\smallskip

\begin{thm} \label{thm:BernsteinLB}
Let $\varpi$ be a weight functions that admits Assertions 1--5. Let $\sw$ be a doubling weight on $\Omega$. 
If $r > 0$, $1 \le p \le \infty$ and $f\in \Pi_n(\Omega)$, then
$$
  \| (-\fD_\varpi)^{\f r 2}f \|_{p,\sw} \le c n^r \|f\|_{p,\sw}. 
$$
\end{thm}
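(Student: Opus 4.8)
The plan is to realize $(-\fD_\varpi)^{r/2}f$ as the image of $f$ under an integral operator whose kernel is exactly $L_n^{(r)}(\varpi;\cdot,\cdot)$, and then to apply Proposition~\ref{prop:Tfw-norm}. First I would observe that since $f\in\Pi_n(\Omega)$, the reproducing property of $K_n(\varpi;\cdot,\cdot)$ together with $\wh a(k/n)=1$ for $k\le n$ gives
$$
  (-\fD_\varpi)^{r/2}f(x) = \sum_{k=0}^n \mu(k)^{r/2}\proj_k(\varpi;f,x)
    = \int_\Omega f(y)\, L_n^{(r)}(\varpi;x,y)\,\varpi(y)\,\d\sm(y),
$$
using \eqref{eq:fracDiff} and the definition \eqref{eq:Ln_r} of $L_n^{(r)}$, provided we use a cut-off $\wh a$ of type (a). So $(-\fD_\varpi)^{r/2}f = Tf$ with $G_n = L_n^{(r)}(\varpi;\cdot,\cdot)$, which is symmetric in its two variables and is a polynomial of degree at most $2n$ in each; after rescaling $n\mapsto 2n$ (harmless up to constants) this fits the hypotheses of Proposition~\ref{prop:Tfw-norm}.

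Next I would invoke Proposition~\ref{prop:Tfw-norm} to get
$$
  \|(-\fD_\varpi)^{r/2}f\|_{p,\sw}
   \le c\,\|f\|_{p,\sw}\,\max_{z\in\Omega}\int_\Omega |L_n^{(r)}(\varpi;x,z)|\,(1+n\sd(z,x))^{\a(\sw)+\a(\varpi)}\,\varpi(x)\,\d\sm(x),
$$
so the whole estimate reduces to bounding that last integral by $c\,n^r$. This is precisely where Assertion~5 enters: it gives
$$
  |L_n^{(r)}(\varpi;x,z)| \le c_\k\, n^r\,\varpi(B(x,n^{-1}))^{-1/2}\,\varpi(B(z,n^{-1}))^{-1/2}\,(1+n\sd(x,z))^{-\k}.
$$
Then I would pull out $n^r$, use the doubling comparison \eqref{eq:w(x)/w(y)} to replace $\varpi(B(z,n^{-1}))^{-1/2}$ by $\varpi(B(x,n^{-1}))^{-1/2}$ at the cost of a factor $(1+n\sd(x,z))^{\a(\varpi)/2}$, absorb the polynomial factors $(1+n\sd(x,z))^{\a(\sw)+\a(\varpi)+\a(\varpi)/2}$ into the decay by taking $\k$ large, and finally apply Lemma~\ref{lem:CorA3} with $p=1$ (equivalently Assertion~3 after adjusting $\tau$) to see that
$$
  \int_\Omega \frac{\varpi(x)\,\d\sm(x)}{\varpi(B(x,n^{-1}))\,(1+n\sd(x,z))^{\tau}} \le c,
$$
uniformly in $z$. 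This yields the bound $c\,n^r$ for the integral, completing the proof.

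The only genuine subtlety — and the step I would flag as the main obstacle — is the bookkeeping of the doubling indices: one must check that $\k$ in Assertion~5 can be chosen large enough that $\tau := \k - \a(\varpi)/2 - \a(\sw) - \a(\varpi) > 0$ is still large enough to satisfy the hypothesis $\tau > \a(\varpi)$ (or whatever threshold) needed for Lemma~\ref{lem:CorA3} / Assertion~3, so that the final integral converges with a constant depending only on $L(\varpi)$, $L(\sw)$, $d$ and $r$. Since Assertion~5 is assumed to hold for every $\k>0$, this is not a real constraint, but it is the place where the argument must be stated carefully. Everything else — the identification of $(-\fD_\varpi)^{r/2}f$ as an integral operator, the appeal to Proposition~\ref{prop:Tfw-norm}, and the harmless $n\mapsto 2n$ rescaling — is routine. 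I would also remark that the case $p=\infty$ can be read off directly from the $p=\infty$ clause of Proposition~\ref{prop:Tfw-norm} (which does not even need the extra $(1+n\sd)^{\a(\sw)}$ weight), and the intermediate $p$ follow either from the proposition as stated or by interpolation.
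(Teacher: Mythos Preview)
Your proposal is correct and follows essentially the same route as the paper: identify $(-\fD_\varpi)^{r/2}f$ as the integral operator with kernel $L_n^{(r)}(\varpi;\cdot,\cdot)$, apply Proposition~\ref{prop:Tfw-norm}, insert the bound from Assertion~5, and finish with \eqref{eq:CorA3} at $p=1$ after absorbing the polynomial factors in $(1+n\sd)$ by choosing $\k$ large. The only cosmetic difference is that the paper applies \eqref{eq:CorA3} with $p=1$ directly to the product $\varpi(B(x,n^{-1}))^{-1/2}\varpi(B(y,n^{-1}))^{-1/2}$ (stating the threshold $\k>\a(\sw)+\tfrac54\a(\varpi)$), whereas you first convert one factor via \eqref{eq:w(x)/w(y)}; the two computations are equivalent.
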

 
\begin{proof}
Since $\sL_n(\varpi)* f$ reproduces polynomials of degree $n$, we can write 
$$
  (-\fD_\varpi)^{\f r 2}f  =  (-\fD_\varpi)^{\f r 2} \left( L_n(\varpi) *f \right ) =  
  \int_{\Omega} f(y) L_{n}^{(r)}(\varpi; x,y)\varpi(y) \d \sm(y),
$$
where $ L_{n}^{(r)} (\varpi)$ is defined by \eqref{eq:Ln_r}. Applying Proposition \ref{prop:Tfw-norm},
we conclude that
\begin{align*}
   \left \| (-\fD)^{\f r 2}f \right\|_{p,\sw} \, & \le c \|f\|_{p,\sw} 
      \max_{x \in \Omega} \int_{\Omega} \left| L_{n}^{(r)} (\varpi; x,y)\right| (1+n\sd(x,y))^{\a(\sw)+\a(\varpi)} 
        \varpi(y) \d \sm(y).
\end{align*}
By Assertion 5, the integral in the right-hand side is bounded by 
$$
  c_\k n^{r}  \int_{\Omega} \frac{\varpi(s)}{\sqrt{\varpi(B(x,n^{-1}))}\sqrt{\varpi(B(y,n^{-1}))}
     \left(1+n\sd(x,y)\right)^{\k- \a(\sw)-\a(\varpi)}}  \d \sm(y) \le c\, n^r
$$
using \eqref{eq:CorA3} with $p=1$ and $\k > \a(\sw)+\frac{5}{4}\a(\varpi)$. 
\end{proof}

For the Laplace-Beltrami operator on the unit sphere, this Bernestin inequality is classical if $\sw$ is the Lebesgue
measure, whereas the version with the doubling weight was proved in \cite{Dai1}. For the unit ball, the inequality 
was established in \cite{X05} for the classical weight function with a different proof that applies to reflection 
invariant weight functions on the unit sphere and the unit ball. 
 
We can also prove a Nikolskii type inequality for the doubling weight on $\Omega$ that admits a 
localizable homogeneous space. 

\begin{thm} \label{thm:Nikolskii}
Let $\sw$ be a doubling weight on $\Omega$. If $0 < p < q \le \infty$ and $f \in \Pi_n(\Omega)$, then 
\begin{equation} \label{eq:Nikolskii}
    \|f\|_{q,\sw} \le c n^{ (\frac 1 p - \f 1 q) \a(\sw)} \|f\|_{p,\sw}.
\end{equation}
\end{thm}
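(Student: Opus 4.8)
The plan is to reduce everything to the endpoint case $q=\infty$ and then interpolate. For the endpoint, let $f\in\Pi_n(\Omega)$; since $f$ is continuous and $\Omega$ is compact, fix $x_0\in\Omega$ with $|f(x_0)|=\|f\|_{\infty,\sw}$. Applying Lemma~\ref{lem:osc(f)} with any fixed $\b>0$, together with the trivial bound $f_{\b,n}^\ast(x_0)\le\|f\|_{\infty,\sw}$ coming straight from Definition~\ref{defn:maxf*}, gives
$$
   \mathrm{osc}(f)\bigl(x_0,\tfrac{\delta}{n}\bigr)\le c_\b\,\delta\,\|f\|_{\infty,\sw},\qquad 0<\delta\le 1 .
$$
Choosing $\delta=\delta_0:=\min\{1,(2c_\b)^{-1}\}$ forces $|f(y)|\ge\tfrac12\|f\|_{\infty,\sw}$ for every $y\in B(x_0,\tfrac{\delta_0}{n})$, and integrating the $p$-th power over that ball yields $\|f\|_{p,\sw}^p\ge 2^{-p}\|f\|_{\infty,\sw}^p\,\sw\!\bigl(B(x_0,\tfrac{\delta_0}{n})\bigr)$.

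The one slightly delicate point is a lower bound for $\sw\!\bigl(B(x_0,\tfrac{\delta_0}{n})\bigr)$ that is uniform in $x_0$ and $n$. I would obtain it from Lemma~\ref{lem:doublingLem}(i): taking $t$ to be the diameter of $\Omega$, so that $B(x_0,t)=\Omega$, and $r=\tfrac{\delta_0}{n}$, one gets $\sw(\Omega)\le c_{L(\sw)}(tn/\delta_0)^{\a(\sw)}\sw\!\bigl(B(x_0,\tfrac{\delta_0}{n})\bigr)$, hence $\sw\!\bigl(B(x_0,\tfrac{\delta_0}{n})\bigr)\ge c\,n^{-\a(\sw)}$ with $c$ independent of $x_0$ and $n$ (we may assume $\sw(\Omega)>0$, else the statement is trivial). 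Combined with the previous estimate this gives the endpoint inequality $\|f\|_{\infty,\sw}\le c\,n^{\a(\sw)/p}\|f\|_{p,\sw}$.

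Finally, for $p<q<\infty$ one interpolates by H\"older: $\|f\|_{q,\sw}^q=\int_\Omega|f|^{q-p}|f|^p\sw\,\d\sm\le\|f\|_{\infty,\sw}^{\,q-p}\|f\|_{p,\sw}^p$, so $\|f\|_{q,\sw}\le\|f\|_{\infty,\sw}^{1-p/q}\|f\|_{p,\sw}^{p/q}$; inserting the endpoint bound and using $\tfrac1p\bigl(1-\tfrac pq\bigr)=\tfrac1p-\tfrac1q$ yields \eqref{eq:Nikolskii}, and the case $q=\infty$ is the endpoint bound itself. Nothing in this argument uses that $\|\cdot\|_{p,\sw}$ is a norm, so it is valid for $0<p<1$ as well. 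The only real obstacle is the uniform ball-volume estimate $\sw(B(x,\delta_0/n))\gtrsim n^{-\a(\sw)}$; once that is in hand via Lemma~\ref{lem:doublingLem} and the compactness of $\Omega$, the rest is routine.
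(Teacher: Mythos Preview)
Your proof is correct and is actually more direct than the paper's argument. The paper routes the endpoint case $q=\infty$ through the Marcinkiewicz--Zygmund machinery: it fixes a maximal $\tfrac{\delta}{n}$-separated set $\Xi$, uses \eqref{eq:MZ3} to bound $\|f\|_\infty$ by $\max_{z\in\Xi}|f(z)|$, and then uses \eqref{eq:MZ1} to control the discrete sum $\sum_{z\in\Xi}\sw(B(z,\tfrac{\delta}{n}))|f(z)|^p$ by $\|f\|_{p,\sw}^p$, leaving the factor $(\min_{z\in\Xi}\sw(B(z,\tfrac{\delta}{n})))^{-1/p}$ to be bounded by the same doubling/compactness argument you give. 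Your approach bypasses the separated set and both MZ inequalities entirely: working at the single maximum point $x_0$ and invoking only Lemma~\ref{lem:osc(f)} and the trivial bound $f_{\b,n}^\ast\le\|f\|_\infty$, you get $|f|\ge\tfrac12\|f\|_\infty$ on a whole ball $B(x_0,\tfrac{\delta_0}{n})$ and integrate directly. The interpolation step for $p<q<\infty$ and the ball-volume lower bound via Lemma~\ref{lem:doublingLem}(i) are the same in both proofs. Your route is shorter and uses strictly less of the preceding development; the paper's route has the minor advantage that the MZ inequalities are already established and are needed elsewhere anyway.
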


\begin{proof}
The proof is another one that follows as in the case of the unit sphere. 
The main work lies in proving the case $q = \infty$, for which we choose a maximal $\frac{\delta}{n}$-separated 
subset of $\Omega$, so that 
\begin{align*}
 \|f\|_\infty \le c \max_{z\in \Xi} |f(z)|\, & \le c \left(\min_{z\in \Xi} B\Big(z, \frac{\delta}{n}\Big)\right)^{-\f 1 p}
   \Bigg(\sum_{z\in \Xi} \sw \left(B\Big(z,\frac{\delta}{n}\Big) \right)|f(z)|^p  \Bigg)^{\f 1 p} \\
    & \le c \|f\|_{p,\sw}\max_{z\in \Xi}  \left( B\Big(z, \frac{1}{n}\Big)\right)^{-\f 1 p}.
\end{align*}
Since $\Omega$ is compact, there is a positive number $\rho_0 > 0$ such that, for each $z \in \Xi$, 
$1= \sw(\Omega) = \sw(B(z, \rho))$ for some positive number $\rho \le \rho_0$. Let $m$ be a positive integer 
such that $2^{m-1} \le \rho n \le 2^m$. Then, by (i) of Lemma \ref{lem:doublingLem},
$$
   1 = \sw(B(z,\rho)) \le c_{L(\sw)} 2^{m\a(\sw)}  \sw(B(z,\tfrac1n)) \le c n^{\a(\sw)} \sw(B(z,\tfrac1n)),
$$
which implies that $\sw(B(z,\tfrac1n)) \le c n^{-\a(\sw)}$. Combing the two inequalities, we conclude that 
$\|f\|_\infty  \le c \|f\|_{p,\sw} n^{\frac1 p\a(\sw)}$. This proves \eqref{eq:Nikolskii} for $q = \infty$. 

The case $q < \infty$ reduces to that of $q= \infty$ since, using  \eqref{eq:Nikolskii} for $q =\infty$, 
$$
  \|f\|_{q,\sw}^q \le \|f\|_\infty^{q-p}  \|f\|_{q,\sw}^p \le c \left(\|f\|_{p,\sw} n^{\frac1 p\a(\sw)}\right)^{q-p} \|f\|_{q,\sw}^p
     = c n^{q(\f1 p - \f 1 q)\a(\sw)}\|f\|_{p,\s}^q,
$$
which is  \eqref{eq:Nikolskii} for $q < \infty$. 
\end{proof}

For the unit sphere, this inequality was established in \cite{DaiWang}. The above proof uses essentially the
same argument as seen in \cite[Theorem 5.5.1]{DaiX}. For the unit ball with the classical weight, the inequality
was proved in \cite{KPX2}.

\subsection{Proof of the main results} \label{sect:bestapprox-proof}

We are now ready to prove Theorems \ref{thm:Enf-Kfunctional} and \ref{thm:K=omega}. The proof follows 
the same procedure used in \cite{Rus, X05}, which is summarized in \cite[section 10]{DaiX}. Since most 
technical parts are essentially the same, we shall be brief. 

\medskip\noindent
{\it Proof of Theorem \ref{thm:Enf-Kfunctional}}. 

To prove the direct estimate, we use Theorems \ref{thm:nearbest} and \ref{thm:f-LnfD} to obtain 
\begin{align*}
  \left \|f- L_n\left (\varpi; f\right) \right\|_{p,\varpi}
  \,& \le  2  \|f- g\|_{p,\varpi} + \left\|g- L_n\left(\varpi; g\right) \right\|_{p,\varpi} \\
    & \le  2  \|f- g\|_{p,\varpi} + c n^{-r}\left\| (-\fD_\varpi)^{\f r 2}g \right\|_{p,\varpi}.
\end{align*}
Taking infimum over $g$ proves (i). 

To prove the inverse estimate, we choose $m$ such that $2^{m-1} \le n < 2^m$. Let us set 
$L_{2^{-1}}(\varpi)*f =0$. Then, choosing $g = L_{2^m}(\varpi)*f$, we obtain by Theorem \ref{thm:nearbest},
\begin{align*}
  K_r\left(f,n^{-1}\right)_{p,\sw}&\,  \le \|f - L_{2^m} (\varpi)*f\|_{p,\sw} + 
      2^{- (m-1) r} \left \|(-\fD_\varpi)^{\f r2}  L_{2^m} (\varpi)*f \right \|_{p,\sw} \\
    & \le  c E_n(f)_{p,\sw} +  2^{- (m-1) r} \sum_{j=0}^m  \left \|(-\fD_\varpi)^{\f r2}
        \left[ L_{2^j} (\sw)*f- L_{2^{j-1}} (\sw)*f \right]\right\|_{p,\sw}.
\end{align*}    
Applying the Bernstein inequality Theorem \ref{thm:BernsteinLB} and using the triangle inequality with
Theorem \ref{thm:nearbest}, we conclude that 
\begin{align*}
  K_r\left(f,n^{-1}\right)_{p,\varpi}&\,  \le  c E_n(f)_{p,\varpi} +  2^{- m r} 
      \sum_{j=0}^m 2^{-j r} \left \| L_{2^j} (\varpi)*f- L_{2^{j-1}} (\varpi)*f\right \|_{p,\sw} \\
    & \le c\, 2^{-m r} \sum_{j=0}^m 2^{j r} E_{2^{j-1}}(f)_{p,\varpi} 
    \le c\, n^{-r}  \sum_{k=0}^m (k+1)^{r-1}E_k(f)_{p, \varpi}. 
\end{align*}
This completes the proof. 
\qed

\medskip

The proof of the equivalence of the $K$-functional and the modulus of smoothness relies on two technical
lemmas. Recall that $\wh a$ is a cut-off function and $\mu(j)$ is the eigenvalue in \eqref{eq:LBoperator}.

\begin{lem}\label{lem:Rus1}
Let $\a \ge \b \ge -\f12$. For $r>0$,  $0 \le \t \le 3n^{-1}$, and any $\ell\in\NN$,
\begin{equation}\label{eq:Rus1a}
 \sum_{ j=0}^{2n} \Biggl| \triangle^{\ell+1}
\biggl[\biggl( \f { 1-R^{(\a,\b)}_j(\cos\t)}{ \mu(j) \t^2 }\biggr)^r \wh a \left( \f jn\right)\biggr]\Biggr| ( j+1)^\ell
   \leq c,
\end{equation}
and
\begin{equation}\label{eq:Rus1b}
 \sum_{ j=0}^{2n}\Biggl| \triangle^{\ell+1} \biggl[\biggl( \f { \mu(j) \t^2 }
     { 1-R_j^{(\a,\b)}(\cos\t)}\biggr)^r \wh a \left( \f j n\right)\biggr]\Biggr| ( j+1)^\ell \leq c,
\end{equation}
where the difference $\triangle^{\ell+1}$ is acting on $j$ and $c$ depends only on $\a, \b, \ell, r$.
\end{lem}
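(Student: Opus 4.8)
The plan is to pass from the discrete difference $\triangle^{\ell+1}$ in $j$ to an ordinary $(\ell+1)$-st derivative of a real-analytic extension in a continuous parameter, and then to estimate that derivative. Let $R_x^{(\a,\b)}(\cos\t)$ be the extension of $R_j^{(\a,\b)}(\cos\t)$ to real $x\ge 0$ furnished by the ${}_2F_1$-representation of the Jacobi polynomials, and recall that $\mu$ is a quadratic polynomial with $\mu(0)=0$. Then
$$
   \Psi_\t(x):=\f{1-R_x^{(\a,\b)}(\cos\t)}{\mu(x)\,\t^{2}}
$$
is real-analytic in $x$ on $(0,\infty)$ and extends analytically across $x=0$, since numerator and denominator vanish there to the same order ($R_0^{(\a,\b)}\equiv 1$, $\mu(0)=0$). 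The bracketed expressions in \eqref{eq:Rus1a} and \eqref{eq:Rus1b} are precisely $\Psi_\t(j)^{r}\,\wh a(j/n)$ and $\Psi_\t(j)^{-r}\,\wh a(j/n)$. Since $|\triangle^{\ell+1}F(j)|\le\sup_{[j,\,j+\ell+1]}|F^{(\ell+1)}|$ for any $C^{\ell+1}$ function $F$, it suffices to bound $\big|\f{d^{\ell+1}}{dx^{\ell+1}}\big[\Psi_\t(x)^{\pm r}\,\wh a(x/n)\big]\big|$ for $0\le x\le 2n$ and $0<\t\le 3n^{-1}$ (so that $x\t\le 6$), and then to sum against the weight $(j+1)^{\ell}$.

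The heart of the matter, and the only nonroutine point, is the estimate
$$
   c_1\le\Psi_\t(x)\le c_2,\qquad \Big|\f{d^{k}}{dx^{k}}\Psi_\t(x)\Big|\le c_{k}\,\t^{2}\,(1+x)^{2-k},\quad k\ge 1,
$$
valid on $0\le x\le 2n$, $0<\t\le 3n^{-1}$, with constants depending only on $\a,\b,k$. I would derive it from the second-order differential equation satisfied by $u(\t)=R_x^{(\a,\b)}(\cos\t)$: in self-adjoint form it reads $(p\,u')'=-\mu(x)\,q\,u$ with $p(\t)\asymp q(\t)\asymp\t^{2\a+1}$ as $\t\to 0^{+}$, and integrating it twice from $0$ using $u(0)=1$, $u'(0)=0$ gives
$$
   1-R_x^{(\a,\b)}(\cos\t)=\mu(x)\int_{0}^{\t}\f{1}{p(\phi)}\int_{0}^{\phi}q(\psi)\,R_x^{(\a,\b)}(\cos\psi)\,d\psi\,d\phi .
$$
Hence $\Psi_\t(x)$ is a double weighted average of $q\,R_x^{(\a,\b)}(\cos\cdot)/(p\,\t^{2})$ over $[0,\t]$; as $R_x^{(\a,\b)}(\cos\psi)$ lies between two positive constants whenever $x\psi\le 6$, this pins $\Psi_\t$ between positive constants, and differentiating the identity in $x$, together with the standard derivative estimates for the normalized Jacobi function in the boundary-layer regime $x\psi\lesssim 1$, produces the factor $\t^{2}(1+x)^{2-k}$ once the $\mu(x)\asymp (1+x)x$ and $\t^{-2}$ factors are accounted for. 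This is exactly the kind of Jacobi estimate carried out on the sphere in \cite{Rus} and \cite[\S10.3]{DaiX}, now with $\mu(j)$ --- which in all our applications is comparable to $j(j+\a+\b+1)$ --- in place of $j(j+d-2)$; the book-keeping of the scaling for $x\t\lesssim 1$ and of the behaviour at $x=0$ is where essentially all the work lies.

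Granting the core estimate, the rest is routine. Since $c_1\le\Psi_\t\le c_2$, the map $t\mapsto t^{\pm r}$ is $C^{\infty}$ on a neighbourhood of the range of $\Psi_\t$, so by the Fa\`a di Bruno formula together with $(1+x)\t\le 7$, the powers $\Psi_\t^{\pm r}$ are bounded and satisfy $\big|\f{d^{k}}{dx^{k}}\Psi_\t^{\pm r}(x)\big|\le c_{k}\,\t^{2}(1+x)^{2-k}$ for $k\ge 1$. Since also $\big|\f{d^{k}}{dx^{k}}\wh a(x/n)\big|\le c_{k}n^{-k}$ and the derivatives of $\wh a(\cdot/n)$ of order $\ge 1$ are supported on $x\asymp n$, Leibniz' rule gives
$$
   \Big|\f{d^{\ell+1}}{dx^{\ell+1}}\big[\Psi_\t(x)^{\pm r}\,\wh a(\tfrac xn)\big]\Big|\le c\Big(\t^{2}(1+x)^{1-\ell}+n^{-\ell-1}\chi_{[n,2n]}(x)\Big),\qquad 0\le x\le 2n ,
$$
hence $\big|\triangle^{\ell+1}[\Psi_\t(j)^{\pm r}\wh a(j/n)]\big|\le c\big(\t^{2}(1+j)^{1-\ell}+n^{-\ell-1}\chi_{[n-\ell-1,\,2n]}(j)\big)$, and therefore
$$
   \sum_{j=0}^{2n}\big|\triangle^{\ell+1}[\Psi_\t(j)^{\pm r}\wh a(\tfrac jn)]\big|(j+1)^{\ell}
   \le c\,\t^{2}\sum_{j=0}^{2n}(1+j)+c\,n^{-\ell-1}\cdot n\cdot(2n)^{\ell}
   \le c\big(\t^{2}n^{2}+1\big)\le c ,
$$
using $\t\le 3n^{-1}$. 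Taking $\Psi_\t(j)^{r}\wh a(j/n)$ yields \eqref{eq:Rus1a} and $\Psi_\t(j)^{-r}\wh a(j/n)$ yields \eqref{eq:Rus1b}.
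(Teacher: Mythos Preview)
Your approach is correct and follows the same route the paper invokes from \cite[\S10.4]{DaiX}: extend $R_j^{(\a,\b)}$ to a continuous degree parameter, establish the two-sided bound $c_1\le\Psi_\t\le c_2$ together with derivative estimates, and then apply Leibniz and sum. One small correction: $R_x^{(\a,\b)}(\cos\psi)$ need not stay between two \emph{positive} constants throughout $x\psi\le 6$ (for $\b=-\tfrac12$, which is exactly the case arising on the conic domains, the first zero already occurs near $x\psi=\tfrac{\pi}{2}$), so the lower bound on $\Psi_\t$ should instead be read off directly from the standard Jacobi estimate $1-R_x^{(\a,\b)}(\cos\t)\asymp x(x+\a+\b+1)\t^2$ for bounded $x\t$ --- with that adjustment your derivative bounds and the final summation go through unchanged.
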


\begin{lem}\label{lem:Rus2}
Let $\a \ge \b \ge -\f12$ and let $\ell = \lceil  \a \rceil$. If $r>0$, $k^{-1}\leq \t \leq \f \pi2$,  $j\in \NN_0$
and $0\leq j\leq \ell+1$,  then for any $m\in\NN$,
\begin{equation}\label{eq:Rus2}
 \biggl | \triangle^j \biggl( \f { ( 1-(1-R_k^{(\a,\b)}(\cos\t))^r)^{m+\ell+1}}{(1-R_k^{(\a,\b)}(\cos\t))^r}\biggr) \biggr| \leq 
    c_{m,r} (k\t)^{-m\a}\t^j.\end{equation}
\end{lem}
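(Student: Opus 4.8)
The plan is to exploit that on the whole range $k^{-1}\le\t\le\f\pi2$ one has $k\t\ge1$, so $\cos\t$ lies in the oscillatory zone of the Jacobi polynomial, where $R_k^{(\a,\b)}(\cos\t)$ is small and stays away from $1$. We may first dispose of the case $\a\le0$: then $(k\t)^{-m\a}\ge1$ and, since $R_k^{(\a,\b)}(\cos\t)$ ranges in a fixed compact subinterval of $(-1,1)$, the whole expression and all its $k$-differences of order $\le\ell+1$ are bounded by $c_j\t^j$, which is already the claim. So assume $\a>0$, write $R:=R_k^{(\a,\b)}(\cos\t)$, $G_k:=1-R$, $A_k:=1-G_k^{r}=1-(1-R)^{r}$, so that the quantity to be bounded is $\Phi_k:=A_k^{\,m+\ell+1}\,G_k^{-r}$ and $\triangle$ acts on $k$. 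First I would record two estimates valid for $k\t\ge1$, $0<\t\le\f\pi2$ — exactly the Jacobi ingredients underlying this circle of results (see \cite{Rus,DaiX}): (a) $c_1\le 1-R_k^{(\a,\b)}(\cos\t)\le c_2$ with $c_1,c_2>0$ depending only on $\a,\b$; and (b) $|\triangle^i R_k^{(\a,\b)}(\cos\t)|\le c_i(k\t)^{-\a}\t^i$ for all $i\ge0$, the case $i=0$ being $|R_k^{(\a,\b)}(\cos\t)|\le c_0(k\t)^{-\a}$. Since $\t\ge k^{-1}\ge(k+i)^{-1}$, every index occurring in a $\triangle^j$ still satisfies $(k+i)\t\ge1$, so (a)--(b) apply to all of them.

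Next I would transport (a)--(b) through the nonlinearities. By (a), $R$ ranges in a fixed compact subinterval $I$ of $(-1,1)$ on which $x\mapsto(1-x)^{\pm r}$ are $C^\infty$ with all derivatives bounded in terms of $r,\a,\b$. Feeding (b) into the finite-difference chain rule — which writes $\triangle^i\phi(b_k)$ as a bounded linear combination of terms $\phi^{(s)}(b_{k'})\prod_\nu\triangle^{i_\nu}b_{k_\nu}$ with $i_\nu\ge1$, $\sum_\nu i_\nu=i$, $1\le s\le i$, and $k',k_\nu$ within $i$ of $k$ — and using $k\t\ge1$ gives $|A_k|\le c(k\t)^{-\a}$, $|\triangle^i A_k|\le c_i(k\t)^{-\a}\t^i$ for $i\ge1$, and $|\triangle^i G_k^{-r}|\le c_i\t^i$ for all $i\ge0$; here the $i=0$ bound for $A_k$ is just $|(1-0)^{r}-(1-R)^{r}|\le c|R|$, and the multi-increment ($s\ge2$) contributions are smaller because $(k\t)^{-s\a}\le(k\t)^{-\a}$. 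Now the finite-difference Leibniz rule applied to the product $A_k\cdots A_k$ ($m+\ell+1$ factors) gives $|\triangle^p(A_k^{\,m+\ell+1})|\le c_p(k\t)^{-\a(m+\ell+1)}\t^p$ for all $p\ge0$ — each factor contributes one $(k\t)^{-\a}$ and the $\t$-exponents sum to $p$ — and a last Leibniz step for $\Phi_k=A_k^{\,m+\ell+1}\cdot G_k^{-r}$ then yields $|\triangle^j\Phi_k|\le c_j(k\t)^{-\a(m+\ell+1)}\t^j$. Since $k\t\ge1$ and $\a(\ell+1)>0$, $(k\t)^{-\a(m+\ell+1)}\le(k\t)^{-m\a}$, which is the asserted bound for $0\le j\le\ell+1$ (in fact for every $j$).

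The only genuinely non-routine input is the pair (a)--(b); granting those, everything afterwards is finite-difference bookkeeping, so if they had to be proved here the delicate point would be the non-degeneracy (a), i.e.\ that $R_k^{(\a,\b)}(\cos\t)$ cannot return near $1$ once $k\t\ge1$. For $\t$ bounded away from $0$ this follows from the classical decay $|R_k^{(\a,\b)}(\cos\t)|\le c(k\t)^{-\a-\f12}$; for $k^{-1}\le\t\le\delta$ it follows from $1-R_k^{(\a,\b)}(\cos\t)\asymp\min\{1,(k\t)^2\}$, which rests on the Mehler--Heine/Darboux asymptotics and on $|P_k^{(\a,\b)}(x)|\le P_k^{(\a,\b)}(1)$ on $[-1,1]$ (valid for $\a\ge\b\ge-\f12$) — exactly the facts used in the proof of \lemref{lem:Rus1}, which I would quote verbatim. (On the trivial subrange $\t\in[1,\f\pi2]$ all $\t$-powers above are bounded constants, so one may otherwise assume $\t\le1$.)
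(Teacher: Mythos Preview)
Your sketch is correct and follows the standard route; the paper itself gives no proof here, merely citing \cite[Lemma~10.4.4]{DaiX} for the Gegenbauer case $\a=\b$ and asserting that the argument carries over to $\a\ge\b\ge-\tfrac12$, so you have in fact supplied more detail than the paper does. One minor caveat: your non-degeneracy input (a), that $1-R_k^{(\a,\b)}(\cos\t)\ge c_1>0$ whenever $k\t\ge1$, fails at the endpoint $\a=\b=-\tfrac12$ (Chebyshev, where $R_k(\cos\t)=\cos k\t$ returns to $1$ at $k\t\in2\pi\ZZ$), and indeed the quantity in the lemma blows up there --- but this is a defect of the stated range rather than of your argument, and in every application in the paper one has $\a\ge0$.
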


For $\a = \b = \l$, these two lemma are Lemma 10.4.3 and Lemma 10.4.4 in \cite{DaiX}. The proof relies on
properties of the Jacobi polynomials and holds readily for $\a \ge \b \ge -\f12$. 

\medskip\noindent
{\it Proof of Theorem \ref{thm:K=omega}}. 

Choose $n$ such that $\t^{-1} \le n \le 3 \t^{-1}$. Let $g$ be a polynomial in $\Pi_n(\Omega)$. Then 
so is $\left(I - S_{\t,\varpi}\right)^{r/2} g$ in $\Pi_n(\Omega)$ 
by \eqref{eq:Stheta}. Let $\wh a$ be a cut-off function of type (a), so that $L_n(\varpi)$ reproduces polynomials
of degree $n$. Using \eqref{eq:fracDiff}, it follows readily that 
$$
   \left(I - S_{\t,\varpi}\right)^{r/2} g = \t^r \sum_{j=0}^{2n} \biggl(\f {1-R^{(\a,\b)}_j (\cos\t)}{ \mu(j)\t^2}\biggr)^{r/2}
          \wh a \Big(\f j n\Big) \proj_j\left(\varpi;  (-\fD_\varpi)^{r/2}g\right).
$$
Following the proof of Theorem \ref{thm:f-LnfD} to take a summation by parts and use the boundedness of
the Ces\`aro means, we can use \eqref{eq:Rus1a} to obtain 
\begin{align}\label{eq:I-S<D}
  \left\|\left(I - S_{\t,\varpi}\right)^{r/2} g \right\|_{p,\varpi} \le c n^{-r} \left\| (-\fD_{\varpi})^{r/2} g\right\|_{p,\varpi}. 
\end{align}
Using this inequality with $g = L_{\lfloor n/2 \rfloor} (\varpi)* f$ and the Jackson estimate in 
Theorem \ref{thm:Enf-Kfunctional}, we obtain
\begin{align*}
 \left \|( I-S_{\t,\varpi})^{r/2} f \right \|_{p,\varpi}  & \le c \left \|f-L_{\lfloor \f n 2 \rfloor} (\varpi)* f\right \|_{p, \varpi} +
       \left \|( I-S_{\t,\varpi})^{r/2} L_{\lfloor \f n 2 \rfloor} (\varpi)* f \right \|_{p, \varpi} \\
 & \le c K_r(f; n^{-1})_{p,\varpi} + c n^{-r} \left\|(-\fD)^{r/2} L_{\lfloor \f n 2 \rfloor} (\varpi)* f\right\|_{p,\varpi}.
\end{align*}
By the triangle inequality and applying the Bernstein inequality on $L_n(\varpi)*(f-g)$, we obtain
\begin{align*}
 n^{-r} \left\|(-\fD)^{r/2} L_{\lfloor \f n 2 \rfloor} (\varpi)* f\right\|_{p,\varpi}
 & \le c  \left\|f-g\right\|_{p,\varpi}+ n^{-r} \left\|(-\fD)^{r/2} L_{\lfloor \f n 2 \rfloor} (\varpi)* g\right\|_{p,\varpi} \\
 & \le c  \left( \left\|f-g\right\|_{p,\varpi}+ n^{-r} \left\|(-\fD)^{r/2} g\right\|_{p,\varpi}\right),
\end{align*}
where we used, by \eqref{eq:LBoperator}, $(-\fD)^{r/2} L_{\lfloor \f n 2 \rfloor} (\varpi)* g
 =  L_{\lfloor \f n 2 \rfloor} (\varpi)* (-\fD)^{r/2}g$ in the second step. Hence, taking infimum over $g$,
it follows from the above two inequalities and $\t \sim n^{-1}$ that
\begin{equation*} 
\left \|(I-S_{\t,\varpi})^{r/2} f \right \|_{p, \varpi} \leq c K_{r}(f, \t)_{p,\varpi},
\end{equation*}
from which the right-hand inequality of \eqref{eq:K=omega} follows.  

In the other direction, we can follow the proof of inequality \eqref{eq:I-S<D} and use \eqref{eq:Rus1b} instead of
\eqref{eq:Rus1a}, to establish 
\begin{align*}
 n^{-r} \left\| (-\fD_{\varpi})^{r/2} L_n(\varpi)*f \right\|_{p,\varpi} \le c  \left\|\left(I - S_{\t,\varpi}\right)^{r/2} f \right\|_{p,\varpi}. 
\end{align*}
Hence, in order to prove the left-hand inequality of \eqref{eq:K=omega}, it suffices to prove that
\begin{align}\label{eq:I-Ln<I-S}
\left\|f- L_n(\varpi)*f \right\|_{p,\varpi} \le c  \left\|\left(I - S_{\t,\varpi}\right)^{r/2} f \right\|_{p,\varpi}.
\end{align}
Using $(1-r)^{-1} = \sum_{i=0}^{m+\ell} r^i + r^{m+\ell}(1-r)^{-1}$ with $r = 1- \big(1-R^{(\a,\b)}_j (\cos\t)\big)^{r/2}$, 
one can write, as seen in \cite[p. 253]{DaiX}, that 
\begin{align*}
  f - L_n(\varpi)* f &\, =  \sum_{i=0}^{m+\ell} (I-L_n(\varpi)) * \left(I-(I-S_{\t,\varpi})^{r/2}\right)^i F\\
     & + \sum_{j=n}^\infty \left(1-\wh a \left(\f j n \right) \right)
      \frac{ \big(1- \big(1-R^{(\a,\b)}_j (\cos\t)\big)^{r/2}\big)^{m+\ell+1}F}{(1-R_j^{(\a,\b)}(\cos \t))^{r/2}} \proj_j(\varpi; F), 
\end{align*}
where $F  = \left(I - S_{\t,\varpi}\right)^{r/2} f$. While the first term is bounded by $\|F\|_{p,\varpi}$ by the 
boundedness of $L_n(\varpi)*f$ and $(I-(I-S_{\t,\varpi})^{r/2})F$, the second one can be shown to be bounded
by $\|F\|_{p,\varpi}$ by using \eqref{eq:Rus2} with the summation by parts and the boundedness of Ces\`aro 
means that we have used several times. This proves \eqref{eq:I-Ln<I-S} and completes the proof. 
\qed

\section{Homogeneous space on conic surfaces}\label{sec:coneV0}
\setcounter{equation}{0}

In this section we work in the setting of homogeneous space on the conic surface
$$
     \VV_{0}^{d+1}= \{(x,t): \|x\| = t, \, x \in \RR^d, \, 0 \le t \le 1\}. 
$$
We shall verify that the framework in the previous two sections is applicable on this domain for the
weight function $t^{-1} (1-t)^\g$, which has a singularity at the apex. The verification is highly non-trivial 
because of new phenomena and obstacles encountered. While the structure of orthogonal polynomials 
on the conic surface shares characteristic features of spherical harmonics, the conic surface is markedly 
different from that of the unit sphere because of its apex and its boundary.  

Our first task is to understand, in the first subsection, the intrinsic distance function on the conic surface, 
which turns out to be incomparable to the Euclidean distance around the apex. In the second subsection, 
we show that the Jacobi weight function $\sw_{\b,\g}(t) = t^\b (1-t)^\g$ is a doubling weight with 
respect to the intrinsic distance. The orthogonal structure with respect to the Jacobi weight is reviewed
in the third subsection, which is used to verify Assertions 1-3 of the highly localized kernels for the weight 
$\sw_{-1,\g}$ in the fourth subsection. Construction of $\ve$-separated set of $\VV_0^{d+1}$ is 
provided in the fifth subsection and used to state the Marcinkiewicz-Zygmund inequality. Assertion 4
is verified in the sixth subsection, which ensures that the positive cubature rules and the tight localized
frames can both be stated for the conic surface. In the seventh subsection, Assertion 5 is verified and
the characterization of the best approximation by polynomials is stated.  

\subsection{Distance on the surface of the cone}
Our first task is to define an appropriate distance function on the surface of the cone. Unlike the sphere, 
the surface $\VV_0^{d+1}$ has a boundary at $t =1$ and a singularity at the apex $t=0$. Our
distance function should measure the distance between points near the boundary or the apex 
and the distance between interior points differently. 
This is a well-known phenomenon as we have already seen for the interval $[-1,1]$. More generally, 
the distance on the interval $[a,b]$ is given by the change of variables $x \in [a,b] \mapsto y \in [-1,1]$,  
$$
  \sd_{[a,b]}(x_1,x_2) = \frac{b-a}{2} \arccos \left( y_1 y_2 + \sqrt{1-y_1^2}\sqrt{1-y_2^2}\right),
$$
where $y_i = -1+2 \frac{x_i-a}{b-a}$. In particular, the distance function for $[0,1]$ is given by
\begin{align*}
  \sd_{[0,1]}(x_1,x_2) \, & = \frac{1}{2} \arccos \left( (2x_1-1) (2y_1-1) + 4 \sqrt{x_1(1-x_1)} \sqrt{y_1(1-y_1)}\right)\\
    & =  \arccos \left(\sqrt{x_1 x_2} + \sqrt{(1-x_1)(1-x_2)} \right),
\end{align*}
where the second identity follows from $\arccos (\a) = \frac12 \arccos(2\a^2-1)$ . In particular,  
setting $x_i = \cos^2 \frac{\t_i}{2} \in [0,1]$, $0\le \t_i \le \pi$, we obtain $\sd_{[0,1]}(x_1,x_2) = \frac12|\t_1-\t_2|$. 

\begin{defn}
For $(x,t)$ and $(y,s)$ on $\VV_0^{d+1}$, define 
\begin{equation}\label{eq:distV0}
  \sd_{\VV_0} ((x,t), (y,s)): =  \arccos \left(\sqrt{\frac{\la x,y\ra + t s}{2}} + \sqrt{1-t}\sqrt{1-s}\right).
\end{equation}
\end{defn}

\begin{prop}
The function $\sd_{\VV_0^{d+1}} (\cdot, \cdot)=\sd_{\VV_0}(\cdot,\cdot)$ defines a distance on the  
surface of the cone $\VV_0^{d+1}$. 
\end{prop}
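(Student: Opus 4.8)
The plan is to realize $\sd_{\VV_0}$, three points at a time, as the geodesic distance on a Euclidean sphere, so that the metric axioms reduce to standard facts about spheres. Throughout, for a point $(x,t)\in\VV_0^{d+1}$ with $t>0$ write $x=t\xi$ with $\xi\in\sph$ (the case $t=0$, where $x=0$, will be handled by inspection), and for a second point $(y,s)=(s\eta,s)$ set $\gamma=\sd_\SS(\xi,\eta)=\arccos\langle\xi,\eta\rangle\in[0,\pi]$. The first step is the half-angle identity on the cone: since $\|x\|=t$ and $\|y\|=s$,
\[
  \f{\langle x,y\rangle+ts}{2}=\f{ts\,(1+\langle\xi,\eta\rangle)}{2}=ts\cos^2\tfrac\gamma2 ,
\]
hence $\sqrt{\tfrac12(\langle x,y\rangle+ts)}=\sqrt{ts}\,\cos\tfrac\gamma2$ (both nonnegative, as $\gamma/2\le\pi/2$), and therefore
\[
  \cos\sd_{\VV_0}\bigl((x,t),(y,s)\bigr)=\sqrt{ts}\,\cos\tfrac\gamma2+\sqrt{(1-t)(1-s)} .
\]
By Cauchy--Schwarz, $\sqrt{ts}\,\cos\tfrac\gamma2+\sqrt{(1-t)(1-s)}\le\sqrt{ts}+\sqrt{(1-t)(1-s)}=\bigl\langle(\sqrt t,\sqrt{1-t}),(\sqrt s,\sqrt{1-s})\bigr\rangle\le1$, so the argument of $\arccos$ lies in $[0,1]$ and $\sd_{\VV_0}$ is well defined with values in $[0,\pi/2]$. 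Symmetry is immediate. For the identity of indiscernibles, $\sd_{\VV_0}=0$ forces both inequalities in the Cauchy--Schwarz chain to be equalities: the second gives $t=s$ (two unit vectors are equal), and then the first gives $\cos\tfrac\gamma2=1$, i.e. $\xi=\eta$ when $t=s>0$, so $x=y$; the degenerate cases $t=0$ or $s=0$ give $x=y=0$ and $t=s=0$ directly. Conversely $\sd_{\VV_0}$ vanishes on the diagonal.

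The heart of the matter is the triangle inequality. Given three points $P_i=(t_i\xi_i,t_i)\in\VV_0^{d+1}$, $i=1,2,3$, put $\gamma_{ij}=\sd_\SS(\xi_i,\xi_j)\in[0,\pi]$ (defined whenever $t_i,t_j>0$; otherwise the corresponding $\xi_i$ may be chosen arbitrarily in the construction below, as it will be multiplied by $\sqrt{t_i}=0$). Since $\sd_\SS$ is a metric and all $\gamma_{ij}\le\pi$, the halved numbers $\gamma_{ij}/2\in[0,\pi/2]$ still satisfy the three triangle inequalities, and their sum is $\le 3\pi/2<2\pi$; by the standard criterion for realizability of a spherical triangle there exist $\tilde\xi_1,\tilde\xi_2,\tilde\xi_3\in\SS^{2}$ with $\langle\tilde\xi_i,\tilde\xi_j\rangle=\cos(\gamma_{ij}/2)$. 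Now set $\tilde P_i:=(\sqrt{t_i}\,\tilde\xi_i,\sqrt{1-t_i})\in\RR^3\times\RR$. Then $\|\tilde P_i\|^2=t_i+(1-t_i)=1$, so $\tilde P_i\in\SS^3$, and by the identity of the first step
\[
  \langle\tilde P_i,\tilde P_j\rangle=\sqrt{t_it_j}\,\cos\tfrac{\gamma_{ij}}2+\sqrt{(1-t_i)(1-t_j)}=\cos\sd_{\VV_0}(P_i,P_j).
\]
Because both $\sd_{\VV_0}(P_i,P_j)$ and $\arccos\langle\tilde P_i,\tilde P_j\rangle$ lie in $[0,\pi/2]$, this says $\sd_{\VV_0}(P_i,P_j)=\sd_{\SS^3}(\tilde P_i,\tilde P_j)$, and the triangle inequality for $\sd_{\VV_0}$ on $P_1,P_2,P_3$ follows from that for the geodesic distance on $\SS^3$.

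The natural-looking alternative — building one global isometric embedding $\VV_0^{d+1}\hookrightarrow\SS^N$ — does not work, and this is really the only subtlety: such an embedding would have to halve every geodesic distance on the $\sph$-factor, and a globally defined ``distance-halving'' self-map of a sphere does not exist (it is a global square root of angles). The argument above sidesteps this because the triangle inequality involves only three points at once, and any three halved spherical distances \emph{are} realizable on $\SS^2$ — halving keeps the perimeter below $2\pi$ and preserves the triangle inequalities. Thus the genuine content is just (i) the half-angle identity $\tfrac12(\langle x,y\rangle+ts)=ts\cos^2\tfrac\gamma2$, which uses $\|x\|=t$, and (ii) the spherical-triangle realizability lemma; the only point requiring care is the apex $t_i=0$, which is covered by inspection as indicated. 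I expect step (ii), together with the bookkeeping for degenerate points, to be where all the care is needed, everything else being routine.
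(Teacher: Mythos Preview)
Your proof is correct and takes a genuinely different route from the paper. The paper handles $d=2$ by an explicit coordinate map $(x_1,x_2,t)\mapsto z_{x,t}\in\SS^2$ (essentially the half-angle substitution on $\SS^1$, whence the $\mathrm{sign}(x_2)$ and its ambiguity at $x_1=-t$, $x_2=0$), verifies that $\sd_{\VV_0}$ pulls back to $\sd_{\SS^2}$, and then for $d>2$ rotates any triple of $\xi_i$ into a common low-dimensional subsphere. You instead start from the half-angle identity $\cos\sd_{\VV_0}=\sqrt{ts}\cos(\gamma/2)+\sqrt{(1-t)(1-s)}$ --- which the paper records only in the \emph{next} proposition --- and invoke the spherical-triangle realizability criterion (triangle inequalities plus perimeter $\le 2\pi$, both automatic after halving) to place the three angles $\gamma_{ij}/2$ on $\SS^2$, then lift to $\SS^3$. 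Your argument is dimension-free and sidesteps coordinates entirely; the paper's buys a concrete formula for small $d$, but, exactly as your closing remark anticipates, that formula cannot be a true global isometric embedding because a distance-halving self-map of a sphere has no continuous section --- the paper's $d=2$ map is indeed two-valued on the antipodal half-line, so both proofs are ultimately three-points-at-a-time arguments.
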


\begin{proof}
Evidently $\sd_{\VV_0}(\cdot,\cdot)$ is symmetric. Since $\|x\| = t$ and $\|y\| = s$, it follows readily that
$0 \le \sqrt{\frac{\la x,y\ra + t s}{2}} + \sqrt{1-t}\sqrt{1-s} \le 1$, so that $\sd_{\VV_0} ((x,t), (y,s)) \ge 0$
and, furthermore, $\sd_{\VV_0} ((x,t), (x,t))= \arccos 1 =0$. Hence, we only need to prove that it satisfies
the triangle inequality. 

For $d =2$, we write $(x,t) = (x_1,x_2,t) \in  \VV_0^3$  with $x_1^2+x_2^2 = t^2$. For $(x,t), (y,s) \in \VV_0^3$,
it is easy to verify the identity
$$
   \left( \sqrt{ (t+x_1)(s+y_1) } + \mathrm{sign}(x_2 y_2) \sqrt{ (t-x_1)(s-y_1)}\right)^2 
     = \frac{t s + x_1 y_1 + x_2 y_2}{2}.
$$
Hence, setting $z_{x,t} = (\sqrt{t+x_1}, \mathrm{sign}(x_2) \sqrt{t-x_1}, \sqrt{1-t}$) and define $z_{y,s}$ similarly,
then they are elements in $\SS^2$ and it follows that 
$$
 \sd_{\VV_0} ((x,t), (y,s))  = \arccos  (\la z_{x,t}, z_{y,s}\ra) = \sd_{\SS^2} (z_{x,t}, z_{y,s}), 
$$
where $\sd_{\SS^2}(\cdot,\cdot)$ is the geodesic distance of the unit sphere $\SS^2$. In particular, 
$ \sd_{\VV_0} (\cdot,\cdot)$ satisfies the triangle inequality. For $d> 2$, given three distinct point in 
$\VV_0^{d+1}$, written as $(t_i \xi_i, t_i)$, $1 \le i \le 3$, where $\xi \in \sph$, we can find a rotation 
in $\RR^d$ so that $\xi_i = (\eta_i, 0)$ with $\eta_i \in \SS^2$. Hence, the triangle inequality for $d > 2$
follows from the triangle inequality for $d =2$. This completes the proof. 
\end{proof}

The distance function $\sd_{\VV_0}(\cdot,\cdot)$ is closely related to the distance function $\sd_{[0,1]}(\cdot,\cdot)$ 
of the interval $[0,1]$ and the geodesic distance $\sd_{\SS}(\cdot,\cdot)$ of the unit sphere $\sph$. 

\begin{prop}\label{eq:cos-dist}
For $d \ge 2$ and $(x,t), (y,s) \in \VV_0^{d+1}$, write $x = t\xi$ and $y = s \eta$ with $\xi,\eta\in \sph$. Then
\begin{equation}\label{eq:d=d+d}
   1- \cos \sd_{\VV_0} ((x,t), (y,s)) =1-\cos \sd_{[0,1]}(t,s) + 
      \sqrt{t}\sqrt{s} \left[1-\cos \left(\tfrac{1}{2} \sd_{\SS}(\xi,\eta) \right)\right].
\end{equation}
In particular, 
\begin{equation} \label{eq:d2=d2+d2}
   c_1 \sd_{\VV_0} ((x,t), (y,s)) \le \sd_{[0,1]}(t,s)  + (t s )^{\f14} \sd_{\SS}(\xi,\eta) \le  c_2 \sd_{\VV_0} ((x,t), (y,s)).
\end{equation}
\end{prop}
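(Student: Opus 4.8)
The plan is to obtain the identity \eqref{eq:d=d+d} by a direct computation from the definition \eqref{eq:distV0}, and then to deduce the equivalence \eqref{eq:d2=d2+d2} from it using the elementary comparison $1-\cos\theta \sim \theta^2$ on $[0,\pi]$.

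First I would substitute $x = t\xi$ and $y = s\eta$ with $\xi,\eta \in \sph$ into \eqref{eq:distV0}. Since $\|\xi\| = \|\eta\| = 1$ we have $\la x,y\ra = t s \la\xi,\eta\ra$, hence
\begin{equation*}
 \frac{\la x,y\ra + t s}{2} = t s\,\frac{1+\la\xi,\eta\ra}{2} = t s\cos^2\!\Bigl(\tfrac12 \sd_{\SS}(\xi,\eta)\Bigr),
\end{equation*}
using $\la\xi,\eta\ra = \cos\sd_{\SS}(\xi,\eta)$ and the half-angle formula. Because $\tfrac12\sd_{\SS}(\xi,\eta) \in [0,\tfrac\pi2]$, the positive square root of the right-hand side equals $\sqrt{t}\sqrt{s}\,\cos(\tfrac12\sd_{\SS}(\xi,\eta))$, so
\begin{equation*}
 \cos\sd_{\VV_0}((x,t),(y,s)) = \sqrt{t}\sqrt{s}\,\cos\!\Bigl(\tfrac12\sd_{\SS}(\xi,\eta)\Bigr) + \sqrt{1-t}\sqrt{1-s}.
\end{equation*}
On the other hand, the expression for $\sd_{[0,1]}$ obtained above gives $\cos\sd_{[0,1]}(t,s) = \sqrt{t}\sqrt{s} + \sqrt{1-t}\sqrt{1-s}$. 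Subtracting the two displays yields
\begin{equation*}
 \cos\sd_{[0,1]}(t,s) - \cos\sd_{\VV_0}((x,t),(y,s)) = \sqrt{t}\sqrt{s}\,\Bigl(1 - \cos\tfrac12\sd_{\SS}(\xi,\eta)\Bigr),
\end{equation*}
and rearranging this is exactly \eqref{eq:d=d+d}.

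For \eqref{eq:d2=d2+d2} I would use $1-\cos\theta = 2\sin^2(\theta/2)$ together with Jordan's inequality $\tfrac2\pi u \le \sin u \le u$ for $u \in [0,\tfrac\pi2]$, which gives $1-\cos\theta \sim \theta^2$ uniformly for $\theta \in [0,\pi]$. Since $\sd_{\VV_0}((x,t),(y,s))$, $\sd_{[0,1]}(t,s)$ and $\tfrac12\sd_{\SS}(\xi,\eta)$ all lie in $[0,\tfrac\pi2]$, applying this comparison to each of the three $(1-\cos)$ terms in \eqref{eq:d=d+d} gives
\begin{equation*}
 \sd_{\VV_0}((x,t),(y,s))^2 \sim \sd_{[0,1]}(t,s)^2 + \sqrt{t}\sqrt{s}\;\sd_{\SS}(\xi,\eta)^2 = \sd_{[0,1]}(t,s)^2 + \bigl((t s)^{1/4}\sd_{\SS}(\xi,\eta)\bigr)^2.
\end{equation*}
Then $a^2 + b^2 \sim (a+b)^2$ for $a,b \ge 0$, and taking square roots, gives \eqref{eq:d2=d2+d2}.

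The computation is entirely elementary and I do not anticipate a genuine obstacle; the only points that need a little care are that the argument of the outer $\arccos$ in \eqref{eq:distV0} lies in $[0,1]$ — which was verified in the proof of the preceding proposition, so the chain of identities is meaningful — and that every half-angle appearing lies in $[0,\tfrac\pi2]$, so that the relevant square roots and the sine/cosine inequalities carry the correct sign. The exponent $\tfrac14$ on $t s$ in \eqref{eq:d2=d2+d2} is merely the bookkeeping identity $\sqrt{t}\sqrt{s}\,\sd_{\SS}(\xi,\eta)^2 = \bigl((t s)^{1/4}\sd_{\SS}(\xi,\eta)\bigr)^2$.
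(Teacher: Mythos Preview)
Your proof is correct and follows essentially the same approach as the paper: both rewrite the square root in \eqref{eq:distV0} as $\sqrt{ts}\cos\bigl(\tfrac12\sd_{\SS}(\xi,\eta)\bigr)$ via the half-angle identity (the paper phrases it as $\arccos\alpha=\tfrac12\arccos(2\alpha^2-1)$, which is the same thing), subtract off $\cos\sd_{[0,1]}(t,s)$ to obtain \eqref{eq:d=d+d}, and then deduce \eqref{eq:d2=d2+d2} from $1-\cos\theta\sim\theta^2$ together with $(a+b)^2\sim a^2+b^2$.
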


\begin{proof}
Using $\arccos (\a) = \frac12 \arccos(2\a^2-1)$,  we deduce  
$$
 \sqrt{\frac{ts +\la x, y\ra}{2}} = \sqrt{t s} \sqrt{\frac{1+\la \xi, \eta\ra}{2}} = \sqrt{ts} \cos \left(\tfrac{1}{2} \arccos \la \xi, \eta\ra \right).
$$
Consequently, in terms of the geodesic distance on the unit sphere, we can write 
\begin{equation*} 
   \sd_{\VV_0} ((x,t), (y,s)) = \arccos \left[\sqrt{t}\sqrt{s} \cos \left(\tfrac{1}{2} \sd_{\SS}(\xi,\eta) \right)
       + \sqrt{1-t}\sqrt{1-s} \right].
\end{equation*}
In particular, it follows that
$$
  1- \cos  \sd_{\VV_0} ((x,t), (y,s))  = 1- \sqrt{t}\sqrt{s}- \sqrt{1-t}\sqrt{1-s} +\sqrt{t}\sqrt{s}
         \left[1- \cos \left(\tfrac{1}{2} \sd_{\SS}(\xi,\eta) \right)\right],
$$
which is the identity \eqref{eq:d=d+d}. From this identity, \eqref{eq:d2=d2+d2} follows from 
$1-\cos \t = 2 \sin^2\frac{\t}{2}$, $\frac{1}{\pi} \t \le \sin \f{\t}{2} \le \f{\t}{2}$ for $0 \le \t \le \pi$, and 
$(a+b)^2/2 \le a^2+b^2 \le (a+b)^2$ for $a,b \ge 0$. 
\end{proof}

The line segment from the apex to a point $(\xi,1)$, $\xi\in \sph$, on the top boundary of the cone $\VV_0^{d+1}$ 
can be parametrized by $l_\xi= \{(t\xi,t): 0 \le t \le 1\}$. For two points $(t\xi,t)$ and $(s \xi, s)$ on $l_\xi$, the
identity \eqref{eq:d=d+d} shows that 
$$
 \sd_{\VV_0} ((t\xi,t), (s\xi,s))  = \tfrac{|\t-\phi|}{2} =  \sd_{[0,1]}(t,s)
$$
if $t = \cos^2 \frac{\t}{2}$ and $s =  \cos^2 \frac{\phi}{2}$. Moreover, the top boundary of $\VV_0^{d+1}$ is the 
unit sphere $\sph$, or $\{(\xi,1): \xi \in \sph\}$. For $(\xi_1,1)$ and $(\xi_2,1)$ on this boundary, 
\eqref{eq:d=d+d} gives
$$
   \sd_{\VV_0}( (\xi_1,1),(\xi_2,1)) =  \frac12 \sd_{\SS}(\xi_1,\xi_2), 
$$
or half of the geodesic distance on the unit sphere. 

\begin{rem}\label{rem:distV0}
It is well-known that the geodesic distance $\sd_\SS(\cdot,\cdot)$ of $\sph$ is proportional to the Euclidean 
distance; that is, $\|\xi - \eta\| \sim \sd_{\SS}(\xi,\eta)$ for all $\xi,\eta \in \sph$. For the surface of the cone 
$\VV_0^{d+1}$, however, this is no longer true when the points are near the apex. Indeed, for $(x,t), (y,s) 
\in \VV_0^{d+1}$ with $x = t\xi$, $y =s \eta$, $\xi,\eta \in \sph$, 
$$
  \|(x,t) - (y,s)\|^2 = 2 (t-s)^2 + 2 t s \big(1- \la \xi,\eta \ra\big) =2 (t-s)^2 + 2 t s (1-\cos \sd_{\SS}(\xi,\eta)).
$$
In particular, if $t = s$, we see that $\|(t\xi,t) - (t\eta,t)\| \sim t \sd_{\SS}(\xi,\eta)$, whereas we have
$d_{\VV_0} \big( (t \xi,t),(t\eta,t) \big) \sim \sqrt{t} \sd_{\SS}(\xi,\eta)$ by \eqref{eq:d2=d2+d2}. Hence
the two distances are not compatible when $t$ is small. 
\end{rem}

We will also need the following lemma in the estimate of the kernels. 

\begin{lem} \label{lem:|s-t|}
For $(x,t), (y,s) \in \VV_0^{d+1}$, 
$$
  \big| \sqrt{t} - \sqrt{s} \big|\le \sd_{\VV_0} ((x,t), (y,s)) \quad \hbox{and} \quad 
      \big| \sqrt{1-t} - \sqrt{1-s} \big| \le \sd_{\VV_0} ((x,t), (y,s)).
$$
\end{lem}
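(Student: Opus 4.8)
The plan is to reduce the inequality to the elementary fact that $\cos$ and $\sin$ are $1$-Lipschitz, after parametrizing $t$ and $s$ by angles, using only the defining formula \eqref{eq:distV0}.

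First I would observe that, by the Cauchy--Schwarz inequality, $\la x,y\ra \le \|x\|\,\|y\| = ts$ since $\|x\| = t$ and $\|y\| = s$ on $\VV_0^{d+1}$; hence, from \eqref{eq:distV0},
$$
  \cos \sd_{\VV_0}((x,t),(y,s)) = \sqrt{\f{\la x,y\ra + ts}{2}} + \sqrt{1-t}\sqrt{1-s} \le \sqrt{ts} + \sqrt{1-t}\sqrt{1-s}.
$$
Next I would substitute $t = \cos^2\phi$ and $s = \cos^2\psi$ with $\phi,\psi \in [0,\pi/2]$, so that $\sqrt{t} = \cos\phi$, $\sqrt{1-t} = \sin\phi$ and similarly for $s$; the right-hand side above is then $\cos\phi\cos\psi + \sin\phi\sin\psi = \cos(\phi-\psi)$. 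Since the argument of $\arccos$ in \eqref{eq:distV0} is nonnegative we have $\sd_{\VV_0}((x,t),(y,s)) \in [0,\pi/2]$, while $|\phi-\psi| \in [0,\pi/2]$ as well; as $\cos$ is strictly decreasing on $[0,\pi]$, the bound $\cos \sd_{\VV_0} \le \cos|\phi-\psi|$ yields
$$
  |\phi-\psi| \le \sd_{\VV_0}((x,t),(y,s)).
$$

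Finally I would invoke that $|\cos\phi - \cos\psi| \le |\phi-\psi|$ and $|\sin\phi - \sin\psi| \le |\phi-\psi|$, which translate directly into $|\sqrt{t}-\sqrt{s}| \le |\phi-\psi|$ and $|\sqrt{1-t}-\sqrt{1-s}| \le |\phi-\psi|$; combining with the previous display completes the proof. (One could alternatively extract the same estimates from Proposition \ref{eq:cos-dist} and \eqref{eq:d2=d2+d2}, but at the cost of implicit constants, whereas the route above is self-contained and sharp.) There is essentially no obstacle; the only point needing a moment's care is verifying that both $\sd_{\VV_0}$ and $|\phi-\psi|$ lie in $[0,\pi/2]$, so that the monotonicity of $\arccos$ can be used without sign ambiguity.
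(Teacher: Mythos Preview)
Your proof is correct and follows essentially the same route as the paper's. The only cosmetic difference is the parametrization: the paper writes $t=\cos^2\tfrac{\t}{2}$, $s=\cos^2\tfrac{\phi}{2}$ with $\t,\phi\in[0,\pi]$ (so that $\sd_{\VV_0}\ge\tfrac12|\t-\phi|$ and then $|\cos\tfrac{\t}{2}-\cos\tfrac{\phi}{2}|\le 2\sin\tfrac{|\t-\phi|}{4}\le\tfrac{|\t-\phi|}{2}$), while you use $t=\cos^2\phi$ with $\phi\in[0,\pi/2]$, which amounts to the same thing after the substitution $\phi\leftrightarrow\t/2$.
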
 

\begin{proof}
Let $t = \cos^2 \frac{\t}{2}$ and $s = \cos^2 \frac{\phi}2$, $0 \le \t, \phi \le \pi$. Since $|\la x,y\ra| \le ts$, we obtain
$$
\cos \sd_{\VV_0} ((x,t), (y,s)) \le \sqrt{t s} + \sqrt{1-t}\sqrt{1-s} = \cos \tfrac{\t-\phi}2,  
$$
so that $\sd_{\VV_0} ((x,t), (y,s))  \ge  \frac12 |\t-\phi|$. Elementary trigonometric identities shows that
$$
 \big | \sqrt{t} - \sqrt{s} \big| = \big |\cos \tfrac{\t}{2} - \cos \tfrac{\phi}{2} \big | \le 2 \sin \tfrac{|\t-\phi|}4 
   \le \tfrac{|\t-\phi|}2 \le \sd_{\VV_0} ((x,t), (y,s)).  
$$
The inequality for $\big | \sqrt{1-t} - \sqrt{1-s} \big|$ follows from $\big |\sin \tfrac{\t}{2} - \sin \tfrac{\phi}{2} \big | 
\le 2 \sin \tfrac{|\t-\phi|}4$. 
\end{proof}

\subsection{A family of doubling weights}
For the conic  surface, balls are conic  caps. For $r > 0$ and $(x,t)$ on $\VV_0^{d+1}$, we denote the conic cap 
centered at $(x,t)$ with radius $r$ by 
$$
      \sc((x,t), r): = \left\{ (y,s) \in \VV_0^{d+1}: \sd_{\VV_0} \big((x,t),(y,s)\big)\le r \right\}.
$$   
A weight function $\sw$ is a doubling weight if it satisfies 
$$
   \sw\big(\sc((x,t), 2 r)\big) \le L \, \sw\big(\sc((x,t), r)\big), \quad r >0.
$$
In comparison with the spherical cap on $\sph$, the geometry of $\sc(x,t)$ is more complicated. 
Denote the surface measure on $\sph$ by $\d \s_\SS$. 

\begin{lem}\label{lem:cone-cap}
For $r > 0$, $t, s \in [0,1]$, define $\tau_r(t,s) = (\cos r - \sqrt{1-t}\sqrt{1-s} )/\sqrt{ts}$ and $\t_r(t,s) = \arccos \tau_r(t,s)$. 
Then, for $(x,t) \in \VV_0^{d+1}$ with $x = t \xi$, $\xi \in \sph$,  
\begin{align*}
   \sw \big(\sc((x,t), r)\big)&  = \int_{\sd_{[0,1]}(t, s)\le r} s^{d-1} 
        \int_{\sd_{\SS}(\xi,\eta) \le \tfrac12 \t_r(t,s)} \sw(s\eta, s) \d \s_{\SS}(\eta)\d s.
\end{align*}
\end{lem}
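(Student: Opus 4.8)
The identity is a polar-coordinate description of the conic cap, so the plan is to parametrize $\VV_0^{d+1}$ by $(s,\eta)\in[0,1]\times\sph$ through $(y,s)=(s\eta,s)$, under which the surface measure $\sm$ on $\VV_0^{d+1}$ takes the form $\d\sm=s^{d-1}\,\d\s_{\SS}(\eta)\,\d s$, to describe the set $\sc((x,t),r)$ in these coordinates by means of the closed-form expression for $\sd_{\VV_0}$, and then to apply Fubini's theorem. Thus one starts from
\begin{equation*}
  \sw\big(\sc((x,t),r)\big)=\int_0^1 s^{d-1}\!\!\int_{\{\eta\in\sph:\,(s\eta,s)\in\sc((x,t),r)\}}\!\!\sw(s\eta,s)\,\d\s_{\SS}(\eta)\,\d s,
\end{equation*}
and it remains only to identify, for each fixed $s$, which $s$ contribute and what the inner region is.

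For this, recall from the proof of Proposition~\ref{eq:cos-dist} that, with $x=t\xi$ and $y=s\eta$,
\begin{equation*}
  \sd_{\VV_0}\big((x,t),(s\eta,s)\big)=\arccos\Big[\sqrt{t}\sqrt{s}\,\cos\big(\tfrac12\sd_{\SS}(\xi,\eta)\big)+\sqrt{1-t}\sqrt{1-s}\Big].
\end{equation*}
Hence $(s\eta,s)\in\sc((x,t),r)$ if and only if $\sqrt{ts}\,\cos\big(\tfrac12\sd_{\SS}(\xi,\eta)\big)+\sqrt{1-t}\sqrt{1-s}\ge\cos r$, that is, $\cos\big(\tfrac12\sd_{\SS}(\xi,\eta)\big)\ge\tau_r(t,s)$. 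Taking $\eta=\xi$ (so $\cos(\tfrac12\sd_{\SS}(\xi,\eta))=1$) shows this slice is nonempty exactly when $\sqrt{ts}+\sqrt{1-t}\sqrt{1-s}\ge\cos r$, which by the formula $\sd_{[0,1]}(t,s)=\arccos\!\big(\sqrt{ts}+\sqrt{(1-t)(1-s)}\big)$ recalled in Section 4.1 is precisely $\sd_{[0,1]}(t,s)\le r$; this yields the outer integration range. When the slice is nonempty one has $\tau_r(t,s)\le1$, and since $\tfrac12\sd_{\SS}(\xi,\eta)$ ranges in $[0,\tfrac\pi2]$ where $\cos$ is decreasing, the condition $\cos(\tfrac12\sd_{\SS}(\xi,\eta))\ge\tau_r(t,s)$ turns into the spherical-cap condition on $\sd_{\SS}(\xi,\eta)$ recorded in the statement in terms of $\t_r(t,s)=\arccos\tau_r(t,s)$, with the convention that the cap is all of $\sph$ when $\tau_r(t,s)<0$ (in which case the condition holds for every $\eta$).

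Substituting these two ranges back and invoking Fubini gives the asserted formula. I do not expect a genuine obstacle here: the essential input, namely the closed form for $\sd_{\VV_0}$, is already available from Proposition~\ref{eq:cos-dist}, and the only points requiring a little care are the degenerate behaviour of $\tau_r(t,s)$ for parameter values outside $(0,1]$ (where the $\eta$-cap either becomes empty or exhausts $\sph$) and the correct normalization/polar form of the surface measure $\sm$ on $\VV_0^{d+1}$, both of which are routine.
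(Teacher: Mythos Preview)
Your approach is correct and matches the paper's proof exactly: parametrize by $(s,\eta)$, rewrite the cap condition via the closed form for $\sd_{\VV_0}$ from Proposition~\ref{eq:cos-dist}, and read off the two integration ranges. One small point you glossed over: solving $\cos\bigl(\tfrac12\sd_\SS(\xi,\eta)\bigr)\ge\tau_r$ actually gives $\sd_\SS(\xi,\eta)\le 2\arccos\tau_r=2\theta_r$, not the $\tfrac12\theta_r$ printed in the statement --- this is a typo in the paper (its proof applies the double-angle identity $\arccos(2\tau^2-1)=2\arccos\tau$ in the wrong direction), but it is inconsequential for the $\sim$ estimates in Proposition~\ref{prop:capV0} that follow.
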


\begin{proof}
 From $\sd_{\VV_0}((x,t),(y,s)) \le r$, we obtain $\sd_{[0,1]}(t, s) \le r$ by 
\eqref{eq:d=d+d} and, with $x = t \xi$ and $y = s \eta$, it follows from \eqref{eq:distV0} that 
$$
  \sd_{\SS} (\xi,\eta) \le \arccos \left(2 [\tau_r(t,s)]^2 -1\right) = \tfrac12 \arccos \tau_r(t,s) = \tfrac12 \t_r(t,s).  
$$
Hence, the stated identity follows from $\d \s(y,s) = s^{d-1} \d \s_\SS(\eta) \d s$. 
\end{proof}

For $\b > -d$ and $\g > -1$, consider the Jacobi weight function defined on the cone
$$
  \sw_{\b,\g} (t) = t^\b (1-t)^\g, \quad 0 < t <1.  
$$
Let $\bs_{\b,\g}$ be the normalization constant so that $\bs_{\b,\g} \sw_{\b,\g}$ has unit integral on 
$\VV_0^{d+1}$. Setting $y = s \eta$, $\eta \in \sph$, then 
$$
  \bs_{\b,\g}^{-1} = \int_0^1 s^{d+\b-1}(1-s)^\g \d s \int_{\sph}\d \s_\sph (\xi)
        =  \omega_d \frac{\Gamma(\b+d) \Gamma(\g+1)}{\Gamma(\b+\g+d+1)},
$$
where $\omega_d$ is the surface are of $\sph$.

\begin{prop}\label{prop:capV0}
Let $r > 0$ and $(x,t) \in \VV_0^{d+1}$. Then for $\b > - d$ and $\g > -1$, 
\begin{align}\label{eq:capV0}
 \sw_{\b,\g}\big(\sc((x,t), r)\big):=  \, & \bs_{\b,\g} \int_{ \sc((x,t), r)} \sw_{\b,\g}(s) \d \s(y,s) \\
     \sim &\, r^d (t+ r^2)^{\b+\f{d}{2}} (1-t+ r^2)^{\g+\f12}. \notag
\end{align}
In particular, $\sw_{\b,\g}$ is a doubling weight and the doubling index $\a( \sw_{\b,\g})$, defined in 
\eqref{eq:alpha(w)}, is give by $\a(\sw_{\b,\g}) = d + 2 \max\{0, \b+\f{d}2\} + 2 \max\{0,\g+\f12\}$.
\end{prop}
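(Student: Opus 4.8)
The plan is to establish the asymptotic formula \eqref{eq:capV0} by direct computation using Lemma \ref{lem:cone-cap}, and then deduce the doubling property and the value of the doubling index as straightforward consequences. First I would use Lemma \ref{lem:cone-cap} to write
\[
  \sw_{\b,\g}\big(\sc((x,t), r)\big) \sim \int_{\sd_{[0,1]}(t,s)\le r} s^{d+\b-1}(1-s)^\g\,
      \s_\SS\!\left(\{\eta: \sd_\SS(\xi,\eta)\le \tfrac12\t_r(t,s)\}\right) \d s,
\]
and recall that the spherical cap on $\sph$ of radius $\psi$ has surface measure comparable to $\min\{1,\psi\}^{d-1}$. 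The quantity $\t_r(t,s)$ must be estimated: using $\cos r - \sqrt{1-t}\sqrt{1-s} = 1-\cos\sd_{[0,1]}(t,s) - (1-\cos r) + \dots$ together with Proposition \ref{eq:cos-dist} (specifically \eqref{eq:d=d+d} read with $\xi=\eta$), one sees that on the range $\sd_{[0,1]}(t,s)\le r$ the half-angle $\tfrac12\t_r(t,s)$ is comparable to $r/\sqrt{(ts)^{1/2}}$ truncated at $\pi$; equivalently $(ts)^{1/4}\cdot\tfrac12\t_r(t,s) \sim \min\{r, (ts)^{1/4}\}$ in the regime where the cap is nonempty. Substituting, the $\eta$-integral contributes a factor comparable to $\big(\min\{r,(ts)^{1/4}\}/(ts)^{1/4}\big)^{d-1} = \min\{1, r^{d-1}(ts)^{-(d-1)/4}\}$, and I am left with a one-dimensional integral in $s$ over an interval around $t$ of length comparable (in the $\sd_{[0,1]}$ metric) to $r$.

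Next I would evaluate that one-dimensional integral. Parametrizing by $t=\cos^2\tfrac\phi2$, $s=\cos^2\tfrac\psi2$ so that $\sd_{[0,1]}(t,s)=\tfrac12|\phi-\psi|$, the region of integration becomes $|\phi-\psi|\le 2r$, and the measure $\d s$ becomes comparable to $\sqrt{s}\sqrt{1-s}\,\d\psi$. The integrand then has the form (up to the already-extracted spherical factor) $s^{d+\b-1/2}(1-s)^{\g+1/2}$ on an arc of length $\sim r$. Standard estimates for integrals of Jacobi-type weights over short arcs — exactly of the kind recorded for the interval $[0,1]$ in the examples subsection — give that $\int_{|\phi-\psi|\le 2r} s^{a}(1-s)^{b}\,\d\psi \sim r\,(t+r^2)^{a}(1-t+r^2)^{b}$ when $a,b>-1$; one handles the two cases $r\gtrsim\sqrt t$ (cap reaches the apex) and $r\lesssim\sqrt t$ separately. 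Combining this with the spherical factor, and carefully tracking whether $r\lesssim (ts)^{1/4}$ or not across the relevant range of $s$, yields the claimed bound $r^d(t+r^2)^{\b+d/2}(1-t+r^2)^{\g+1/2}$. The bookkeeping here — reconciling the truncation in the $\eta$-integral with the location of $t$ relative to $r^2$, $r^4$, and $1-t$ — is what I expect to be the main obstacle, since several sub-regimes must be checked to confirm the answer is uniform.

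Finally, with \eqref{eq:capV0} in hand the doubling property is immediate: the right-hand side clearly satisfies
\[
  \frac{(2r)^d (t+4r^2)^{\b+\f d2}(1-t+4r^2)^{\g+\f12}}{r^d (t+r^2)^{\b+\f d2}(1-t+r^2)^{\g+\f12}} \le c,
\]
with $c$ depending only on $d,\b,\g$, because each factor changes by at most a bounded multiple when $r$ is doubled (using $\pm$ monotonicity of $u\mapsto(a+u)^{\pm|c|}$). For the doubling index I would iterate: replacing $r$ by $2^m r$ multiplies the bound by $2^{md}$ from the leading power, by at most $2^{m\cdot 2\max\{0,\b+d/2\}}$ from the middle factor (trivial when $\b+d/2\le 0$, and bounded by $(2^{2m})^{\b+d/2}$ when $\b+d/2>0$), and similarly by at most $2^{m\cdot 2\max\{0,\g+1/2\}}$ from the last factor; moreover these exponents are attained (at $t\sim r^2$, respectively $1-t\sim r^2$), so the supremum in \eqref{eq:alpha(w)} grows exactly like $2^{\a(\sw_{\b,\g})m}$ with $\a(\sw_{\b,\g}) = d + 2\max\{0,\b+\tfrac d2\} + 2\max\{0,\g+\tfrac12\}$. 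This completes the proof.
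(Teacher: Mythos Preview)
Your plan is essentially the paper's proof: start from Lemma~\ref{lem:cone-cap}, integrate out the spherical variable to obtain the factor $(1-\tau_r(t,s))^{(d-1)/2}$, and estimate the remaining one-dimensional integral in $s$ by splitting into cases according to the position of $t$ relative to $r^2$ and $1-r^2$. The paper carries this out with three explicit cases ($3r^2\le t\le 1-3r^2$, $t\le 3r^2$, $1-t\le 3r^2$).

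One point to tighten: your two-sided bound $(ts)^{1/4}\cdot\tfrac12\t_r(t,s)\sim\min\{r,(ts)^{1/4}\}$ is not uniform on $\sd_{[0,1]}(t,s)\le r$, since $\t_r(t,s)\to 0$ as $\sd_{[0,1]}(t,s)\to r$. The paper avoids this either by keeping the full factor $(\cos\sd_{[0,1]}(t,s)-\cos r)^{(d-1)/2}$ inside the $s$-integral and evaluating it directly (in the interior case, where it yields an extra power of $r$), or by restricting to $\sd_{[0,1]}(t,s)\le r/2$ for the lower bound (near the apex, where $1-\tau_r$ is then bounded below by an absolute constant). Also, the threshold $r^4$ you mention does not arise; only the comparisons $t$ versus $r^2$ and $1-t$ versus $r^2$ matter.
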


\begin{proof} 
Without loss of generality, we assume $r$ is bounded by a small positive number $r \le \delta$; 
for example, $\delta = \f{\pi}{12}$ will do. By rotation symmetry, we could choose $x = t e_1$, where 
$e_1=(1,0,\ldots,0)$. Then, by Lemma \ref{lem:cone-cap},
\begin{align*}
   \sw_{\b,\g}\big(\sc((x,t), r)\big) = \o_{d-1} \int_{\sd_{[0,1]}(t, s)\le r}  s^{d-1} \sw_{\b,\g}(s) \int_0^{\tfrac12 \t_r(t,s)}
            (\sin \t)^{d-2} \d \t \d s,
\end{align*}
where we have used the identity (cf. \cite[(A.5.1)]{DaiX}) 
\begin{equation} \label{eq:intSS}
  \int_{\sph} g(\la \xi,\eta\ra) d\s(\eta) 
      = \o_{d-1} \int_0^\pi g (\cos \t) (\sin\t)^{d-2} \d \t
\end{equation} 
with $\o_{d-1}$ being the surface are of $\SS^{d-2}$. Since $\t \sim \sin \t \sim \sqrt{1-\cos \t}$,
it follows that 
\begin{align} \label{eq:sw(cap)}
 \sw_{\b,\g}\big(\sc((x,t), r)\big)\, &\sim \int_{\sd_{[0,1]}(t, s)\le r}  s^{d-1} \sw_{\b,\g}(s) 
      \big(1-\tau_r(t,s) \big)^{\f{d-1}2} \d s. 
\end{align}
By its definition, $\tau_r(t,s) \ge 0$ and, furthermore, $\tau_r(t,s) \le 1$ since we can write
\begin{align} \label{eq:tau_rV0}
   1-\tau_r(t,s) = \frac{\cos \sd_{[0,1]}(t,s) - \cos r}{\sqrt{t}\sqrt{s}}.
\end{align}
We need to consider three cases.

\medskip\noindent
{\it Case 1}. Assume $3 r^2 \le t \le 1- 3 r^2$. By Lemma \ref{lem:|s-t|}, this implies that 
$s \sim t + r^2$ and $1-s \sim 1-t + r^2$, which allows us to conclude that 
\begin{align*}
 \sw_{\b,\g}\big(\sc((x,t), r)\big) & \, \sim (t+ r^2)^{\b+\f{d}{2}} (1-t+ r^2)^{\g+\f12} \\
     & \times \int_{\sd_{[0,1]}(t, s)\le r} \big(\cos (\sd_{[0,1]} (t,s)) - \cos r \big)^{\f{d-1}2} 
    \frac{\d s}{\sqrt{s (1-s)}}. 
\end{align*}
Setting $t = \sin^2 \frac{\t}{2}$ and $s = \sin^2 \frac{\phi}{2}$ so that $\sd_{[0,1]} (t,s) = |\t-\phi|/2$
and the last integral is easily seen to be
$$
  \int_{|\t-\phi|\le 2 r} \big(\cos \tfrac{\t-\phi}{2} - \cos r \big)^{\f{d-1}2} \d \phi
    =  c \int_{|\zeta|\le 2 r} \big(\sin \tfrac{\zeta-r}{2} \sin \tfrac{\zeta+r}{2} \big)^{\f{d-1}2} \d \zeta  \sim r^d.
$$
This completes the proof of the first case. 

\medskip\noindent
{\it Case 2.} $0 \le t \le 3 r^2$. For $(y,s) \in \sc((x,t),r)$, we also have $\sqrt{s} \le \sqrt{t}+ r \le (1+ \sqrt{3})r$ by 
Lemma \ref{lem:|s-t|}, which shows, in particular, that $s \le c (t +r^2)$. Evidently $1-s \sim 1-t \sim 1$ in this
case. Furthermore, let $t = \sin^2 \frac{\t}{2}$ and $s = \sin^2 \frac{\phi}{2}$; then 
$$
|s - t| = \left|\sin \frac{\t-\phi}{2} \sin \frac{\t+\phi}{2} \right| \le c  r^2  
$$
since $|\t-\phi| \le 2 \sd_{[0,1]}(t,s)\le r$ and $\t \le c r$. Now, we have a trivial upper bound $1-\tau_r(t,s) \le 2$, 
which leads to, by \eqref{eq:sw(cap)},
\begin{align*}
 \sw_{\b,\g}\big(\sc((x,t), r)\big) \le c  \int_{\sd_{[0,1]}(t, s)\le r/2 }  s^{d+ \b -1} \d s 
       \sim \int_0^{r^2} \phi^{d+ \b -1} d\phi  \sim r^{2\b+ 2 d},
\end{align*}
which proves the upper bound in \eqref{eq:capV0}. For the lower bound, we consider a subset of $\sc((x,t),r)$ 
with $\d_{[0,1]}(t,s) \le r/2$. Using the upper bound of $s$ and $t$, we then deduce 
$$
  1-\tau_r(t,s)= \frac{\cos d_{[0,1]}(t,s) - \cos r}{\sqrt{t}\sqrt{s}} \ge \frac{\cos \tfrac{r}{2} - \cos r}{ (3+\sqrt{3}) r^2} \ge \frac2{\pi^2},
$$
where in the last step we have used the monotonicity of the function over $0 \le r \le \pi/12$, which shows then
\begin{align*}
 \sw_{\b,\g}\big(\sc((x,t), r)\big)\,& \ge c \int_{\sd_{[0,1]}(t, s)\le r/2 }  s^{d+ \b -1} \d s 
     \sim r^{2\b+ 2 d}. 
\end{align*}
This completes the proof of this second case.  

\medskip\noindent
{\it Case 3.} $1- t \le 3 r^2$. In this case, we clearly have $s \sim t  \sim 1$ for $(y,s) \in \sc((x,t),r)$. Since
$\sd_{[0,1]}(t,s) = \sd_{[0,1]}(1-t,1-s)$, changing variable $s\mapsto 1-s$ in  \eqref{eq:sw(cap)}, we obtain
\begin{align*}
 \sw_{\b,\g}\big(\sc((x,t), r)\big) \sim c  \int_{\sd_{[0,1]}(t, s)\le r}  (1-s)^\g \d s 
        = c \int_{\sd_{[0,1]}(1-t, s)\le r} s^\g \d s, 
\end{align*}
where the last integral can be estimated as in Case 2. This completes the proof of \eqref{eq:capV0}.
\end{proof}

It is worthwhile to mention that the proof relies on the geometry of $\sc((x,t),r)$ when $t \le c r^2$, which
we describe in the following remark. 

\begin{rem}
By \eqref{eq:cos-dist}, $(y,s) \in \sc((x,t),r)$ in equivalent to 
$$
   2 \sin^2 \frac{\sd_{\SS}(\xi,\eta)}{2} = \frac{\cos d_{[0,1]}(t,s) - \cos d_{\VV_0}\big((x,t), (y,s)\big)}{\sqrt{t}\sqrt{s} }
       \le \frac{\cos d_{[0,1]}(t,s) - \cos r}{\sqrt{t}\sqrt{s}}. 
$$
If $t \le c r^2$, then the proof of the Case 2 shows, by $\sin \t \le \t$, that the above inequality holds 
whenever $\sd_{\SS}(\xi,\eta) \le \frac4{\pi^2}$. In other words, for any $s$ that satisfies $\sd_{[0,1]}(t,s) \le r$,
the set $\sc((x,t), r)$ contains a large spherical cap $\{\eta: \sd_{\SS} (\xi,\eta) \le \frac4{\pi^2}\}$. 
\end{rem}

\begin{cor}
For $d\ge 2$, $\b > -d$ and $\g > -1$, the space $(\VV_0^{d+1},  \sw_{\b,\g}, \sd_{\VV_0})$ is a 
homogeneous space. 
\end{cor}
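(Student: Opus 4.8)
The plan is to read this corollary as a bookkeeping consequence of the material just developed, checked against the definition of homogeneous space from Subsection~\ref{sec:localizableHS}. By that definition it suffices to verify four things: that $\sd_{\VV_0}(\cdot,\cdot)$ is a metric on $\VV_0^{d+1}$; that the caps $\sc((x,t),r)$ (and the open balls $\{(y,s):\sd_{\VV_0}<r\}$) are measurable and the ambient measure is regular; that the surface (Lebesgue) measure $\sm=\d\s$ on $\VV_0^{d+1}$ is a doubling measure with respect to $\sd_{\VV_0}$ and satisfies $\min_{(x,t)\in\VV_0^{d+1}}\sm\big(\sc((x,t),\ve)\big)\ge c_\ve>0$ for every $\ve>0$; and that $\sw_{\b,\g}$ is a doubling weight, whence $\d\mu=\sw_{\b,\g}(s)\,\d\sm$ is a doubling measure and $(\VV_0^{d+1},\sw_{\b,\g},\sd_{\VV_0})$ is a homogeneous space.

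The first point is exactly the proposition proved just after the definition of $\sd_{\VV_0}$, and measurability together with regularity is automatic: $\sd_{\VV_0}$ is continuous on $\VV_0^{d+1}\times\VV_0^{d+1}$, so each $\sc((x,t),r)$ is closed and each open ball is open, while $\VV_0^{d+1}$ is a compact metric space, on which every finite Borel measure is regular. For the quantitative statements about $\sm$ I would invoke Proposition~\ref{prop:capV0} with $\b=\g=0$: since $\sw_{0,0}\equiv1$, the proposition says that $\sw_{0,0}$ is a doubling weight, i.e.\ that $\sm$ is a doubling measure (with index $\a(\sm)=2d+1$), and moreover that $\sm\big(\sc((x,t),\ve)\big)\sim\ve^d(t+\ve^2)^{d/2}(1-t+\ve^2)^{1/2}$ for $\ve$ small. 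The last factor $(t+\ve^2)^{d/2}(1-t+\ve^2)^{1/2}$, by concavity of its logarithm in $t$, attains its minimum over $t\in[0,1]$ at an endpoint; at $t=0$ it is $\sim\ve^d$, which is the smaller endpoint value (the hypothesis $d\ge2$ being the standing assumption of Proposition~\ref{prop:capV0}), so $\min_{(x,t)}\sm(\sc((x,t),\ve))\ge c\,\ve^{2d}>0$ for small $\ve$; for larger $\ve$ the lower bound is inherited from a fixed small radius, and since $\sm(\sc((x,t),\ve))\le\sm(\VV_0^{d+1})<\infty$ throughout, the lower bound is positive for every fixed $\ve$.

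Finally, Proposition~\ref{prop:capV0} in full generality states that $\sw_{\b,\g}$ is a doubling weight with respect to $\sd_{\VV_0}$ for all $\b>-d$ and $\g>-1$ (these being also precisely the conditions under which $\sw_{\b,\g}$ is integrable over $\VV_0^{d+1}$, as the closed form of $\bs_{\b,\g}^{-1}$ shows). Combining this with the general principle recorded in Subsection~\ref{sec:localizableHS} --- that a doubling weight against a doubling Lebesgue measure produces a doubling measure --- gives that $\sw_{\b,\g}(s)\,\d\sm$ is a doubling measure, and hence that $(\VV_0^{d+1},\sw_{\b,\g},\sd_{\VV_0})$ is a homogeneous space. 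There is no genuine obstacle here: the substance of the corollary has already been carried out in the cap-volume asymptotics of Proposition~\ref{prop:capV0} and in the verification that $\sd_{\VV_0}$ is a distance, and what remains is only to match these statements against the definition of a homogeneous space.
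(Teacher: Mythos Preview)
Your proposal is correct and follows the same approach as the paper, which gives no explicit proof and treats the corollary as an immediate consequence of Proposition~\ref{prop:capV0} together with the earlier verification that $\sd_{\VV_0}$ is a distance. Your write-up is simply more explicit in checking each clause of the definition of homogeneous space from Subsection~\ref{sec:localizableHS}.
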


For convenience, we will introduce and use the function $\sw_{\b,\g,d}(n;t)$ defined by 
\begin{equation}\label{eq:w_bg}
  \sw_{\b,\g,d} (n; t):=  n^{d} \sw_{\b,\g}\big(\sc((x,t), n^{-1})\big)
        = \big(t+n^{-2}\big)^{\b+\f{d}{2}} \big(1-t+n^{-2}\big)^{\g+\f12}. 
\end{equation}
For any $\xi$ in the unit sphere $\SS^{m-1}$, a spherical cap $\sc(\xi,r)$ has $\s (\sc(\xi,r)) = c r^{m-1}$ 
so that, for a doubling weight $w$ on the unit sphere $\SS^{m-1}$, the function 
$w(n;\xi) =  n^{m-1} \s (\sc(\xi,\tfrac 1 n))$ is an approximation to $w(\xi)$ by the Lebesgue differentiation 
theorem. The function $\sw_{\b,\g,d}(n; t)$ in 
\eqref{eq:w_bg}, however, is not an approximation to $\sw_{\b,\g}(t)$ on the conic  surface since for the 
Lebesgue measure $\d \s$ on $\VV_0^{d+1}$,
$$
  \s \big(\sc((x,t), n^{-1})\big) \sim n^{-d} (t+ n^{-2})^{\f d 2} (1 - t + n^{-2})^{\f12} 
$$
by \eqref{eq:capV0}, whereas it is the ratio 
$$
  \sw_n(x,t) = \frac{\sw(\sc((x,t), n^{-1}))}{\s(\sc((x,t), n^{-1}))}
$$ 
that provides an approximation to $\sw(t)$ on $\VV_0^{d+1}$. 

For the unit sphere, it can be verified that $w_n(\xi) = w(n;\xi)$ is a doubling weight on $\SS^{m-1}$ whenever 
$w$ is, and this property has been used to show, for example, that $\|f\|_{p,w} \sim \|f\|_{p, w_n}$ for all 
polynomials of degree at most $n$. For a doubling measure $\sw$ on the conic surface, it is not clear, however,
if $\sw_n$ defined above is itself a doubling weight on $\VV_0^{d+1}$. 

\subsection{Orthogonal polynomials on the conic surface}\label{sec:OPconicSurface}
Orthogonal structure with respect to $\sw_{\b,\g}$ on the conic surface was
studied in \cite{X20a}. For $\b > - d$ and $\g > -1$, define the inner product 
$$
\la f, g\ra_{\sw} =\bs_{\b,\g} \int_{\VV_0^{d+1}} f(x,t) g(x,t) \sw_{\b,\g} \d \s(x,t),
$$ 
where $\d \s$ denote the surface measure on $\VV_0^{d+1}$, which is well-defined for all polynomials 
restricted on the conic surface. Let $\CV_n(\VV_0^{d+1},\sw_{\b,\g})$ be the space of orthogonal
polynomials of degree $n$. Since $\VV_0^{d+1}$ is a quadratic surface in $\RR^{d+1}$, the dimension 
of the space $\CV_n(\VV_0^{d+1}, \sw_{\b,\g})$ is the same as the dimension of the spherical harmonics 
of degree $n$ on $\SS^d$; that is, $\dim \CV_0(\VV_0^{d+1},\sw_{\b,\g}) =1$ and 
$$
   \dim \CV_n(\VV_0^{d+1},\sw_{\b,\g})  = \binom{n+d-1}{n}+\binom{n+d-2}{n-1},\quad n=1,2,3,\ldots.
$$
Furthermore, let $\Pi_n(\VV_0^{d+1})$ denote the space of polynomials of degree at most $n$ restricted 
on $\VV_0^{d+1}$, then it is the union of $\CV_n(\VV_0^{d+1},\sw_{\b,\g})$ and 
$$
    \dim \Pi_n(\VV_0^{d+1}) = \binom{n+d}{n}+\binom{n+d-1}{n-1}.
$$

An orthogonal basis of $\CV_n(\VV_0^{d+1}, \varphi_{\b,\g})$ can be given in terms of the Jacobi polynomials
and spherical harmonics. Let $\CH_m(\sph)$ be the space of spherical harmonics of degree $m$ in $d$
variables. Let $\{Y_\ell^m: 1 \le \ell \le \dim \CH_m(\sph)\}$ denote an orthonormal basis of $\CH_m(\sph)$. 
Then the polynomials
\begin{equation*} 
  \sS_{m, \ell}^n (x,t) = P_{n-m}^{(2m + \b + d-1,\g)} (1-2t) Y_\ell^m (x), \quad 0 \le m \le n, \,\, 
      1 \le \ell \le \dim \CH_m(\sph),
\end{equation*}
consist of an orthogonal basis of $\CV_n(\VV_0^{d+1}, \varphi_{\b,\g})$. More precisely, 
$$
  \la  \sS_{m, \ell}^n,  \sS_{m', \ell'}^{n'} \ra_{\sw_{b,\g}} = \sH_{m,n}^{\b,\g} \delta_{n,n'} \delta_{m,m'} \delta_{\ell,\ell'}
$$
where the norm $\sH_{m,n}^{\b,\g}$ of $\sS_{m,\ell}^n$ is given by
\begin{equation*}
  \sH_{m,n}^{\b,\g} = \frac{c_{\b+d-1,\g}}{c_{2m + \b+d-1,\g}} h_{n-m}^{(2m+\b+d-1,\g)},
\end{equation*}
where $c_{\a,\b}$ is the normalization constant in \eqref{eq:c_ab} and $h_m^{(\a,\b)}$ is the norm square of
the Jacobi polynomial. 
We call $S_{m, \ell}^n$ the Jacobi polynomials on the conic  surface. The reproducing kernel of the space 
$\CV_n(\VV_0^{d+1}, \sw_{\b,\g})$ is denoted by $\sP_n(\sw_{\b,\g};\cdot,\cdot)$, which can be written 
in terms of the above basis, 
$$
\sP_n\big(\sw_{\b,\g}; (x,t),(y,s) \big) = \sum_{m=0}^n \sum_{k=1}^{\dim \CH_m^d}
    \frac{  \sS_{m, \ell}^n(x,t)  \sS_{m, \ell}^n(y,s)}{\sH_{m,n}^{\b,\g}},
$$
and it is the kernel of the orthogonal projection operator $\proj_n(\sw_{\b,\g}): L^2(\VV_0^{d+1},\sw_{\b,\g}) \to 
\CV_n(\VV_0^{d+1}, \varphi_{\b,\g})$,  
$$
\proj_n(\sw_{\b,\g};f) = \int_{\VV_0^{d+1}} f(y,s) \sP_n\big(\sw_{\b,\g}; \,\cdot, (y,s) \big)  \sw_{\b,\g}(s) \d\s(y,s).
$$

The case $\b = -1$ turns out to be the most interesting case, for which there is a second order differential 
operator that has orthogonal polynomials as eigenfunctions, akin the Laplace-Beltrami operator for the spherical harmonics. 

\begin{thm}\label{thm:Delta0V0}
Let $\Delta_0^{(\xi)}$ denote the Laplace-Beltrami operator in $\xi \in \sph$. Define 
$$
\Delta_{0,\g}:= \left(t(1-t)\partial_t^2 + \big( d-1 - (d+\g)t \big) \partial_t+ t^{-1} \Delta_0^{(\xi)}\right) 
$$
for $\g > -1$. Then the polynomials in $\CV_n(\VV_0^{d+1}, \sw_{-1,\g})$ are eigenfunctions of $\Delta_{0,\g}$, 
$$
    \Delta_{0,\g} u =  -n (n+\g+d-1) u, \qquad \forall u \in \CV_n(\VV_0^{d+1}, \sw_{-1,\g}).
$$
\end{thm}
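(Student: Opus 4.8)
The plan is to exploit the explicit orthogonal basis
$\sS_{m,\ell}^n(x,t) = P_{n-m}^{(2m+d-2,\g)}(1-2t)\, Y_\ell^m(x)$ of
$\CV_n(\VV_0^{d+1},\sw_{-1,\g})$ (the case $\b = -1$, so the Jacobi parameter is $2m+\b+d-1 = 2m+d-2$), and to verify directly that each basis element is an eigenfunction of $\Delta_{0,\g}$ with the claimed eigenvalue. Since $\Delta_{0,\g}$ is linear and the $\sS_{m,\ell}^n$ span $\CV_n(\VV_0^{d+1},\sw_{-1,\g})$, eigenfunction-ness on the basis gives it on the whole space. First I would record the two facts I need about the building blocks: the spherical harmonic $Y_\ell^m$ is homogeneous of degree $m$ and satisfies $\Delta_0^{(\xi)} Y_\ell^m = -m(m+d-2) Y_\ell^m$ on $\sph$ (this is \eqref{eq:sph-harmonics} with $d$ in place of $d+1$, since $x\in\RR^d$); and the Jacobi polynomial $y\mapsto P_{k}^{(a,b)}(y)$ satisfies the Jacobi differential equation, which after the substitution $y = 1-2t$ (so $\partial_y = -\tfrac12\partial_t$, $\partial_y^2 = \tfrac14\partial_t^2$) becomes
$$
  t(1-t) u'' + \big( (b+1) - (a+b+2)t\big) u' = -k(k+a+b+1)\, u, \qquad u(t) = P_k^{(a,b)}(1-2t).
$$

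Next I would substitute $\sS_{m,\ell}^n = u_m(t)\, Y_\ell^m(x)$ with $u_m(t) = P_{n-m}^{(2m+d-2,\g)}(1-2t)$ into
$\Delta_{0,\g} = t(1-t)\partial_t^2 + (d-1-(d+\g)t)\partial_t + t^{-1}\Delta_0^{(\xi)}$. Here one must be careful: $x = t\xi$ with $\xi\in\sph$, so $Y_\ell^m(x) = t^m Y_\ell^m(\xi)$, and the operator $\Delta_0^{(\xi)}$ acts only in the angular variable $\xi$, while $\partial_t$ acts on the radial factor. Thus $\Delta_0^{(\xi)}$ applied to $\sS_{m,\ell}^n$ produces $u_m(t)\, t^m\, \big(\Delta_0^{(\xi)} Y_\ell^m(\xi)\big) = -m(m+d-2)\, \sS_{m,\ell}^n$, contributing $-t^{-1} m(m+d-2)\,\sS_{m,\ell}^n$. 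For the radial part, writing $v_m(t) := u_m(t) t^m$ (so that $\sS_{m,\ell}^n = v_m(t) Y_\ell^m(\xi)$), I would compute $t(1-t) v_m'' + (d-1-(d+\g)t) v_m'$ using the Leibniz rule, then collect the terms. The $t^{-1}$ singularity from the angular term must cancel against the lowest-order term coming from differentiating $t^m$; this is where the specific choice $\b = -1$ (equivalently the coefficient $d-1$ rather than $d-1+\b$ multiplying $\partial_t$) is essential, and checking this cancellation is the main computational heart of the proof.

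The main obstacle, then, is organizing this radial computation cleanly: after the cancellation of the $t^{-1}$ term one is left with an operator on $u_m(t)$ that should coincide, up to the shift in degree and parameters, with the Jacobi operator displayed above with $k = n-m$, $a = 2m+d-2$, $b = \g$. Concretely, I expect that the substitution $v_m = t^m u_m$ conjugates $t(1-t)\partial_t^2 + (d-1-(d+\g)t)\partial_t - m(m+d-2)t^{-1}$ into $t^m\big[ t(1-t)\partial_t^2 + (2m+d-1 - (2m+d+\g)t)\partial_t \big]$, which is exactly the Jacobi operator for parameters $(2m+d-2,\g)$ acting on $u_m = P_{n-m}^{(2m+d-2,\g)}(1-2t)$, yielding eigenvalue $-(n-m)(n-m + 2m+d-2+\g+1) = -(n-m)(n+m+d+\g-1)$. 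Adding the eigenvalue $-m(m+d-2)$ from the angular part (now without the $t^{-1}$, since it has been absorbed), the total is
$$
  -(n-m)(n+m+d+\g-1) - m(m+d-2) = -n(n+\g+d-1),
$$
after expanding and simplifying (the cross terms in $m$ cancel). I would present the conjugation identity as the key lemma, verify it by a short direct computation with the product rule, and then assemble the eigenvalue arithmetic. One should also note at the outset that $\Delta_{0,\g}$ genuinely maps polynomials on $\VV_0^{d+1}$ to polynomials there — this follows because the expression is, after using $x=t\xi$, polynomial in $(x,t)$ when applied to $\sS_{m,\ell}^n$, the apparent $t^{-1}$ having been cancelled — so the eigenvalue equation makes sense as an identity of polynomials restricted to the cone.
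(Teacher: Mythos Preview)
Your approach is the right one, and in fact the paper does not prove this theorem here at all --- it is quoted from \cite{X20a} --- so your write-up is already more detailed than what appears in the text. However, the computation contains a genuine error that makes the final eigenvalue come out wrong.

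The conjugation identity you state is incorrect. A direct Leibniz-rule computation gives
\[
\big[t(1-t)\partial_t^2 + (d-1-(d+\g)t)\partial_t - m(m+d-2)t^{-1}\big](t^m u)
= t^m\big[t(1-t)u'' + (2m+d-1 - (2m+d+\g)t)u' - m(m+d+\g-1)u\big],
\]
so the conjugated operator carries an extra zeroth-order term $-m(m+d+\g-1)$ that you dropped. (The $t^{-1}$ singularity does cancel, as you anticipated, but the cancellation leaves this nonzero residue.) You then compound the error by ``adding the eigenvalue $-m(m+d-2)$ from the angular part'': that contribution was already included in the operator you conjugated, so re-adding it is double-counting. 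As a sanity check, your displayed identity $-(n-m)(n+m+d+\g-1) - m(m+d-2) = -n(n+\g+d-1)$ is false --- take $n=m=1$, $d=2$, $\g=0$: the left side is $-1$, the right is $-2$. The correct arithmetic uses the residue from the conjugation,
\[
-(n-m)(n+m+d+\g-1) - m(m+d+\g-1) = -n(n+\g+d-1),
\]
and with this correction (and no further addition from the angular part) your argument goes through. A minor slip: in your transformed Jacobi ODE the first-order coefficient should be $(a+1)-(a+b+2)t$, not $(b+1)-(a+b+2)t$; since you use the correct value $2m+d-1 = a+1$ in the specific case, this typo does not propagate.
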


The reproducing kernel enjoys an addition formula that is a mixture of the addition formula for the spherical
harmonics and the Jacobi polynomials. The formula has the most elegant form when $\b = -1$, 
which is stated below. 

\begin{thm}  \label{thm:sfPbCone2}
Let $d \ge 2$ and $\g \ge -\f12$. Then, for $(x,t), (y,s) \in \VV_0^{d+1}$,
\begin{align} \label{eq:sfPbCone}
 \sP_n \big(\sw_{-1,\g}; (x,t), (y,s)\big) =  b_{\g,d}  \int_{[-1,1]^2} & Z_{2n}^{\g+d-1} \big( \zeta (x,t,y,s; v)  \big) \\
  & \times  (1-v_1^2)^{\f{d-4}{2}} (1-v_2^2)^{\g-\f12} \d v, \notag
\end{align} 
where $b_{\g,d}$ is a constant so that $\sP_0\big(\sw_{-1,\g}; (x,t), (y,s)\big) =1$ and 
$$
 \zeta (x,t,y,s; v)  = v_1 \sqrt{\tfrac{st + \la x,y \ra}2}+ v_2 \sqrt{1-t}\sqrt{1-s};
$$
moreover, the identity holds under the limit \eqref{eq:limitInt} when $\g = -\f12$ and/or $d = 2$. 
\end{thm}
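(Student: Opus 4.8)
The plan is to establish \eqref{eq:sfPbCone} by showing that the double integral on its right-hand side, call it $R_n\big((x,t),(y,s)\big)$, is the reproducing kernel of $\CV_n(\VV_0^{d+1},\sw_{-1,\g})$. I would use the characterisation that such a kernel is the unique symmetric kernel that is a polynomial of degree $\le n$ in each of its two variables on $\VV_0^{d+1}$ and that reproduces $\Pi_n(\VV_0^{d+1})$ under integration against $\sw_{-1,\g}$. Throughout write $x=t\xi$, $y=s\eta$ with $\xi,\eta\in\sph$ and $\psi=\sd_\SS(\xi,\eta)$; as in the proof of Proposition~\ref{eq:cos-dist} one has $\sqrt{(st+\la x,y\ra)/2}=\sqrt{ts}\cos(\psi/2)$, so that $\zeta(x,t,y,s;v)=v_1\sqrt{ts}\cos(\psi/2)+v_2\sqrt{1-t}\sqrt{1-s}$.

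The first step is to write out the kernel itself. Using the orthogonal basis $\{\sS^n_{m,\ell}\}$ of $\CV_n(\VV_0^{d+1},\sw_{-1,\g})$ from Subsection~\ref{sec:OPconicSurface}, the homogeneity $Y^m_\ell(t\xi)=t^mY^m_\ell(\xi)$, and the addition formula \eqref{eq:additionF}--\eqref{eq:Zn} for spherical harmonics on $\sph$, the sum over $\ell$ collapses into a zonal harmonic and one obtains
$$\sP_n\big(\sw_{-1,\g};(x,t),(y,s)\big)=\sum_{m=0}^n\frac{c_{2m+d-2,\g}}{c_{d-2,\g}\,h^{(2m+d-2,\g)}_{n-m}}\,(ts)^m\,Z^{\f{d-2}2}_m(\cos\psi)\,P^{(2m+d-2,\g)}_{n-m}(1-2t)\,P^{(2m+d-2,\g)}_{n-m}(1-2s).$$
Thus the theorem amounts to identifying this double sum with $R_n$.

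I would then argue through the uniqueness characterisation rather than resumming the series directly. Expanding $Z^{\g+d-1}_{2n}(\zeta)$ as a polynomial in $\zeta$ and using parity of the weights $(1-v_1^2)^{\f{d-4}2}$ and $(1-v_2^2)^{\g-\f12}$ (with the point masses provided by \eqref{eq:limitInt} taking their place when $d=2$ or $\g=-\f12$), only even powers of $\zeta$ survive the $v$-integration; since $(st+\la x,y\ra)/2$ is bilinear on $\VV_0^{d+1}\times\VV_0^{d+1}$ and $1-t$ is linear, this already shows that $R_n$ is symmetric and of degree $\le n$ in $(x,t)$ and in $(y,s)$. It remains to verify that $\la R_n((x,t),\cdot),\sS^j_{m,\ell}\ra_{\sw_{-1,\g}}$ equals $\sS^n_{m,\ell}(x,t)$ when $j=n$ and vanishes when $j\ne n$, for all admissible $j,m,\ell$. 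Here the kernel and the basis both factor through the two variables $\la\xi,\eta\ra$ and $(t,s)$, so I would integrate in two stages: first over $\eta\in\sph$, where the Funk--Hecke formula for spherical harmonics on $\sph$ applied to $Y^m_\ell$ produces a one-variable multiplier; combined with \eqref{eq:intSS} this is what turns the $v_1$-average of $\cos(\psi/2)$ into a spherical average and fixes the exponent $\f{d-4}2$. Then one integrates over $s\in[0,1]$ against $s^{d-2}(1-s)^\g\,\d s$, where the relevant quantities are moments of the Jacobi polynomials $P^{(2m+d-2,\g)}_{n-m}(1-2s)$; after accounting for the extra $s^m$ produced by the kernel side, the effective weight is the Jacobi weight for which these polynomials are orthogonal. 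After these reductions the identity becomes a one-parameter family of classical identities for Jacobi polynomials on $[0,1]$, and these follow from the product formula for Gegenbauer polynomials (the same one underlying the ball addition formula \eqref{eq:additionBall}) together with the quadratic transform \eqref{eq:Jacobi-Gegen0}, which is exactly the mechanism that converts a degree-$n$ Jacobi object into the degree-$2n$ Gegenbauer polynomial $Z^{\g+d-1}_{2n}$ and supplies the weight $(1-v_2^2)^{\g-\f12}$; an equivalent but more opaque route reduces the whole problem, via a quadratic change of variables, to an already-known addition formula on a related weighted domain whose product structure matches the two $v$-integrations.

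The main obstacle is this last step: collapsing the $m$-indexed terms of the series into the single Gegenbauer polynomial $Z^{\g+d-1}_{2n}\big(\zeta(x,t,y,s;v)\big)$, i.e.\ making the parameters in the quadratic transform and in the Funk--Hecke multipliers line up so that the sum over $m$ telescopes correctly. The normalising constant $b_{\g,d}$ is then pinned down afterwards by evaluating both sides at $n=0$, where the identity must read $\sP_0\equiv1$. Finally, the degenerate endpoints $d=2$ (where the $v_1$-measure degenerates to the two-point mass of \eqref{eq:limitInt}) and $\g=-\f12$ (similarly in $v_2$) must be handled as limits in the parameters, using continuity in $(\b,\g,d)$ of the Jacobi polynomials, of the normalising constants, and of both sides of the identity.
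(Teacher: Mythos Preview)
The paper does not prove this theorem: it is quoted from \cite{X20a} (see the opening sentence of Subsection~\ref{sec:OPconicSurface} and the text surrounding the statement), so there is no ``paper's own proof'' to compare against.

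On the merits of your outline: the strategy of recognising $R_n$ as the reproducing kernel via the uniqueness characterisation is sound, and your expression for $\sP_n$ as an $m$-sum involving $Z_m^{(d-2)/2}(\cos\psi)$ and products of Jacobi polynomials is correct. The observation that parity in $v_1,v_2$ forces $R_n$ to be a genuine polynomial of degree $\le n$ in each argument on $\VV_0^{d+1}$ is also correct. However, your proposal is really a plan rather than a proof: you accurately flag the ``main obstacle'' --- collapsing the $m$-indexed sum into the single Gegenbauer polynomial $Z_{2n}^{\g+d-1}$ --- but you do not carry it out. Invoking ``the product formula for Gegenbauer polynomials together with the quadratic transform'' is not enough on its own, because the Jacobi parameters in the basis are $m$-dependent ($\a=2m+d-2$), and one needs a specific connection/summation identity to absorb that dependence; the Funk--Hecke step and the $s$-integration do not by themselves telescope the sum. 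The ``more opaque route'' you allude to --- transporting the problem by a quadratic change of variables to a domain (in \cite{X20a} this is effectively the simplex/parabolic-biangle picture) whose addition formula is already known --- is in fact the substance of the argument in \cite{X20a}, and it is precisely what your write-up leaves undone. As written, the proposal identifies the right target and the right difficulty but stops short of resolving it.
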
 

In particular, the orthogonal structure for the weight function $\sw_{-1,\g}$ on the conic surface satisfies 
both characteristic properties specified in Definition \ref{def:LBoperator} and Definition \ref{defn:additionF}.
We show that $\sw_{-1,\g}$ admits highly localized kernels in the next subsection. 

\subsection{Highly localized kernels}
Let $\wh a$ be an admissible cut-off function. For $(x,t)$, $(y,s) \in \VV_0^{d+1}$, define the kernel 
$\sL_n(\sw_{-1,\g})$ by
$$
   \sL_n\big(\sw_{-1,\g}; (x,t),(y,s)\big) = \sum_{j=0}^\infty \wh a\left( \frac{j}{n} \right) \sP_j\big(\sw_{-1,\g}; (x,t), (y,s)\big). 
$$
We show that this kernel is highly localized. It is worthwhile to point out that $\sw_{-1,\g}$ does not include 
the constant weight function, or the Lebesgue measure on the conic surface. 

We shall need $\sw_{-1,\g}\big(\sc((x,t), r)\big)$ with $r = n^{-2}$.
Following the notation \eqref{eq:w_bg}, we define
\begin{equation}\label{eq:w(n;t)}
     \sw_{\g,d} (n; t) = \big(1-t+n^{-2}\big)^{\g+\f12}\big(t+n^{-2}\big)^{\f{d-2}{2}}. 
\end{equation}

\begin{thm} \label{thm:kernelV0}
Let $d\ge 2$ and $\g \ge -\f12$. Let $\wh a$ be an admissible cutoff function. Then
for any $\k > 0$ and $(x,t), (y,s) \in \VV_0^{d+1}$, 
\begin{equation*}
\left |\sL_n (\sw_{-1,\g}; (x,t), (y,s))\right|
\le \frac{c_\k n^d}{\sqrt{ \sw_{\g,d} (n; t) }\sqrt{ \sw_{\g,d} (n; s) }}
\big(1 + n \sd_{\VV_0}( (x,t), (y,s)) \big)^{-\k}.
\end{equation*}
\end{thm}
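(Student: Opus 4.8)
The plan is to reduce the estimate on the conic surface to the one-dimensional Jacobi kernel estimate \eqref{eq:Ln(t,1)} via the addition formula \eqref{eq:sfPbCone}. Since $\sL_n(\sw_{-1,\g}; (x,t),(y,s)) = \sum_j \wh a(j/n)\sP_j(\sw_{-1,\g};(x,t),(y,s))$ and $\sP_j$ is built from $Z_{2j}^{\g+d-1}$, which by the quadratic transform \eqref{eq:Jacobi-Gegen0} is a Jacobi polynomial $P_j^{(\g+d-\f32,-\f12)}$ up to constant, the addition formula gives
\begin{align*}
  \sL_n\big(\sw_{-1,\g};(x,t),(y,s)\big) = b_{\g,d}\int_{[-1,1]^2}
   \CL_n\big(\zeta(x,t,y,s;v)\big)(1-v_1^2)^{\f{d-4}{2}}(1-v_2^2)^{\g-\f12}\,\d v,
\end{align*}
where $\CL_n$ is (a constant multiple of) the one-dimensional kernel $L_n^{(\g+d-\f32,-\f12)}(\cdot,1)$ in $\zeta$-variable evaluated against the Gegenbauer quadratic argument. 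First I would convert this to an integral of $|L_n^{(\a,\b)}(u)|$ with $u = 2\zeta^2-1$ and $\a = \g+d-\f32$, $\b=-\f12$, and apply the sharp one-dimensional bound $|L_n^{(\a,\b)}(u)| \le c\,n^{2\a+2}(1+n\sqrt{1-u})^{-\k}$ from \eqref{eq:Ln(t,1)}, noting $\sqrt{1-u} = \sqrt{2}\sqrt{1-\zeta^2}\sim \arccos\zeta$ roughly, and $2\a+2 = 2(\g+d-1)$.

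\textbf{Key steps.} The main body of the argument is then a careful estimate of the resulting double integral
$$
  n^{2(\g+d-1)}\int_{[-1,1]^2}\frac{(1-v_1^2)^{\f{d-4}{2}}(1-v_2^2)^{\g-\f12}}
     {\big(1+n\sqrt{1-\zeta(x,t,y,s;v)}\big)^{\k}}\,\d v.
$$
The plan is: (1) write $\zeta = v_1 A + v_2 B$ with $A = \sqrt{(st+\la x,y\ra)/2}$, $B=\sqrt{1-t}\sqrt{1-s}$, and note $A+B \le 1$ with $1-(A+B) = 1-\cos\sd_{\VV_0}$; (2) use the elementary inequality $1-\zeta \ge (1-v_1^2)A/2 + (1-v_2^2)B/2 + (1-(A+B))$ or a comparable lower bound, so that the decay factor controls separately the $v_1$, $v_2$ integrations and the distance; (3) perform the one-variable integral $\int_{-1}^1 (1-v^2)^{\sigma}(1+c\sqrt{1-v^2})^{-\k}\,\d v \sim c^{-2\sigma-1}$ (for $\k$ large, $c\ge 1$) with $c \sim n\sqrt{A}$ and $c\sim n\sqrt{B}$ respectively, producing factors $(n\sqrt A)^{-(d-2)}$ and $(n\sqrt B)^{-(2\g+1)}$ when these are $\gtrsim 1$, with the obvious modification (factor $\sim 1$) when they are $\lesssim 1$; (4) collect powers: $n^{2(\g+d-1)}\cdot n^{-(d-2)}\cdot n^{-(2\g+1)} = n^{d-1}$, and $A^{-(d-2)/2}B^{-(2\g+1)/2}$ combines with the remaining distance decay to give exactly $n^{d}/\sqrt{\sw_{\g,d}(n;t)\sw_{\g,d}(n;s)}$ after matching $A + n^{-2}\sim t+n^{-2}$ (up to $\la x,y\ra$ issues near the apex) and $B+n^{-2}\sim \sqrt{(1-t)(1-s)}+n^{-2}$, and using Proposition \ref{eq:cos-dist} and Lemma \ref{lem:|s-t|} to replace $s$-quantities by $t$-quantities at the cost of a factor $(1+n\sd_{\VV_0})^{c}$ that the large-$\k$ decay absorbs.

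\textbf{Main obstacle.} The hard part will be step (2)/(4) near the apex $t\to 0$, where $A = \sqrt{(st+\la x,y\ra)/2}$ can be small and, crucially, $\la x,y\ra$ is \emph{not} controlled by $ts$ from below — writing $x = t\xi$, $y=s\eta$ we have $A = \sqrt{ts}\cos(\tfrac12\sd_\SS(\xi,\eta))$, which vanishes when $\sd_\SS(\xi,\eta)\to\pi$. This is the same incomparability of $\sd_{\VV_0}$ with the Euclidean distance flagged in Remark \ref{rem:distV0}, and it forces one to keep the $v_1$-integration's dependence on $\sd_\SS(\xi,\eta)$ explicit rather than crudely bounding $A$ below. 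I expect to handle this by splitting into the regime $t+s\le c n^{-2}$ (both points essentially at the apex, where one uses the trivial bound $|\CL_n|\le c n^{2(\g+d-1)}$ together with $\sw_{\g,d}(n;t)\sim n^{-(d-2)}$ and a direct volume count) versus $t+s \gtrsim n^{-2}$, and within the latter using the decomposition \eqref{eq:d=d+d} of $1-\cos\sd_{\VV_0}$ into a radial piece and $\sqrt{ts}(1-\cos\tfrac12\sd_\SS)$ so that the $v_1$ integral against $(1-v_1^2)^{(d-4)/2}$ is estimated with $c\sim n(ts)^{1/4}$, the correct scale dictated by \eqref{eq:d2=d2+d2}. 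Matching constants through these cases, and verifying the bound is uniform as $\g\to-\f12$ and $d\to 2$ under the limit \eqref{eq:limitInt}, is the delicate bookkeeping that makes the proof nontrivial.
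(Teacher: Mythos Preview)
Your overall strategy is exactly the paper's: reduce via the addition formula and the quadratic transform to the one-dimensional Jacobi kernel, apply \eqref{eq:Ln(t,1)}, peel off the $\sd_{\VV_0}$-decay from a lower bound on $1-\zeta$, and then estimate the remaining $v$-integral one variable at a time. The gap is precisely at the obstacle you flag. Your lower bound $1-\zeta \ge (1-A-B)+(1-v_1)A+(1-v_2)B$ with $A=\sqrt{ts}\cos\tfrac12\sd_\SS(\xi,\eta)$ leaves the $v_1$-integration producing $A^{-(d-2)/2}$, and your proposed case-split does not explain how to upgrade this to the needed $(\sqrt{ts})^{-(d-2)/2}$; saying the correct scale is $n(ts)^{1/4}$ is not the same as deriving it from your inequality. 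The paper bypasses the case analysis entirely with a short algebraic trick: writing $1-\zeta = 1-\alpha+(1-v_1)\alpha-v_2\beta$ with $\alpha=A$, $\beta=B$, one drops half of $(1-v_1)\alpha$, rewrites $1-\alpha+\tfrac12(1-v_1)\alpha = 1-\tfrac{1+v_1}{2}\alpha$, adds and subtracts $\tfrac{1-v_1}{2}\sqrt{ts}$, and then uses $\alpha\le\sqrt{ts}$ together with $1-\sqrt{ts}-\beta\ge 0$ to obtain
\[
  1-\zeta(x,t,y,s;v)\ \ge\ \tfrac12(1-v_1)\sqrt{ts}+(1-v_2)\sqrt{1-t}\sqrt{1-s},
\]
uniformly in $\xi,\eta$. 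From here two applications of the elementary estimate \eqref{eq:B+At} give the factors $(\sqrt{ts})^{-(d-2)/2}$ and $(\sqrt{(1-t)(1-s)})^{-(\g+1/2)}$ directly, the $+n^{-2}$ is inserted using the trivial bound $I\le 1$, and \eqref{eq:ab+=a+b+} plus Lemma~\ref{lem:|s-t|} split the products and absorb the cross terms into the distance decay. No apex regime needs separate treatment.

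Two minor points: your power count has a slip, since with $\alpha=\g+d-\tfrac32$ one has $2\alpha+2=2\g+2d-1$ (this is the paper's $2\lambda+1$), and after the two one-variable integrations the net power of $n$ is $n^d$, not $n^{d-1}$. Also, the paper's integral estimate is phrased with $(1-v_i)$ rather than $(1-v_i^2)$ in the denominator's argument, which is what the lower bound above actually delivers; on $[0,1]$ these are equivalent, but it is cleaner to keep them matched.
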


\begin{proof}
We only prove the case when $d > 2$ and $\g > -\f12$. The remaining cases $\g = -\f12$ and/or $d =2$ follow
similarly and are easier. Using the addition formula for the reproducing kernel and, by the quadratic transform 
\eqref{eq:Jacobi-Gegen0},
\begin{equation} \label{eq:Jacobi-Gegen}
   Z_{2n}^\l (x) = \frac{P_n^{(\l-\f12,-\f12)}(1)P_n^{(\l-\f12,-\f12)}(2x^2-1)}{h_n^{(\l-\f12,-\f12)}}, 
\end{equation}
we can write $\sL(\sw_{\b,\g})$ in terms of the kernel $L_n ^{(\l-\f12,-\f12)}$ of the Jacobi polynomials, where
$\l = \g+d -1$. Then
\begin{align}\label{eq:Ln-intV0}
\sL_n (\sw_{-1,\g}; (x,t), (y,s) )=  
    c_{\g}  \int_{[-1,1]^2} & L_n ^{(\l-\f12,-\f12)}\big(2 \zeta (x,t,y,s; v)^2-1 \big)\\
  &  \times    (1-v_1^2)^{\f{d-2}2-1}(1-v_2^2)^{\g-\f12} \d v. \notag
\end{align}
Let $\theta(x,t,y,s;v)= \arccos (2\zeta(x,t,y,s; v)^2 -1)$. Then 
$$
  1- \zeta(x,t,y,s;t)^2  = \frac12 (1 - \cos \theta(x,t,y,s;v)) =  \sin^2 \frac{\theta(x,t,y,s;v)}{2}
       \sim \theta(x,t,y,s;v)^2.
$$
We apply the estimate \eqref{eq:Ln(t,1)} for $L_n^{\a,\b}$ with $\alpha=\l-1/2$, $\beta= -1/2$
to obtain
\begin{align*}
 \left| \sL_n (\sw_{-1,\g}; (x,t), (y,s)) \right| \le c n^{2 \l +1} \int_{[-1,1]^2} 
 & \frac{1} { \left(1+ n\sqrt{1- \zeta (x,t,y,s; v)^2}\right)^{\k+2\g+d+1} }\\
       &   \times  (1-v_1^2)^{\f{d-2}2-1}(1-v_2^2)^{\g-\f12} \d v.
\end{align*}
From its definition, it is easy to verify that $|\zeta(x,t,y,s;v)|\le 1$ and 
\begin{align*}
   1- \zeta (x,t,y,s; v) =\, & 1- \cos \sd_{\VV_0} ((x,t), (y,s))  \\
         + & \sqrt{\frac{\la x,y\ra + t s}{2}}(1 - v_1)+\sqrt{1-s}\sqrt{1-t} (1 - v_2). \notag
\end{align*}
Consequently, since $t s + \la x,y\ra \ge 0$, it follows that 
\begin{align*}
   1 - \zeta (x,t,y,s; v) \, &\ge 1- \cos \sd_{\VV_0} ((x,t), (y,s)) \\
      & = 2 \sin^2 \frac{ \sd_{\VV_0} ((x,t), (y,s)) }{2} \ge \frac{2}{\pi^2} [\sd_{\VV_0}((x,t),(y,s))]^2. 
\end{align*}
Applying this inequality, we obtain the estimate  
\begin{align*}
 \left| \sL_n (\sw_{-1,\g}; (x,t), (y,s) )\right| \, & \le c n^{2 \l +1}
      \frac{1}{\left (1+  n \sd_{\VV_0}((x,t),(y,s))\right)^\k} \\
     & \times  b_{\g,d} \int_{[-1,1]^2} 
                     \frac{(1-v_1^2)^{\f{d-2}2-1}(1-v_2^2)^{\g-\f12}} {\left(1+n\sqrt{1- \zeta (x,t,y,s; v)}\right)^{2\g+d+1}} \d v,
\end{align*}
where $b_{\g,d} = c_{\g-\f12,\g-\f12} c_{\f{d-4}{2},\f{d-4}{2}}$. The estimate of the last expression is the 
crux of the proof and it is contained in the lemma 
below. 
\end{proof}

\begin{lem} \label{lem:kernelV0}
Let $d \ge 2$ and $\g > -\f12$. Then, for $\b \ge 2\g+ d+1$, 
\begin{align*}
 b_{\g,d} \int_{[-1,1]^2} & \frac{(1-v_1)^{\f{d-2}2-1}(1-v_2)^{\g-\f12}} 
        {\big(1+n\sqrt{1- \zeta (x,t,y,s; v)}\,\big)^{\b}} \d v\\
    & \qquad \le \frac{cn^{- (2\g+d-1)}} 
   {\sqrt{\sw_{\g,d}(n; t)}\sqrt{\sw_{\g,d}(n; s)}\big(1+n \sd_{\VV_0}((x,t),(y,s))\big)^{\b - 3\g- \frac{3d+1}{2}}} .
\end{align*}
\end{lem}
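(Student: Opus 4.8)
The plan is to estimate the two-dimensional integral
$$
 I := b_{\g,d} \int_{[-1,1]^2} \frac{(1-v_1)^{\f{d-2}2-1}(1-v_2)^{\g-\f12}}
       {\big(1+n\sqrt{1- \zeta (x,t,y,s; v)}\,\big)^{\b}} \d v
$$
by decomposing $1-\zeta(x,t,y,s;v)$ into its three nonnegative pieces, as recorded in the proof of Theorem \ref{thm:kernelV0}:
$$
  1-\zeta = A + B(1-v_1) + C(1-v_2), \quad A = 1-\cos \sd_{\VV_0}, \ B = \sqrt{\tfrac{\la x,y\ra + ts}{2}},\ C = \sqrt{1-t}\sqrt{1-s}.
$$
Since each summand is $\ge 0$, one has $1+n\sqrt{1-\zeta} \ge \max\{1+n\sqrt A,\, 1+n\sqrt{B(1-v_1)},\,1+n\sqrt{C(1-v_2)}\}$, and more usefully one can split a large exponent $\b$ into three parts and keep in the denominator whichever factor is convenient for each variable. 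First I would peel off a factor $(1+n\sqrt A)^{-\tau}$ with $\tau$ as large as we can afford; since $A \ge \tfrac{2}{\pi^2}[\sd_{\VV_0}((x,t),(y,s))]^2$, this contributes exactly the claimed decay factor $(1+n\sd_{\VV_0})^{-(\b-3\g-(3d+1)/2)}$ once we see how much of $\b$ is consumed by the two one-variable integrals. This reduces matters to bounding, for suitable exponents $\b_1 + \b_2 \le \b - \tau$,
$$
  \int_{-1}^1 \frac{(1-v_1)^{\f{d-2}2-1}}{(1+n\sqrt{B(1-v_1)})^{\b_1}}\,\d v_1
  \qquad\text{and}\qquad
  \int_{-1}^1 \frac{(1-v_2)^{\g-\f12}}{(1+n\sqrt{C(1-v_2)})^{\b_2}}\,\d v_2 .
$$

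For each of these I would substitute $1-v_i = u^2$ (or directly estimate by splitting the range at $1-v_i \sim (nB)^{-2}$, resp. $(nC)^{-2}$) to obtain the standard Jacobi-type bound: an integral $\int_0^2 r^{\sigma-1}(1+n\sqrt{Br})^{-\b_i}\d r$ behaves like $c\,n^{-2\sigma}B^{-\sigma}$ provided $\b_i$ exceeds $2\sigma$, and like $c$ if $nB \le 1$; uniformly one gets a bound of order $n^{-2\sigma}(B + n^{-2})^{-\sigma}$. With $\sigma = \tfrac{d-2}{2}$ for the $v_1$ integral (exponent $\tfrac{d-2}{2}-1$ means parameter $\tfrac{d-2}{2}$ in this normalization) and $\sigma = \g+\tfrac12$ for the $v_2$ integral, this produces
$$
  I \le c\, n^{-(d-2)}(B+n^{-2})^{-\f{d-2}{2}}\; n^{-(2\g+1)}(C+n^{-2})^{-(\g+\f12)}\;
        (1+n\sd_{\VV_0})^{-(\b - 3\g - \frac{3d+1}{2})},
$$
after checking that the exponents $\b_1 = 2\cdot\tfrac{d-2}{2}$, $\b_2 = 2(\g+\tfrac12)$ split off cleanly and the leftover $\tau = \b - (d-2) - (2\g+1)$ is what survives in the $A$-factor, up to the stated loss $3\g + \tfrac{3d+1}{2}$ that comes from trading triangle-inequality slack. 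Finally I would identify $B + n^{-2}$ with $\sw_{\g,d}$-type quantities. Here one uses that $B = \sqrt{\tfrac{\la x,y\ra+ts}{2}} = \sqrt{ts}\cos(\tfrac12 \sd_\SS(\xi,\eta))$, hence $B \sim \sqrt{t}\sqrt{s}$ up to the angular defect controlled by $\sd_{\VV_0}$, so that $(B+n^{-2})^{-\f{d-2}{2}} \le c(1+n\sd_{\VV_0})^{?}\,(t+n^{-2})^{-\f{d-4}{4}}(s+n^{-2})^{-\f{d-4}{4}}$, and similarly $C + n^{-2} \sim \sqrt{1-t}\sqrt{1-s}+n^{-2}$ relates to $(1-t+n^{-2})^{1/2}(1-s+n^{-2})^{1/2}$ via Lemma \ref{lem:|s-t|}; combining, $(B+n^{-2})^{-\f{d-2}{2}}(C+n^{-2})^{-(\g+\f12)}$ matches $\left[\sw_{\g,d}(n;t)\sw_{\g,d}(n;s)\right]^{-1/2}$ up to a further bounded power of $(1+n\sd_{\VV_0})$, which is absorbed into the exponent bookkeeping.

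The main obstacle is the last step: the quantities $B$ and $C$ are symmetric products $\sqrt{ts}$-type expressions, whereas $\sw_{\g,d}(n;t)\sw_{\g,d}(n;s)$ involves $t$ and $s$ separately, and near the apex ($t,s$ small) or the boundary ($1-t, 1-s$ small) these differ by factors governed by $|\sqrt t - \sqrt s|$ or $|\sqrt{1-t}-\sqrt{1-s}|$, which by Lemma \ref{lem:|s-t|} are $\le \sd_{\VV_0}((x,t),(y,s))$. Tracking these discrepancies carefully — so that every factor of $(1+n\sd_{\VV_0})$ lost this way is compensated by the surplus in the $A$-factor, and the final surviving exponent is exactly $\b - 3\g - \tfrac{3d+1}{2}$ — is where the delicate case analysis (mirroring Cases 1–3 of Proposition \ref{prop:capV0}: $t\sim s$ bounded away from $0$ and $1$; $t,s$ near the apex; $t,s$ near the top) must be carried out. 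The special cases $d=2$ and $\g=-\tfrac12$ degenerate the respective one-variable integral to a boundary evaluation via \eqref{eq:limitInt} and are handled the same way with $\sigma = 0$.
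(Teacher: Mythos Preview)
Your overall architecture matches the paper's: peel off a decay factor in $(1+n\sd_{\VV_0})$, reduce to one-variable Jacobi-type integrals via \eqref{eq:B+At}, then convert the resulting symmetric products to $\sw_{\g,d}(n;t)\sw_{\g,d}(n;s)$ using \eqref{eq:ab+=a+b+} and Lemma~\ref{lem:|s-t|}. The gap is in the coefficient you carry through the $v_1$-integral. You keep
\[
B=\sqrt{\tfrac{\la x,y\ra+ts}{2}}=\sqrt{ts}\,\cos\!\big(\tfrac12\sd_\SS(\xi,\eta)\big),
\]
which can vanish when the angular separation is large (take $t=s=\tfrac12$, $\sd_\SS(\xi,\eta)=\pi$: then $B=0$ while $\sqrt{ts}=\tfrac12$ and $\sd_{\VV_0}\sim 1$). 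In that regime $(B+n^{-2})^{-\frac{d-2}{2}}\sim n^{d-2}$, and the best one can say is $(\sqrt{ts}+n^{-2})/(B+n^{-2})\le c(1+n\sd_{\VV_0})^2$ (from $\sqrt{ts}-B\le c\,\sd_{\VV_0}^2$ by \eqref{eq:d=d+d}). Tracking this through, your conversion costs an additional $(1+n\sd_{\VV_0})^{d-2}$ beyond the paper's, so your argument yields exponent $\b-3\g-\tfrac{5d-3}{2}$ rather than the stated $\b-3\g-\tfrac{3d+1}{2}$. The ``delicate case analysis'' you propose, modeled on the cap-measure argument of Proposition~\ref{prop:capV0}, does not address this: the loss is pointwise and sharp, not a matter of regions.

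The paper avoids this entirely by proving the lower bound \eqref{eq:1-zeta-lwd},
\[
1-\zeta(x,t,y,s;v)\ \ge\ \tfrac12(1-v_1)\sqrt{ts}\ +\ (1-v_2)\sqrt{1-t}\sqrt{1-s},
\]
via a short algebraic trick: throw away half of $(1-v_1)\alpha$ (where $\alpha=B$), then use $\alpha\le\sqrt{ts}$ on the remaining $\tfrac{1+v_1}{2}\alpha$ so that the angular deficit $\sqrt{ts}-\alpha$ is absorbed into the nonnegative term $1-\sqrt{ts}-\sqrt{1-t}\sqrt{1-s}$ already present. With $\sqrt{ts}$ in place of $B$, the two applications of \eqref{eq:B+At} go through with exactly the exponents needed (note also your $\b_1=d-2$, $\b_2=2\g+1$ are one short; \eqref{eq:B+At} requires $b\ge 2a+1$, i.e.\ $\b_1\ge d-1$, $\b_2\ge 2\g+2$), and the final conversion costs only $(1+n\sd_{\VV_0})^{\g+\frac{d-1}{2}}$, giving the stated exponent. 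This lower bound is the crux you are missing.
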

 
\begin{proof}
Using the lower bound of $1- |\zeta(x,t,y,s;v)|$ in the proof of the previous theorem, the left-hand side of the
stated inequality has the upper bound
$$
 \frac{c}{\big(1+n  \sd_{\VV_0}((x,t),(y,s)) \big)^{\b - 2 \g- d-1}}
         \int_{[-1,1]^2} 
                 \frac{(1-v_1)^{\f{d-2}2-1}(1-v_2)^{\g-\f12}} {(1+n \sqrt{1- \zeta (x,t,y,s; v)})^{2\g+d+1}} \d v. 
$$
Denote the above integral by $I(x,t,y,s)$. To complete the proof, we need to show that 
$$
\frac{ I(x,t,y,s)}{\big(1+  n \sd_{\VV_0}((x,t),(y,s))\big)^{\g+\f{d-1}{2}}} \le \frac{cn^{- (2\g+d-1)}} 
   {\sqrt{\sw_{\g}(n; t)}\sqrt{\sw_{\g}(n; s)}}.
$$
We need a lower bound for $1-\zeta$. Let $\a =  \sqrt{\frac{\la x,y\ra +ts}{2}}$ and $\b = \sqrt{1-t}\sqrt{1-s}$. Then
\begin{align*}
  1- \zeta (x,t,y,s; v) \,& = 1- \a + (1-v_1) \a - v_2 \b  \ge 1 -\a + \frac12(1-v_1)\a  - v_2 \b  \\
                         & = 1-  \frac{1+v_1}{2} \a- \frac{1-v_1}{2} \sqrt{ts} + \frac{1-v_1}{2}\sqrt{ts}  -v_2 \b.
\end{align*}
Using $\a \le \sqrt{ts}$ and $1- \sqrt{t s} - \sqrt{1-t}\sqrt{1-s} \ge 0$, we then obtain
\begin{align}\label{eq:1-zeta-lwd}
  1- \zeta (x,t,y,s; v) \, & \ge 1-\sqrt{ts} - \b + \frac12(1-v_1)\sqrt{ts}  + (1-v_2) \b \\
         & \ge \frac12 (1 - v_1)\sqrt{t}\sqrt{s}  +(1 - v_2) \sqrt{1-s}\sqrt{1-t}. \notag
\end{align}
Using this inequality in $I(x,t,y,s)$ and then making a change of variable $v_1 \mapsto 2 u_1-1$ and 
$v_2 \mapsto 2 u_2-1$, we obtain 
\begin{align*}
 I(x,t,y,s) \le c \int_{[0,1]^2} & \frac{(1-v_1)^{\f{d-2}2-1}(1-v_2)^{\g-\f12}} 
        {\big(1+n\sqrt{1-  (1 - v_1)\sqrt{t}\sqrt{s} - 2 (1 - v_2) \sqrt{1-s}\sqrt{1-t}}\,\big)^{2\g+d+1}} \d v. 
\end{align*}
This integral can be estimated by using the inequality \cite[(13.5.8)]{DaiX} 
\begin{equation}\label{eq:B+At}
  \int_0^1 \frac{(1-t)^{a-1} \d t}{(1+n \sqrt{B+A(1-t)})^b} \le c \frac{n^{-2 a}}{A^a (1+n\sqrt{B})^{b-2a-1}},
\end{equation}
which holds for $A>0$, $B\ge 0$, $a>0$ and $b\ge 2 a +1$. Applying this inequality with 
$A = \sqrt{1-t}\sqrt{1-s}$, $a = \g+\f12$, we obtain
$$
 I(x,t,y,s) \le \frac{cn^{-2\g-1}}{(\sqrt{1-t}\sqrt{1-s})^{\g+\f12}} \int_0^1 
               \frac{(1-v_1^2)^{\f{d-2}2-1}} {\left(1+n\sqrt{\sqrt{t}\sqrt{s}(1 - v_1)}\right)^{d-1}} \d v_1. 
$$
Applying \eqref{eq:B+At} one more time with $B=0$, $a = \f{d-2}{2}$ and $b= d-1$, we see that the last
integral is bounded by $c n^{-(d-2)}$, hence,
\begin{align*}
   I(x,t,y,s) \,&  \le \frac{cn^{- (2\g+d-1)}}{(\sqrt{1-t}\sqrt{1-s})^{\g+\f12} (\sqrt{t}\sqrt{s})^{\f{d-2}2}}  \\
        &  \le \frac{cn^{- (2\g+d-1)}}{(\sqrt{1-t}\sqrt{1-s}+n^{-2} )^{\g+\f12} (\sqrt{t}\sqrt{s}+n^{-2})^{\f{d-2}2}}, 
\end{align*}
where the second inequality follows since $I(x,t,y,s) \le 1$ holds trivially by the choice of $b_{\g,\d}$. Now, 
using the elementary identity \cite[(11.5.13)]{DaiX}
\begin{equation} \label{eq:ab+=a+b+}
   (a+n^{-1})(b+n^{-1}) \le 3 (ab+n^{-2})(1+n|b-a|)
\end{equation}
with $a = \sqrt{t}$ and $b=\sqrt{s}$ as well as with $a = \sqrt{1-t}$ and $b=\sqrt{1-s}$, we obtain 
$$
 I(x,t,y,s) \le \frac{cn^{- (2\g+d-1)}\big(1+  n \big|\sqrt{t}-\sqrt{s}\big|\big)^{\f{d-2}2} \big(1+  n \big|\sqrt{1-t}-\sqrt{1-s}\big|\big)^{\g+\f12}} 
   {\sqrt{\sw_{\g,d}(n; t)}\sqrt{\sw_{\g,d}(n; s)}}.
$$
Finally, by Lemma \ref{lem:|s-t|}, we conclude that 
$$
 I(x,t,y,s) \le \frac{cn^{- (2\g+d-1)}(1+  n \sd_{\VV_0}((x,t),(y,s)))^{\g+\f{d-1}{2}}} 
   {\sqrt{\sw_{\g,d}(n; t)}\sqrt{\sw_{\g,d}(n; s)}},
$$
which is what we need to complete the proof. 
\end{proof}

The following corollary, following immediately from \eqref{eq:1-zeta-lwd}, will be used later. 

\begin{cor} \label{cor:kernelV0}
Let $d \ge 2$ and $\g > -\f12$. Then, for $\b \ge 2\g+ d+1$, 
\begin{align*}
 b_{\g,d} \int_{[-1,1]^2} & \frac{(1-v_1)^{\f{d-2}2-1}(1-v_2)^{\g-\f12}} 
        {\left(1+n\sqrt{1-  (1 - v_1)\sqrt{t}\sqrt{s}  -  (1 - v_2) \sqrt{1-s}\sqrt{1-t}}\,\right)^{2\g+\d+1}} \d v\\
    & \qquad \le \frac{cn^{- (2\g+d-1)}\big(1+n  \sd_{[0,1]}((x,t),(y,s))\big)^{\g+ \frac{d-1}{2}}} 
   {\sqrt{\sw_{\g,d}(n; t)}\sqrt{\sw_{\g,d}(n; s)}} .
\end{align*}
\end{cor}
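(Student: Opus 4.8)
The plan is to recognize that the integral on the left, up to the normalizing constant $b_{\g,d}$ and a linear change of the integration variables $v_i\mapsto 2v_i-1$, is precisely the integral that is estimated in the body of the proof of Lemma~\ref{lem:kernelV0} — the one obtained there after inserting the lower bound \eqref{eq:1-zeta-lwd} and changing variables. So almost nothing new is needed: one re-runs that computation and stops one step short of where Lemma~\ref{lem:kernelV0} stops, retaining the sharper interval distance $\sd_{[0,1]}(t,s)$ in place of the coarser $\sd_{\VV_0}((x,t),(y,s))$.

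Concretely, writing the quantity under the square root as $B+A(1-v_2)$ with $B$ independent of $v_2$ and $A=\sqrt{1-t}\sqrt{1-s}$, I would first integrate in $v_2$ by the one-dimensional inequality \eqref{eq:B+At} with $a=\g+\f12$ and $b=2\g+d+1$; since $b\ge 2a+1$ this produces the factor $n^{-2\g-1}(\sqrt{1-t}\sqrt{1-s})^{-\g-\f12}$ and leaves a $v_1$-integral of the same shape with $B=0$ and exponent $b-2a-1=d-1$. A second application of \eqref{eq:B+At}, now with $B=0$, $a=\f{d-2}2$ and $b=d-1$ (legitimate for $d\ge 3$; for $d=2$ the $v_1$-integration is trivial and the argument is the easier one already used for the degenerate cases of Theorem~\ref{thm:kernelV0}), produces $n^{-(d-2)}(\sqrt t\sqrt s)^{-\f{d-2}2}$. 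Multiplying, the left side is bounded by $c\,n^{-(2\g+d-1)}(\sqrt{1-t}\sqrt{1-s})^{-\g-\f12}(\sqrt t\sqrt s)^{-\f{d-2}2}$, and since that left side is trivially $\le 1$ by the choice of $b_{\g,d}$, one may insert the regularizing $n^{-2}$, i.e. replace $\sqrt t\sqrt s$ by $\sqrt t\sqrt s+n^{-2}$ and $\sqrt{1-t}\sqrt{1-s}$ by $\sqrt{1-t}\sqrt{1-s}+n^{-2}$.

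It then remains to recover the $\sw_{\g,d}(n;\cdot)$ factors and the distance factor. Applying \eqref{eq:ab+=a+b+} once with $a=\sqrt t$, $b=\sqrt s$ and once with $a=\sqrt{1-t}$, $b=\sqrt{1-s}$ converts the two regularized products into $\sqrt{\sw_{\g,d}(n;t)}\sqrt{\sw_{\g,d}(n;s)}$ in the denominator, at the cost of the numerator factors $\big(1+n|\sqrt t-\sqrt s|\big)^{\f{d-2}2}$ and $\big(1+n|\sqrt{1-t}-\sqrt{1-s}|\big)^{\g+\f12}$. Here, rather than bounding these by $1+n\,\sd_{\VV_0}((x,t),(y,s))$ as in Lemma~\ref{lem:kernelV0}, I would invoke the sharper estimates $|\sqrt t-\sqrt s|\le\sd_{[0,1]}(t,s)$ and $|\sqrt{1-t}-\sqrt{1-s}|\le\sd_{[0,1]}(t,s)$, both of which are established within the proof of Lemma~\ref{lem:|s-t|}; since $\f{d-2}2+\g+\f12=\g+\f{d-1}2$, this gives exactly the asserted bound.

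The main point is that there is no serious obstacle: the argument is a re-run of bookkeeping already carried out for Lemma~\ref{lem:kernelV0}, and the only genuinely new observation is that the computation can be stopped at $\sd_{[0,1]}$ instead of being coarsened further to $\sd_{\VV_0}$, a remark justified by the proof of Lemma~\ref{lem:|s-t|}. What deserves a line of care is verifying the hypotheses $a>0$ and $b\ge 2a+1$ of \eqref{eq:B+At} at each step — this is where $\g>-\f12$ and $d\ge 2$ enter, with $d=2$ handled separately — and keeping track of the nonnegativity of the quantity under the square root on the range of integration, which is what the chain of inequalities leading to \eqref{eq:1-zeta-lwd} (via $\sqrt t\sqrt s+\sqrt{1-t}\sqrt{1-s}\le 1$) supplies.
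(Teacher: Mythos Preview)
Your proposal is correct and matches the paper's approach exactly: the paper simply says the corollary follows immediately from \eqref{eq:1-zeta-lwd}, meaning one re-runs the estimate of $I(x,t,y,s)$ in the proof of Lemma~\ref{lem:kernelV0} --- the two applications of \eqref{eq:B+At}, the regularization by $n^{-2}$, and \eqref{eq:ab+=a+b+} --- and at the last step retains the inequalities $|\sqrt t-\sqrt s|\le\sd_{[0,1]}(t,s)$ and $|\sqrt{1-t}-\sqrt{1-s}|\le\sd_{[0,1]}(t,s)$ already established inside the proof of Lemma~\ref{lem:|s-t|}, rather than coarsening to $\sd_{\VV_0}$. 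Your remarks on checking the hypotheses of \eqref{eq:B+At} and on the degenerate case $d=2$ are apt and match the paper's handling.
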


By \eqref{eq:w_bg}, we have shown that $\sw_{-1,\g}$ admits Assertion 1 of the highly localized kernel. 
Our next result shows that it also admits Assertion 2. 

\begin{thm} \label{thm:L-LkernelV0}
Let  $\wh a$ be an admissible cutoff function. Let $d\ge 2$ and $\g \ge -\f12$. For 
$(x_i,t_i), (y,s) \in \VV_0^{d+1}$ and $(x_1,t_1) \in \sc \big((x_2,t_2), c^* n^{-1}\big)$ with $c^*$ small 
and for $\k > 0$,  
\begin{align}\label{V0-bound}
     &  \left |\sL_n (\sw_{-1,\g}; (x_1,t_1), (y,s))-\sL_n (\sw_{-1,\g}; (x_2,t_2), (y, s))\right| \\
     & \qquad \le c_\k \frac{  n^{d+1} \sd_{\VV_0}((x_1,t_1), (x_2, t_2))}{\sqrt{ \sw_{\g,d} (n; s) }\sqrt{ \sw_{\g,d} (n; t_2) }
     \big(1 + n \sd_{\VV_0}( (y,s), (x_2, t_2)) \big)^{\k}}. \notag
\end{align}
\end{thm}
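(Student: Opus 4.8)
The plan is to derive the Lipschitz-type estimate \eqref{V0-bound} from the integral representation \eqref{eq:Ln-intV0} together with the one-dimensional derivative estimate \eqref{eq:DLn(t,1)} for the Jacobi kernel $L_n^{(\a,\b)}(\cdot,1)$, applied along a path joining $(x_1,t_1)$ to $(x_2,t_2)$. Writing $\l=\g+d-1$ and $\Phi(w):=L_n^{(\l-\f12,-\f12)}(2w^2-1)$, the difference $\sL_n(\sw_{-1,\g};(x_1,t_1),(y,s))-\sL_n(\sw_{-1,\g};(x_2,t_2),(y,s))$ equals $c_\g\int_{[-1,1]^2}\bigl[\Phi(\zeta_1(v))-\Phi(\zeta_2(v))\bigr](1-v_1^2)^{\f{d-2}2-1}(1-v_2^2)^{\g-\f12}\,\d v$, where $\zeta_i(v)=\zeta(x_i,t_i,y,s;v)$. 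First I would bound $|\Phi(\zeta_1(v))-\Phi(\zeta_2(v))|$ by $\|\Phi'\|_{\infty,[\zeta_2,\zeta_1]}\,|\zeta_1(v)-\zeta_2(v)|$; using the chain rule and \eqref{eq:DLn(t,1)} (with $m=1$), $|\Phi'(w)|\le c\,n^{2\l+3}(1+n\sqrt{1-w^2})^{-\k'}$ for the relevant range of $w$ — this costs one extra power of $n^2$ over the $L_n^{(\a,\b)}$ bound, which is exactly the $n^{d+1}$ versus $n^d$ discrepancy between \eqref{V0-bound} and Theorem \ref{thm:kernelV0} (note $2\l+3=2\g+2d+1=d+1+(2\g+d)$, with the $2\g+d$ absorbed into the integration as in the proof of Theorem \ref{thm:kernelV0}).

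Second I would estimate $|\zeta_1(v)-\zeta_2(v)|$. Since $\zeta_i(v)=v_1\sqrt{\tfrac{st_i+\la x_i,y\ra}{2}}+v_2\sqrt{1-t_i}\sqrt{1-s}$, the difference is controlled by $\bigl|\sqrt{\tfrac{st_1+\la x_1,y\ra}2}-\sqrt{\tfrac{st_2+\la x_2,y\ra}2}\bigr|+\bigl|\sqrt{1-t_1}-\sqrt{1-t_2}\bigr|$. The second term is $\le \sd_{\VV_0}((x_1,t_1),(x_2,t_2))$ by Lemma \ref{lem:|s-t|}. For the first term, writing $a_i=\sqrt{\tfrac{st_i+\la x_i,y\ra}2}$ and using $|a_1-a_2|\le |a_1^2-a_2^2|/(a_1+a_2)$ together with $\sqrt{st_i+\la x_i,y\ra}=\sqrt{st_i}\sqrt{\tfrac{1+\la\xi_i,\eta\ra}2}\cdot\sqrt2$ and the half-angle identity from Proposition \ref{eq:cos-dist}, I expect $|a_1-a_2|\lesssim \sd_{\VV_0}((x_1,t_1),(x_2,t_2))$ after invoking the triangle inequality for $\sd_{\VV_0}$ and Proposition \ref{eq:cos-dist} / \eqref{eq:d2=d2+d2}; here one uses that $(x_1,t_1)$ lies in the small cap around $(x_2,t_2)$ so $t_1\sim t_2$, $s$ fixed, and all square-root quantities are comparable. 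Thus $|\zeta_1(v)-\zeta_2(v)|\le c\,\sd_{\VV_0}((x_1,t_1),(x_2,t_2))$, uniformly in $v$.

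Third, with $1-|\zeta_i(v)|\ge \tfrac{2}{\pi^2}[\sd_{\VV_0}((x_i,t_i),(y,s))]^2$ (as in the proof of Theorem \ref{thm:kernelV0}) and $\sd_{\VV_0}((x_1,t_1),(y,s))\sim\sd_{\VV_0}((x_2,t_2),(y,s))$ whenever $n\sd_{\VV_0}((x_2,t_2),(y,s))$ is large (otherwise the factor $(1+n\sd_{\VV_0})^{-\k}$ is harmless), I would plug these into the integral, pull out the decay factor $(1+n\sd_{\VV_0}((x_2,t_2),(y,s)))^{-\k}$ and the factor $n^{d+1}\sd_{\VV_0}((x_1,t_1),(x_2,t_2))$, and reduce the remaining $v$-integral to exactly the one handled in Lemma \ref{lem:kernelV0} (or its Corollary \ref{cor:kernelV0}), which supplies the $\bigl[\sw_{\g,d}(n;t_2)\,\sw_{\g,d}(n;s)\bigr]^{-1/2}$ factor after replacing $t_1$ by $t_2$ via $\sw_{\g,d}(n;t_1)\sim\sw_{\g,d}(n;t_2)$ (legitimate since the caps are comparable and $\sw_{-1,\g}$ is doubling, Proposition \ref{prop:capV0}). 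Choosing $\k'$ large relative to $\k+2\g+d$ from the start closes the estimate. The main obstacle I anticipate is the second step — showing $|\zeta_1(v)-\zeta_2(v)|\lesssim\sd_{\VV_0}((x_1,t_1),(x_2,t_2))$ cleanly, because $\zeta$ mixes the spherical variable $\xi$ and the radial variable $t$ in a way that is genuinely governed by $\sd_{\VV_0}$ only through the non-obvious identity of Proposition \ref{eq:cos-dist}; one must be careful that the $\sqrt{ts}$ weighting in front of the spherical part does not spoil the bound near the apex, and the smallness of $c^*$ is what guarantees $t_1\sim t_2$ so that this weighting is essentially constant across the path.
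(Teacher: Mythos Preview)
Your architecture---mean value on $\Phi$, derivative bound \eqref{eq:DLn(t,1)} with $m=1$, then the integral of Lemma~\ref{lem:kernelV0}---is the same as the paper's. But your power count is off and the uniform bound on $|\zeta_1-\zeta_2|$ is too crude to close.

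Check the arithmetic: the derivative costs $n^{2\l+3}$, while Lemma~\ref{lem:kernelV0} returns only $n^{-(2\g+d-1)}$ (not $n^{-(2\g+d)}$; in Theorem~\ref{thm:kernelV0} one has $n^{2\l+1}\cdot n^{-(2\g+d-1)}=n^d$). Hence your route produces $n^{2\l+3}\cdot n^{-(2\g+d-1)}=n^{d+2}$, a full factor of $n$ too large. You also say the derivative costs ``one extra power of $n^2$'' yet call this ``the $n^{d+1}$ versus $n^d$ discrepancy''---that is only one power of $n$, so your own sentence flags the mismatch. This extra $n$ is not cosmetic: Assertion~2 feeds into Lemma~\ref{lem:osc(f)}, where the bound must be $c\delta$, and an extra $n$ turns that into $cn\delta$.

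The missing $n^{-1}$ comes from a \emph{sharper} bound on $|\zeta_1(v)-\zeta_2(v)|$ than the uniform one. Write
\[
\zeta_i(v)=\cos\sd_{\VV_0}((x_i,t_i),(y,s))-(1-v_1)\sqrt{t_is}\cos\tfrac{\sd_{\SS}(\xi_i,\eta)}{2}-(1-v_2)\sqrt{1-t_i}\sqrt{1-s},
\]
so the difference splits into three pieces. The first is $\cos\a_1-\cos\a_2$ with $\a_i=\sd_{\VV_0}((x_i,t_i),(y,s))$; here the paper uses the \emph{second-order} estimate $|\cos\a_1-\cos\a_2|\le|\a_1-\a_2|\bigl(|\a_i|+\tfrac12|\a_1-\a_2|\bigr)$, and the extra factor $|\a_i|+|\a_1-\a_2|\le cn^{-1}(1+n\sd_{\VV_0}((x_i,t_i),(y,s)))$ supplies exactly the needed $n^{-1}$ at the price of one unit of decay. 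The other two pieces carry explicit factors $(1-v_1)\sqrt{s}$ and $(1-v_2)\sqrt{1-s}$; these raise the exponents of the $v$-weights, so Lemma~\ref{lem:kernelV0} applied with $d$ (resp.\ $\g$) shifted up yields an extra $n^{-2}$ from the integration, which together with $n^{-1}\sqrt{s}\le(\sqrt{t_2}+n^{-1})(\sqrt{s}+n^{-1})$ lands back in $\sw_{\g,d}$ with the correct $n^{d+1}$. Your uniform bound $|\zeta_1(v)-\zeta_2(v)|\le c\,\sd_{\VV_0}((x_1,t_1),(x_2,t_2))$ is true but discards precisely these small factors, and there is no way to recover them afterward.
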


\begin{proof}
Denote the left-hand side of \eqref{eq:L-LkernelV0} by $K$. Let $\partial L(u) = L'(u)$. Using the integral expression 
\eqref{eq:Ln-intV0} of $\sL_n (\sw_{-1,\g})$, we obtain 
\begin{align} \label{eq:L-LkernelV0}
K &
 \le 2 \int_{[-1,1]^2} \big\| \partial L_n^{\l-\f12,-\f12} \big(2(\cdot)^2-1\big)\big\|_{L^\infty(I_v)} 
     \big |\zeta_1(v)^2 - \zeta_2(v)^2| \\ 
  & \qquad \qquad\qquad
         \times  (1-v_1^2)^{\frac{d-2}{2}-1} (1-v_2^2)^{\g-\f12} \d v, \notag
\end{align}
where $\zeta_i(v) = \zeta (x_i,t_i,y,s;v)$, and $I_v$ is the interval with end points $\zeta_1(v)$ 
and $\zeta_2(v)$. Since $|\zeta(\cdot)| \le 1$, $|\zeta_1(v)^2 - \zeta_2(v)^2| \le 2 |\zeta_1(v)- \zeta_2(v)|$. 
We claim that 
\begin{align} \label{eq:zeta1-zeta2}
 |\zeta_1(v)- \zeta_2(v)| \le  c\, \sd_{\VV_0}\big ((x_1,t_1),(x_2,t_2)\big)
       \big( \Sigma_1 + \Sigma_2(v_1) + \Sigma_3 (v_2)\big), 
\end{align}
where 
\begin{align*}
  \Sigma_1 \, & =  \sd_{\VV_0}\big((x_i, t_i),(y,s)\big) +  \sd_{\VV_0}\big ((x_1,t_1),(x_2,t_2)\big), \\
  \Sigma_2(v_1) \, & = (1-v_1)\sqrt{s}, \\
  \Sigma_3 (v_2)\, & = (1 - v_2)\sqrt{1-s}. 
\end{align*}
To see this, we use \eqref{eq:d=d+d} and writing $x_i = t_i \xi_i$ and $y = s \eta$ to obtain 
\begin{align*}
 \zeta_1(v)- \zeta_2(v) \, & = \cos \sd_{\VV_0}\big((x_1,t_1),(y,s)\big) - \cos \sd_{\VV_0}\big((x_2,t_2),(y,s)\big) \\
    &     +  (1-v_1) \left(\sqrt{ t_2 s} \cos \frac{\sd_{\SS}(\xi_2,\eta)}{2} - 
         \sqrt{ t_1 s} \cos \frac{\sd_{\SS}(\xi_1,\eta)}{2} \right) \\
     &   +  (1- v_2) \left(\sqrt{1- t_2} - \sqrt{1-t_1}\right) \sqrt{1-s}. 
\end{align*}
Denote temporarily $\a_i =  \sd_{\VV_0}((x_1,t_1),(y,s))$ for $i =1, 2$. Hence, using the identity 
\begin{align*}
  \cos \a_1 -  \cos \a_2  \, &= 2 \sin \frac{\a_1 - \a_2}{2} \sin \frac{\a_1+\a_2}2 \\
    & =   2 \sin \frac{\a_1 - \a_2}{2} \left( 2 \sin \frac{\a_i}{2} + \sin \frac{|\a_1-\a_2|}{2}\right),
\end{align*} 
it follows readily that 
$$
   |\cos \a_1 -  \cos \a_2| \le |\a_1-\a_2| \left ( |\a_1| + \tfrac 12 |\a_1 - \a_2| \right).
$$
By the triangle inequality of $\sd_{\VV_0}$, $|\a_1-\a_2| \le \sd_{\VV_0}((x_1,t_1),(x_2,t_2))$, this gives
the estimate for the $\Sigma_1$ term. Moreover, assuming $t_2 \ge t_1$, for example, and applying similar 
argument for $\sd_\SS$, we then obtain, using Lemma \ref{lem:|s-t|} and \eqref{eq:d2=d2+d2}, that 
\begin{align*}
   \bigg | \sqrt{ t_1 } \cos \frac{\sd_{\SS}(\xi_1,\eta)}{2} - &  \sqrt{ t_2 } \cos \frac{\sd_{\SS}(\xi_2,\eta)}{2} \bigg | \\
          & \le \left | \sqrt{t_2} - \sqrt{t_1} \right | + \sqrt{t_1} 
               \left | \cos \frac{\sd_{\SS}(\xi_1,\eta)}{2} -\cos \frac{\sd_{\SS}(\xi_2,\eta)}{2} \right | \\
  & \le \sd_{\VV_0}((x_1,t_1),(x_2,t_2)) + \f12 (t_1 t_2)^{\f14} \sd_{\SS}(\xi_1,\xi_2) \\
  & \le c\, \sd_{\VV_0}((x_1,t_1),(x_2,t_2)),
\end{align*}
which verifies the $\Sigma_2(v_1)$ term. The third term with $\Sigma_3(v_2)$  follows from using 
Lemma \ref{lem:|s-t|} one more time. This verifies the claim \eqref{eq:zeta1-zeta2}. 

Since $\max_{r\in I_v} |1+n \sqrt{1- r}|^{-\s}$ is attained at one of the end points of the interval, it follows 
from \eqref{eq:DLn(t,1)} with $m =1$.  
$$
\big\| \partial L_n^{\l-\f12,-\f12} \big(2(\cdot)^2-1\big)\big\|_{L^\infty(I_v)} 
 \le c \left[  \frac{n^{2 \l + 3}}{\big(1+n\sqrt{1-\zeta_1(v)^2} \big)^{\k}} +  \frac{n^{2 \l + 3}}{\big(1+n\sqrt{1-\zeta_2(v)^2}\big)^{\k}} \right].
$$
Consequently, we see that $K$ is bounded by a sum of integrals
\begin{align*}
  K  \le c \, \sd_{\VV_0}\big((x_1,t_1),(x_2,t_2)\big) & \int_{[-1,1]^2}\left[  \frac{n^{2 \l + 3}}
    {\big(1+n\sqrt{1-\zeta_1(v)^2} \big)^{\k}} +  \frac{n^{2 \l + 3}}{\big(1+n\sqrt{1-\zeta_2(v)^2}\big)^{\k}} \right ] \\
 &   \quad \times \big(\Sigma_1+\Sigma_2(v) + \Sigma_3(v) \big) (1-v_1^2)^{\frac{d-2}{2}-1} (1-v_2^2)^{\g-\f12} \d v.
\end{align*}
Since $(x_1,t_1) \in \sc \big((x_2,t_2), c^* n^{-1}\big)$, $\Sigma_1$ is bounded by 
$\Sigma_1 \le c n^{-1}  \big(1 + n \sd_{\VV_0}\big((x_i, t_i),(y,s)\big) \big)$. Hence, we obtain  from Lemma \ref{lem:kernelV0} that 
\begin{align*}
 \int_{[-1,1]^2} & \frac{n^{2 \l + 3}}{\big(1+n\sqrt{1-\zeta_i(v)^2} \big)^{\k}} \Sigma_1 
        (1-v_1^2)^{\frac{d-2}{2}-1} (1-  v_2^2)^{\g-\f12} \d v\\
 & \le  c  \frac{  n^{d+1} }{\sqrt{ \sw_{\g,d} (n; s) }\sqrt{ \sw_{\g,d} (n; t_i) }
     \big(1 + n \sd_{\VV_0}( (y,s), (x_i, t_i)) \big)^{\k(\g,d)-1}},
\end{align*} 
where $\k(\g,d) = \k - 3 \g - \frac{3d+1}{2}$ for either $i=1$ or $i=2$. Since  
$\sw_{\g,d}(n,t_1) \sim \sw_{\g,d}(n,t_2)$ and $\sd_{\VV_0}((x_1, t_1),(y,s))
 + n^{-1} \sim \sd_{\VV_0}((x_2, t_2),(y,s)) + n^{-1}$ by Lemma \ref{lem:|s-t|},
we can replace $(x_1,t_1)$ in the right-hand side by $(x_2,t_2)$. This shows that the integral containing 
$\Sigma_1$ has the desired estimate. 

For the remaining integrals, the same consideration shows that we only need to consider those containing 
$\zeta_2(v)$. For the integral that contains $\Sigma_2(v_1)= (1-v_1)\sqrt{s}$, the factor $(1-v_1)$ increases 
the power of the weight to $(1-v_1)^{\f d 2}$, so that we can apply Lemma \ref{lem:kernelV0} with 
$\frac{d-2}{2}$ replaced by $\frac{d}{2}$, which leads to 
\begin{align*}
 \int_{-1}^1 & \frac{n^{2 \l + 3}}{\big(1+n\sqrt{1-\zeta_2(v)^2} \big)^{\k}} \Sigma_2(v_1) 
        (1-v_1^2)^{\frac{d-2}{2}-1} (1-  v_2^2)^{\g-\f12} \d v\\
 & \le  c  \frac{  n^{d+1} n^{-1}\sqrt{s} }{\sqrt{ \sw_{\g,d+1} (n; s) }\sqrt{ \sw_{\g,d+1} (n; t_2) }
   \big(1 + n \sd_{\VV_0}( (y,s), (x_2, t_2)) \big)^{\k(\g,d+1)}} \\
  & \le  c  \frac{  n^{d+1}  }{\sqrt{ \sw_{\g,d} (n; s) }\sqrt{ \sw_{\g,d} (n; t_2) }
     \big(1 + n \sd_{\VV_0}( (y,s), (x_2, t_2)) \big)^{\k(\g,d+1)}}, 
\end{align*} 
where the last step follows from the inequality $n^{-1} \sqrt{s}\le (\sqrt{t_2}+n^{-1}) (\sqrt{s}+n^{-1})$. 
The integral that contains $\Sigma_3(v_2)$ can be estimated similarly by applying Lemma \ref{lem:kernelV0} 
with $\g$ replaced by $\g +1$ and using $n^{-1} \sqrt{1-s}\le (\sqrt{1-t_2}+n^{-1}) (\sqrt{1-s}+n^{-1})$. 
This completes the proof.
\end{proof} 
 
The first two assertions for the highly localized kernels are established for $\sw_{-1,\g}$ when $\g \ge -\f12$. 
The case of $p=1$ of the following lemma establishes Assertion 3. Recall that $\sw_{\b,\g,d}(n; t)$ is defined 
in \eqref{eq:w_bg}. 

\begin{lem}\label{lem:intLn}
Let $d\ge 2$, $\b> -d$ and $\g > -1$. For $0 < p < \infty$, assume 
$\k > \frac{2d}{p} + (\g+\b+\f{d+1}{2}) |\f1p-\f12|$. Then for $(x,t) \in \VV_0^{d+1}$,  
\begin{align}\label{eq:intLn1}
\int_{\VV_0^{d+1}} \frac{ \sw_{\b,\g}(s)  \d \s(y,s) }{  \sw_{\b,\g,d} (n; s)^{\f{p}2}
    \big(1 + n \sd_{\VV_0}( (x,t), (y,s)) \big)^{\k p}} 
    \le c n^{-d} \sw_{\b,\g,d} (n; t)^{1-\f{p}{2}}.
\end{align}
\end{lem}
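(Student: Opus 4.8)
The plan is to reduce the integral in \eqref{eq:intLn1} to a one-dimensional integral over the $t$-variable by first integrating out the spherical component, and then to split the remaining range of $s$ into a dyadic decomposition around $t$ governed by the distance $\sd_{[0,1]}$. First I would use the decomposition \eqref{eq:d=d+d} of Proposition \ref{eq:cos-dist}, which expresses $\sd_{\VV_0}$ in terms of $\sd_{[0,1]}(t,s)$ and $(ts)^{1/4}\sd_{\SS}(\xi,\eta)$, together with the surface measure splitting $\d\s(y,s) = s^{d-1}\d\s_\SS(\eta)\d s$ and the formula \eqref{eq:intSS} for integrating a zonal function over $\sph$. Since the weight $\sw_{\b,\g}(s)$ and $\sw_{\b,\g,d}(n;s)$ depend only on $s$, the $\eta$-integral becomes
$$
  \int_0^\pi \frac{(\sin\phi)^{d-2}\,\d\phi}{\big(1+n\sd_{\VV_0}((x,t),(y,s))\big)^{\k p}},
$$
where $\phi = \sd_\SS(\xi,\eta)$. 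Because $1+n\sd_{\VV_0} \sim 1+ n\sd_{[0,1]}(t,s) + n(ts)^{1/4}\phi$ by \eqref{eq:d2=d2+d2}, this integral is of the type handled by the standard estimate (as in \cite[(13.5.8)]{DaiX} or the argument behind \eqref{eq:B+At}), giving a bound of the form
$$
  \frac{c\,(ts)^{-\f{d-1}{4}}\, n^{-(d-1)}}{\big(1+n\sd_{[0,1]}(t,s)\big)^{\k p - d+1}}\,,
$$
valid once $\k p > d-1$; here the factor $(ts)^{-(d-1)/4}$ must be tracked carefully against the weight.

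Next I would carry out the one-dimensional integral over $s$. After the spherical integration, \eqref{eq:intLn1} is reduced to bounding
$$
  n^{-(d-1)}\int_0^1 \frac{s^{d-1}\,(ts)^{-\f{d-1}{4}}\,\sw_{\b,\g}(s)\,\d s}
      {\sw_{\b,\g,d}(n;s)^{p/2}\,\big(1+n\sd_{[0,1]}(t,s)\big)^{\k p - d+1}}\,.
$$
Recalling $\sw_{\b,\g}(s) = s^\b(1-s)^\g$ and $\sw_{\b,\g,d}(n;s) = (s+n^{-2})^{\b+\f d2}(1-s+n^{-2})^{\g+\f12}$, I would split $[0,1]$ into the three regimes used in the proof of Proposition \ref{prop:capV0}: $3n^{-2}\le t\le 1-3n^{-2}$; $t\le 3n^{-2}$ (near the apex); and $1-t\le 3n^{-2}$ (near the rim). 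In each regime I dyadically decompose the $s$-range by $\sd_{[0,1]}(t,s)\sim 2^j n^{-1}$ and use Lemma \ref{lem:|s-t|}, which gives $|\sqrt t-\sqrt s|\le \sd_{[0,1]}(t,s)$ and hence $s+n^{-2}\sim t + 2^{2j}n^{-2}$ and $1-s+n^{-2}\sim 1-t+2^{2j}n^{-2}$ on the $j$-th shell. On each shell the measure of the $s$-range is $\sim 2^j n^{-1}\,(s+n^{-2})^{1/2}/\dots$ (computed as in Case 1 of Proposition \ref{prop:capV0}), and summing the resulting geometric-type series in $j$ — which converges precisely under the hypothesis $\k > \frac{2d}{p} + (\g+\b+\f{d+1}{2})|\f1p-\f12|$ — produces the claimed bound $c\,n^{-d}\,\sw_{\b,\g,d}(n;t)^{1-p/2}$. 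The role of the hypothesis on $\k$ is exactly to absorb the worst powers of $2^j$ coming from the weight ratio $\sw_{\b,\g,d}(n;s)^{-p/2}$ (via the doubling-type estimate \eqref{eq:w(x)/w(y)}, with $\a(\sw_{\b,\g})$ as in Proposition \ref{prop:capV0}) and from the $(ts)^{-(d-1)/4}$ factor, against the decay $(1+2^j)^{-(\k p - d+1)}$.

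The main obstacle I expect is the bookkeeping near the apex, the regime $t\le 3n^{-2}$, where $\sd_{\VV_0}$ is not comparable to the Euclidean distance and where the innocuous-looking factor $(ts)^{-(d-1)/4}$ interacts with the singular weight $s^{\b}$ (and with $s^{d-1}$ from the surface measure). There one must verify that the combined power of $s$, namely $s^{\,d-1-\f{d-1}{4}+\b} = s^{\,\f{3(d-1)}{4}+\b}$, together with the comparison $s+n^{-2}\sim 2^{2j}n^{-2}$ on the relevant shells, still yields a convergent $j$-sum and the correct power of $\sw_{\b,\g,d}(n;t)$ — in particular that the constraint $\b>-d$ is exactly what keeps this integrable. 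A secondary technical point is interchanging the order of the $\eta$- and $\phi$-parametrizations cleanly so that the zonal-integral estimate \eqref{eq:B+At} applies with the correct exponents; once the apex case is under control, the interior case (Case 1 analogue) is routine and the rim case $1-t\le 3n^{-2}$ follows by the symmetry $s\mapsto 1-s$ in $\sd_{[0,1]}$ exactly as in Case 3 of Proposition \ref{prop:capV0}.
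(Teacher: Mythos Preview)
Your approach is workable in principle but takes a substantially different and more laborious route than the paper, and you are missing the main simplification.

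The paper's first move is to invoke Lemma~\ref{lem:CorA3}: once the case $p=2$ is established, the general $0<p<\infty$ case follows immediately from the doubling property of $\sw_{\b,\g}$ (Proposition~\ref{prop:capV0}) via \eqref{eq:w(x)/w(y)}. You never use this reduction, so you are forced to drag the exponent $p/2$ on $\sw_{\b,\g,d}(n;s)$ through the entire computation, which is precisely what generates the awkward weight-ratio bookkeeping and the case analysis at the apex and rim. Reducing to $p=2$ collapses all of that.

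For $p=2$ the paper does \emph{not} use a dyadic shell decomposition at all. Instead it writes the integral with $y=s\eta$, applies \eqref{eq:intSS}, and then performs a short chain of explicit substitutions --- $\sqrt{(1+u)/2}\mapsto v$, then $v\mapsto z/\sqrt{s}$, then $s\mapsto 1-w^2$ --- which transforms the cone integral into an integral over the positive quadrant of the unit disk $\BB^2$. A final orthogonal rotation $p=\sqrt{t}\,z+\sqrt{1-t}\,w$, $q=-\sqrt{1-t}\,z+\sqrt{t}\,w$ decouples the variables and reduces everything to a single one-dimensional integral $\int_0^1 (1-p^2)^{(d-2)/2}(1+n\sqrt{1-p})^{-2\k}\,\d p$, which is trivially bounded by $cn^{-d}$. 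No case splitting, no dyadic sums, no apex regime.

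Your dyadic approach can be pushed through, but note a genuine imprecision: the spherical-integral bound you quote, $\lesssim (ts)^{-(d-1)/4}n^{-(d-1)}(1+n\sd_{[0,1]}(t,s))^{-(\k p - d+1)}$, is only valid when $n(ts)^{1/4}\gtrsim 1+n\sd_{[0,1]}(t,s)$. When $n(ts)^{1/4}$ is small --- exactly the apex regime you flag --- the correct bound is the trivial one $\lesssim (1+n\sd_{[0,1]}(t,s))^{-\k p}$, and you must use the \emph{minimum} of the two. Without this, the $(ts)^{-(d-1)/4}$ factor is not integrable near the apex and the argument breaks. Once you insert the minimum the dyadic sum does close, but at that point the paper's change-of-variables argument is both shorter and cleaner.
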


\begin{proof}
Let $J_{p}$ denote the left-hand side of \eqref{eq:intLn1}. By Lemma \ref{lem:CorA3}, it is sufficient to 
estimate $J_{2}$. Let $x = t \xi$ and $y = s\eta$. The definition of $\sd_{\VV_0}(\cdot,\cdot)$ shows that it 
is a function of $\la \xi,\eta\ra$. Hence, by \eqref{eq:intSS}, we deduce that 
\begin{align*}
  J_{2,\k} \, &\le c   \int_0^1 \int_{-1}^1 \frac{ s^{d-1} \sw_{\b,\g}(s)  (1-u^2)^{\f{d-3}{2}} }{\sw_{\b,\g,d} (n; s)
        \big(1 + n \arccos \big( \sqrt{ts} \sqrt{\frac{1+u}{2}} + \sqrt{1-t}\sqrt{1-s} \big)\big)^{2\k} } \d u  \d s \\
        & \le c  \int_0^1 \int_{0}^1 \frac{s^{d-1} \sw_{\b,\g}(s) v^{d-2}(1-v^2)^{\f{d-3}{2}} }{\sw_{\b,\g,d} (n; s)
        \left(1 + n \sqrt{1- \sqrt{ts} v -  \sqrt{1-t}\sqrt{1-s}} \right)^{2k}} \d v  \d s,
\end{align*}
where the second step follows from changing variable $ \sqrt{\frac{1+u}{2}} \mapsto v$ and the relation
$\t \sim \sin \f{\t}{2} \sim \sqrt{1-\cos \t}$. Making a further changing of variable $v \mapsto z/\sqrt{s}$ gives
\begin{align*}
  J_{2} \,& \le c    \int_0^1 \int_{0}^{\sqrt{s}} \frac{s\, \sw_{\b,\g}(s) z^{d-2}(s- z^2)^{\f{d-3}{2}} }{\sw_{\b,\g,d} (n; s)
        \left(1 + n \sqrt{1- \sqrt{t}\,z -  \sqrt{1-t}\sqrt{1-s}} \right)^{2\k} }  \d z  \d s \\
        & \le c   \int_0^1 \int_{0}^{\sqrt{s}} \frac{(s- z^2)^{\f{d-3}{2}} }{ (1-s+n^{-2})^{\f12} 
       \left(1 + n \sqrt{1- \sqrt{t}\, z -  \sqrt{1-t}\sqrt{1-s}} \right)^{2\k} } \d z  \d s,        
\end{align*}
where we have used $s z^{d-2} \le s^{\frac{d}{2}} \le (s+n^{-2})^{\frac{d}{2}}$. One more change of variable 
$s\mapsto 1-w^2$ with $\d s = w \d w$ and 
$w \le (w^2+n^{-2})^\f12$, we obtain
\begin{align*}
   J_{2}  \le c    \int_0^1 \int_{0}^{\sqrt{1-w^2}} \frac{(1-w^2- z^2)^{\f{d-3}{2}} }{
    \left(1 + n \sqrt{1- \sqrt{t} z -  \sqrt{1-t} \, w} \right)^{2\k}} \d z  \d w, 
\end{align*}
which is an integral over the positive quadrant $\{(z,w) \in \BB^2: w \ge 0,  z \ge 0\}$ of the unit disk 
$\BB^2$. Setting $p = \sqrt{t} z + \sqrt{1-t} \, w$ and $q = - \sqrt{1-t} z + \sqrt{t}w$ in the
integral, which is an orthogonal transformation, and enlarging the integral domain while taking into
account that $p \ge 0$, it follows that 
\begin{align*}
  J_{2} \, & \le c    \int_0^1 \frac{1} {\left(1 + n \sqrt{1- p} \right)^{2\k} } 
        \int_{- \sqrt{1-p^2}}^{\sqrt{1-p^2}}(1-p^2- q^2)^{\f{d-3}{2}} \d q  \d p \\
       & \le c   \int_0^1 \frac{(1-p^2)^{\f{d-2}{2}} } {\left(1 + n \sqrt{1- p} \right)^{2\k} }  
          \le c  n^{-d} \int_0^n \frac{ r^{d-1} }{ (1 + r )^\k } \d r \le c n^{-d}
\end{align*} 
by setting $r = n \sqrt{1-p}$ and recalling that $\k > d$. This completes the proof. 
\end{proof}

\begin{prop}\label{prop:intLn}
Let $d\ge 2$ and $\g \ge - \f12$. For $0 < p < \infty$ and $(x,t) \in \VV_0^{d+1}$,  
\begin{equation*}
   \int_{\VV_0^{d+1}} \left| \sL_n\big(\sw_{-1,\g};(x,t),(y,s)\big) \right|^p \sw_{-1,\g}(s) \d \s(y,s)
       \le c \left(\frac{n^d}{\sw_{\g,d} (n; t)}\right)^{p-1}. 
\end{equation*}
\end{prop}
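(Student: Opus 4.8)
The plan is to read off this proposition as a direct consequence of the pointwise kernel estimate of Theorem~\ref{thm:kernelV0} (which is Assertion~1 for $\sw_{-1,\g}$) together with the integral bound of Lemma~\ref{lem:intLn} specialized to $\b=-1$. First I would record that, by \eqref{eq:w_bg} and \eqref{eq:w(n;t)}, one has $\sw_{-1,\g,d}(n;\cdot) = \sw_{\g,d}(n;\cdot)$, so the two notations coincide and no conversion between them is needed; in particular $\b=-1$ is admissible in Lemma~\ref{lem:intLn} since $-1>-d$ for $d\ge 2$.

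Pick $\k>0$ large, to be fixed below. Raising the bound of Theorem~\ref{thm:kernelV0} to the $p$-th power, multiplying by $\sw_{-1,\g}(s)$ and integrating over $\VV_0^{d+1}$ gives
\begin{align*}
  \int_{\VV_0^{d+1}} \big|\sL_n(\sw_{-1,\g};(x,t),(y,s))\big|^p \sw_{-1,\g}(s)\,\d \s(y,s)
  &\le \frac{c\,n^{dp}}{\sw_{\g,d}(n;t)^{p/2}} \\
  &\quad \times \int_{\VV_0^{d+1}} \frac{\sw_{-1,\g}(s)\,\d \s(y,s)}{\sw_{\g,d}(n;s)^{p/2}\big(1+n\sd_{\VV_0}((x,t),(y,s))\big)^{\k p}}.
\end{align*}
Now I would choose $\k > \tfrac{2d}{p} + \big(\g-1+\tfrac{d+1}{2}\big)\big|\tfrac1p-\tfrac12\big|$, so that Lemma~\ref{lem:intLn} with $\b=-1$ applies and bounds the remaining integral by $c\,n^{-d}\sw_{\g,d}(n;t)^{1-p/2}$. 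Substituting and collecting exponents (using $n^{dp}\cdot n^{-d}=(n^d)^{p-1}$ and $\sw_{\g,d}(n;t)^{-p/2}\cdot\sw_{\g,d}(n;t)^{1-p/2}=\sw_{\g,d}(n;t)^{-(p-1)}$) yields precisely $c\,(n^d/\sw_{\g,d}(n;t))^{p-1}$, which is the claim.

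There is essentially no obstacle here beyond this bookkeeping: the only point requiring attention is that the admissibility threshold on $\k$ imposed by Lemma~\ref{lem:intLn} can actually be met, which is immediate because Theorem~\ref{thm:kernelV0} holds for \emph{every} $\k>0$. In fact this proposition is the conic-surface incarnation of the Corollary following Lemma~\ref{lem:CorA3}, and one could equally deduce it from that corollary once Assertions~1 and~3 are known for $\sw_{-1,\g}$ on $\VV_0^{d+1}$ (Assertion~3 being exactly the $p=1$ case of Lemma~\ref{lem:intLn}); I would nonetheless favour the direct route above, since it keeps the explicit dependence of the constant on $n$ and on $t$ transparent.
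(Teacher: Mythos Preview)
Your proof is correct and follows exactly the route indicated in the paper: apply the pointwise estimate of Theorem~\ref{thm:kernelV0} and then Lemma~\ref{lem:intLn} with $\b=-1$ (using $\sw_{-1,\g,d}=\sw_{\g,d}$). The paper's own proof is the one-line remark ``immediate by Theorem~\ref{thm:kernelV0} and Lemma~\ref{lem:intLn}'', and your write-up simply spells out the bookkeeping behind that sentence.
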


The proof is immediate by (i) of Theorem \ref{thm:kernelV0} and Lemma \ref{lem:intLn}. 

\begin{cor}
For $\g \ge -\f12$, the space $(\VV_0^{d+1}, \sw_{-1,\g}, \sd_{\VV_0})$ is a localizable homogeneous space. 
\end{cor}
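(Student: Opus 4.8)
The plan is to unwind the definition: the space $(\VV_0^{d+1},\sw_{-1,\g},\sd_{\VV_0})$ is localizable precisely when $\sw_{-1,\g}$ is a doubling weight for $\sd_{\VV_0}$ and the kernels $\sL_n(\sw_{-1,\g};\cdot,\cdot)$ satisfy Assertions~1, 2 and 3 of Definition~\ref{def:Assertions}. Each of these pieces has already been assembled in the present section; the task is only to record it in the correct normalization. The doubling property, together with the ambient homogeneous-space axioms for $(\VV_0^{d+1},\sd_{\VV_0})$, is the content of Proposition~\ref{prop:capV0} at $\b=-1$, which gives that $\sw_{-1,\g}$ is a doubling weight with index $\a(\sw_{-1,\g})=2d+2\g-1$ for $d\ge2$, $\g\ge-\f12$. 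Specializing its conclusion to radius $n^{-1}$ and writing, as in \eqref{eq:w(n;t)}, $\sw_{\g,d}(n;t)=(1-t+n^{-2})^{\g+\f12}(t+n^{-2})^{\f{d-2}2}$ (this is $\sw_{-1,\g,d}(n;t)$ in the notation \eqref{eq:w_bg}), we obtain the dictionary
$$
  \sw_{-1,\g}\!\big(\sc((x,t),n^{-1})\big)\sim n^{-d}\,\sw_{\g,d}(n;t),
$$
which converts each kernel estimate below into the shape demanded by Definition~\ref{def:Assertions}.

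With this dictionary, Assertion~1 is read off directly from Theorem~\ref{thm:kernelV0}, whose right-hand side is $c_\k\big(\sw_{-1,\g}(\sc((x,t),n^{-1}))\,\sw_{-1,\g}(\sc((y,s),n^{-1}))\big)^{-1/2}(1+n\sd_{\VV_0}((x,t),(y,s)))^{-\k}$, exactly the bound in Assertion~1. Assertion~3 is Lemma~\ref{lem:intLn} taken with $p=2$ and $\b=-1$: there the exponent $1-\f p2$ vanishes, so the bound reads $\le cn^{-d}$; multiplying through by $n^{d}$ and using the dictionary once more turns it into $\int_{\VV_0^{d+1}}\sw_{-1,\g}(s)\,\sw_{-1,\g}(\sc((y,s),n^{-1}))^{-1}(1+n\sd_{\VV_0}((x,t),(y,s)))^{-\k}\,\d\s(y,s)\le c$, which is Assertion~3 for $\k$ large.

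For Assertion~2 I would invoke Theorem~\ref{thm:L-LkernelV0}. Its hypothesis $(x_1,t_1)\in\sc((x_2,t_2),c^*n^{-1})$ is exactly the hypothesis $x_1\in B(x_2,\delta/n)$ of Assertion~2 with $\delta_0=c^*$, and by Lemma~\ref{lem:|s-t|} it forces $\sw_{\g,d}(n;t_1)\sim\sw_{\g,d}(n;t_2)$. Rewriting the bound of Theorem~\ref{thm:L-LkernelV0} with the dictionary, its right-hand side equals
$$
  c_\k\,\frac{n\,\sd_{\VV_0}((x_1,t_1),(x_2,t_2))}{\sqrt{\sw_{-1,\g}(\sc((x_1,t_1),n^{-1}))}\,\sqrt{\sw_{-1,\g}(\sc((x_2,t_2),n^{-1}))}}\cdot\sqrt{\frac{\sw_{\g,d}(n;t_2)}{\sw_{\g,d}(n;s)}}\cdot\frac1{(1+n\sd_{\VV_0}((y,s),(x_2,t_2)))^{\k}},
$$
so the only discrepancy with Assertion~2 is the stray factor $\sqrt{\sw_{\g,d}(n;t_2)/\sw_{\g,d}(n;s)}$. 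This is controlled by part~(ii) of Lemma~\ref{lem:doublingLem} for $\sw_{-1,\g}$, which gives $\sw_{\g,d}(n;t_2)\le c\,(1+n\sd_{\VV_0}((y,s),(x_2,t_2)))^{\a(\sw_{-1,\g})}\sw_{\g,d}(n;s)$; the extra power of $1+n\sd_{\VV_0}$ is harmless since the exponent $\k$ in Assertion~2 is arbitrary. Hence all three assertions hold, and together with the doubling property this shows $(\VV_0^{d+1},\sw_{-1,\g},\sd_{\VV_0})$ is a localizable homogeneous space.

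I do not expect any genuine obstacle at this step: once Proposition~\ref{prop:capV0}, Lemma~\ref{lem:kernelV0}, Theorem~\ref{thm:L-LkernelV0} and Lemma~\ref{lem:intLn} are granted, the corollary is pure bookkeeping — normalizing the $\sw_{-1,\g}$-measures of conic caps and absorbing stray powers of $1+n\sd_{\VV_0}$. The real difficulty, already surmounted in the preceding subsections, is the failure of $\sd_{\VV_0}$ to be comparable to the Euclidean distance near the apex (Remark~\ref{rem:distV0}), which is what forces the two-parameter weights $\sw_{\g,d}(n;\cdot)$ into every estimate and makes the addition-formula integrals over $[-1,1]^2$ in Lemma~\ref{lem:kernelV0} and Theorem~\ref{thm:L-LkernelV0} delicate.
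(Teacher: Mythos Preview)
Your proposal is correct and follows exactly the route the paper intends: the corollary is stated without proof in the paper because it is an immediate consequence of Proposition~\ref{prop:capV0} (doubling), Theorem~\ref{thm:kernelV0} (Assertion~1), Theorem~\ref{thm:L-LkernelV0} (Assertion~2), and Lemma~\ref{lem:intLn} with $p=2$, $\b=-1$ (Assertion~3). Your extra care in reconciling the $(y,s)$ versus $(x_1,t_1)$ normalization in the denominator of Assertion~2 via Lemma~\ref{lem:doublingLem}(ii) is a legitimate point the paper glosses over, and you handle it correctly.
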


\subsection{Maximal $\ve$-separated sets and MZ inequality}\label{sec:ptsV0}
Let $\ve > 0$. An $\ve$-separated set is defined in Definition \ref{defn:separated-pts}. In this subsection, we
provide a construction of some examples of maximal $\ve$-separated points on the conic surface. 
 
For our construction, we shall need $\ve$-separated points on the unit sphere $\sph$. We adopt the 
following notation. For $\ve > 0$, we denote by $\Xi_{\SS}(\ve)$ a maximal $\ve$-separated set
on the unit sphere $\sph$ and we let $\SS_\xi(\ve)$ be the subsets in $\sph$ so that the collection 
$\{\SS_\xi(\ve): \xi \in \Xi_\SS(\ve)\}$ is a partition of $\sph$, and we assume
\begin{equation}\label{eq:ptsV01} 
      \sc_{\SS}(\xi, c_1 \ve) \subset \SS_\xi(\ve) \subset \sc_{\SS}(\xi, c_2 \ve), \qquad \xi \in \Xi_{\SS}(\ve),
\end{equation} 
where $\sc_{\SS}(\xi,\ve)$ denotes the spherical cap centered at $\xi$ with radius $\ve$, $c_1$ and $c_2$ 
depending only on $d$. Such a $\Xi_\SS(\ve)$ exists for all $\ve > 0$, see for example \cite[Section 6.4]{DaiX}, 
and its cardinality satisfies  
\begin{equation}\label{eq:ptsV02} 
c_d' \ve^{-d+1} \le \# \Xi_{\SS}(\ve) \le c_d \ve^{-d+1}.
\end{equation} 

We now consider the subsets of points on the cone $\VV_0^{d+1}$. For $\ve > 0$, we denote by 
$\Xi_{\VV_0} = \Xi_{\VV_0}(\ve)$ an $\ve$-separated set and denote by 
$\{\VV_0(\xi,t): (t\xi,t) \in \Xi_{\VV_0}\}$ a partition of $\VV_0^{d+1}$.
We start with an explicit construction of such $\Xi_{\VV_0}$ and $\VV_0(\xi,t)$. 

Let $\ve > 0$ and let $N = \lfloor \frac{\pi}{2}\ve^{-1} \rfloor$. For $1\le j \le N$ we define 
$$
 \t_j:= \frac{(2j-1)\pi}{2 N},  \qquad \t_j^- :=  \t_j- \frac{\pi}{2 N}  \quad \hbox{and} \quad \t_j^+ :=  \t_j +\frac{\pi}{2 N}.
$$
Let $t_j =  \sin^2 \frac{\t_j}{2}$ and define $t_j^-$ and $t_j^+$ accordingly. In particular, $t_1^- = 0$ and 
$t_N^+ = 1$. Then $\t_{j+1}^- =\t_j^+$ and $\VV_0^{d+1}$ can be partitioned by 
$$
   \VV_0^{d+1} =  \bigcup_{j=1}^N \VV_0^{(j)}, \quad \hbox{where}\quad \VV_0^{(j)}:= 
        \left\{(x,t) \in \VV_0^{d+1}:   t_j^- < t \le t_j^+ \right \}.  
$$
Let $\ve_j := (2 \sqrt{t_j})^{-1} \pi \ve$.  Then $\Xi_\SS(\ve_j)$ is the maximal $\ve_j$-separated 
set of $\sph$ such that $\{\SS_\xi(\ve_j): \xi \in \Xi_\SS(\ve_j)\}$ is a partition $\sph = \bigcup_{\eta \in \Xi_\SS(\ve_j)} \SS_\eta(\ve_j)$, and 
$$
   \# \Xi_\SS(\ve_j) \sim \ve_j^{-d+1}.
$$
For each $j =1,\ldots, N$, we decompose $\VV_0^{(j)}$ by 
$$
 \VV_0^{(j)} =  \bigcup_{\xi \in  \Xi_\SS(\ve_j)} \VV_0(\xi,t_j), \quad \hbox{where}\quad 
 \VV_0(\xi,t_j):= \left\{(t\eta,t):  t_j^- < t \le t_j^+, \, \eta \in \SS_\xi(\ve_j) \right\}.
$$
Finally, we define the subset $\Xi_{\VV_0}$ of $\VV_0^{d+1}$ by
$$
   \Xi_{\VV_0} = \big\{(t_j \xi, t_j): \,  \xi \in \Xi_\SS(\ve_j), \, 1\le j \le N \big\}. 
$$

\begin{prop} \label{prop:subsetV0}
Let $\ve > 0$ and $N = \lfloor \frac{\pi}{2} \ve^{-1} \rfloor$. Then $\Xi_{\VV_0}$ is a maximal $\ve$-separated 
set of $\VV_0^{d+1}$ and $\{\VV_0(\xi, t_j): \xi \in \Xi_\SS(\ve_j), \, 1\le j \le N \}$ is a partition 
$$
   \VV_0^{d+1} =  \bigcup_{j=1}^N \bigcup_{\xi \in \Xi_\SS(\ve_j)} \VV_0(\xi,t_j).
$$
Moreover, there are positive constants $c_1$ and $c_2$ depending only on $d$ such that 
\begin{equation}\label{eq:incluV0cap}
      \sc \big((t_j\xi,t_j), c_1 \ve\big) \subset \VV_0(\xi,t_j) \subset \sc \big( (t_j \xi,t_j), c_2 \ve\big), 
\end{equation}
and $c_d'$ and $c_d$ such that 
$$
c_d' \ve^{-d} \le \# \Xi_{\VV_0} \le c_d \ve^{-d}. 
$$
\end{prop}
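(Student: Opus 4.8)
Proof proposal.

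The plan is to verify the four assertions of the proposition in turn, working from the explicit construction given just above: first that $\Xi_{\VV_0}$ is $\ve$-separated, then that the sets $\VV_0(\xi,t_j)$ form a partition of $\VV_0^{d+1}$, then the double inclusion \eqref{eq:incluV0cap}, and finally the cardinality bound. The main engine throughout is the equivalence \eqref{eq:d2=d2+d2}, namely $\sd_{\VV_0}((x,t),(y,s)) \sim \sd_{[0,1]}(t,s) + (ts)^{1/4}\sd_{\SS}(\xi,\eta)$, which splits every distance estimate into a "radial" part controlled by the $\t_j$-grid on $[0,1]$ and an "angular" part controlled by $\Xi_\SS(\ve_j)$. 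The partition claim is immediate from the construction: $\VV_0^{d+1} = \bigcup_j \VV_0^{(j)}$ since $\t_{j+1}^- = \t_j^+$ covers $[0,\pi]$, and within each slab $\{t_j^- < t \le t_j^+\}$ the decomposition into $\VV_0(\xi,t_j)$ over $\xi \in \Xi_\SS(\ve_j)$ is inherited from the partition $\sph = \bigcup_\eta \SS_\eta(\ve_j)$; interiors are disjoint because both the $t$-slabs and the spherical pieces $\SS_\xi(\ve_j)$ have disjoint interiors.

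For the separation and the inclusions, the key computation is to show that on the slab $\VV_0^{(j)}$ one has $\sqrt{t} \sim \sqrt{t_j}$ (equivalently $t + n^{-2}$-type quantities are comparable to $t_j$ up to $\ve^2$, using $|\sqrt t - \sqrt{t_j}| \le \frac12|\t - \t_j| \le \frac{\pi}{4N} \sim \ve$ together with $\sqrt{t_j} \gtrsim \ve$ which holds since $t_j = \sin^2(\t_j/2)$ with $\t_j \ge \frac{\pi}{2N} \sim \ve$). Given this, for two points $(t_j\xi, t_j)$ and $(t_i\eta, t_i)$ of $\Xi_{\VV_0}$: if $i \ne j$ then the radial part $\sd_{[0,1]}(t_i,t_j) = \frac12|\t_i - \t_j| \ge \frac{\pi}{2N} \gtrsim \ve$, giving separation; if $i = j$ then $\xi \ne \eta$ in $\Xi_\SS(\ve_j)$, so $\sd_\SS(\xi,\eta) \ge \ve_j = \frac{\pi\ve}{2\sqrt{t_j}}$, and the angular part is $(t_jt_j)^{1/4}\sd_\SS(\xi,\eta) = \sqrt{t_j}\cdot\sd_\SS(\xi,\eta) \ge \frac{\pi\ve}{2} \gtrsim \ve$. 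The same bookkeeping, run in the other direction, gives \eqref{eq:incluV0cap}: a point $(t\eta,t) \in \VV_0(\xi,t_j)$ has $\sd_{[0,1]}(t,t_j) \le \frac{\pi}{2N} \lesssim \ve$ and $\sd_\SS(\eta,\xi) \le c_2\ve_j$, hence $(t_jt)^{1/4}\sd_\SS(\eta,\xi) \sim \sqrt{t_j}\cdot c_2\ve_j \lesssim \ve$, so $\sd_{\VV_0}((t\eta,t),(t_j\xi,t_j)) \lesssim \ve$; conversely a small cap around $(t_j\xi,t_j)$ forces both $t$ close to $t_j$ and $\eta$ close to $\xi$ at the scales $\ve$ and $\ve_j$ respectively, landing inside $\VV_0(\xi,t_j)$ after adjusting the constant $c_1$ and using \eqref{eq:ptsV01}. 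The maximality condition $1 \le \sum_{z\in\Xi_{\VV_0}} \chi_{\sc(z,\ve)} \le c_d$ then follows from \eqref{eq:incluV0cap} together with a bounded-overlap argument: any point lies in finitely many of the enlarged caps, which one sees by noting each such cap is contained in at most a bounded number of adjacent slabs $\VV_0^{(j\pm1)}$ and within those in boundedly many $\SS$-caps.

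For the cardinality, $\#\Xi_{\VV_0} = \sum_{j=1}^N \#\Xi_\SS(\ve_j) \sim \sum_{j=1}^N \ve_j^{-(d-1)} = \sum_{j=1}^N \left(\frac{2\sqrt{t_j}}{\pi\ve}\right)^{d-1} \sim \ve^{-(d-1)} \sum_{j=1}^N (\sqrt{t_j})^{d-1}$ by \eqref{eq:ptsV02}, and since $\sqrt{t_j} = \sin(\t_j/2) \sim \t_j \sim j/N \sim j\ve$ on the relevant range (or $\sim 1$ near $j = N$, handled by the same comparison), the sum is $\sum_{j=1}^N (j\ve)^{d-1} \sim \ve^{d-1} N^d \sim \ve^{d-1}\ve^{-d} = \ve^{-1}$, giving $\#\Xi_{\VV_0} \sim \ve^{-(d-1)}\cdot\ve^{-1} = \ve^{-d}$. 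The upper and lower bounds come out with the same argument, absorbing the $\sin(\t/2) \sim \t$ comparison into the implied constants. The main obstacle, and the place requiring the most care, is the uniform control of $\sqrt{t_j}$ and $\ve_j$ near the apex $t = 0$: one must confirm that $\ve_j$ stays bounded below (so that $\Xi_\SS(\ve_j)$ is a genuine finite net and the angular scale never collapses) and that the $(ts)^{1/4}$ weight in \eqref{eq:d2=d2+d2} is correctly tracked — this is exactly the incompatibility with the Euclidean distance flagged in Remark \ref{rem:distV0}, and it is why the construction inflates the angular resolution by the factor $(\sqrt{t_j})^{-1}$ in the definition of $\ve_j$.
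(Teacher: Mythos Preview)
Your proposal is correct and follows essentially the same route as the paper's proof: both split every distance estimate via \eqref{eq:d2=d2+d2} into the radial part $\sd_{[0,1]}(t,t_j)$ and the angular part $(t_jt)^{1/4}\sd_\SS(\xi,\eta)$, use $\sqrt{t_j}\gtrsim \ve$ (from $\t_j\ge \frac{\pi}{2N}$) to get $s\sim t_j$ on each slab, and compute the cardinality by $\sum_j \ve_j^{-(d-1)}\sim \ve^{-(d-1)}\sum_j \t_j^{d-1}\sim \ve^{-d}$. The only notable addition is that you spell out the bounded-overlap/maximality verification explicitly, which the paper leaves implicit in the combination of the partition with the right inclusion of \eqref{eq:incluV0cap}.
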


\begin{proof}
Let $(t_j \xi, t_j)$ and $(t_k \eta, t_k)$ be two distinct points in $\Xi_{\VV_0}$. If $t_j \ne t_k$,
then 
$$
\sd_{\VV_0}\big((t_j \xi, t_j), (t_k \eta, t_k)  \big) \ge \sd_{[0,1]}(t_j,t_k) = \frac12 |\t_j - \t_k| \ge \frac{\pi}{2N}  \ge \ve.
$$
If $j = k$, then $\xi$ and $\eta$ are both elements of $\SS(\ve_j)$, so that $\sd_{\SS}(\xi,\eta) \ge \ve_j$. Hence,
using $\f{2}{\pi}\phi \le \sin \phi \le \phi$, we deduce from \eqref{eq:d=d+d} that 
$$
\sd_{\VV_0}\big((t_j \xi, t_j), (t_j \eta, t_j)\big) \ge \frac{2}{\pi} \sqrt{t_j} \sd_{\SS}(\xi,\eta) \ge \frac{2}{\pi} 
    \sqrt{t_j}\ve_j = \ve. 
$$
Hence, $\Xi_{\VV_0}$ is $\ve$-separated. Moreover, since $\#\Xi_\SS(\ve_j) \sim \ve_j^{-d+1}$
and $\ve_j \sim \ve/ \t_j$, it follows that 
$$
 \#\Xi_{\VV_0} = \sum_{j=1}^N \#\Xi_\SS(\ve_j) 
     \sim \sum_{j=1}^N \ve_j^{-d+1} \sim \ve^{-d+1} \sum_{j=1}^N \t_j^{d-1} \sim \ve^{-d+1} N \sim \ve^{-d}. 
$$

For the proof of \eqref{eq:incluV0cap}, we first consider the ball $\sc_{[0,1]}(t_j, r) = \{s: \sd_{[0,1]}(s,t_j) \le r\}$
on $[0,1]$. For $0< \delta < \pi/2$, it is easy to see that 
$$
    \sc_{[0,1]}(t_j, \delta /N) \subset \{s: (s\eta, s)\in \VV_0(\xi,t_j)\} \subset \sc_{[0,1]}(t_j, \pi/N).
$$
We further choose $\delta$ so that $4 \delta (1+\delta) < \f12$. For $s \in \sc_{[0,1]}(t_j, \delta /N)$, write
$ s= \sin^2 \f{\phi} 2$, then  
$$
  |t_j-s| = \frac12 |\cos \t -\cos\phi| = \left|\sin \frac{\t_j-\phi}{2} \sin \frac{\t_j+\phi}{2}\right| \le \frac{\delta}{N} 
        \left( 2 \sqrt{t_j} + \frac{\delta}{N}\right), 
$$
where we have used $|\sin \frac{\t_j+\phi}{2}| \le 2 \sin \f{\t_j}{2} + |\sin \frac{\t_j-\phi}{2}| \le 2 \sqrt{t_j} + \sd_{[0,1]}(s,t_j)$. 
For $j \ge 1$, $N^{-1} = \frac{2}{(2j-1)\pi} \t_j \le 2 \sin \frac{\t_j}2 = 2 \sqrt{t_j}$, it follows that 
$$
 |t_j-s| \le  2 \delta ( 2 + \delta)  t_j \le \tfrac12 t_j,
$$
which implies in particular that $\frac{1}{2}t_j \le s \le \f 3 2 t_j$. Furthermore, the same proof shows 
if $s \in\sc_{[0,1]}(t_j, \pi/N)$, then $s \le c_* t_j$. By definition, there are constants $b_1 > 0$ and 
$b_2> 0$ such that $\sc_\SS(\xi,b_1\ve_j)\subset \SS_\xi(\ve_j) \subset \sc_\SS(\xi,b_2 \ve_j)$. 
We claim that \eqref{eq:incluV0cap} holds for some $c_1 < \delta$ and some $c_2 > b_2$. Indeed,
if $(y,\eta) \subset \sc \big((t_j\xi,t_j), c_1 \ve\big)$, then $\sd_{[0,1]}(s,t_j) \le c_1 \ve \le \delta/N$ 
so that $s \ge t_j/2$, and $(s t)^{\f14} \sd_{\SS}(\xi,\eta) \le c c_1 \ve$ by \eqref{eq:d2=d2+d2} 
so that $\sd_\SS(\xi,\eta) \le 2^{\f14} c c_1 \ve/\sqrt{t_j} \le b_1 \ve_j$ by choosing $c_1$ small. 
This establishes the left-hand side inclusion of \eqref{eq:incluV0cap}. The right-hand side inclusion
can be similarly established. The proof is completed. 
\end{proof}
 
The above construction establishes the existence of maximal $\ve$-separated set on the conic  surface. 
Since $(\Omega, \sw_{-1,\g},\sd_{\VV_0})$ is a localizable homogeneous space, we can then deduce 
the Marcinkiewicz-Zygmund inequality on such sets for all doubling weights on the conic surface. The
weight $\sw$ on $\VV_0^{d+1}$ is in general a function of both $x$ and $t$. 

\begin{thm} \label{thm:MZinequalityV0}
Let $\sw$ be a doubling weight on $\VV_0^{d+1}$. Let  $\Xi_{\VV_0}$ be a maximal 
$\f \delta n$-separated subset of $\VV_0^{d+1}$ and $0 < \delta \le 1$.
\begin{enumerate}[$(i)$]
\item For all $0<p< \infty$ and $f\in\Pi_m(\VV_0^{d+1})$ with $n \le m \le c n$,
\begin{equation*}
  \sum_{z \in \Xi_{\VV_0}} \Big( \max_{(x,t)\in \sc((z,r), \f \delta n)} |f(x,t)|^p \Big)
     \sw\!\left(\sc((z, r), \tfrac \delta n) \right) \leq c_{\sw,p} \|f\|_{p,\sw}^p
\end{equation*}
where $c_{\sw,p}$ depends on the doubling constant $L(\sw)$ and on $p$ when $p$ is close to $0$.
\item For $0 < r < 1$, there is a $\delta_r > 0$ such that for $\delta \le \delta_r$, $r \le p < \infty$ and 
$f \in \Pi_n(\VV_0^{d+1})$,  
\begin{align*}
  \|f\|_{p,\sw}^p \le c_{\sw,r} \sum_{z \in\Xi}
       \Big(\min_{(x,t)\in \sc\bigl((z,r), \tfrac{\delta}n\bigr)} |f(x,t)|^p\Big)
          \sw\bigl(\sc((z,r), \tfrac \delta n)\bigr)
\end{align*}
where $c_{\sw,r}$ depends only on $L(\sw)$ and on $r$ when $r$ is close to $0$.
\end{enumerate}
\end{thm}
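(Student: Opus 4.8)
The statement is the specialization of the abstract Marcinkiewicz--Zygmund inequality, Theorem~\ref{thm:MZinequality}, to the conic surface, so the plan is simply to verify that every hypothesis of that theorem holds for $\Omega=\VV_0^{d+1}$, with the weight $\varpi=\sw_{-1,\g}$ in the role of the weight that admits highly localized kernels and $\sd_{\VV_0}$ as the metric. First I would record the bookkeeping identification: the conic caps $\sc(\cdot,\cdot)$ are, by definition, precisely the metric balls $B(\cdot,\cdot)$ for $\sd_{\VV_0}$, so the sums over $\sc(z,\tfrac\delta n)$ in the statement are literally the sums over $B(z,\tfrac\delta n)$ appearing in Theorem~\ref{thm:MZinequality}. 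Next, $(\VV_0^{d+1},\sw_{-1,\g},\sd_{\VV_0})$ is a localizable homogeneous space: Theorem~\ref{thm:kernelV0} is Assertion~1, Theorem~\ref{thm:L-LkernelV0} is Assertion~2, the case $p=1$ of Lemma~\ref{lem:intLn} is Assertion~3, and Proposition~\ref{prop:capV0} shows $\sw_{-1,\g}$ is doubling. Since $\sw$ is assumed doubling on $\VV_0^{d+1}$, Lemma~\ref{lem:doublingLem}, the maximal-function bound \eqref{eq:MaxFuncH}, and hence Corollary~\ref{cor:fbn-bound} and the oscillation estimate Lemma~\ref{lem:osc(f)} are all available on $\VV_0^{d+1}$ with constants depending only on $d$, $\g$, $L(\sw)$ (and on $p$ when $p$ is near $0$).

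The only remaining point is the existence of the index set. Theorem~\ref{thm:MZinequality} is stated for \emph{any} maximal $\tfrac\delta n$-separated subset, and Proposition~\ref{prop:subsetV0} produces one explicitly for each $\delta/n$, together with a partition $\{\VV_0(\xi,t_j)\}$ satisfying the sandwiching \eqref{eq:incluV0cap}, which in particular gives the overlap bound \eqref{eq:def-pts2} required of a maximal separated set in the sense of Definition~\ref{defn:separated-pts}. Thus no property of $\Xi_{\VV_0}$ beyond maximality is needed. Applying Theorem~\ref{thm:MZinequality}(i) to $f\in\Pi_m(\VV_0^{d+1})$ with $n\le m\le cn$ yields the first inequality, and Theorem~\ref{thm:MZinequality}(ii) with $\delta\le\delta_r$ yields the second, the constants $c_{\sw,p}$, $c_{\sw,r}$ and $\delta_r$ being inherited verbatim from the abstract statement.

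The genuine difficulty here is located entirely upstream: all of the real work — the highly localized kernel bounds on $\VV_0^{d+1}$ with the correct weighting $\sw_{\g,d}(n;t)$ and the distance $\sd_{\VV_0}$, which is not comparable to the Euclidean distance near the apex, together with the auxiliary maximal function $f_{\b,n}^\ast$ and Lemmas~\ref{lem:osc(f)} and~\ref{lem:intLn} — has already been carried out, so the theorem is immediate once one observes that the relevant constants depend only on $d$, $\g$ and $L(\sw)$. If a point deserves a second look it is exactly this: the constant $c_r$ in Lemma~\ref{lem:osc(f)} used to fix $\delta_r=1/(4c_r)$ must depend only on $d$ and $\g$ (through the localizable structure), not on $\sw$, which is what makes the choice of $\delta_r$ in part (ii) uniform in the doubling weight; this holds because Assertions~1 and~3 for $\sw_{-1,\g}$ were established with constants of precisely that form.
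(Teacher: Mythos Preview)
Your proposal is correct and matches the paper's approach exactly: the paper does not give a separate proof of this theorem but simply notes, immediately before stating it, that since $(\VV_0^{d+1},\sw_{-1,\g},\sd_{\VV_0})$ has been shown to be a localizable homogeneous space, the Marcinkiewicz--Zygmund inequality follows for all doubling weights from the abstract Theorem~\ref{thm:MZinequality}. Your write-up is in fact more detailed than the paper's, carefully tracking which earlier results supply each assertion and the dependence of the constants.
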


\subsection{Cubature rules and localized tight frames}\label{sec:CF-frameV0}
We now turn our attention to Assertion 4 in Subsection \ref{subsect:CFunction} and construct 
fast decaying polynomials on the conic surface. 

\begin{lem}\label{lem:A4}
Let $d\ge 2$. For each $(x,t) \in \VV_0^{d+1}$, there is a polynomial $T_{x,t}$ of degree $n$ that
satisfies
\begin{enumerate} [   (1)]
\item $T_{x,t}(x,t) =1$, $T_{x,t}(y,s) \ge c > 0$ if $(y,s) \in \sc( (x,t), \f{\delta}{n})$, and for every $\k > 0$,
$$
   0 \le T_{x,t}(y,s) \le c_\k \left(1+ \sd_{\VV_0}\big((x,t),(y,s)\big) \right)^{-\k}, \quad (y,s) \in \VV_0^{d+1}.
$$
\item there is a polynomial $q(t)$ of degree $n$ such that $q(t) T_{x,t}$ is a polynomial of degree $2 n$ 
in $(x,t)$ variables and $1 \le q_n(t) \le c$. 
\end{enumerate}
\end{lem}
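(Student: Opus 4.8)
The plan is to build the fast‑decaying polynomial $T_{x,t}$ on $\VV_0^{d+1}$ by combining, via the addition formula, a one‑dimensional fast‑decaying polynomial in the Jacobi variable with a fast‑decaying polynomial on the unit sphere $\sph$. Recall from Subsection~\ref{subsect:CFunction} that fast‑decaying polynomials of one variable are available from \cite{KT,ST} and that they yield fast‑decaying polynomials on $\sph$ through the addition formula \eqref{eq:additionF}; the same mechanism should work here since $\VV_0^{d+1}$ carries the addition formula \eqref{eq:sfPbCone}. Concretely, first I would fix a univariate polynomial $\ell_n$ of degree $\le n$ that is nonnegative on $[-1,1]$, normalized by $\ell_n(1)=1$, bounded below by a positive constant on $[1-cn^{-2},1]$, and satisfies $0\le \ell_n(u)\le c_\k(1+n\sqrt{1-u})^{-\k}$ for all $\k>0$; such $\ell_n$ exist by \cite{KT,ST}. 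Then, writing $x=t\xi$, $y=s\eta$ with $\xi,\eta\in\sph$, I would set
$$
  T_{x,t}(y,s) := c_n\, \big[\ell_{\lfloor n/4\rfloor}\big(1-2\sd_{[0,1]}(t,s)^2/\pi^2\big)\big]^2 \, \big[L_{\lfloor n/4\rfloor}\text{-type factor in }\la\xi,\eta\ra\big]^2,
$$
i.e.\ a product of (the square of) a Jacobi‑type fast‑decaying polynomial in $t,s$ and (the square of) a spherical fast‑decaying polynomial in $\xi,\eta$, renormalized so that $T_{x,t}(x,t)=1$. The squares guarantee nonnegativity. The degree bookkeeping is arranged so the product has degree $\le n$ in the relevant sense after multiplying by a suitable power‑of‑$t$ factor.

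The key steps, in order, are: (i) recall/record the univariate fast‑decaying polynomial $\ell_m$ and its properties; (ii) use the quadratic‑transform identity and the substitution $t=\sin^2\tfrac\t2$, $s=\sin^2\tfrac\phi2$ so that $\sd_{[0,1]}(t,s)=\tfrac12|\t-\phi|$ and $\cos(\t-\phi)$ is a polynomial in $t,s$, hence $\ell_m$ applied to an appropriate argument is a genuine polynomial in $t$ and $s$ of controlled degree, and use the addition formula for $\sph$ to produce the polynomial $G_m(\la\xi,\eta\ra)$ on the sphere with $G_m(1)=1$ and $0\le G_m(\la\xi,\eta\ra)\le c_\k(1+m\,\sd_\SS(\xi,\eta))^{-\k}$; (iii) form the product, verify it is a polynomial in the ambient variables $(x,t)$ after incorporating the factor $q(t)=(1+ (1-t)+\cdots)^{?}$ needed to clear half‑integer powers — more precisely the zonal factor $Z^{\g+d-1}$ on $\VV_0^{d+1}$ depends on $\sqrt{ts}$ and $\sqrt{1-t}\sqrt{1-s}$, so as in the bound‑domain cases one multiplies by a polynomial $q(t)$ (a power of $t$) to make $q(t)T_{x,t}(y,s)$ polynomial in $(x,t)$; (iv) combine the two decay estimates via \eqref{eq:d2=d2+d2}, which states $\sd_{\VV_0}\sim \sd_{[0,1]}(t,s)+(ts)^{1/4}\sd_\SS(\xi,\eta)$, to get the single decay bound in $\sd_{\VV_0}$; and (v) check the lower bound on $\sc((x,t),\delta/n)$, using that on such a cap both $\sd_{[0,1]}(t,s)\le\delta/n$ and $(ts)^{1/4}\sd_\SS(\xi,\eta)\lesssim\delta/n$ so each factor is bounded below by a constant, and verify the normalization constant $c_n$ is bounded above and below independent of $n$ and $(x,t)$ (this uses that $\ell_m$ and $G_m$ are bounded below on the relevant small neighborhoods of $1$, together with a uniform upper bound on their values).

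The main obstacle I expect is step (iv)–(v) near the apex $t=0$, where $\sd_{\VV_0}$ is \emph{not} comparable to the Euclidean distance and where the weight $(ts)^{1/4}$ in front of $\sd_\SS$ degenerates. One must be careful that the spherical decay factor, whose natural scale is $m\,\sd_\SS(\xi,\eta)$, is matched against $n\,\sd_{\VV_0}\sim n\sqrt{t}\,\sd_\SS$ when $t$ is small; the resolution is to choose the degree of the spherical factor adaptively (roughly $\sim n\sqrt{t}$, rounded) as in the construction of $\ve$‑separated sets in Proposition~\ref{prop:subsetV0}, or else to absorb the loss of powers of $n\sqrt t$ into the arbitrarily large exponent $\k$ by noting $t+n^{-2}\ge n^{-2}$ and hence $n\sqrt t$‑type factors are at worst harmless when $\k$ is taken large enough. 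A secondary technical point is that making the argument of $\ell_m$ a genuine polynomial of degree $O(m)$ in $t,s$: one works with $\cos\big(\tfrac12\sd_{\VV_0}\big)$‑type quantities, i.e.\ directly with $\sqrt{(ts+\la x,y\ra)/2}+\sqrt{1-t}\sqrt{1-s}$, so that squaring removes the square roots and the quadratic transform \eqref{eq:Jacobi-Gegen0} converts a Jacobi polynomial in $2(\cdot)^2-1$ into a Gegenbauer polynomial with polynomial argument; the factor $q(t)$ is then exactly the normalization needed, exactly as in the analogous constructions for the ball in \cite{PX2}. Once these degree and decay matchings are pinned down, the verification of (1) and (2) is routine.
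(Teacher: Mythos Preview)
Your tensor-product construction (a $[0,1]$-factor in $t,s$ times a spherical factor in $\xi,\eta$) has a genuine gap at the apex, and neither remedy you propose closes it. The lower bound in (1) requires $T_{x,t}(y,s)\ge c>0$ on the whole cap $\sc((x,t),\delta/n)$. But as observed in the remark following Proposition~\ref{prop:capV0}, when $t\lesssim n^{-2}$ the cap $\sc((x,t),\delta/n)$ contains points $(s\eta,s)$ with $\sd_\SS(\xi,\eta)$ bounded away from zero (of order~$1$). A spherical fast-decaying factor $G_m(\la\xi,\eta\ra)$ of any fixed degree $m\sim n$ is then of size $(1+n)^{-\k}$ at such points, so the product cannot be bounded below uniformly. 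Taking $m\sim n\sqrt t$ adaptively makes the degree of $T_{x,t}$ depend on $(x,t)$, and then part (2) --- that $q(t)T_{x,t}(y,s)$ be a polynomial of fixed degree in $(x,t)$ --- cannot hold. Absorbing losses into the free exponent $\k$ helps only with the upper bound, not the lower one.

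There is a second, related obstruction to part (2). With $x=t\xi$, a zonal factor $G_m(\la\xi,\eta\ra)$ involves $\la x,y\ra/(ts)$; via the addition formula it is a sum of $t^{-k}s^{-k}$ times harmonic polynomials in $x,y$ for $k\le m$. Clearing denominators forces $q(t)$ to contain $t^m$, which vanishes at the apex and so cannot satisfy $q(t)\ge 1$.

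The paper sidesteps both issues by not factoring into $[0,1]\times\sph$ at all. It takes a single even one-variable Jackson-type polynomial $S_n$ and forms the symmetrized combination
\[
  S_n\!\Big(\sqrt{\tfrac{\la x,y\ra+ts}{2}}+\sqrt{1-t}\sqrt{1-s}\Big)
  + S_n\!\Big(\sqrt{\tfrac{\la x,y\ra+ts}{2}}-\sqrt{1-t}\sqrt{1-s}\Big),
\]
divided by $q_n(t)=1+S_n(2t-1)$. Because $(\sqrt a+\sqrt b)^{2k}+(\sqrt a-\sqrt b)^{2k}$ is a polynomial in $a,b$, the numerator is a genuine polynomial in $(y,s)$ and in $(x,t)$; the first summand is exactly $S_n(\cos\sd_{\VV_0})$, which gives both the lower bound on the cap and the decay $(1+n\,\sd_{\VV_0})^{-\k}$ directly, while the second summand is nonnegative and obeys the same upper bound. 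The denominator satisfies $1\le q_n(t)\le 2$. No spherical factor appears, so the apex causes no difficulty.
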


\begin{proof}
Let $r$ be a positive integer such that $\k \le 2r$. For positive integer $n$, let $m = \lfloor \frac{n}{r} \rfloor +1$ 
and define 
$$
  S_n(\cos \t) = \left( \frac{\sin (m+\f12)\f\t 2} {(m+\f12) \sin \frac{\t}2} \right)^{2r}, \qquad 0 \le \t \le \pi.
$$
By considering $m \t \ge 1$ and $m \t \le 1$ separately if necessary, it follows readily that 
\begin{equation} \label{eq:fastSn}
  S_n(1) = 1, \qquad 0 \le S_n(\cos \t) \le c \big(1+ n \t\big)^{-2r}, \quad 0 \le \t \le \pi.
\end{equation}
Moreover, $S_n(z)$ is an even algebraic polynomial of degree at most $2 n$. For a fixed $(x,t) \in \VV_0^{d+1}$, 
we define 
$$
T_{(x,t)}(y,s) := \frac{S_n\Big(\sqrt{ \frac{\la x,y\ra + t s}{2}}+ \sqrt{1-t}\sqrt{1-s}\Big) + 
S_n\Big(\sqrt{ \frac{\la x,y\ra + t s}{2}}- \sqrt{1-t}\sqrt{1-s}\Big)}{1 + S_n(2 t-1)}. 
$$
As an even polynomial, $S_n(z)$ is a sum of even monomials. Since, by binomial formula, 
$(\sqrt{a} + \sqrt{b})^{2k} + (\sqrt{a} - \sqrt{b})^{2k}$ is a polynomial of degree $k$ in $a$ and $b$, it 
follows that $T_{(x,t)}$ is indeed a polynomial in $(y,s)$ of degree at most $n$. Moreover, it satisfies 
$T_{(x,t)} (x,t) = 1$ since $\la x,x\ra = t^2$. Furthermore, using $\f{2}{\pi} \t \le \sin \t \le \t$ for $0 \le \t \le \pi/2$, 
we see that $S_n$ satisfies 
$$
    S_n (\cos \t) \ge \left(\frac{2}{\pi}\right)^{2r}, \qquad \hbox{if \, $0 \le \t \le \frac{2\pi}{2m+1}$}.  
$$
Hence, since $0 \le S_n(2t-1) \le 1$, it follows that 
$$
T_{(x,t)}(y,s)  \ge \frac{S_n\Big(\sqrt{ \frac{\la x,y\ra + t s}{2}}+ \sqrt{1-t}\sqrt{1-s}\Big)}{1 + S_n(2 t-1)} 
    \ge \frac12 S_n(\cos \sd_{\VV_0}((x,t),(y,s))) \ge  \f12 \left(\frac{2}{\pi}\right)^{2r}
$$
for $(y,s) \in \sc((x,t), \frac{2\pi}{2m+1})$. Finally, we can use the fast decay \eqref{eq:fastSn} of $S_n$ 
with $\t \sim \sin\f{\t}{2} =  \sqrt{\f12 (1-\cos\t)}$ to derive an upper bound of $T_{(x,t)}$. Indeed, since evidently 
$
1- \sqrt{\frac{\la x,y\ra + t s}{2}} + \sqrt{1-t}\sqrt{1-s} \ge  1- \sqrt{\frac{\la x,y\ra + t s}{2}} - \sqrt{1-t}\sqrt{1-s},
$
we obtain then
\begin{align*}
 0 \le T_{(x,t)}(y,s) \, & \le c \frac{1}{\left(1+ n \sqrt{1 - \sqrt{ \frac{\la x,y\ra + t s}{2}} - \sqrt{1-t}\sqrt{1-s}}\right)^{2r}}\\
    & = c \frac{1}{\left(1+ n \sqrt{1 -\cos \sd_{\VV_0}((x,t),(y,s))}\right)^{2r}}.
\end{align*}
Hence, by the definition of $\sd_{\VV_0}$ and $1 - \cos \t \sim \t^2$, we obtain
\begin{equation}\label{eq:fast-decayV0}
 0 \le T_{(x,t)}(y,s) \le c \frac{1}{\left(1+ n \sd_{\VV_0} ( (x,t),(y, s))\right)^{2r}}.
\end{equation}
This completes the proof of (1). Finally, $q_n(t) = 1 + S_n(2t-1)$ is a polynomial of degree $n$ and it satisfies
$1 \le q_n(t) \le c$ for all $t \in [0,1]$. Moreover, $q_n(t)T_{x,t}$ is a polynomial of degree $2n$ in $(x,t)$ 
variable. This completes the proof. 
\end{proof}

From Propositions \ref{prop:ChristF1} and \ref{prop:ChristF2}, we have
established the following result. 

\begin{cor} \label{cor:ChristFV0}
Let $\sw$ be a doubling weight function on $\VV_0^{d+1}$. Then 
\begin{equation*}  
   \l_n \big(\sw; (x,t) \big)  \le c \, \sw\left(\sc\left((x,t), \tfrac1n \right) \right).
\end{equation*}
Moreover, for $\g \ge -\f12$, 
$$
\l_n \big(\sw_{-1,\g}; (x,t) \big)  \ge c \, \sw_{-1,\g} \left(\sc\left((x,t), \tfrac1n \right) \right) 
   = c n^{-d} \sw_{-1,\g}(n; t).
$$
\end{cor}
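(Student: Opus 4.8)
The plan is to combine the upper bound from Proposition~\ref{prop:ChristF1} with the lower bound from Proposition~\ref{prop:ChristF2}, having already verified the two relevant assertions in the preceding subsections. For the upper bound, the input needed is Assertion~4, which was just established for the conic surface in Lemma~\ref{lem:A4}: for each $(x,t)\in\VV_0^{d+1}$ there is a fast-decaying polynomial $T_{x,t}$ of degree at most $n$ with $T_{x,t}(x,t)=1$, $T_{x,t}\ge c$ on the cap $\sc((x,t),\tfrac{\delta}{n})$, the polynomial decay estimate \eqref{eq:fast-decayV0}, and a companion polynomial $q_n(t)$ with $1\le q_n(t)\le c$ such that $q_n(t)T_{x,t}(y,s)$ is a polynomial of degree $2n$ in the $(x,t)$ variables. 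This is exactly the hypothesis of Proposition~\ref{prop:ChristF1}, so applying it with $\sw$ an arbitrary doubling weight on $\VV_0^{d+1}$ — and recalling from Proposition~\ref{prop:capV0} (more precisely Corollary to it, and the discussion around \eqref{eq:w_bg}) that balls on the conic surface are the caps $\sc((x,t),r)$ and that the Lebesgue surface measure $\d\s$ is itself a doubling measure on $(\VV_0^{d+1},\sd_{\VV_0})$ with $\min_{(x,t)}\s(\sc((x,t),\ve))>0$ — yields $\l_n(\sw;(x,t))\le c\,\sw(\sc((x,t),\tfrac1n))$ directly.

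For the lower bound, the input is Assertion~1 for the weight $\sw_{-1,\g}$, which was established in Theorem~\ref{thm:kernelV0} together with the identification $\sw_{-1,\g}(\sc((x,t),n^{-1}))=n^{-d}\sw_{\g,d}(n;t)$ coming from \eqref{eq:w_bg} (specialized to $\b=-1$, where $\b+\tfrac d2=\tfrac{d-2}{2}$, matching \eqref{eq:w(n;t)}). Since $\sw_{-1,\g}$ satisfies Assertion~1, Proposition~\ref{prop:ChristF2} applies and gives $\l_n(\sw_{-1,\g};(x,t))\ge c\,\sw_{-1,\g}(\sc((x,t),\tfrac1n))$, which is the claimed $c\,n^{-d}\sw_{-1,\g}(n;t)$ after inserting the volume formula. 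I would present the argument essentially as two one-line invocations, being careful to note that the first statement requires only the doubling property of $\sw$ plus Assertion~4 (so it holds for all doubling weights), whereas the second requires Assertion~1 and hence is restricted to $\sw_{-1,\g}$ with $\g\ge-\tfrac12$, the range for which Theorem~\ref{thm:kernelV0} was proved.

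There is really no substantial obstacle at this stage: all the genuine work — constructing the intrinsic distance $\sd_{\VV_0}$, proving the doubling estimate \eqref{eq:capV0}, establishing the highly localized kernel bounds of Theorems~\ref{thm:kernelV0} and \ref{thm:L-LkernelV0}, and building the fast-decaying polynomials in Lemma~\ref{lem:A4} — has already been carried out in this and the previous sections. The only point deserving a word of care is purely bookkeeping: checking that the abstract hypotheses of Section~2 ("$\Omega$ compact, $\sm$ doubling, $\min_x\sm(B(x,\ve))\ge c_\ve>0$") are genuinely met by $(\VV_0^{d+1},\d\s,\sd_{\VV_0})$, which follows from Proposition~\ref{prop:capV0} applied with $\b=0,\g=0$, and that the ball $B((x,t),r)$ of Section~2 coincides with the cap $\sc((x,t),r)$. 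Once that is noted, the corollary is immediate from Propositions~\ref{prop:ChristF1} and \ref{prop:ChristF2}.
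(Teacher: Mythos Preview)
Your proposal is correct and follows exactly the paper's approach: the paper simply states that the result follows from Propositions~\ref{prop:ChristF1} and~\ref{prop:ChristF2}, and you have correctly identified that the hypotheses of these propositions are furnished by Lemma~\ref{lem:A4} (Assertion~4) for the upper bound and Theorem~\ref{thm:kernelV0} (Assertion~1) for the lower bound, together with the cap-volume identification from \eqref{eq:w_bg}. Your additional remarks verifying that the abstract hypotheses of Section~\ref{sec:HomogSpace} are met on $\VV_0^{d+1}$ are accurate and appropriate bookkeeping.
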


This implies, in particular, that the cubature rule in Theorem \ref{thm:cubature} holds for all doubling weights 
on the conic surface. 

\begin{thm}\label{thm:cubatureV0}
Let $\sw$ be a doubling weight on $\VV_0^{d+1}$. Let $\Xi$ be a maximum $\frac{\delta}{n}$-separated 
subset of $\VV_0^{d+1}$. There is a $\delta_0 > 0$ such that for $0 < \delta < \delta_0$ there exist positive 
numbers $\l_{z,r}$, $(z,r) \in \Xi$, so that 
\begin{equation}\label{eq:CFV0}
    \int_{\VV_0^{d+1}} f(x,t) \sw(x,t)  \d \s(x,t) = \sum_{(z,r) \in \Xi }\l_{z,r} f(z,r), \qquad 
            \forall f \in \Pi_n(\VV_0^{d+1}). 
\end{equation}
Moreover, $\l_{z,r} \sim \sw\!\left(\sc((z,r), \tfrac{\delta}{n})\right)$ for all $(z,r) \in \Xi$. 
\end{thm}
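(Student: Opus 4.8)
The plan is to obtain Theorem~\ref{thm:cubatureV0} as a direct instance of the general result, Theorem~\ref{thm:cubature}, now that all of its hypotheses have been verified for the conic surface. The essential ingredients have been assembled in the preceding subsections: the space $(\VV_0^{d+1},\sw_{-1,\g},\sd_{\VV_0})$ is a localizable homogeneous space (so highly localized kernels exist for the weight $\varpi=\sw_{-1,\g}$), every Jacobi weight $\sw_{\b,\g}$ on the conic surface is a doubling weight by Proposition~\ref{prop:capV0}, a maximal $\frac{\delta}{n}$-separated set $\Xi=\Xi_{\VV_0}$ together with an admissible partition has been constructed in Proposition~\ref{prop:subsetV0}, and Assertion~4 holds for every doubling weight on $\VV_0^{d+1}$ by Lemma~\ref{lem:A4}.

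First I would check that the abstract machinery applies verbatim. The role of $(\Omega,\d\sm)$ is played by $(\VV_0^{d+1},\d\s)$, the surface measure, which is a doubling measure with respect to $\sd_{\VV_0}$ (this is the content of Proposition~\ref{prop:capV0} with $\b=0$), and balls $B(x,r)$ become the conic caps $\sc((x,t),r)$. The weight $\sw$ in the statement is an arbitrary doubling weight on $\VV_0^{d+1}$ — note that it need not be one of the $\sw_{\b,\g}$, and need not be a function of $t$ alone; all that is needed is the doubling inequality for $\sc$-caps. Since $(\VV_0^{d+1},\sw_{-1,\g},\sd_{\VV_0})$ is localizable, the hypothesis of Theorem~\ref{thm:cubature} that ``$\Omega$ admits a localizable homogeneous space'' is met with $\varpi=\sw_{-1,\g}$, and the localized kernel $\sL_n(\sw_{-1,\g};\cdot,\cdot)$ satisfies Assertions~1--3 by Theorem~\ref{thm:kernelV0}, Theorem~\ref{thm:L-LkernelV0}, and Proposition~\ref{prop:intLn}. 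Invoking Theorem~\ref{thm:cubature} then yields a $\delta_0>0$ such that for $0<\delta<\delta_0$ there are positive weights $\l_{z,r}$, $(z,r)\in\Xi_{\VV_0}$, for which the cubature identity \eqref{eq:CFV0} holds on $\Pi_n(\VV_0^{d+1})$, together with the lower bound $\l_{z,r}\ge c_1\,\sw(\sc((z,r),\tfrac{\delta}{n}))$.

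For the matching upper bound $\l_{z,r}\le c\,\sw(\sc((z,r),\tfrac{\delta}{n}))$, I would again quote the corresponding conclusion of Theorem~\ref{thm:cubature}, which gives $\l_z\sim\sw(B(z,\tfrac{\delta}{n}))$ under Assertion~4; the proof there reduces the upper bound to Proposition~\ref{prop:ChristF1}, i.e.\ to the upper estimate $\l_n(\sw;x)\le c\,\sw(B(x,\tfrac1n))$ on the Christoffel function, which in turn rests on Assertion~4. On the conic surface Assertion~4 is precisely Lemma~\ref{lem:A4}, so Proposition~\ref{prop:ChristF1} applies and gives $\l_n(\sw;(x,t))\le c\,\sw(\sc((x,t),\tfrac1n))$ — this is also recorded as Corollary~\ref{cor:ChristFV0}. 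Combining the two bounds yields $\l_{z,r}\sim\sw(\sc((z,r),\tfrac{\delta}{n}))$, completing the proof.

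There is essentially no hard calculation left; the only point requiring a little care is bookkeeping, namely making sure that the separated set produced by Proposition~\ref{prop:subsetV0} is genuinely ``maximal'' in the sense of Definition~\ref{defn:separated-pts}(c) — the inclusion \eqref{eq:incluV0cap} of Proposition~\ref{prop:subsetV0} together with the doubling property of $\sw$ and $\d\s$ guarantees the finite-overlap bound \eqref{eq:def-pts2} that Theorem~\ref{thm:cubature} uses. So the main (and really only) obstacle has already been overcome in the earlier subsections, namely the verification of Assertions~1--4 for $\VV_0^{d+1}$; the present theorem is then a clean corollary, and the write-up can be correspondingly short.
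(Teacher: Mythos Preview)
Your proposal is correct and matches the paper's approach exactly: the paper also derives this theorem as a direct instance of the general cubature result Theorem~\ref{thm:cubature}, after verifying that $(\VV_0^{d+1},\sw_{-1,\g},\sd_{\VV_0})$ is a localizable homogeneous space and that Assertion~4 holds via Lemma~\ref{lem:A4} (with the Christoffel function bound recorded in Corollary~\ref{cor:ChristFV0}). In fact the paper gives no separate proof at all, simply stating the theorem after the remark ``This implies, in particular, that the cubature rule in Theorem~\ref{thm:cubature} holds for all doubling weights on the conic surface.''
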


We can now construct localized frame on the conic surface. Let us recapitulate necessary definitions in
the setting of $\VV_0^{d+1}$.  
For $j =0,1,\ldots,$ let $\Xi_j$ be a maximal $ \frac \delta {2^{j}}$-separated 
subset in $\VV_0^{d+1}$, so that 
\begin{equation*}
  \int_{\VV_0^{d+1}} f(x,t) \sw(x,t)  \d \s(x,t) = \sum_{(z,r) \in \Xi_j} \l_{(x,r),j} f(z,r), 
     \qquad f \in \Pi_{2^j} (\VV_0^{d+1}).  
\end{equation*}
Let $\wh a$ be an admissible cut-off function satisfying \eqref{eq:a-frame}. Let $\sL_{2^j}(\sw)*f$ denote the 
near best approximation operator defined by 
$$
   \sL_{2^j}(\sw) * f (x): = \int_{\VV_0^{d+1}}f(y,s) \sL_{2^j}(\sw; (x,t),(y,s)) \sw(y,s)  \d \s(y,s).
$$
For $j=1,\ldots,$ we introduce the notation $F_j(\sw) = \sL_{2^j}(\sw)$ for both the kernel and the operator. 
More precisely, $F_0(\sw; \cdot,\cdot) =1$ and for $j \ge 1$,
$$
  F_j (\sw; \cdot,\cdot) =  \sL_{2^{j-1}}(\sw; \cdot,\cdot) \quad\hbox{and}\quad  F_j(\sw) * f = F_j (\sw) * f
$$
and define the frame elements $\psi_{(z,r),j}$ for $(z,r) \in \Xi_j$ by 
$$ 
      \psi_{(z,r),j}(x,t):= \sqrt{\l_{(z,r),j}} F_j(\sw; (x,t), (z,r)), \qquad (x,t) \in \VV_0^{d+1}, 
$$ 
which are well defined for all doubling weight by Theorem \ref{thm:cubatureV0}. By Theorem \ref{thm:frame},
the system $\Phi =\{ \psi_{(z,r),j}: (z,r) \in \Xi_j, \, j =1,2,3,\ldots\}$ is a tight frame of $L^2(\VV_0^{d+1}, \sw)$.

\begin{thm}\label{thm:frameV0}
Let $\sw$ be a doubling weight on $\VV_0^{d+1}$. If $f\in L^2(\VV_0^{d+1}, \sw)$, then
$$ 
   f =\sum_{j=0}^\infty \sum_{(z,r) \in\Xi_j}
            \langle f, \psi_{(z,r), j} \rangle_\sw \psi_{(z,r),j}  \qquad\mbox{in $L^2(\VV_0^{d+1}, \sw)$}
$$  
and
$$ 
\|f\|_{2, \sw}  = \Big(\sum_{j=0}^\infty \sum_{(z,r) \in \Xi_j} |\langle f, \psi_{(z,r),j} \rangle_\sw|^2\Big)^{1/2}.
$$ 
\end{thm}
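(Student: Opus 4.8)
The plan is to reduce Theorem~\ref{thm:frameV0} to the abstract frame result Theorem~\ref{thm:frame} proved in Section~\ref{sec:HomogSpace}, by checking that the hypotheses of that theorem are met on the conic surface $\VV_0^{d+1}$. Two ingredients are needed: first, that $\VV_0^{d+1}$ admits a localizable homogeneous space, and second, that the given doubling weight $\sw$ satisfies Assertion~4 so that the positive cubature rules in Theorem~\ref{thm:cubature} hold with coefficients $\l_{(z,r),j}\sim \sw(\sc((z,r),2^{-j}))$. The first point is exactly the Corollary preceding this subsection, namely that $(\VV_0^{d+1},\sw_{-1,\g},\sd_{\VV_0})$ is localizable for $\g\ge-\f12$ (so the weight admitting highly localized kernels is $\varpi=\sw_{-1,\g}$); the second point is Lemma~\ref{lem:A4}, which constructs the fast decaying polynomials $T_{x,t}$ and the accompanying $q_n$, verifying Assertion~4 for $\Omega=\VV_0^{d+1}$. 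Consequently Theorem~\ref{thm:cubatureV0} gives, for each $j$, a positive cubature rule exact on $\Pi_{2^j}(\VV_0^{d+1})$ with $\l_{(z,r),j}\sim\sw(\sc((z,r),\tfrac{\delta}{2^j}))$, which is precisely \eqref{eq:cuba-frame} with the quantization needed in the proof of Theorem~\ref{thm:frame}.

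Next I would set up the frame machinery verbatim as in Subsection on tight polynomial frames: fix an admissible cut-off $\wh a$ of type (b) satisfying \eqref{eq:a-frame}, so that \eqref{eq:a3} holds; define $F_0(\sw;\cdot,\cdot)=1$ and $F_j(\sw;\cdot,\cdot)=\sL_{2^{j-1}}(\sw;\cdot,\cdot)$ for $j\ge1$, together with the operators $F_j(\sw)*f$; and set the needlets $\psi_{(z,r),j}(x,t)=\sqrt{\l_{(z,r),j}}\,F_j(\sw;(x,t),(z,r))$ for $(z,r)\in\Xi_j$. The key algebraic identities are, first, the Calder\'on-type decomposition $f=\sum_{j=0}^\infty F_j(\sw)*F_j(\sw)*f$ in $L^2(\VV_0^{d+1},\sw)$, which follows from $F_j*F_j*f=\sum_{k}|\wh a(k/2^{j-1})|^2\proj_k(\sw;f)$ (orthogonality of the reproducing kernels $\sP_k(\sw;\cdot,\cdot)$) and \eqref{eq:a3}; and second, the discretization $F_j(\sw)*F_j(\sw)((x,t),(y,s))=\sum_{(z,r)\in\Xi_j}\psi_{(z,r),j}(x,t)\psi_{(z,r),j}(y,s)$, obtained by applying the cubature rule \eqref{eq:CFV0} to the polynomial $(z,r)\mapsto F_j(\sw;(x,t),(z,r))F_j(\sw;(z,r),(y,s))$ of degree $2^j$ in the $(z,r)$ variables. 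Combining these two identities yields \eqref{eq:f=frame} on $\VV_0^{d+1}$, and testing against $S_Rf=\sum_{j=0}^R\sum_{(z,r)\in\Xi_j}\langle f,\psi_{(z,r),j}\rangle_\sw\psi_{(z,r),j}$ gives $\langle f,S_Rf\rangle_\sw=\sum_{j=0}^R\sum_{(z,r)\in\Xi_j}|\langle f,\psi_{(z,r),j}\rangle_\sw|^2$; letting $R\to\infty$ produces the tight frame identity for $\|f\|_{2,\sw}$.

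Since all of this is literally the content of Theorem~\ref{thm:frame} once the abstract hypotheses are confirmed, the proof is short: it suffices to invoke Theorem~\ref{thm:frame} with $\Omega=\VV_0^{d+1}$, $\varpi=\sw_{-1,\g}$, $\sd=\sd_{\VV_0}$, and the given doubling weight $\sw$. The only genuine points to write out are the two structural facts being cited—that $\VV_0^{d+1}$ carries a localizable homogeneous space and that every doubling weight on it satisfies Assertion~4—both of which have already been established in this section. The main obstacle, and the place where the real work lies, is therefore not in this theorem at all but upstream: it is the verification of Assertions~1–3 for $\sw_{-1,\g}$ (Theorems~\ref{thm:kernelV0} and \ref{thm:L-LkernelV0} and Proposition~\ref{prop:intLn}) and of Assertion~4 (Lemma~\ref{lem:A4}); given those, Theorem~\ref{thm:frameV0} is an immediate corollary. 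I would keep the proof to a sentence or two, noting explicitly that the frame elements are well defined for \emph{all} doubling weights $\sw$ because the cubature rule in Theorem~\ref{thm:cubatureV0} holds for all such $\sw$, even though the localization estimate \eqref{est.needl} for the needlets only sharpens when $\sw=\sw_{-1,\g}$.
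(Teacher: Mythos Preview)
Your proposal is correct and matches the paper's approach exactly: the paper states Theorem~\ref{thm:frameV0} without a separate proof, simply noting immediately before it that ``By Theorem~\ref{thm:frame}, the system $\Phi$ is a tight frame of $L^2(\VV_0^{d+1}, \sw)$,'' having already verified the localizable homogeneous space structure and Assertion~4 via Lemma~\ref{lem:A4} and Theorem~\ref{thm:cubatureV0}. Your identification of the upstream work (Assertions~1--4) as the real content, and your remark that the frame elements are well defined for all doubling weights by Theorem~\ref{thm:cubatureV0}, are precisely what the paper records.
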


For the weight function $\sw_{-1,\g}$, the frame elements are highly localized. 

\begin{thm}\label{thm:frameV02}
For $\g \ge -\f12$, the frame for $\sw_{-1,\g}$ is highly localized in the sense that, for every $\k >0$,
there exists a constant $c_\k >0$ such that 
\begin{equation} \label{eq:needleV0}
   |\psi_{(z,r),j}(x,t)| \le c_\s \frac{2^{j d/2}}{\sqrt{ \sw_{\g,d}(2^{j}; t)} \left(1+ 2^j \sd_{\VV_0}((x,t),(z,r))\right)^\k}, 
     \quad (x,t)\in \VV_0^{d+1}.
\end{equation}
\end{thm}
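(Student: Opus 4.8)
The plan is to combine the definition of the frame element $\psi_{(z,r),j}$ with the highly localized kernel estimate of Theorem \ref{thm:kernelV0} and the quantization of the cubature weights. Recall that, for $j \ge 1$,
$$
\psi_{(z,r),j}(x,t) = \sqrt{\l_{(z,r),j}}\, F_j(\sw_{-1,\g}; (x,t),(z,r)) = \sqrt{\l_{(z,r),j}}\, \sL_{2^{j-1}}(\sw_{-1,\g}; (x,t),(z,r)).
$$
First I would invoke Theorem \ref{thm:cubatureV0} together with Corollary \ref{cor:ChristFV0}, which give, for the doubling weight $\sw_{-1,\g}$ with $\g \ge -\f12$, the two-sided estimate
$$
\l_{(z,r),j} \sim \sw_{-1,\g}\!\left(\sc\big((z,r), 2^{-j}\big)\right) = 2^{-jd}\, \sw_{\g,d}(2^j; r),
$$
using the identity \eqref{eq:w(n;t)} and Proposition \ref{prop:capV0} with $\b = -1$. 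In particular $\sqrt{\l_{(z,r),j}} \le c\, 2^{-jd/2} \sqrt{\sw_{\g,d}(2^j; r)}$.

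Next I would apply Theorem \ref{thm:kernelV0} to the kernel $\sL_{2^{j-1}}(\sw_{-1,\g}; (x,t),(z,r))$: for every $\k > 0$,
$$
\left|\sL_{2^{j-1}}(\sw_{-1,\g}; (x,t),(z,r))\right| \le \frac{c_\k\, 2^{jd}}{\sqrt{\sw_{\g,d}(2^j; t)}\sqrt{\sw_{\g,d}(2^j; r)}\,\big(1+2^j \sd_{\VV_0}((x,t),(z,r))\big)^\k},
$$
where I have used that $\sw_{\g,d}(2^{j-1};\cdot) \sim \sw_{\g,d}(2^j;\cdot)$ and $1+2^{j-1}\sd_{\VV_0} \sim 1+2^j\sd_{\VV_0}$ up to fixed constants, absorbed into $c_\k$. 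Multiplying the two bounds, the factor $\sqrt{\sw_{\g,d}(2^j; r)}$ cancels against $1/\sqrt{\sw_{\g,d}(2^j; r)}$ and the powers $2^{-jd/2} \cdot 2^{jd} = 2^{jd/2}$ combine, yielding exactly
$$
|\psi_{(z,r),j}(x,t)| \le c_\k \frac{2^{jd/2}}{\sqrt{\sw_{\g,d}(2^j; t)}\,\big(1+2^j \sd_{\VV_0}((x,t),(z,r))\big)^\k},
$$
which is \eqref{eq:needleV0}. For $j = 0$ the statement is trivial since $\psi_{(z,r),0} \equiv \sqrt{\l_{(z,r),0}}$ is a bounded constant and $\sw_{\g,d}(1;t) \sim 1$.

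Since every ingredient is already available — Theorem \ref{thm:kernelV0} for Assertion 1 on $\VV_0^{d+1}$, Corollary \ref{cor:ChristFV0} and Theorem \ref{thm:cubatureV0} for the sharp quantization of $\l_{(z,r),j}$, and Proposition \ref{prop:capV0} to translate cap measures into the explicit function $\sw_{\g,d}(n;\cdot)$ — the proof is essentially a bookkeeping argument. The only point requiring mild care is the replacement of the scale $2^{j-1}$ by $2^j$ in all three places (the weight function, the distance factor, and the cubature cap), which is harmless by the doubling property of $\sw_{-1,\g}$ and by Lemma \ref{lem:doublingLem}. I do not expect any genuine obstacle here; the substance of the result lies entirely in the already-established kernel estimates and cubature rules.
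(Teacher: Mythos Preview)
Your proposal is correct and follows essentially the same approach as the paper's own proof, which simply invokes the kernel estimate of Theorem~\ref{thm:kernelV0} together with the quantization $\l_{(z,r),j} \sim 2^{-jd}\sw_{\g,d}(2^j;r)$ from Corollary~\ref{cor:ChristFV0}. Your version is in fact more detailed than the paper's, spelling out the scale change $2^{j-1} \to 2^j$ and the trivial $j=0$ case explicitly.
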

 
\begin{proof}
The inequality \eqref{eq:needleV0} follows from the highly localized estimate of the kernel in
Theorem \ref{thm:kernelV0} and $\l_{(z,r),j} \sim 2^{- jd} \sw_{\g}(2^j;r)$ which holds for  $\sw_{-1,\g}$ 
according to Corollary \ref{cor:ChristFV0}. 
\end{proof}

It is worth pointing out that the localization of the frame elements is established for $\sw_{-1,\g}(t) = t^{-1}(1-t)^{\g}$ but 
not for the Lebesgue measure on $\VV_0^{d+1}$. 
 
\subsection{Characterization of best approximation} 

For $f\in L_p(\VV_0^{d+1}, \sw)$, $1 \le p < \infty$, and $f\in C(\VV_0^{d+1})$ if $p=\infty$, the error of best 
approximation by polynomials of degree at most $n$ is defined by 
$$
   \sE_n(f)_{p,\sw} = \inf_{g \in \Pi_n(\VV_0^{d+1})} \|f - g \|_{p,\sw}, \quad 1 \le p \le \infty. 
$$
Following the study in Section 3, we can give a characterization for this quantity by the modulus of 
smoothness defined via the operator $\sS_{\t, \sw}$ and the $K$-functional defined via the differential
operator $\Delta_{0,\g}$ for $\sw_{-1,\g}$. 

More specifically, for $f\in L^p(\VV_0^{d+1}, \sw_{-1,\g})$ and $r > 0$, the modulus of smoothness 
is defined by
$$
  \o_r(f; \rho)_{p,\sw_{-1,\g}} = \sup_{0 \le \t \le \rho} 
     \left\| \left(I - \sS_{\t,\sw_{-1,\g}}\right)^{r/2} f\right\|_{p,\sw_{-1,\g}}, \quad 1 \le p \le \infty, 
$$
where the operator $\sS_{\t,\sw_{-1,\g}}$ is defined by, for $n = 0,1,2,\ldots$ and $\l = \g+d-1$, 
$$
 \proj_n(\sw_{-1,\g}; \sS_{\t,\sw_{-1,\g}}f) = R_n^{(\l-\f12, -\f12)} (\cos \t) \proj_n(\sw_{-1,\g}; f).
$$
Moreover, in terms of the fractional differential operator $(-\Delta_{0,\g})^{\f r 2}$, the $K$-functional
is defined for a weight $\sw$ by 
$$
   \sK_r(f, t)_{p,\sw} : = \inf_{g \in W_p^r(\VV_0^{d+1}, \sw)}
      \left \{ \|f-g\|_{p,\sw} + t^r\left\|(-\Delta_{0,\g})^{\f r 2}f \right\|_{p,\sw} \right \}.
$$
Both these quantities are well defined as shown in Section 3. Since Assertions 1--4 hold for
$\sw_{-1,\g}$, we only have to verify that the Assertion 5 in Subsection \ref{set:Bernstein} holds 
for the differential operator $\fD_\varpi = \Delta_{0,\g}$. By Theorem \ref{thm:Delta0V0}, 
the kernel $L_n^{(r)}(\varpi)$ in Assertion 5 becomes
$$
    \sL_n^{(r)}\big(\sw_{-1,\g}; (x,t),(y,s)\big) =\sum_{k=0}^\infty \wh a\left(\frac{k}{n} \right) (k(k+\g+d-1))^{\f r 2} 
       \sP_k\big(\sw_{-1,\g}; (x,t),(y,s)\big).
$$

\begin{lem}
Let $\g \ge -\f12$ and $\k > 0$. Then, for $r > 0$ and $(x,t), (y,s) \in \VV_0^{d+1}$, 
$$
 |  \sL_n^{(r)}\big(\sw_{-1,\g}; (x,t),(y,s)\big)| \le c_\k   \frac{n^{d+r}}{\sqrt{ \sw_{\g,d} (n; t) }\sqrt{ \sw_{\g,d} (n; s) }}
\big(1 + n \sd_{\VV_0}( (x,t), (y,s)) \big)^{-\k}.
$$
\end{lem}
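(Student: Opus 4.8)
The plan is to mimic the proof of Theorem~\ref{thm:kernelV0}, replacing the estimate \eqref{eq:Ln(t,1)} for $|L_n^{(\a,\b)}(t)|$ with the estimate \eqref{eq:DLn(t,1)} for the derivatives of $L_n^{(\a,\b)}(t)$, since the extra factor $[\mu(k)]^{r/2} = (k(k+\g+d-1))^{r/2}$ is (up to the action of the differential operator $\Delta_{0,\g}$) precisely what one obtains by applying fractional powers of a second-order operator. Concretely, first I would recall from Theorem~\ref{thm:Delta0V0} and the addition formula \eqref{eq:sfPbCone} that
$$
 \sL_n^{(r)}\big(\sw_{-1,\g}; (x,t),(y,s)\big) = \big(-\Delta_{0,\g}^{(x,t)}\big)^{r/2}\sL_n\big(\sw_{-1,\g}; (x,t),(y,s)\big),
$$
where $-\Delta_{0,\g}$ acts on the $(x,t)$ variable, so that via \eqref{eq:Ln-intV0} the kernel is represented as an integral over $[-1,1]^2$ of $\big(-\Delta_{0,\g}\big)^{r/2}$ applied to $L_n^{(\l-\f12,-\f12)}\big(2\zeta(x,t,y,s;v)^2-1\big)$ with $\l = \g+d-1$. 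The point is that the Jacobi kernel $L_n^{(\a,\b)}$ already incorporates the cut-off $\wh a$, and the eigenvalue weight $k(k+\g+d-1)$ matches the Jacobi eigenvalue up to a bounded shift, so one can absorb $[\mu(k)]^{r/2}$ into a modified one-dimensional kernel whose decay is governed by \eqref{eq:DLn(t,1)}: morally $\big|\tfrac{d^m}{du^m}L_n^{(\l-\f12,-\f12)}(u)\big| \le c\, n^{2\l+2m+1}(1+n\sqrt{1-u})^{-\ell}$ for any $\ell$.

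Next I would carry out the chain-rule bookkeeping. Writing $u = 2\zeta^2-1$ with $\zeta = \zeta(x,t,y,s;v)$, the operator $\Delta_{0,\g}$ in $(x,t)$ applied to a function of $\zeta$ produces, by the chain rule, terms of the form $(\partial_\zeta^j \Phi)(u)\cdot(\text{bounded functions of }x,t,s,v)$ for $j \le 2\cdot\tfrac r2 = r$ (interpreting $r$ as an integer first, then handling general $r>0$ via the spectral definition exactly as in Theorem~\ref{thm:f-LnfD}'s summation-by-parts argument). Each $\zeta$-derivative of $L_n^{(\l-\f12,-\f12)}(2\zeta^2-1)$ costs a factor $n^2$ by \eqref{eq:DLn(t,1)}, so $r/2$ powers of $\Delta_{0,\g}$ cost $n^r$; the geometric coefficients arising from the chain rule (derivatives of $\zeta$ in $t$ and in the spherical variable $\xi$, divided by $t$ from the $t^{-1}\Delta_0^{(\xi)}$ term) are bounded because $\zeta$ is built from $\sqrt{(st+\la x,y\ra)/2}$ and $\sqrt{1-t}\sqrt{1-s}$ whose relevant derivatives are controlled — here the singular factor $t^{-1}$ from $\Delta_{0,\g}$ is exactly compensated since $\Delta_0^{(\xi)}$ acting on $\zeta$, a function of $\la\xi,\eta\ra$, brings down a factor comparable to $t$ (this is the conic analog of the fact that $\Delta_0$ on the sphere commutes nicely with zonal functions). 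After this reduction, the remaining integral over $[-1,1]^2$ is estimated exactly by Lemma~\ref{lem:kernelV0} (with $\b$ chosen large in terms of $\k$), yielding the claimed bound with $n^{d+r}$ in the numerator.

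The genuinely delicate step — and the one I expect to be the main obstacle — is verifying that applying $\Delta_{0,\g}$ in the $(x,t)$ variable to the composite function $\zeta \mapsto L_n^{(\l-\f12,-\f12)}(2\zeta^2-1)$ does not generate uncontrolled factors near the apex $t=0$, where the coefficient $t^{-1}$ of $\Delta_0^{(\xi)}$ blows up and where (by Remark~\ref{rem:distV0}) the intrinsic distance is incompatible with the Euclidean one. One must show that every such chain-rule term is still bounded by a constant times $n^{r}$ times the $\zeta$-derivatives, uniformly in $t\in[0,1]$; this requires writing $\Delta_{0,\g}$ applied to $\zeta^2$ and $\zeta$ explicitly and checking that the apparent $t^{-1}$ singularity cancels against the $\sqrt{t}$-type factors inside $\zeta$ (since $\la x,y\ra = ts\la\xi,\eta\ra$ carries two powers of the radial variables). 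Once that cancellation is established, the rest follows the template of Theorems~\ref{thm:kernelV0} and~\ref{thm:L-LkernelV0} verbatim, including the reduction from integer $r$ to arbitrary $r>0$ by a summation-by-parts argument on the coefficients $\wh a(k/n)[\mu(k)]^{r/2}$, using that $\wh a\in C^\infty$ and that $[\mu(k)]^{r/2}$ behaves like $k^r$ with all finite differences of the expected order.
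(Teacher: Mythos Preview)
Your main plan --- applying $(-\Delta_{0,\g})^{r/2}$ in the $(x,t)$ variables to the integral representation \eqref{eq:Ln-intV0} and tracking the chain-rule terms --- is far more involved than what the paper does, and the apex obstacle you identify is real rather than merely technical: applying $\Delta_{0,\g}$ once to $L_n^{(\l-\frac12,-\frac12)}(2\zeta^2-1)$ leaves an $F'(\zeta)$ term whose coefficient contains contributions of order $t^{-1/2}$ (from both $t(1-t)\partial_t^2\zeta$ and $t^{-1}\Delta_0^{(\xi)}\zeta$) that do \emph{not} cancel pointwise in $v$. Indeed, for fixed $v$ the integrand $L_n^{(\l-\frac12,-\frac12)}(2\zeta(x,t,y,s;v)^2-1)$ is not even a polynomial in $(x,t)$, since $\zeta^2$ contains the cross term $2v_1v_2\sqrt{(st+\la x,y\ra)/2}\,\sqrt{1-t}\sqrt{1-s}$; the cancellation that makes $\sL_n^{(r)}$ smooth at the apex occurs only after the $v$-integration. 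Extracting the sharp bound through this route, if feasible at all, would require making that integrated cancellation quantitative --- substantially more work than needed.

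The paper bypasses all of this and never differentiates in the geometric variables. It simply absorbs the eigenvalue factor into a modified one-dimensional cut-off
\[
\eta(u)=\wh a(u)\,\big(u(u+n^{-1}(\g+d-1))\big)^{r/2},
\]
so that $\sL_n^{(r)}$ is again an integral over $[-1,1]^2$ of a Jacobi-type kernel $L_{n,r}(2\zeta^2-1)$, now built from $\eta$ rather than from $\wh a$. Applying \eqref{eq:DLn(t,1)} with $m=0$ to this $\eta$ gives $|L_{n,r}(u)|\le c\,n^{r}\cdot n^{2\l+1}(1+n\sqrt{1-u})^{-\ell}$, after which the argument is literally the proof of Theorem~\ref{thm:kernelV0} fed through Lemma~\ref{lem:kernelV0}. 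This is exactly the mechanism you mention in your final sentence as a ``reduction from integer $r$ to arbitrary $r>0$'' --- but it is not a reduction step at all; it is the entire proof, works for every $r>0$ at once, and requires neither chain-rule bookkeeping nor any analysis near the apex.
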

\begin{proof}
By \eqref{eq:sfPbCone}, the kernel can be written as 
\begin{align*}
 \sL_n^{(r)}\big(\sw_{-1,\g};(x,t),(y,s)) =  b_{\g,d}  \int_{[-1,1]^2} & L_{n,r}\left(2 \zeta(x,t,y,s;v)^2-1\right) \\
    & \times (1-v_1^2)^{\f{d-2}{2} -1} (1-v_2^2)^{\g-\f12} \d v,
\end{align*}
in which $L_{n,r}$ is defined by, with $\l = \g+d -1$, 
$$
 L_{n,r}(t) = \sum_{k=0}^\infty \wh a\left(\frac{k}{n} \right) (k(k+\g+d-1))^{\f r 2} 
     \frac{P_n^{(\l-\f12, -\f12)}(1)P_n^{(\l-\f12, -\f12)}(t)}{h^{(\l-\f12,-\f12)}}.
$$
Applying \eqref{eq:DLn(t,1)} with $\eta(t) = \wh a(t) \left( t( t + n^{-1} (\g+d-1))\right)^{\f r 2}$ and $m=0$, 
it follows that 
$$
 \left| L_{n,r}(t) \right| \le c n^{r} \frac{n^{2\l+1}}{(1+n\sqrt{1-t})^\ell}.  
$$
Consequently, it follows readily that 
\begin{align*}
|\sL_n^{(r)}\big(\sw_{-1,\g};(x,t),(y,s))| \le  c n^{2\l+r+1}
    \int_{[-1,1]^2}& \frac{1}{ \left(1+n \sqrt{1-\zeta(x,t,y,s;v)^2}\right)^{\k+2\g+d+1}} \\
    & \times (1-v_1^2)^{\f{d-2}{2} -1} (1-v_2^2)^{\g-\f12} \d v.
\end{align*}
Apart from $n^r$, this estimate is the same as that of $L_n(\sw_{-1,\g})$ appeared in the proof of 
Theorem \ref{thm:kernelV0}, so that the desired upper bound follows from the estimate there.  
\end{proof}
 
With all assertions hold for the $\sw_{-1,\g}$, the characterization of the best approximation by polynomials 
in Subsection \ref{sec:bestapp} holds on the conic surface, which we state below. 

\begin{thm}
Let $f \in L^p(\VV_0^{d+1}, \sw)$ if $1 \le p < \infty$ and $f\in C(\VV_0^{d+1})$ if $p = \infty$. 
Le $r > 0$ and $n =1,2,\ldots$. For $\sw = \sw_{-1,\g}$ with $\g \ge -\f12$, there holds 
\begin{enumerate} [   (i)]
\item  direct estimate
$$
  \sE_n(f)_{p,\sw_{-1,\g}} \le c \sK_r (f;n^{-1})_{p,\sw_{-1,\g}};
$$
\item inverse estimate 
$$
   \sK_r(f;n^{-1})_{p,\sw_{-1,\g}} \le c n^{-r} \sum_{k=0}^n (k+1)^{r-1}\sE_k(f)_{p, \sw_{-1,\g}}.
$$
\end{enumerate}
\end{thm}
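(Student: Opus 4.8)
The plan is to deduce this theorem as a direct corollary of the general framework established in Section 3, specifically Theorem~\ref{thm:Enf-Kfunctional}, by verifying that the weight $\sw_{-1,\g}$ on $\VV_0^{d+1}$ with $\g\ge-\f12$ satisfies all of the hypotheses of that theorem, namely Assertions~1, 3 and 5 (together with the addition formula and the differential operator that are needed to even define $\sK_r$ and $\o_r$). First I would note that Assertions~1 and 3 have already been verified for $\sw_{-1,\g}$ earlier in this section: Assertion~1 is Theorem~\ref{thm:kernelV0} combined with the identity \eqref{eq:w_bg} that gives $\sw_{-1,\g}(\sc((x,t),n^{-1}))\sim n^{-d}\sw_{\g,d}(n;t)$, and Assertion~3 is the $p=1$ case of Lemma~\ref{lem:intLn}. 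The addition formula is Theorem~\ref{thm:sfPbCone2}, which puts $\sP_n(\sw_{-1,\g};\cdot,\cdot)$ in the form required by Definition~\ref{defn:additionF} with $Z_n^{(\a,\b)}=Z_{2n}^{\g+d-1}$, hence after the quadratic transform \eqref{eq:Jacobi-Gegen0} with Jacobi parameters $\a=\g+d-\tfrac32\ge\b=-\tfrac12$; and the differential operator is $\fD_\varpi=\Delta_{0,\g}$ of Theorem~\ref{thm:Delta0V0}, with eigenvalue $\mu(n)=n(n+\g+d-1)$, a nonnegative quadratic polynomial in $n$ as Definition~\ref{def:LBoperator} demands.

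The one remaining input is Assertion~5 for $\Delta_{0,\g}$, and this is exactly the content of the Lemma stated just before the theorem in the excerpt; so the second step is to invoke that Lemma, which bounds $|\sL_n^{(r)}(\sw_{-1,\g};(x,t),(y,s))|$ by $c_\k n^{d+r}/\big(\sqrt{\sw_{\g,d}(n;t)}\sqrt{\sw_{\g,d}(n;s)}(1+n\sd_{\VV_0})^{\k}\big)$, i.e. exactly Assertion~5 once one rewrites $\sqrt{\varpi(B((x,t),n^{-1}))}\sim n^{-d/2}\sqrt{\sw_{\g,d}(n;t)}$. With Assertions~1, 3, 5 and the two structural properties in hand, Theorem~\ref{thm:Enf-Kfunctional} applies verbatim with $\Omega=\VV_0^{d+1}$, $\varpi=\sw_{-1,\g}$, $\sd=\sd_{\VV_0}$, and $E_n(f)_{p,\varpi}=\sE_n(f)_{p,\sw_{-1,\g}}$, $K_r(f;t)_{p,\varpi}=\sK_r(f;t)_{p,\sw_{-1,\g}}$; part~(i) of that theorem gives the direct estimate and part~(ii) gives the inverse estimate, which is precisely the statement to be proved. (If desired one also records that Theorem~\ref{thm:K=omega} then yields the equivalence $\sK_r(f;\t)_{p,\sw_{-1,\g}}\sim\o_r(f;\t)_{p,\sw_{-1,\g}}$, so the characterization may equally be phrased through the modulus of smoothness.)

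Since essentially every ingredient has been assembled already, the proof is short and amounts to bookkeeping; I would write it as roughly: ``\emph{By Theorems~\ref{thm:kernelV0}, \ref{thm:sfPbCone2}, \ref{thm:Delta0V0}, Lemma~\ref{lem:intLn} and the Lemma above, the weight $\sw_{-1,\g}$ admits the addition formula, the differential operator $\Delta_{0,\g}$, and Assertions~1, 3, 5 on $(\VV_0^{d+1},\sd_{\VV_0})$; hence Theorem~\ref{thm:Enf-Kfunctional} applies and gives (i) and (ii).}'' The only genuine subtlety — and thus the main obstacle — is not in this theorem at all but was already dispatched in the preceding Lemma: controlling $\sL_n^{(r)}$, the $\Delta_{0,\g}$-derivative of the localized kernel, which required applying the derivative estimate \eqref{eq:DLn(t,1)} for Jacobi kernels to the cut-off $\eta(t)=\wh a(t)\big(t(t+n^{-1}(\g+d-1))\big)^{r/2}$ and then re-running the delicate caps-and-change-of-variables estimate of Lemma~\ref{lem:kernelV0}. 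In the write-up of the final theorem itself the thing to be most careful about is consistency of hypotheses on $p$: the direct estimate uses $f\in L^p(\VV_0^{d+1},\sw_{-1,\g})$ for $1\le p<\infty$ and $f\in C(\VV_0^{d+1})$ for $p=\infty$, matching Theorem~\ref{thm:Enf-Kfunctional}, and one should note that the inverse estimate in fact holds for any doubling weight while the direct estimate is tied to $\varpi=\sw_{-1,\g}$, exactly as remarked after Theorem~\ref{thm:Enf-Kfunctional}.
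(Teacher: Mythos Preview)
Your proposal is correct and matches the paper's approach exactly: the paper simply remarks that ``with all assertions hold for the $\sw_{-1,\g}$, the characterization of the best approximation by polynomials in Subsection~\ref{sec:bestapp} holds on the conic surface'' and then states the theorem without further proof, so the result is indeed just Theorem~\ref{thm:Enf-Kfunctional} specialized to $(\VV_0^{d+1},\sw_{-1,\g},\sd_{\VV_0})$ once Assertions~1, 3, 5, the addition formula, and the differential operator have been verified. Your bookkeeping of which earlier results supply each ingredient is accurate, and your summary sentence is essentially what the paper writes.
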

 
Furthermore, the characterization can be given via the modulus of smoothness, 
since it is equivalent to the $K$-functional. 

\begin{thm} \label{thm:K=omegaV0}
Let $\g \ge -\f12$ and $f \in L_p^r(\VV_0^{d+1}, \sw_{-1,\g})$,  $1 \le p \le \infty$. Then
for $0 < \t \le \pi/2$ and $r >0$ 
$$
   c_1 \sK_r(f; \t)_{p,\sw_{-1,\g}} \le \o_r(f;\t)_{p,\sw_{-1,\g}} \sK_r(f;\t)_{p,\sw_{-1,\g}}.
$$
\end{thm}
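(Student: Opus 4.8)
The plan is to deduce Theorem~\ref{thm:K=omegaV0} directly from the general equivalence Theorem~\ref{thm:K=omega} established in Section~3, by verifying that its hypotheses are all satisfied for the weight $\sw_{-1,\g}$ on $\VV_0^{d+1}$ when $\g\ge-\f12$. Recall that Theorem~\ref{thm:K=omega} requires the weight $\varpi$ to admit Assertions~1, 3, and 5, and implicitly (through the constructions of Section~3) the addition formula of Definition~\ref{defn:additionF} together with a second-order operator $\fD_\varpi$ satisfying \eqref{eq:LBoperator}. So the proof is essentially a checklist.

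First I would recall that Assertions~1 and~2 for $\sL_n(\sw_{-1,\g})$ are Theorems~\ref{thm:kernelV0} and~\ref{thm:L-LkernelV0}, and Assertion~3 is the $p=1$ case of Lemma~\ref{lem:intLn} (equivalently Proposition~\ref{prop:intLn} with $p=1$), all valid for $\g\ge-\f12$. Next, Theorem~\ref{thm:sfPbCone2} gives the addition formula: since by the quadratic transform \eqref{eq:Jacobi-Gegen0} the zonal polynomial $Z_{2n}^{\g+d-1}$ is (a constant multiple of) the Jacobi kernel with parameters $\a=\g+d-\tfrac32$, $\b=-\tfrac12$, the reproducing kernel $\sP_n(\sw_{-1,\g})$ has exactly the shape of Definition~\ref{defn:additionF} with $m=2$, $\a\ge\b\ge-\tfrac12$, and probability measure $\d\tau(v)=b_{\g,d}(1-v_1^2)^{\frac{d-4}2}(1-v_2^2)^{\g-\frac12}\d v$; this is what legitimizes the operator $\sS_{\t,\sw_{-1,\g}}$, the convolution, and the modulus $\o_r$. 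Theorem~\ref{thm:Delta0V0} supplies the operator $\fD_{\sw_{-1,\g}}=\Delta_{0,\g}$ with eigenvalue $\mu(n)=n(n+\g+d-1)$, a nonnegative quadratic polynomial, so \eqref{eq:LBoperator} holds and the $K$-functional $\sK_r$ is well defined. The only remaining hypothesis is Assertion~5, and the Lemma immediately preceding the statement of Theorem~\ref{thm:K=omegaV0} establishes precisely the required bound on $\sL_n^{(r)}(\sw_{-1,\g};\cdot,\cdot)$ (with the weight $\sw_{\g,d}(n;\cdot)$ playing the role of $\varpi(B(\cdot,n^{-1}))$ up to the factor $n^{-d}$). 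With all of Assertions~1,~3,~5, the addition formula, and the differential operator in hand, Theorem~\ref{thm:K=omega} applies verbatim and yields \eqref{eq:K=omega} for $\varpi=\sw_{-1,\g}$, which is the assertion $c_1\sK_r(f;\t)_{p,\sw_{-1,\g}}\le\o_r(f;\t)_{p,\sw_{-1,\g}}\le c_2\sK_r(f;\t)_{p,\sw_{-1,\g}}$.

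The main subtlety—really the only place where anything beyond quotation is needed—is the bookkeeping translating the homogeneous-space quantity $\varpi(B(x,n^{-1}))$ into the conic-surface quantity $n^{-d}\sw_{\g,d}(n;t)$. I would point out explicitly, via \eqref{eq:w_bg}/\eqref{eq:w(n;t)} and Proposition~\ref{prop:capV0}, that $\sw_{-1,\g}(\sc((x,t),n^{-1}))\sim n^{-d}\sw_{\g,d}(n;t)$, so that Assertion~1 as stated in Theorem~\ref{thm:kernelV0} and Assertion~5 as stated in the preceding Lemma are indeed the Assertions~1 and~5 of Definition~\ref{def:Assertions} and Subsection~\ref{set:Bernstein}; one also needs $\a(\sw_{-1,\g})=d+(d-2)+2\max\{0,\g+\tfrac12\}$ finite, from Proposition~\ref{prop:capV0}. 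Likewise the parameters $(\a,\b)=(\g+d-\tfrac32,-\tfrac12)$ satisfy $\a\ge\b\ge-\tfrac12$ exactly when $\g\ge -\tfrac12$ and $d\ge2$, which is why the hypothesis on $\g$ appears. Once these identifications are stated, the equivalence is a direct citation of Theorem~\ref{thm:K=omega}, and I would keep the write-up to a short paragraph making the correspondence explicit rather than reproving anything. (One should also note, as the text already did for $\VV_0^{d+1}$ generally, that Proposition~\ref{prop:Stheta} and Proposition~\ref{prop-1-2-ch10} guarantee $\o_r$ is well defined and finite, so that both sides of the claimed inequality make sense for $f\in L^p(\VV_0^{d+1},\sw_{-1,\g})$, resp.\ $C(\VV_0^{d+1})$ when $p=\infty$.)
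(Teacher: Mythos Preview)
Your proposal is correct and matches the paper's approach exactly: the paper does not give a separate proof of Theorem~\ref{thm:K=omegaV0} but simply records it as the specialization of Theorem~\ref{thm:K=omega} once Assertions 1--5, the addition formula, and the differential operator have been verified for $\sw_{-1,\g}$ in Section~\ref{sec:coneV0}. Your checklist of the relevant results (Theorems~\ref{thm:kernelV0}, \ref{thm:L-LkernelV0}, \ref{thm:sfPbCone2}, \ref{thm:Delta0V0}, Lemma~\ref{lem:intLn}, and the Assertion~5 lemma) together with the identification $\sw_{-1,\g}(\sc((x,t),n^{-1}))\sim n^{-d}\sw_{\g,d}(n;t)$ from \eqref{eq:w_bg} is precisely what is needed.
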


\section{Homogeneous space on solid cones}\label{sec:coneV}
\setcounter{equation}{0}

In this section we work in the setting of homogeneous space on the solid cone 
$$
  \VV^{d+1} = \{(x,t): \|x\|\le t, \, x \in \BB^d, \, 0 \le t \le 1\},
$$
which is bounded by $\VV_0^{d+1}$ and the hyperplane $t =1$ in $\RR^{d+1}$. As in the previous section,
we shall verify that the framework we developed for approximation and tight frame on homogeneous space 
is applicable on this domain with respect to the Jacobi weight function $W_{\g,\mu}(x,t) = (1-t)^{\g} (t^2-\|x\|^2)^{\mu-\f12}$. 

The structure of the section is parallel to that of the conic surface. We again need to define and understand an 
intrinsic distance and doubling weights on the cone and do so in the first two subsections. The orthogonal structure 
with respect to the Jacobi weight is reviewed in the third subsection. The highly localized kernels are established 
in the 
fourth subsection, where the Assertions 1 and 3 are verified for $W_{\g,\mu}$ for all $\mu \ge 0$ but the 
Assertion 2 is established only for $W_{\g,0}$. We also provide construction of $ve$-separated set on 
the cone and state the Marcinkiewicz-Zygmund inequality in the fifth subsection. Assertion 4 is verified in the 
sixth subsection, which allows us to state the positive cubature rules and the tight frame on the cone. While the 
tight localized frame holds for the Jacobi wight with $\mu =0$, the characterization of the best approximation
by polynomials works out for all $\mu \ge 0$ in the seventh subsection.  

\subsection{Distance on solid cone}

The distant function $\sd_{\BB}(\cdot, \cdot)$ on the unit ball $\BB^d$ can be deduced from regarding
$\BB^d$ as the projection of the upper hemisphere of $\SS^{d+1}$, or the distant function $\sd_\SS(\cdot,\cdot)$ 
applied on the points $X= (x,\sqrt{1-\|x\|^2})$ and $Y= (y,\sqrt{1-\|y\|^2})$ with $x \in \BB^d$. The same holds
for the solid cone $\VV^{d+1}$. 

\begin{defn}
For $(x,t)$ and $(y,s)$ on $\VV^{d+1}$, let $X=\big(x,\sqrt{t^2-\|x\|^2}\big)$ and $Y=\big(y,\sqrt{s^2-\|y\|^2}\big)$. 
Define 
\begin{equation} \label{eq:distV}
  \sd_{\VV} ((x,t), (y,s)) : =  \arccos \left(\sqrt{\frac{\la X,Y\ra +ts}2}  + \sqrt{1-t}\sqrt{1-s}\right).
\end{equation}
\end{defn}

\begin{prop}
The function $\sd_{\VV}(\cdot,\cdot)$ defines a distance function on the solid cone $\VV^{d+1}$. 
\end{prop}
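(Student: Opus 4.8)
The plan is to realize the solid cone $\VV^{d+1}$ as a subset of the conic surface $\VV_0^{d+2}$ and to obtain $\sd_{\VV}$ as the pullback of the distance $\sd_{\VV_0}$ on $\VV_0^{d+2}$, which was already shown to be a metric in the previous proposition (that argument works verbatim for $\VV_0^{d'}$ with any $d'\ge 2$). Symmetry of $\sd_{\VV}$ is immediate from the definition \eqref{eq:distV}, and $\sd_{\VV}\big((x,t),(x,t)\big)=\arccos 1=0$ because $\la X,X\ra=\|x\|^2+\big(t^2-\|x\|^2\big)=t^2$, so that the argument of the arccosine equals $\sqrt{t}\sqrt{t}+\sqrt{1-t}\sqrt{1-t}=1$.

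First I would introduce the map $\iota\colon \VV^{d+1}\to\RR^{d+2}$ defined by $\iota(x,t)=(X,t)$ with $X=\big(x,\sqrt{t^2-\|x\|^2}\big)\in\RR^{d+1}$; this is well defined since $\|x\|\le t$ on $\VV^{d+1}$. Because $\|X\|^2=\|x\|^2+t^2-\|x\|^2=t^2$ and $0\le t\le 1$, the point $\iota(x,t)$ lies on $\VV_0^{d+2}$. The map $\iota$ is injective: if $\iota(x,t)=\iota(y,s)$ then $t=s$ and the first $d$ coordinates of $X$ and $Y$ coincide, so $x=y$. Next I would verify that $\iota$ is an isometry onto its image, that is,
\begin{equation*}
  \sd_{\VV}\big((x,t),(y,s)\big)=\sd_{\VV_0}\big(\iota(x,t),\iota(y,s)\big),\qquad (x,t),(y,s)\in\VV^{d+1}.
\end{equation*}
This is a direct comparison of \eqref{eq:distV} with \eqref{eq:distV0} applied on $\VV_0^{d+2}$: one has $\la X,Y\ra=\la x,y\ra+\sqrt{t^2-\|x\|^2}\sqrt{s^2-\|y\|^2}$, so both sides equal $\arccos\big(\sqrt{(\la X,Y\ra+ts)/2}+\sqrt{1-t}\sqrt{1-s}\big)$.

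Finally, since $\sd_{\VV_0}$ is a distance on $\VV_0^{d+2}$ and $\iota$ is injective, the pulled-back function $\sd_{\VV}=\sd_{\VV_0}\circ(\iota\times\iota)$ inherits all the metric axioms: non-negativity, the identity of indiscernibles (here the injectivity of $\iota$ is used to pass from $\sd_{\VV}\big((x,t),(y,s)\big)=0$ to $(x,t)=(y,s)$), symmetry, and the triangle inequality. So there is essentially no obstacle beyond setting up the identification carefully; the one point that requires attention is that the extra coordinate $\sqrt{t^2-\|x\|^2}$ must be taken non-negative, so that $\iota$ is genuinely a bijection of $\VV^{d+1}$ onto its image inside $\VV_0^{d+2}$ and the triangle inequality transfers without loss.
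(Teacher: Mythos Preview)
Your proof is correct and follows essentially the same route as the paper: both arguments identify $(x,t)\in\VV^{d+1}$ with $(X,t)\in\VV_0^{d+2}$ via $X=\big(x,\sqrt{t^2-\|x\|^2}\big)$ and observe that $\sd_{\VV}$ is the pullback of $\sd_{\VV_0}$ under this embedding. You are simply more explicit than the paper about the injectivity of the embedding and why the metric axioms transfer, which the paper compresses into ``it is easy to see.''
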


\begin{proof}
Since $\|X \| = t$ and $\|Y\|= s$, both $(X,t)$ and $(Y,s)$ are elements of $\VV_0^{d+2}$. Furthermore, 
\begin{equation}\label{eq:dV=dV0}
    \sd_{\VV^{d+1}} ((x,t), (y,s)) =  \sd_{\VV_0^{d+2}} ((X,t), (Y,s)), 
\end{equation}
from which it is easy to see that $\sd_{\VV}(\cdot,\cdot)$ is a distance function on $\VV^{d+1}$. 
\end{proof}

The distance function $\sd_{\VV}(\cdot,\cdot)$ is closely related to the distance function $\sd_{[0,1]}(\cdot,\cdot)$ 
of the interval $[0,1]$ and the distance $\sd_{\BB}(\cdot,\cdot)$ of the unit ball $\BB^d$. 

\begin{prop} \label{prop:cos-d}
For $d \ge 2$ and $(x,t), (y,s) \in \VV^{d+1}$, write $x = t x'$ and $y = s y'$ with $x', y' \in \BB^d$. Then
\begin{equation*}
   1- \cos \sd_{\VV} ((x,t), (y,s)) =1-\cos \sd_{[0,1]}(t,s) + \sqrt{t}\sqrt{s} \left[1-\cos \left(\tfrac{1}{2} \sd_{\BB}(x',y') \right)\right].
\end{equation*}
In particular,  
\begin{equation*} 
  c_1 \sd_{\VV} ((x,t), (y,s)) \le  \sd_{[0,1]}(t,s) +   (t s)^{\f14} \sd_{\BB}(x',y') \le  c_2  \sd_{\VV} ((x,t), (y,s)).
\end{equation*}
\end{prop}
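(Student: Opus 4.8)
The plan is to reduce the statement to the analogous identity on the conic surface $\VV_0^{d+2}$, which was already established in Proposition~\ref{eq:cos-dist}. By construction, $X=\big(x,\sqrt{t^2-\|x\|^2}\big)$ and $Y=\big(y,\sqrt{s^2-\|y\|^2}\big)$ satisfy $\|X\|=t$ and $\|Y\|=s$, so $(X,t),(Y,s)\in\VV_0^{d+2}$, and the identity \eqref{eq:dV=dV0} used in the previous proof gives $\sd_{\VV}((x,t),(y,s))=\sd_{\VV_0^{d+2}}((X,t),(Y,s))$. Writing $x=tx'$ and $y=sy'$ with $x',y'\in\BB^d$, we get $X=t\xi$ and $Y=s\eta$, where $\xi:=\big(x',\sqrt{1-\|x'\|^2}\big)$ and $\eta:=\big(y',\sqrt{1-\|y'\|^2}\big)$ are unit vectors in $\SS^d$.

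First I would apply Proposition~\ref{eq:cos-dist} to the pair $(X,t),(Y,s)\in\VV_0^{d+2}$, i.e.\ the statement with $d$ replaced by $d+1$, to obtain
$$
  1-\cos\sd_{\VV}((x,t),(y,s))=1-\cos\sd_{[0,1]}(t,s)+\sqrt{t}\sqrt{s}\Big[1-\cos\big(\tfrac12\sd_{\SS^d}(\xi,\eta)\big)\Big].
$$
It then remains only to recognize that $\sd_{\SS^d}(\xi,\eta)=\sd_{\BB}(x',y')$. This is immediate from the explicit form of $\xi,\eta$: we have $\la\xi,\eta\ra=\la x',y'\ra+\sqrt{1-\|x'\|^2}\sqrt{1-\|y'\|^2}$, which is precisely $\cos\sd_{\BB}(x',y')$ by the definition \eqref{eq:distBall} of the ball distance, and since $\xi,\eta$ are unit vectors, $\sd_{\SS^d}(\xi,\eta)=\arccos\la\xi,\eta\ra=\sd_{\BB}(x',y')$. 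Substituting this into the displayed identity gives the claim. The degenerate case $t=0$ forces $x=0$ and both sides collapse to $1-\cos\sd_{[0,1]}(0,s)$, so it is harmless; only the range $t,s>0$, where $x',y'$ are genuinely defined, needs attention.

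For the two-sided comparability estimate I would argue exactly as at the end of the proof of Proposition~\ref{eq:cos-dist}: writing each $1-\cos\t$ as $2\sin^2\tfrac\t2$, using $\tfrac1\pi\t\le\sin\tfrac\t2\le\tfrac\t2$ for $0\le\t\le\pi$ together with $(a+b)^2/2\le a^2+b^2\le(a+b)^2$ for $a,b\ge0$, the identity converts into $\sd_{\VV}((x,t),(y,s))\sim\sd_{[0,1]}(t,s)+(ts)^{1/4}\sd_{\BB}(x',y')$ with absolute constants. I do not anticipate any genuine obstacle here; the only points requiring care are the identification of the ball distance with the geodesic distance of the lifted points $\xi,\eta$ on $\SS^d$ — which is exactly how $\sd_{\BB}$ was defined in \eqref{eq:distBall} — and keeping track of the dimension shift $d\mapsto d+1$ when invoking Proposition~\ref{eq:cos-dist}.
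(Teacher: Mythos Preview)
Your proposal is correct and follows essentially the same route as the paper: reduce to the conic surface case via the identification $\sd_{\VV}((x,t),(y,s))=\sd_{\VV_0^{d+2}}((X,t),(Y,s))$ with $X=t\xi$, $Y=s\eta$ for $\xi,\eta\in\SS^d$, and then recognize $\sd_{\SS^d}(\xi,\eta)=\sd_{\BB}(x',y')$. The paper's proof is terser but does exactly this, deriving the intermediate identity \eqref{eq:dV=dB+dt} and then saying the rest follows verbatim from Proposition~\ref{eq:cos-dist}.
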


\begin{proof}
With $x= t x'$, we see that $X = t (x', \sqrt{1-\|x'|^2})$ and $Y = s (y', \sqrt{1-\|y'|^2})$. Hence, it follows as
in the case of $\VV_0^{d+1}$ that 
\begin{equation} \label{eq:dV=dB+dt}
   \sd_{\VV} ((x,t), (y,s)) = \arccos \left[\sqrt{t}\sqrt{s} \cos \left(\tfrac{1}{2} \sd_{\BB}(x', y') \right)+ \sqrt{1-t}\sqrt{1-s} \right].
\end{equation}
The rest of the proof follows from that of Proposition \ref{eq:cos-dist} almost verbatim.
\end{proof}

Like the distance function on the surface, we see that the distance on the line segment from the apex to 
$(x',1)$ on the top boundary of the cone $\VV^{d+1}$ is exactly the distance function on $[0,1]$; that is,
for $x'\in \BB^d$, 
$$
   \sd_{\VV}((x',t), (x',s)) = \sd_{[0,1]}(t,s), \qquad  0 \le t, s  \le 1.
$$

The following lemma is an analog of Lemma \ref{lem:|s-t|} and will be needed in the next 
subsection. 

\begin{lem} \label{lem:|s-t|V}
For $(x,t), (y,s) \in \VV^{d+1}$, 
$$
  \big| \sqrt{t} - \sqrt{s} \big|\le \sd_{\VV} ((x,t), (y,s)) \quad \hbox{and} \quad 
      \big| \sqrt{1-t} - \sqrt{1-s} \big| \le \sd_{\VV} ((x,t), (y,s)),
$$
and 
$$
 \big| \sqrt{t^2-\|x\|^2} - \sqrt{s^2-\|y\|^2} \big|\le  c \max\{\sqrt{s},\sqrt{t} \} \, \sd_{\VV} ((x,t), (y,s)). 
$$
\end{lem}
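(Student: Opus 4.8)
The plan is to reduce the first two inequalities to Lemma~\ref{lem:|s-t|} via the identification of $\VV^{d+1}$ with a subset of $\VV_0^{d+2}$, and to obtain the third one by combining Proposition~\ref{prop:cos-d} with a short case distinction.

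For the first two bounds I would observe that, since $\|X\| = t$ and $\|Y\| = s$, the points $(X,t)$ and $(Y,s)$ lie on the conic surface $\VV_0^{d+2}$, and \eqref{eq:dV=dV0} gives $\sd_{\VV}((x,t),(y,s)) = \sd_{\VV_0^{d+2}}((X,t),(Y,s))$. Applying Lemma~\ref{lem:|s-t|} to the pair $(X,t),(Y,s)$ on $\VV_0^{d+2}$ then yields at once $\bigl|\sqrt t - \sqrt s\bigr| \le \sd_{\VV}((x,t),(y,s))$ and $\bigl|\sqrt{1-t}-\sqrt{1-s}\bigr| \le \sd_{\VV}((x,t),(y,s))$.

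For the third inequality I may assume $0 < s \le t$ by the symmetry of the claim under interchanging $(x,t)$ and $(y,s)$ (the cases $s=0$ or $t=0$ being trivial), so that $\max\{\sqrt s,\sqrt t\} = \sqrt t$. Writing $x = t x'$ and $y = s y'$ with $x',y' \in \BB^d$, and setting $\xi = \bigl(x',\sqrt{1-\|x'\|^2}\bigr)$ and $\eta = \bigl(y',\sqrt{1-\|y'\|^2}\bigr) \in \SS^d$, one has $\sqrt{t^2-\|x\|^2} = t\xi_{d+1}$ and $\sqrt{s^2-\|y\|^2} = s\eta_{d+1}$ with $\xi_{d+1},\eta_{d+1}\in[0,1]$, while $\sd_\BB(x',y') = \arccos\la\xi,\eta\ra = \sd_{\SS^d}(\xi,\eta)$ by \eqref{eq:distBall}. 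Using $|\xi_{d+1}-\eta_{d+1}| \le \|\xi-\eta\| = 2\sin\bigl(\tfrac12\sd_{\SS^d}(\xi,\eta)\bigr) \le \sd_\BB(x',y')$ together with the splitting $t\xi_{d+1}-s\eta_{d+1} = t(\xi_{d+1}-\eta_{d+1}) + (t-s)\eta_{d+1}$, I would arrive at
\begin{equation*}
\bigl|\sqrt{t^2-\|x\|^2} - \sqrt{s^2-\|y\|^2}\bigr| \le t\,\sd_\BB(x',y') + |t-s|.
\end{equation*}
Here $|t-s| = (\sqrt t + \sqrt s)\bigl|\sqrt t - \sqrt s\bigr| \le 2\sqrt t\,\sd_{\VV}((x,t),(y,s))$ by the first bound. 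For $t\,\sd_\BB(x',y')$, Proposition~\ref{prop:cos-d} supplies $(ts)^{1/4}\sd_\BB(x',y') \le c\,\sd_{\VV}((x,t),(y,s))$: if $s \ge t/4$ then $t^{3/4}s^{-1/4} \le \sqrt 2\,\sqrt t$, whence $t\,\sd_\BB(x',y') = t^{3/4}s^{-1/4}\,(ts)^{1/4}\sd_\BB(x',y') \le \sqrt 2\,c\,\sqrt t\,\sd_{\VV}((x,t),(y,s))$; while if $s < t/4$, the first bound gives $\sd_{\VV}((x,t),(y,s)) \ge \sqrt t - \sqrt s \ge \tfrac12\sqrt t$, so $t\,\sd_\BB(x',y') \le \pi t = \pi\sqrt t\cdot\sqrt t \le 2\pi\sqrt t\,\sd_{\VV}((x,t),(y,s))$. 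Combining these estimates would complete the proof. I expect the only real obstacle to be this last case distinction: the estimate from Proposition~\ref{prop:cos-d} degenerates when $s$ is much smaller than $t$, and there one must instead exploit that $\sd_\VV$ is already of order $\sqrt t$.
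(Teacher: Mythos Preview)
Your argument is correct. The first two inequalities are handled exactly as in the paper, via \eqref{eq:dV=dV0} and Lemma~\ref{lem:|s-t|}. For the third inequality your approach also works, but it differs from the paper's in one small but instructive way: in the splitting of $t\xi_{d+1}-s\eta_{d+1}$ you put the \emph{larger} of $t,s$ in front of the difference $\xi_{d+1}-\eta_{d+1}$, obtaining the term $t\,\sd_\BB(x',y')$, and this forces your case distinction $s\ge t/4$ versus $s<t/4$. The paper instead (assuming $t\ge s$) writes
\[
  \bigl|t\sqrt{1-\|x'\|^2}-s\sqrt{1-\|y'\|^2}\bigr|
  \le |t-s|\sqrt{1-\|x'\|^2}+s\bigl|\sqrt{1-\|x'\|^2}-\sqrt{1-\|y'\|^2}\bigr|,
\]
placing the \emph{smaller} variable $s$ in front of the difference. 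Since $s\le\sqrt{st}\le\sqrt t\,(ts)^{1/4}$, the term $s\,\sd_\BB(x',y')$ is bounded directly by $\sqrt t\,(ts)^{1/4}\sd_\BB(x',y')\le c\sqrt t\,\sd_\VV$ via Proposition~\ref{prop:cos-d}, with no case analysis needed. Your case distinction is a legitimate workaround for the degeneration of $(ts)^{1/4}$ when $s\ll t$, but the paper's splitting sidesteps the issue entirely and gives a one-line finish.
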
 

\begin{proof}
The first two inequalities are immediate consequence of \eqref{eq:dV=dV0} and Lemma \ref{lem:|s-t|}. 
Let $x= t x'$ and $y =s y'$ with $x',y' \in \BB^d$. We know \cite[(A.1.4)]{DaiX} that 
\begin{equation} \label{eq:inequaBall}
  \left| \sqrt{1-\|x'\|^2} -  \sqrt{1-\|y'\|^2}\right| \le \sqrt{2} \sd_\BB(x',y').  
\end{equation}
Without loss of generality, assuming $t \ge s$. Then $s \le \sqrt{s}\sqrt{t}$, so that
\begin{align*}
  \left| \sqrt{t^2-\|x\|^2} -  \sqrt{s^2-\|y\|^2}\right| & = \left| t \sqrt{1-\|x'\|^2} - s\sqrt{1-\|y'\|^2}\right| \\
   & \le  |t-s| \sqrt{1-\|x'\|^2} + s  \left| \sqrt{1-\|x'\|^2} - \sqrt{1-\|y'\|^2}\right| \\
   & \le \sqrt{2} \max\{\sqrt{s},\sqrt{t} \} \left(  \big|\sqrt{t} - \sqrt{s}\big|  +  (ts)^{\f14} \,\sd_\BB(x',y')\right),
\end{align*}
which is bounded by $c \max\{\sqrt{s},\sqrt{t} \} \, \sd_{\VV} ((x,t), (y,s))$ by the first inequality in the
statement and the inequality in Proposition \ref{prop:cos-d}. 
\end{proof}

\subsection{A family of doubling weights}
For $d \ge 1$, $\mu \ge 0$, $\b+2\mu > -d$ and $\g > -1$, we consider the Jacobi weight function 
defined on the solid cone by
$$
    W_{\b,\g,\mu}(x,t) := (t^2-\|x\|^2)^{\mu-\f12} t^\b (1-t)^\g,  \quad (x,t) \in \VV^{d+1}.
$$
Let $\bb_{\b,\g,\mu}$ be the normalization constant so that $\bb_{\b,\g,\mu} W_{\b,\g,\mu}$ has unit 
integral on $\VV^{d+1}$. Setting $x= t x'$ with $x' \in \BB^d$, then 
$$
   \bb_{\b,\g,\mu} =\bigg(\int_0^1 t^{d +\b+ 2\mu -1} (1-t)^\g \d t  \int_{\BB^d} (1-\|x'\|^2)^{\mu-\f12}   \d x' \bigg)^{-1}
           = c_{\b+2\mu+d-1, \g} b_\mu^\BB,
$$
where $c_{\a,\b}$ is defined in \eqref{eq:c_ab} and $b_\mu^\BB$ is the normalization constant of the
classical weight function on the unit ball. The case $\b =0$ is of particular interest and will be denoted by
$W_{\g,\mu}$; that is, 
$$
    W_{\g,\mu}(x,t):= W_{0,\g,\mu} (x,t) = (t^2-\|x\|^2)^{\mu-\f12}(1-t)^\g,  \quad (x,t) \in \VV^{d+1}.
$$

We show that this is a doubling weight with respect to the distance $\sd_\VV$. For $r > 0$ and $(x,t)$ in 
$\VV^{d+1}$, we denote the ball centered at $(x,t)$ with radius $r$ by 
$$
      \cb((x,t), r): = \big\{ (y,s) \in \VV^{d+1}: \sd_{\VV} \big((x,t),(y,s)\big)\le r \big\}.
$$   

\begin{prop}\label{prop:capV}
Let $r > 0$ and $(x,t) \in \VV^{d+1}$. Then for $\b > -d$, $\g > -1$ and $\mu \ge 0$, 
\begin{align}\label{eq:capV}
   W_{\b,\g,\mu}\big(\cb((x,t), r)\big) & := \int_{ \cb((x,t), r)} W_{\b,\g,\mu}(y,s) \d y \d s \\
     & \sim r^{d+1} (t+ r^2)^{\b+\f{d-1}{2}} (1-t+ r^2)^{\g+\f12} (t^2-\|x\|^2 + r^2)^\mu.  \notag
\end{align}
In particular, $W_{\b,\g,\mu}$ is a doubling weight and the doubling index $\a(W_{\b,\g,\mu})$
is give by $\a(W_{\b,\g,\mu}) = 2\mu+d+1 + 2 \max\{0, \b+\f{d-1}2\} + 2 \max\{0,\g+\f12\}$.
\end{prop}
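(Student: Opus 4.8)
The plan is to mimic the proof of Proposition~\ref{prop:capV0} on the conic surface, with the spherical cap on $\sph$ replaced by a $\sd_\BB$-ball in $\BB^d$; the close parallel is forced by \eqref{eq:dV=dV0}, i.e. $\sd_\VV((x,t),(y,s))=\sd_{\VV_0^{d+2}}((X,t),(Y,s))$ under the lift $(x,t)\mapsto(X,t)$. Write $x=tx'$ and $y=sy'$ with $x',y'\in\BB^d$, so that $t^2-\|x\|^2=t^2(1-\|x'\|^2)$ and $\d y=s^d\,\d y'$. By Proposition~\ref{prop:cos-d} (equivalently the decomposition \eqref{eq:dV=dB+dt}), $(y,s)\in\cb((x,t),r)$ forces $\sd_{[0,1]}(t,s)\le r$ and, for such $s$, forces $\sd_\BB(x',y')$ to be at most a quantity comparable to $\t_r(t,s):=\arccos\tau_r(t,s)$, where $\tau_r(t,s)=(\cos\sd_{[0,1]}(t,s)-\cos r)/\sqrt{ts}$ is the very function appearing in Lemma~\ref{lem:cone-cap}. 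Hence, exactly as in that lemma,
\begin{equation*}
  W_{\b,\g,\mu}\big(\cb((x,t),r)\big)\sim\int_{\sd_{[0,1]}(t,s)\le r}s^{\b+2\mu+d-1}(1-s)^\g\,W_\mu\!\big(B_\BB(x',\,c\,\t_r(t,s))\big)\,\d s,
\end{equation*}
and I would insert the ball-cap volume estimate $W_\mu(B_\BB(x',\rho))\sim\rho^d(\sqrt{1-\|x'\|^2}+\rho)^{2\mu}$ from \cite{PX2}, together with $\sqrt{1-\|x'\|^2}=\sqrt{t^2-\|x\|^2}/t$, to reduce the problem to a one-dimensional integral in $s$.

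Next I would run the same three-case split in $t$ as in Proposition~\ref{prop:capV0}: the interior range $3r^2\le t\le 1-3r^2$, the apex range $0\le t\le 3r^2$, and the top-face range $1-t\le 3r^2$. Lemma~\ref{lem:|s-t|V} supplies the geometric control: on $\cb((x,t),r)$ one has $s\sim t+r^2$ and $1-s\sim 1-t+r^2$, while its third inequality $|\sqrt{t^2-\|x\|^2}-\sqrt{s^2-\|y\|^2}|\le c\max\{\sqrt s,\sqrt t\}\,\sd_\VV((x,t),(y,s))$ gives $s^2-\|y\|^2+r^2\sim t^2-\|x\|^2+r^2$ throughout the ball, which is exactly what permits the factor $(t^2-\|x\|^2+r^2)^\mu$ to be extracted. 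After substituting $t=\sin^2\tfrac\t2$, $s=\sin^2\tfrac\phi2$ (so that $\sd_{[0,1]}(t,s)=|\t-\phi|/2$) the surviving $s$-integrals are the elementary ones already evaluated in Cases~1--3 there, now carrying the extra power $(1-\|x'\|^2+\t_r(t,s)^2)^\mu$, which one handles by splitting according to whether $\t_r(t,s)$ dominates $\sqrt{1-\|x'\|^2}$ or not. This produces \eqref{eq:capV}.

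The doubling statement and the value of $\a(W_{\b,\g,\mu})$ are then read directly off \eqref{eq:capV}: replacing $r$ by $2r$ multiplies the $r^{d+1}$ factor by $2^{d+1}$, multiplies $(t+r^2)^{\b+\frac{d-1}2}$ by at most $2^{2\max\{0,\b+\frac{d-1}2\}}$ (the factor increases in $r$ only when the exponent is nonnegative), multiplies $(1-t+r^2)^{\g+\f12}$ by at most $2^{2\max\{0,\g+\f12\}}$, and, since $\mu\ge0$, multiplies $(t^2-\|x\|^2+r^2)^\mu$ by at most $2^{2\mu}$. These bounds are uniform in $(x,t)$ and in $r$ up to the relevant $r_0$, so $W_{\b,\g,\mu}$ is a doubling weight and \eqref{eq:alpha(w)} holds with $\a(W_{\b,\g,\mu})=2\mu+d+1+2\max\{0,\b+\tfrac{d-1}2\}+2\max\{0,\g+\tfrac12\}$.

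The main obstacle is the new factor $(t^2-\|x\|^2+r^2)^\mu$, which has no analog on $\VV_0^{d+1}$. Tracking it correctly relies on the third inequality of Lemma~\ref{lem:|s-t|V} to keep $s^2-\|y\|^2$ from oscillating over $\cb((x,t),r)$ relative to $t^2-\|x\|^2+r^2$, combined with the precise $2\mu$-power in the $\BB^d$ cap estimate; and it requires extra care in the apex range, where — as in the remark following Proposition~\ref{prop:capV0} — the cap $\cb((x,t),r)$ is no longer comparable to a Euclidean ball and the inner $\sd_\BB$-ball effectively fills out a fixed portion of $\BB^d$, so that the $t^2-\|x\|^2$ contribution and the ball-weight contribution must be combined by hand rather than inferred from a naive scaling.
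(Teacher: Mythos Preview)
Your proposal is correct and follows essentially the same route as the paper: reduce to an $s$-integral via the ball-cap estimate \eqref{eq:doublineB} from \cite{PX2}, then run the case split of Proposition~\ref{prop:capV0}. The only noteworthy difference is how the factor $(t^2-\|x\|^2+r^2)^\mu$ is extracted: the paper does this more directly by observing that for $t\ge 3r^2$ one has $s^\mu\big(1-\|x'\|^2+1-\tau_r(t,s)\big)^\mu\sim(t^2-\|x\|^2+r^2)^\mu$ straight from \eqref{eq:tau_rV0} (and for $t\le 3r^2$ one has $t^2-\|x\|^2+r^2\sim r^2$, while $1-\|x'\|^2+1-\tau_r(t,s)$ is bounded above by $2$ and below by $2/\pi^2$ on the subset $\sd_{[0,1]}(t,s)\le r/2$), so neither the third inequality of Lemma~\ref{lem:|s-t|V} nor a further sub-split on $\theta_r(t,s)$ versus $\sqrt{1-\|x'\|^2}$ is actually needed.
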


\begin{proof} 
Let $\tau_r(t,s)$ and $\t_r(t,s) = \arccos \tau_r(t,s)$ be as in the proof of Proposition \ref{prop:capV0}. 
From $\sd_{\VV}((x,t),(y,s)) \le r$, we obtain $\sd_{[0,1]}(t, s) \le r$ and, by \eqref{eq:distV},
$$
  \sd_{\BB} (x', y') \le \arccos \left(2 [\tau_r(t,s)]^2 -1\right) = \tfrac12 \arccos \tau_r(t,s) = \tfrac12 \t_r(t,s), 
$$
where $\sd_\BB(\cdot,\cdot)$ is the distance on the unit ball $\BB^d$. Hence, it follows that
\begin{align*}
 & W_{\b,\g,\mu}\big(\cb((x,t), r)\big)  = \int_{\sd_{[0,1]}(t, s)\le r} s^{d} 
        \int_{\sd_{\BB}(x',y') \le \tfrac12 \t_r(t,s)} W_{\b,\g,\mu}(sy',s)  \d y' \d s\\
       & \qquad\sim  \int_{\sd_{[0,1]}(t, s)\le r} s^{d+\b + 2\mu -1}(1-s)^\g  
        \int_{\sd_{\BB}(x',y') \le \tfrac12 \t_r(t,s)} (1-\|y'\|^2)^{\mu-\f12} \d y' \d s  
\end{align*}
For $\mu \ge 0$ and $0 < \rho \le 1$, it is known \cite[Lemma 5.3]{PX2} or \cite[p. 107]{DaiX} that  
\begin{equation}\label{eq:doublineB}
 \int_{\sd_{\BB}(x',y') \le \rho} (1-\|y'\|^2)^{\mu-\f12} \d y' \sim (1-\|x'\|^2 + \rho^2)^\mu \rho^d,
\end{equation}
which can also be established by following the approach in Proposition \ref{prop:capV0}. Consequently, 
using $\t_r(t,s) \sim \sqrt{1-\tau_r(t,s)}$, we conclude that 
\begin{align*}
  W_{\b,\g,\mu}\big(\cb((x,t), r)\big)  
     \sim  \,& \int_{\sd_{[0,1]}(t, s)\le r}  s^{d+\b + 2\mu -1}(1-s)^\g  \\ 
         & \quad \times \big(1-\|x'\|^2 + 1-\tau_r(t,s) \big)^\mu \big(1-\tau_r(t,s)\big)^{\f{d}{2}} \d s.
\end{align*}
If $ t \ge 3 r^2$, then $s \sim t +r^2$ and, by \eqref{eq:tau_rV0}, it follows that 
$$
     s^\mu \big(1-\|x'\|^2 + 1-\tau_r(t,s) \big)^\mu \sim  (t^2-\|x\|^2 + r^2)^\mu. 
$$
With this term removed, the integral of the remaining integrand can be estimated by following the 
estimates of Case 1 and Case 3 of the proof of Proposition \ref{prop:capV0}. If $t \le 3 r^2$, then 
$t^2 - \|x\|^2 + r^2 \sim r^2$. We use $1-\|x'\|^2 + 1-\tau_r(t,s) \le 2$ for the upper bound and 
$1-\tau_r(t,s) \ge 2/\pi^2$ on the subset $\sd_{[0,1]}(t,s)\le r/2$, proved in the Case 2 of the proof 
of Proposition \ref{prop:capV0}, to remove the term $ \big(1-\|x'\|^2 + 1-\tau_r(t,s) \big)^\mu$ from the 
integral. The rest of the proof then follows from that of the Case 2 of the proof of Proposition \ref{prop:capV0}.
This completes the proof. 
\end{proof}

\begin{cor}
For $d\ge 2$, $\b > -d$, $\g > -1$ and $\mu \ge 0$, $(\VV^{d+1}, W_{\b,\g,\mu}, \sd_{\VV})$ is a 
homogeneous space. 
\end{cor}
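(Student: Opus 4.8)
The final statement to prove is the corollary asserting that $(\VV^{d+1}, W_{\b,\g,\mu}, \sd_{\VV})$ is a homogeneous space for $d \ge 2$, $\b > -d$, $\g > -1$, and $\mu \ge 0$. The plan is to read this directly off the estimate \eqref{eq:capV} in Proposition \ref{prop:capV}, which has just been established, together with the general definition of a homogeneous space recalled in Section 2.

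\medskip

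First I would recall exactly what must be checked: per the definition in Subsection \ref{sec:localizableHS}, one needs (a) that $\sd_\VV$ is a genuine metric on $\VV^{d+1}$, (b) that the balls $\cb((x,t),r)$ are measurable, (c) that the Lebesgue measure $\d y\,\d s$ on $\VV^{d+1}$ is doubling with respect to $\sd_\VV$ and satisfies the lower bound $\min \sm(\cb((x,t),\ve)) \ge c_\ve > 0$, and (d) that $W_{\b,\g,\mu}$ is a doubling weight, so that $\d\mu = \bb_{\b,\g,\mu}W_{\b,\g,\mu}(x,t)\,\d y\,\d s$ is a doubling measure. Item (a) is Proposition \ref{prop:cos-d}'s predecessor, namely the proposition asserting $\sd_\VV$ is a distance on $\VV^{d+1}$ (already proved via \eqref{eq:dV=dV0}), and (b) is immediate since $\cb((x,t),r)$ is the preimage of a closed interval under a continuous function, hence Borel.

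\medskip

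The substance is (c) and (d), and both follow from \eqref{eq:capV}. For (d): applying \eqref{eq:capV} with radius $2r$ and comparing with radius $r$, each of the four factors $r^{d+1}$, $(t+r^2)^{\b+\frac{d-1}{2}}$, $(1-t+r^2)^{\g+\frac12}$, and $(t^2-\|x\|^2+r^2)^\mu$ changes by at most a bounded multiplicative constant when $r \mapsto 2r$ (monotonicity of $u \mapsto (a+u)^\kappa$ for the appropriate sign of $\kappa$, together with $a + 4u \le 4(a+u)$), so $W_{\b,\g,\mu}(\cb((x,t),2r)) \le L\, W_{\b,\g,\mu}(\cb((x,t),r))$ with $L$ independent of $(x,t)$ and $r$; this is precisely the statement in Proposition \ref{prop:capV} that $W_{\b,\g,\mu}$ is doubling, and the doubling index computed there, $\a(W_{\b,\g,\mu}) = 2\mu+d+1+2\max\{0,\b+\frac{d-1}{2}\}+2\max\{0,\g+\frac12\}$, follows by tracking the worst-case exponents through the same iteration. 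For (c): the Lebesgue measure on $\VV^{d+1}$ corresponds to the special parameters $\b=0$, $\g=0$, $\mu=\frac12$, which are admissible ($\b+2\mu = 1 > -d$, $\g > -1$, $\mu \ge 0$), so $\sm(\cb((x,t),r)) \sim r^{d+1}(t+r^2)^{\frac{d-1}{2}}(1-t+r^2)^{\frac12}(t^2-\|x\|^2+r^2)^{\frac12}$ is doubling by the same argument, and for fixed $\ve > 0$ the right-hand side is bounded below by $c\,\ve^{d+1}\cdot(\ve^2)^{\frac{d-1}{2}}\cdot c'\cdot c''\ve$ uniformly in $(x,t) \in \VV^{d+1}$ (using $1-t+\ve^2 \ge \ve^2$ and $t^2-\|x\|^2+\ve^2 \ge \ve^2$), giving $\min_{(x,t)} \sm(\cb((x,t),\ve)) \ge c_\ve > 0$. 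Finally, combining (c) and (d), $\d\mu = \bb_{\b,\g,\mu}W_{\b,\g,\mu}\,\d y\,\d s$ is a regular doubling measure relative to $\sd_\VV$, so $(\VV^{d+1}, W_{\b,\g,\mu}, \sd_\VV)$ meets the definition of a homogeneous space, completing the proof.

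\medskip

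I do not anticipate a genuine obstacle here: the corollary is a bookkeeping consequence of Proposition \ref{prop:capV}, whose proof has already been carried out by reduction to the conic-surface estimate \eqref{eq:capV0} and the ball estimate \eqref{eq:doublineB}. The only point requiring a word of care is the uniform lower bound in (c) near the apex and near the boundary $t=1$, where the factors $(t^2-\|x\|^2+\ve^2)$ and $(1-t+\ve^2)$ degenerate; but the regularizing $+\ve^2$ terms in \eqref{eq:capV} handle exactly this, and the bound $\min_{(x,t)} \sm(\cb((x,t),\ve)) \ge c\,\ve^{2d+2}$ is what drops out. Hence the corollary can be stated with only a brief proof, or even left to the reader as an immediate consequence of the preceding proposition.
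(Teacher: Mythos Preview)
Your proposal is correct and aligns with the paper's approach: the paper states this corollary without proof, treating it as an immediate consequence of Proposition \ref{prop:capV} and the already-established fact that $\sd_\VV$ is a distance. You have simply made explicit the routine verifications (measurability of balls, doubling of Lebesgue measure via the special case $\b=0$, $\g=0$, $\mu=\tfrac12$, and the uniform lower bound on $\sm(\cb((x,t),\ve))$) that the paper leaves to the reader.
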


\subsection{Orthogonal polynomials on the solid cone}
These polynomials are studied in \cite{X20a} and they are orthogonal with respect to the inner product 
$$
\la f, g \ra_{\b,\g,\mu} =\bb_{\b,\g,\mu}\int_{\VV^{d+1}} f(x,t) g(x,t)W_{\b,\g,\mu}(x,t) \d x \d t.
$$
Let $\CV_n(\VV^{d+1},W_{\b,\g,\mu})$ be the space of orthogonal polynomials of degree $n$, which has 
dimension $\binom{n+d+1}{n}$. An orthogonal basis of $\CV_n(\VV^{d+1}, W_{\b,\g,\mu})$ can be given 
in terms of the Jacobi polynomials and the orthogonal polynomials on the unit ball. For $m =0,1,2,\ldots$, 
let $\{P_\kb^m(W_\mu): |\kb| = m, \kb \in \NN_0^d\}$ be an orthonormal basis of $\CV_n(\BB^d, W_\mu)$ 
on the unit ball. Let $\alpha: = \mu+\frac{\b+d-1}{2}$ and 
\begin{equation} \label{eq:coneJ}
  \Jb_{m,\kb}^n(x,t):= P_{n-m}^{(2\a+2m, \g)}(1- 2t) t^m P_\kb^m\left(W_\mu; \frac{x}{t}\right), \quad 
\end{equation}
Then $\{\Jb_{m,\kb}^n(x,t): |\kb| = m, \, 0 \le m\le n\}$ is an orthogonal basis of $\CV_n(\VV^{d+1},W_{\b,\g,\mu})$
and the norm of $\Jb_{m,\kb}^n$ is given by 
\begin{equation} \label{eq:coneJnorm}
    \Hb_{m,n}^{(\a,\g)}:=  \la \Jb_{m, \kb}^n, \Jb_{m, \kb}^{n} \ra_{W_{\mu,\b,\g}} 
        =  \frac{ c_{2\a,\g}} {c_{2\a+2m,\g} } h_{n-m}^{(2\a+2m, \g)},
\end{equation}
where $c_{\a,\b}$ is as in \eqref{eq:c_ab} and $h_m^{(\a,\g)}$ is the norm square of the Jacobi polynomial.
We call $\Jb_{m, \kb}^n$ the Jacobi polynomials on the cone. 
 
The reproducing kernel of the space 
$\CV_n(\VV^{d+1}, W_{\b,\g,\mu})$, denoted by $\Pb_n(W_{\b,\g,\mu};\cdot,\cdot)$, can be written 
in terms of the above basis, 
$$
\Pb_n\big(W_{\b,\g,\mu}; (x,t),(y,s) \big) = \sum_{m=0}^n \sum_{|\kb|=n} 
    \frac{  \Jb_{m, \kb}^n(x,t)  \Jb_{m, \kb}^n(y,s)}{\Hb_{m,n}^{\b,\g}}.
$$
It is the kernel of the projection $\proj_n(W_{\b,\g,\mu}): L^2(\VV^{d+1},W_{\b,\g,\mu}) \to 
\CV_n(\VV^{d+1}, W_{\b,\g,\mu})$, 
$$
\proj_n(W_{\b,\g,\mu};f) = \int_{\VV^{d+1}} f(y,s) \Pb_n\big(W_{\b,\g,\mu}; \,\cdot, (y,s) \big)  W_{\b,\g,\mu}(s) \d y \d s.
$$

In contrast to the conic surface, it is the case $\b= 0$ that turns out to be the most interesting one.
For $\b =0$, there is a second order differential operator that has orthogonal polynomials as eigenfunctions. 
Recall that $W_{0,\g,\mu} = W_{\g,\mu}$.

\begin{thm}\label{thm:Delta0V}
Let $\mu > -\tfrac12$, $\g > -1$ and $n \in \NN_0$. Define the second order differential operator 
$\fD_{\mu,\g}$ by 
\begin{align*}
  \fD_{\g,\mu} : = & \, t(1-t)\partial_t^2 + 2 (1-t) \la x,\nabla_x \ra \partial_t + \sum_{i=1}^d(t - x_i^2) \partial_{x_i}^2
        - 2 \sum_{i<j } x_i x_j \partial_{x_i} \partial_{x_j}  \\ 
  &   + (2\mu+d)\partial_t  - (2\mu+\g+d+1)( \la x,\nabla_x\ra + t \partial_t),  
\end{align*}
where $\nabla_x$ and $\Delta_x$ denote the gradient and the Laplace operator in $x$-variable. Then the 
polynomials in $\CV_n(\VV^{d+1},W_{\g,\mu})$ are eigenfunctions of $\fD_{\g,\mu}$,  
\begin{equation}\label{eq:cone-eigen}
   \fD_{\g,\mu} u =  -n (n+2\mu+\g+d) u, \qquad \forall u \in \CV_n(\VV^{d+1},W_{\g,\mu}).
\end{equation}
\end{thm}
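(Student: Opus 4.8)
The plan is to follow the same mechanism used for the conic surface in Theorem \ref{thm:Delta0V0}, exploiting the explicit orthogonal basis $\Jb_{m,\kb}^n$ given in \eqref{eq:coneJ}, which separates a Jacobi factor in $t$ from an orthogonal-polynomial-on-the-ball factor in $x/t$. First I would record the two one-variable/ball ingredients: the Jacobi differential equation, namely that $v = P_k^{(a,\g)}(1-2t)$ satisfies $\left[t(1-t)\partial_t^2 + (a+1 - (a+\g+2)t)\partial_t\right]v = -k(k+a+\g+1)v$; and the differential equation \eqref{eq:diffBall} for the classical weight on $\BB^d$, i.e. $\left(\Delta_x - \la x,\nabla_x\ra^2 - (2\mu+d-1)\la x,\nabla_x\ra\right) P = -m(m+2\mu+d-1)P$ for $P \in \CV_m(\BB^d, W_\mu)$. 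Since $\Jb_{m,\kb}^n(x,t) = P_{n-m}^{(2\a+2m,\g)}(1-2t)\, t^m P_\kb^m(W_\mu; x/t)$ with $\a = \mu + \frac{\b+d-1}{2}$ and here $\b=0$ so $2\a = 2\mu+d-1$, the goal is to verify by direct computation that $\fD_{\g,\mu}\Jb_{m,\kb}^n = -n(n+2\mu+\g+d)\Jb_{m,\kb}^n$; because the $\Jb_{m,\kb}^n$ span $\CV_n(\VV^{d+1},W_{\g,\mu})$, this yields \eqref{eq:cone-eigen} for all of $\CV_n$.

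The key computational steps, in order: (1) Write $g(x,t) = t^m P_\kb^m(W_\mu; x/t)$ and compute how the $x$-derivative operators $\sum_i(t-x_i^2)\partial_{x_i}^2 - 2\sum_{i<j}x_ix_j\partial_{x_i}\partial_{x_j} - (2\mu+\g+d+1)\la x,\nabla_x\ra$ act on $g$; by homogeneity-in-$(x,t)$ of $g$ (it is a homogeneous function of degree $m$ jointly, since $P_\kb^m$ is a polynomial of degree $m$ and $t^m P_\kb^m(x/t)$ is degree $m$), Euler's relation $\la x,\nabla_x\ra g + t\partial_t g = m g$ will be the crucial simplifying identity, letting me trade $t$-derivatives for $x$-derivatives. (2) Observe that $\sum_i(t-x_i^2)\partial_{x_i}^2 - 2\sum_{i<j}x_ix_j\partial_{x_i}\partial_{x_j} = t\Delta_x - \la x,\nabla_x\ra^2 - \la x,\nabla_x\ra$ (expand $\la x,\nabla_x\ra^2 = \sum_{i,j}x_ix_j\partial_{x_i}\partial_{x_j} + \la x,\nabla_x\ra$), so on $g$, which via the substitution $x' = x/t$ satisfies the ball equation, the combination $t\Delta_x - \la x,\nabla_x\ra^2$ reproduces $-m(m+2\mu+d-1)$ times $g$ up to lower-order $t$-derivative terms. (3) Handle the mixed term $2(1-t)\la x,\nabla_x\ra\partial_t$ and the remaining pure-$t$ operators $t(1-t)\partial_t^2 + (2\mu+d)\partial_t - (2\mu+\g+d+1)t\partial_t$ acting on the product $v(t)g(x,t)$, using the Leibniz rule together with Euler's relation from step (1) to collapse everything into the Jacobi operator in $t$ with parameter $a = 2\a+2m = 2\mu+d-1+2m$, giving eigenvalue $-(n-m)(n-m+a+\g+1) = -(n-m)(n+m+2\mu+\g+d)$. (4) Add the eigenvalue contributions: $-m(m+2\mu+d-1)$ from the ball part rescaled, plus cross terms of the form $-m(\g+1)$ or similar from the mixed and first-order pieces, plus $-(n-m)(n+m+2\mu+\g+d)$ from the Jacobi part, and check the total telescopes to $-n(n+2\mu+\g+d)$.

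The main obstacle I expect is step (3): getting the bookkeeping exactly right for how the mixed second-order term $2(1-t)\la x,\nabla_x\ra\partial_t$ and the first-order terms interact with the product structure $v(t)\cdot t^m P_\kb^m(W_\mu;x/t)$. The substitution $x \mapsto x/t$ introduces $t$-dependence inside $g$ that must be carefully differentiated (e.g. $\partial_t[P_\kb^m(W_\mu; x/t)] = -t^{-2}\la x, (\nabla P_\kb^m)(x/t)\ra$), and one must repeatedly invoke the Euler/homogeneity identity to recognize that all these seemingly distinct terms reorganize into the known Jacobi and ball operators. A clean way to organize this is to first change variables to $(u,t)$ with $u = x/t$, verify that $\fD_{\g,\mu}$ transforms into a direct sum $\mathcal{L}_t^{(\g)} \oplus \mathcal{L}_u^{\BB}$ (Jacobi in $t$ plus ball operator in $u$) modulo terms that annihilate functions of product form $v(t)P(u)$, and then read off the eigenvalue; but since the excerpt only asks for verification on the explicit basis, the direct substitution-and-collect approach above suffices. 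Alternatively, one can cite that this operator and its spectrum were established in \cite{X20a} and simply verify the normalization of the eigenvalue, but I would present the self-contained computation.
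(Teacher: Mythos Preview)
Your plan is sound, but note that the paper does not actually prove this theorem: it is stated in Section~5.3 as a quotation of the result established in \cite{X20a}, with no argument given. So there is no ``paper's own proof'' to compare against beyond the citation.

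Your proposed direct verification on the basis $\Jb_{m,\kb}^n$ is exactly the computation carried out in \cite{X20a}, and your outline is correct in its essentials: the Jacobi ODE for $P_{n-m}^{(2\mu+d-1+2m,\g)}(1-2t)$ gives the $t$-part eigenvalue $-(n-m)(n+m+2\mu+\g+d)$, the ball equation \eqref{eq:diffBall} for $P_\kb^m(W_\mu;\cdot)$ gives $-m(m+2\mu+d-1)$, and the homogeneity relation $\la x,\nabla_x\ra g + t\partial_t g = m g$ for $g(x,t)=t^m P_\kb^m(W_\mu;x/t)$ is indeed the key to collapsing the cross terms. The identity $\sum_i(t-x_i^2)\partial_{x_i}^2 - 2\sum_{i<j}x_ix_j\partial_{x_i}\partial_{x_j} = t\Delta_x - \la x,\nabla_x\ra^2 - \la x,\nabla_x\ra$ is correct. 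Your anticipated obstacle in step~(3) is real but routine; the cleanest organization is the one you mention at the end, changing variables to $(u,t)$ with $u=x/t$, under which $\fD_{\g,\mu}$ decouples into the ball operator in $u$ plus a Jacobi-type operator in $t$ whose parameters shift by the degree $m$ via Euler's relation. Either route works; for the purposes of this paper, simply citing \cite{X20a} as the paper does is entirely adequate.
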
 
 
The reproducing kernel enjoys an addition formula that is a mixture of the addition formula for orthogonal 
polynomials with respect to the classical weight function $W_\mu$ on the unit ball and the Jacobi polynomials. 
The addition formula is complicated and has the most elegant form when $\b =0$. 

\begin{thm} \label{thm:PnCone2}
Let $d \ge 2$. For $\mu \ge 0$ and $\g \ge -\f12$, let $\a = \mu + \frac{d-1}{2}$; then 
\begin{align}\label{eq:PbCone2}
  \Pb_n \big(W_{\g,\mu}; (x,t), (y,s)\big) =\, & 
   c_{\mu,\g,d} \int_{[-1,1]^3}  Z_{2n}^{2 \a+\g+1} (\xi (x, t, y, s; u, v)) \\
     &\times   (1-u^2)^{\mu-1} (1-v_1^2)^{\a - 1}(1-v_2^2)^{\g-\f12}  \d u \d v, \notag
\end{align}
where $c_{\mu,\g,d}$ is a constant, so that $\Pb_0 =1$ and 
$\xi (x,t, y,s; u, v) \in [-1,1]$ is defined by 
\begin{align} \label{eq:xi}
\xi (x,t, y,s; u, v) = &\, v_1 \sqrt{\tfrac12 \left(ts+\la x,y \ra + \sqrt{t^2-\|x\|^2} \sqrt{s^2-\|y\|^2} \, u \right)}\\
      & + v_2 \sqrt{1-t}\sqrt{1-s}. \notag
\end{align}
When $\mu = 0$ or $\g = -\f12$, the identity \eqref{eq:PbCone2} holds under the limit \eqref{eq:limitInt}.
In particular, 
\begin{align}\label{eq:PbCone3}
   \Pb_n \big(W_{\g, 0}; & (x,t), (y,s)\big) =  c_{0,\g,d} \\
  & \times \int_{[-1,1]^2}   \left[ Z_{2n}^{\g+d} (\xi (x, t, y, s; 1, v))  
    +Z_{2n}^{\g+d} (\xi (x, t, y, s; -1, v))\right] \notag \\
     &\qquad\qquad\quad \times (1-v_1^2)^{\f{d-1}2 - 1}(1-v_2^2)^{\g-\f12}  \d v. \notag
\end{align}
\end{thm}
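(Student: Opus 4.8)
The plan is to prove the addition formula \eqref{eq:PbCone2} by reducing it to the corresponding addition formula on the conic surface $\VV_0^{d+2}$, namely Theorem~\ref{thm:sfPbCone2}, via the identification $(x,t)\in\VV^{d+1}\leftrightarrow (X,t)\in\VV_0^{d+2}$ with $X=(x,\sqrt{t^2-\|x\|^2})$ that we already used in \eqref{eq:dV=dV0}. The first step is to record the relationship between orthogonal polynomials on the solid cone $\VV^{d+1}$ with respect to $W_{\g,\mu}$ and those on the conic surface $\VV_0^{d+2}$ with respect to $\sw_{-1,\g}$: integrating a polynomial on $\VV_0^{d+2}$ over the fibre $\{(x',x_{d+1}):\|x'\|^2+x_{d+1}^2=1\}$ against the weight that produces $(1-\|x'\|^2)^{\mu-1/2}$ relates the two inner products, so that the reproducing kernel $\Pb_n(W_{\g,\mu};\cdot,\cdot)$ is obtained from $\sP_n(\sw_{-1,\g};\cdot,\cdot)$ on $\VV_0^{d+2}$ by an integration of the extra spherical variable against $(1-u^2)^{\mu-1}\,\d u$ (with the $\mu=0$ case interpreted via \eqref{eq:limitInt}). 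Concretely, I would either quote the explicit statement from \cite{X20a} or verify it by checking on the orthogonal basis: the basis $\Jb_{m,\kb}^n$ in \eqref{eq:coneJ} is built from $P_\kb^m(W_\mu;x/t)$, and $P_\kb^m(W_\mu;\cdot)$ on $\BB^d$ is itself obtained from spherical harmonics on $\SS^d$ by the classical projection with its own addition formula \eqref{eq:additionBall}; composing these gives exactly the $u$-integral structure.

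Granting that reduction, the second step is purely computational: substitute into the conic-surface formula \eqref{eq:sfPbCone} the point $(X,t)$ with $X=(x,\sqrt{t^2-\|x\|^2})$ and $(Y,s)$ with $Y=(y,\sqrt{s^2-\|y\|^2})$. The inner argument of the Gegenbauer polynomial there is $\zeta=v_1\sqrt{(\la X,Y\ra+ts)/2}+v_2\sqrt{1-t}\sqrt{1-s}$; since $\la X,Y\ra=\la x,y\ra+\sqrt{t^2-\|x\|^2}\sqrt{s^2-\|y\|^2}$, carrying out the $u$-integration that implements the ball addition formula replaces $\sqrt{(\la X,Y\ra+ts)/2}$ inside $\zeta$ by exactly the quantity under the first square root in \eqref{eq:xi}, and the index of the Gegenbauer polynomial shifts from $\g+d-1$ (the conic-surface index $\g+(d+2)-2$) to $2\a+\g+1=2\mu+\g+d$ (the correct index for the $(d+1)$-dimensional cone), while the weight $(1-v_1^2)^{(d-2)/2-1}$ becomes $(1-v_1^2)^{\a-1}$. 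One must be careful to track normalizing constants and to confirm that the product of probability measures $(1-u^2)^{\mu-1}(1-v_1^2)^{\a-1}(1-v_2^2)^{\g-1/2}$ is the correct one after all substitutions; this is where the bulk of the bookkeeping lives. The formula \eqref{eq:PbCone3} for $\mu=0$ is then the specialization of the $u$-integral to the limit \eqref{eq:limitInt}, which collapses the $u$-integration to the average of the values at $u=1$ and $u=-1$, together with the corresponding drop of the index from $2\a+\g+1$ to $\g+d$ (since $\a=\frac{d-1}{2}$ when $\mu=0$).

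An alternative, more self-contained route — which I would mention as a remark even if I use the reduction above — is to prove \eqref{eq:PbCone2} directly by verifying the two defining properties of the reproducing kernel: that the right-hand side, as a function of $(x,t)$, lies in $\CV_n(\VV^{d+1},W_{\g,\mu})$, and that it reproduces that space. For this one applies $\proj_n(W_{\g,\mu})$ to the right-hand side against a basis element $\Jb_{m,\kb}^n$ and uses the product structure: the $t$-integration produces a Jacobi-type integral handled by the orthogonality of $P_{n-m}^{(2\a+2m,\g)}$, the radial $\|x\|$-integration and the $u$-integration together reproduce $P_\kb^m(W_\mu;\cdot)$ via the ball addition formula \eqref{eq:additionBall}, and the remaining angular integral is the spherical-harmonic addition formula \eqref{eq:additionF}. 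The main obstacle in either approach is the same: correctly matching all the Jacobi/Gegenbauer parameter shifts and normalization constants across the three nested layers (sphere $\to$ ball $\to$ cone), in particular verifying that the Gegenbauer index in \eqref{eq:PbCone2} is $2\a+\g+1$ and not some neighbouring value, and that the limiting case \eqref{eq:limitInt} is applied consistently when $\mu=0$ and/or $\g=-\f12$ and/or $d=2$. Once the parameters are pinned down the rest is a routine, if lengthy, calculation that parallels the proof of Theorem~\ref{thm:sfPbCone2}.
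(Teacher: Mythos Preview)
The paper does not give its own proof of this theorem: it is stated in Section~5.3 as a result quoted from \cite{X20a}, in the same way that the conic-surface addition formula (Theorem~\ref{thm:sfPbCone2}) is quoted earlier. So there is nothing in the present paper to compare your proposal against.

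That said, your outline is a reasonable sketch of how such a formula is derived, and is close in spirit to the argument in \cite{X20a}. A point to tighten: the claim that $\Pb_n(W_{\g,\mu};\cdot,\cdot)$ on $\VV^{d+1}$ is obtained from $\sP_n(\sw_{-1,\g};\cdot,\cdot)$ on $\VV_0^{d+2}$ by a single integration against $(1-u^2)^{\mu-1}\,\d u$, with the Gegenbauer index shifting from $\g+d$ to $2\mu+\g+d$, is not literally a one-line consequence of the lifting $(x,t)\mapsto(X,t)$; that index shift comes from combining the conic-surface kernel with the ball addition formula \eqref{eq:additionBall} inside the basis $\Jb_{m,\kb}^n$, and one has to invoke the Gegenbauer product/integration identity that underlies \eqref{eq:additionBall}. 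Your ``alternative, more self-contained route''---verifying the reproducing property directly on the basis $\Jb_{m,\kb}^n$ using the three nested addition formulas (sphere, ball, Jacobi)---is in fact the cleaner way to organize the calculation and is essentially what \cite{X20a} does.
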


As a consequence, the orthogonal structure for the weight function $W_{\g,\mu}$ on the cone $\VV^{d+1}$ 
satisfies both characteristic properties specified in Definition \ref{def:LBoperator} and Definition \ref{defn:additionF}.
In the next subsection, we use the addition formula to establish Assertions 1--3 for highly localized kernels. 

\subsection{Highly localized kernels}

Let $\wh a$ be an admissible cut-off function.  For $(x,t)$, $(y,s) \in \VV^{d+1}$, the localized kernel 
$\Lb_n(W_{\g,\mu}; \cdot,\cdot)$ is defined by  
$$
   \Lb_n\left(W_{\g,\mu}; (x,t),(y,s)\right) = \sum_{j=0}^\infty \wh a\left( \frac{j}{n} \right)
       \Pb_j\left(W_{\g,\mu}; (x,t), (y,s)\right). 
$$
For $\g > -1$ and $\mu \ge 0$, we need the function $n^{d+1}W_{\g,\mu}\big(\cb((x,t),n^{-2})\big)$, 
which we denote by $W_{\g,\mu} (n; x, t)$; that is, 
$$
    W_{\g,\mu} (n; x, t) :=  \big(t+n^{-2}\big)^{\f{d-1}2} \big(1-t+n^{-2}\big)^{\g+\f12}\big(t^2-\|x\|^2+n^{-2}\big)^{\mu}. 
$$

\begin{thm} \label{thm:kernelV}
Let $d\ge 2$, $\g \ge -\f12$ and $\mu \ge 0$. Then for any $k>0$ and $(x,t), (y,s) \in \VV^{d+1}$, 
\begin{equation*}
\left |\Lb_n \left(W_{\g,\mu}; (x,t), (y,s)\right)\right|
   \le \frac{c_\k n^{d+1}}{\sqrt{ W_{\g,\mu} (n; x, t) }\sqrt{ W_{\g,\mu} (n; y, s) }}
    \big(1 + n  \sd_{\VV}( (x,t), (y,s)) \big)^{-\k}.
\end{equation*}
\end{thm}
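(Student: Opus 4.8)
The strategy follows the blueprint already executed for the conic surface in Theorem \ref{thm:kernelV0}, transplanted through the addition formula \eqref{eq:PbCone2}--\eqref{eq:PbCone3} and the quadratic transform \eqref{eq:Jacobi-Gegen}. First I would invoke the addition formula: since $Z_{2n}^\l$ is expressible via $L_n^{(\l-\f12,-\f12)}$ using \eqref{eq:Jacobi-Gegen}, the kernel $\Lb_n(W_{\g,\mu})$ becomes, with $\l = 2\a+\g+1$ and $\a = \mu+\f{d-1}{2}$,
\begin{align*}
  \Lb_n\big(W_{\g,\mu};(x,t),(y,s)\big) = c_{\mu,\g,d} \int_{[-1,1]^3} & L_n^{(\l-\f12,-\f12)}\big(2\xi(x,t,y,s;u,v)^2-1\big) \\
   & \times (1-u^2)^{\mu-1}(1-v_1^2)^{\a-1}(1-v_2^2)^{\g-\f12} \,\d u\,\d v,
\end{align*}
where $\xi$ is given by \eqref{eq:xi}. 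The plan is then to apply the one-dimensional Jacobi estimate \eqref{eq:Ln(t,1)} with $\alpha = \l-\f12$, $\beta=-\f12$, which (after writing $1-\xi^2 = \sin^2(\theta/2) \sim \theta^2$ with $\theta = \arccos(2\xi^2-1)$) bounds the integrand by $c\,n^{2\l+1}(1+n\sqrt{1-\xi(x,t,y,s;u,v)^2})^{-\k-2(2\mu+d+\g)}$ for an arbitrarily large exponent, since $L_n^{(\l-\f12,-\f12)}$ decays faster than any power of $1+n\sqrt{1-\xi^2}$.

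Next I would extract the factor $(1+n\,\sd_{\VV})^{-\k}$ by a lower bound on $1-\xi$. From \eqref{eq:xi} one has, analogously to the conic surface computation,
$$
  1-\xi(x,t,y,s;u,v) = \big(1-\cos\sd_{\VV}((x,t),(y,s))\big) + A_0(1-v_1) + B_0(1-v_2) + C_0(1-u),
$$
where $A_0,B_0,C_0 \ge 0$ (here $C_0$ is a nonnegative multiple of $v_1^2\sqrt{t^2-\|x\|^2}\sqrt{s^2-\|y\|^2}$, using $\mathrm{sign}$-free estimates since the cross term under the square root in \eqref{eq:xi} can be bounded appropriately — this requires care, as the argument of the inner square root must be kept nonnegative). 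Since $t s + \la x,y\ra + \sqrt{t^2-\|x\|^2}\sqrt{s^2-\|y\|^2}\,u \ge 0$ and $1-\sqrt{ts}-\sqrt{1-t}\sqrt{1-s}\ge 0$, this yields $1-\xi \ge 1-\cos\sd_{\VV} \ge \tfrac{2}{\pi^2}\sd_{\VV}^2$, and also the refined lower bound
$$
  1-\xi \gtrsim (1-v_1)\sqrt{ts} + (1-v_2)\sqrt{1-t}\sqrt{1-s} + (1-v_1^2)(1-u)\sqrt{t^2-\|x\|^2}\sqrt{s^2-\|y\|^2}
$$
needed for the remaining integral. After pulling out $(1+n\sd_{\VV})^{-\k'}$ for a suitably large $\k'$, the crux is to show the residual triple integral is bounded by
$c\,n^{-(2\l-d-1)}(1+n\sd_{\VV})^{M}\big(W_{\g,\mu}(n;x,t)\,W_{\g,\mu}(n;y,s)\big)^{-1/2}$ for some fixed power $M$; the extra $(1+n\sd_{\VV})^M$ is then absorbed by choosing $\k'$ large. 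This is the analog of Lemma \ref{lem:kernelV0}, and I would isolate it as a separate lemma. Its proof uses the one-dimensional integral inequality \eqref{eq:B+At} iterated three times: first integrate in $u$ against $(1-u)^{\mu-1}$ to produce the factor $(\sqrt{t^2-\|x\|^2}\sqrt{s^2-\|y\|^2}+n^{-2})^{-\mu}$ (via \eqref{eq:B+At} with $a=\mu$; when $\mu=0$ the $u$-integral degenerates to the endpoint average \eqref{eq:limitInt} and one uses \eqref{eq:PbCone3} directly), then integrate in $v_2$ against $(1-v_2)^{\g-\f12}$ to get $(\sqrt{1-t}\sqrt{1-s}+n^{-2})^{-\g-\f12}$, then in $v_1$ against $(1-v_1)^{\a-1}$ to get $(\sqrt{ts}+n^{-2})^{-\f{d-1}{2}}$ after accounting for the $(1-v_1^2)$ weight change coming from the $u$-term. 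Finally, the elementary inequality \eqref{eq:ab+=a+b+} applied with $a=\sqrt{t},b=\sqrt{s}$ (and with $\sqrt{1-t},\sqrt{1-s}$, and with $\sqrt{t^2-\|x\|^2},\sqrt{s^2-\|y\|^2}$, the last using the third estimate in Lemma \ref{lem:|s-t|V}) converts the product of the two separate $W_{\g,\mu}(n;\cdot,\cdot)$-type factors at $(x,t)$ and $(y,s)$ into the geometric mean times powers of $1+n|\sqrt{t}-\sqrt{s}|$ etc., which by Lemma \ref{lem:|s-t|V} are all $\le c(1+n\sd_{\VV})^{M}$.

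\textbf{Main obstacle.} The delicate point — genuinely new compared to the conic surface — is the presence of the inner square root $\sqrt{\tfrac12(ts+\la x,y\ra+\sqrt{t^2-\|x\|^2}\sqrt{s^2-\|y\|^2}\,u)}$ in \eqref{eq:xi}, which nests a $u$-dependent quantity under a square root before the outer structure. One must verify that this argument is nonnegative for all $u\in[-1,1]$ (it is, by Cauchy--Schwarz since $|\la x,y\ra|\le \|x\|\|y\|$ and $t\ge\|x\|$, $s\ge\|y\|$), and more importantly that the decomposition $1-\xi \ge (\text{geometric terms})$ survives this nesting. The clean way is to first bound $\sqrt{\tfrac12(\cdots)} \le \sqrt{\tfrac12(ts+\la x,y\ra)} + \tfrac12(\text{something})\cdot$ or rather to write the inner quantity as $\sqrt{ts}\cos(\tfrac12\sd_\BB(x',y'))$-type expression plus a $u$-correction, mirroring \eqref{eq:dV=dB+dt}, and then expand $1-v_1\sqrt{\cdots}$ as in the conic-surface case. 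Keeping all error terms of the correct homogeneity, and correctly tracking which power of $n^{-2}$ regularization attaches to which of the three "directions" ($t$, $1-t$, $t^2-\|x\|^2$), is the bookkeeping-heavy heart of the argument; everything else is a faithful repetition of Theorem \ref{thm:kernelV0} and Lemma \ref{lem:kernelV0}.
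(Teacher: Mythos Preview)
Your overall plan is correct and matches the paper's: write $\Lb_n(W_{\g,\mu})$ via the addition formula \eqref{eq:PbCone2}, apply the Jacobi estimate \eqref{eq:Ln(t,1)}, peel off $(1+n\sd_{\VV})^{-\k}$ using $1-\xi \ge 1-\cos\sd_{\VV}$, and reduce to a triple integral handled by iterating \eqref{eq:B+At} three times followed by \eqref{eq:ab+=a+b+} and Lemma~\ref{lem:|s-t|V}. The paper does exactly this, isolating the last step as Lemma~\ref{lem:kernelV}.

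The genuine gap is in your refined lower bound. Your proposed form
\[
  1-\xi \gtrsim (1-v_1)\sqrt{ts} + (1-v_2)\sqrt{1-t}\sqrt{1-s} + (1-v_1^2)(1-u)\sqrt{t^2-\|x\|^2}\sqrt{s^2-\|y\|^2}
\]
has the $(1-u)$ coefficient coupled to $v_1$. Even if this inequality were true, iterating \eqref{eq:B+At} would fail the power count: integrating in $u$ produces a factor $(1-v_1^2)^{-\mu}$, which lowers the effective $v_1$-weight to $(1-v_1^2)^{\a-1-\mu}=(1-v_1^2)^{(d-3)/2}$, so the $v_1$-integration yields only $n^{-(d-1)}$ instead of the required $n^{-2\mu-(d-1)}$. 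The final bound would then be too large by a factor $n^{2\mu}$.

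The paper resolves this by proving a \emph{decoupled} lower bound (equation \eqref{eq:1-xi-lbd}):
\[
  1-\xi \ge \tfrac14\sqrt{ts}\,(1-v_1) + \sqrt{1-t}\sqrt{1-s}\,(1-v_2) + \tfrac14\sqrt{ts}\sqrt{1-\|x'\|^2}\sqrt{1-\|y'\|^2}\,(1-u),
\]
with all three coefficients independent of the other variables. The trick is to write $1-\xi \ge (\sqrt{ts}-\sqrt{\eta}) + (1-v_1)\sqrt{\eta} + (1-v_2)\sqrt{1-t}\sqrt{1-s}$, rationalize $\sqrt{ts}-\sqrt{\eta} = (ts-\eta)/(\sqrt{ts}+\sqrt{\eta}) \ge (ts-\eta)/(2\sqrt{ts})$ to extract the $(1-u)$ term with coefficient $\tfrac14\sqrt{ts}\,A(x',y')$, and then verify that the combined $(1-v_1)$ coefficient, call it $B$, satisfies $B \ge \tfrac14$ by an explicit algebraic rewriting. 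With this clean separation, the three applications of \eqref{eq:B+At} give exactly $n^{-4\mu-2\g-d}$ and the correct spatial factors; note that combining $B_1^{-\mu}$ with $B_3^{-\mu}$ converts $\sqrt{ts}\,A(x',y')$ into $\sqrt{t^2-\|x\|^2}\sqrt{s^2-\|y\|^2}$, which is what you need. You correctly identified the nested square root as the obstacle; this rationalization-plus-$B\ge\tfrac14$ step is the missing key.
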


\begin{proof}
The proof follows along the line of the estimate on the surface of the cone, but it will be more involved 
as can be seen in the lower bound of $1- \xi(x,t,y,s; u,v)$ at \eqref{eq:1-xi-lbd} below. 

The kernel $\Lb_n(W_{\mu,\g})$ can be written in terms of the kernel of the Jacobi polynomials by the 
addition formula in Theorem \ref{thm:PnCone2}, which gives, with $\a = \mu+\f{d-1}{2}$ and $\l =  2\mu + \g +d$, 
that 
\begin{align*}
\Lb_n \left(W_{\g,\mu}; (x,t), (y,s) \right)=  
  c_{\mu,\g,d}  \int_{[-1,1]^3} & L_n ^{(\l-\f12,-\f12)}\big(2 \xi (x,t,y,s; u, v)^2-1 \big)\\
  &  \times    (1-u^2)^{\mu-1} (1-v_1^2)^{\a-1}(1-v_2^2)^{\g-\f12}\d u \d v.
\end{align*}
Then, as in the proof of Theorem \ref{thm:kernelV0}, we can apply the estimate \eqref{eq:Ln(t,1)} for $L_n^{\a,\b}$ 
with $\alpha=\l-1/2$, $\beta= -1/2$ to obtain
\begin{align*}
 \left| \Lb_n \left(W_{\g,\mu}; (x,t), (y,s)\right) \right| \le c n^{2 \l +1} \int_{[-1,1]^3} 
 & \frac{1}{ \left(1+n \sqrt{1- \xi (x,t,y,s; u,v)^2} \right)^{\k+5\mu+3\g+\frac{3d}2+1} }\\
       &   \times (1-u^2)^{\mu-1} (1-v_1^2)^{\a-1}(1-v_2^2)^{\g-\f12}\d u \d v.
\end{align*}
We first need an lower bound for $\xi(x,t,y,s; u,v)$. We simplify the notation and write
$$
\xi (x,t, y,s; u, v) =  v_1 \sqrt{\eta(x,t,y,s;u)}+ v_2 \sqrt{1-t}\sqrt{1-s}
$$
by introducing the notation 
$$
\eta(x,t,y,s;u)  = \tfrac12 \left(ts+\la x,y \ra + \sqrt{t^2-\|x\|^2} \sqrt{s^2-\|y\|^2} \, u\right).
$$
Let $x = t x'$ and $y = s y'$, where $x',y'\in \BB^d$. Then by $|u|\le 1$, it is easy to verify that $0 \le \eta(x,t,y,s;u)
 \le t s$
and, consequently, $|\xi(x,t,y,s;u,v)| \le \sqrt{t s} + \sqrt{1-s^2}\sqrt{1-t^2} \le 1$. Furthermore, we can write
\begin{align}\label{eq:1-xiV}
  1- \xi(x,t,y,s;u,v) = \, & 1 - \sqrt{\eta(x,t,y,s;u)}  - \sqrt{1-t}\sqrt{1-s} \\
   & + (1-v_1) \sqrt{\eta(x,t,y,s;u)} + (1-v_2) \sqrt{1-t}\sqrt{1-s} \notag \\
    \ge\, & 1 - \sqrt{\eta(x,t,y,s;1)}  - \sqrt{1-t}\sqrt{1-s} \notag \\
    = \,& 1 - \cos \sd_{\VV}((x,t),(y,s)) = 2 \sin^2 \frac{ \sd_{\VV} ((x,t), (y,s)) }{2}\notag \\
   \ge\, & \frac{2}{\pi^2} [\sd_{\VV}((x,t),(y,s))]^2. \notag
\end{align}
As in the case of $\VV_0^{d+1}$, we use this inequality to obtain the estimate
\begin{align*}
   \left| \Lb_n \left(W_{\g,\mu}; (x,t), (y,s)\right) \right| \le c n^{2 \l +1}
       \frac{1}{(1+  n \sd_{\VV}((x,t),(y,s)))^{\k+\mu+\g+\f{d}2}} I(x,t,y,s) 
\end{align*}
with the integral $I(x,t, y,s)$ defined by 
$$
 I(x,t,y,s) = c_{\g,\mu} \int_{[-1,1]^3}    
        \frac{(1-u^2)^{\mu-1} (1-v_1^2)^{\a-1}(1-v_2^2)^{\g-\f12}} {(1+n\sqrt{1- \xi (x,t,y,s;u,v)})^{4\mu+2\g+d+1}} \d u \d v,
$$
where $c_{\g,\mu}$ is the normalization constant so that $I(x,t,y,s)=1$ when $n =0$.
To complete the proof, we need to show that
$$
\frac{ I(x,t,y,s)}{(1+  n \sd_{\VV}((x,t),(y,s)))^{\mu+\g+\f{d}{2}}} \le \frac{cn^{- (4\mu+2\g+d)}} 
   {\sqrt{W_{\g,\mu}(n;x, t)}\sqrt{W_{\g,\mu}(n; y, s)}}.
$$
This is a special case of Lemma \ref{lem:kernelV} established below. 
\end{proof}

\begin{lem}\label{lem:kernelV}
Let $d \ge 2$ and $\g\ge -\f12$ and $\mu \ge 0$. Then for $\b \ge 4\mu+2\g+d+1$,
\begin{align}\label{eq:kernelV}
  c_{\g,\mu}& \int_{[-1,1]^3}    
        \frac{(1-u^2)^{\mu-1} (1-v_1^2)^{\a-1}(1-v_2^2)^{\g-\f12}} {(1+n\sqrt{1- \xi (x,t,y,s;u,v)})^{\b}} \d u \d v \\
    &\le  \frac{cn^{- (4\mu+2\g+d)}} 
   {\sqrt{W_{\g,\mu}(n;x, t)}\sqrt{W_{\g,\mu}(n; y, s)}(1+  n \sd_{\VV}((x,t),(y,s)))^{\b - 5\mu -3\g- \f{3d}{2}-1}}. \notag
\end{align}
\end{lem}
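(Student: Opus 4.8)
The plan is to reduce the three-dimensional integral in \eqref{eq:kernelV} to an iterated application of the one-variable inequality \eqref{eq:B+At}, exactly as in the proof of Lemma \ref{lem:kernelV0}, but carrying the extra $u$-integration. The key is a good lower bound for $1-\xi(x,t,y,s;u,v)$ in terms of the separated quantities $(1-v_1)\sqrt{t}\sqrt{s}$, $(1-v_2)\sqrt{1-t}\sqrt{1-s}$, and $(1-u)\sqrt{t^2-\|x\|^2}\sqrt{s^2-\|y\|^2}$. Writing $\eta=\eta(x,t,y,s;u)$ and using $\eta\le ts$ together with $1-\sqrt{ts}-\sqrt{1-t}\sqrt{1-s}\ge 0$, the same manipulation as in \eqref{eq:1-zeta-lwd} should give
\begin{align*}
 1-\xi(x,t,y,s;u,v) \ge\, & c\,(1-v_1)\sqrt{t}\sqrt{s} + (1-v_2)\sqrt{1-t}\sqrt{1-s} \\
   & + c\,(1-u)\sqrt{t^2-\|x\|^2}\sqrt{s^2-\|y\|^2},
\end{align*}
where the last term comes from $\sqrt{ts}-\sqrt{\eta}\gtrsim (1-u)\sqrt{t^2-\|x\|^2}\sqrt{s^2-\|y\|^2}$ after rationalizing and using $\sqrt{ts}+\sqrt{\eta}\le 2\sqrt{ts}$.

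With this lower bound in hand, I would first drop the factor $(1+n\sd_\VV)^{\b-4\mu-2\g-d-1}$ out of the integral using the bound $1-\xi\ge \frac{2}{\pi^2}\sd_\VV^2$ from \eqref{eq:1-xiV}, reducing matters to estimating $I(x,t,y,s)$ (the integral with exponent $4\mu+2\g+d+1$) by $c\,n^{-(4\mu+2\g+d)}(1+n\sd_\VV)^{\mu+\g+d/2}/(\sqrt{W_{\g,\mu}(n;x,t)}\sqrt{W_{\g,\mu}(n;y,s)})$. Then I would substitute $v_i\mapsto 2u_i-1$, $u\mapsto 2w-1$ and apply \eqref{eq:B+At} three times in succession: first in the $v_2$ variable with $A=\sqrt{1-t}\sqrt{1-s}$, $a=\g+\f12$, pulling out $n^{-2\g-1}(\sqrt{1-t}\sqrt{1-s})^{-(\g+\f12)}$; next in the $u$ variable with $A=\sqrt{t^2-\|x\|^2}\sqrt{s^2-\|y\|^2}$, $a=\mu$ (the degenerate case $\mu=0$ being handled by the limit \eqref{eq:limitInt}, which simply removes this integration), pulling out $n^{-2\mu}(t^2-\|x\|^2)^{-\mu/2}(s^2-\|y\|^2)^{-\mu/2}$; and finally in the $v_1$ variable with $B=0$, $A=\sqrt{t}\sqrt{s}$, $a=\a=\mu+\f{d-1}{2}$, pulling out $n^{-(d-1+2\mu)}(\sqrt{t}\sqrt{s})^{-\a}$. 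Collecting the powers gives $I(x,t,y,s)\le c\,n^{-(4\mu+2\g+d)}$ times $(\sqrt{1-t}\sqrt{1-s})^{-(\g+\f12)}(\sqrt{t}\sqrt{s})^{-(\mu+\f{d-1}{2})}(t^2-\|x\|^2)^{-\mu/2}(s^2-\|y\|^2)^{-\mu/2}$, and since $I\le 1$ trivially, each factor $a^{-\rho}$ may be replaced by $(a+n^{-2})^{-\rho}$.

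To finish, I would convert the product of $(a+n^{-2})$-type factors in the two variables into $\sqrt{W_{\g,\mu}(n;x,t)}\sqrt{W_{\g,\mu}(n;y,s)}$ up to a harmless power of $(1+n\,(\text{difference}))$, using the elementary inequality \eqref{eq:ab+=a+b+} with the pairs $(\sqrt{t},\sqrt{s})$, $(\sqrt{1-t},\sqrt{1-s})$, and $(\sqrt{t^2-\|x\|^2},\sqrt{s^2-\|y\|^2})$. For the first two pairs the resulting difference factors $1+n|\sqrt{t}-\sqrt{s}|$ and $1+n|\sqrt{1-t}-\sqrt{1-s}|$ are bounded by $c(1+n\sd_\VV)$ via Lemma \ref{lem:|s-t|V}; for the third pair Lemma \ref{lem:|s-t|V} gives $|\sqrt{t^2-\|x\|^2}-\sqrt{s^2-\|y\|^2}|\le c\max\{\sqrt{t},\sqrt{s}\}\sd_\VV$, which together with $\max\{\sqrt{t},\sqrt{s}\}\le c(\sqrt{t^2-\|x\|^2}+\ldots)$—or more simply by absorbing the extra $\max\{\sqrt t,\sqrt s\}$ against the already-extracted $\sqrt t\sqrt s$ factor and then using $I\le 1$—yields another factor of $(1+n\sd_\VV)$ to a fixed power. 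The main obstacle is the bookkeeping in this last step: the $u$-integration introduces the Euclidean-type quantity $t^2-\|x\|^2$ whose comparison with $\sd_\VV$ carries the extra weight $\max\{\sqrt t,\sqrt s\}$ (Lemma \ref{lem:|s-t|V}), so one must be careful that this weight is compensated by the $\sqrt t\sqrt s$ already pulled out, ensuring the final exponent of $(1+n\sd_\VV)$ is exactly $\b-5\mu-3\g-\f{3d}{2}-1$ as claimed; the degenerate cases $\mu=0$, $\g=-\f12$, $d=2$ are then checked to follow by the limiting relation \eqref{eq:limitInt} with no loss.
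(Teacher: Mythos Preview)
Your overall strategy matches the paper's, but there is a real gap in the lower bound for $1-\xi$. Write $x=tx'$, $y=sy'$ with $x',y'\in\BB^d$ and set $A'=\sqrt{1-\|x'\|^2}\sqrt{1-\|y'\|^2}$. Rationalizing gives
\[
\sqrt{ts}-\sqrt{\eta}\;\ge\;\frac{ts-\eta}{2\sqrt{ts}}\;\ge\;\frac{1}{4\sqrt{ts}}(1-u)\,\sqrt{t^2-\|x\|^2}\sqrt{s^2-\|y\|^2}
   \;=\;\tfrac14\sqrt{ts}\,A'(1-u),
\]
and the paper keeps exactly this form in \eqref{eq:1-xi-lbd}. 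You instead bound $\tfrac{1}{4\sqrt{ts}}\ge\tfrac14$ and record only $c(1-u)\sqrt{t^2-\|x\|^2}\sqrt{s^2-\|y\|^2}$, which is smaller by a factor $\sqrt{ts}$. That loss is fatal: applying \eqref{eq:B+At} with your $A$ in the $u$-variable yields $A^{-\mu}$, whereas the paper's choice $B_3=\tfrac14\sqrt{ts}\,A'$ yields $B_3^{-\mu}$; when these are combined with the $v_1$-step (which produces $B_1^{-\alpha}=c(\sqrt{ts})^{-\mu-(d-1)/2}$), the paper's product collapses to $c(\sqrt{ts})^{-(d-1)/2}(\sqrt{t^2-\|x\|^2}\sqrt{s^2-\|y\|^2})^{-\mu}$, but yours carries an extra $(\sqrt{ts})^{-\mu}$. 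After the $I\le1$ and \eqref{eq:ab+=a+b+} steps this extra factor becomes $((\sqrt t+n^{-1})(\sqrt s+n^{-1}))^{-\mu}$, which is not part of $\sqrt{W_{\g,\mu}(n;x,t)W_{\g,\mu}(n;y,s)}$ and is of order $n^{2\mu}$ near the apex; the bookkeeping cannot close.

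There is a second, related issue: the phrase ``the same manipulation as in \eqref{eq:1-zeta-lwd}'' is not enough here. In the surface case one only needs $(1-v_1)\sqrt{ts}$, and $\sqrt{\eta}$ there equals a fixed multiple of $\sqrt{ts}$. On the solid cone $\sqrt{\eta}$ can be arbitrarily small compared with $\sqrt{ts}$ (e.g.\ $u=-1$, $x'=y'=0$), so one cannot simply replace $(1-v_1)\sqrt{\eta}$ by $c(1-v_1)\sqrt{ts}$ while still retaining the $(1-u)$ term. The paper handles this with a genuinely new step: it writes $1-\xi\ge B\sqrt{ts}(1-v_1)+\sqrt{1-t}\sqrt{1-s}(1-v_2)+\tfrac14\sqrt{ts}A'(1-u)$ and then proves the algebraic inequality $B\ge\tfrac14$ (the displayed computation just before and after \eqref{eq:1-xi-lbd}). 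This is the key idea you are missing. Finally, your worry about the third pair in the \eqref{eq:ab+=a+b+} step is unnecessary: Lemma~\ref{lem:|s-t|V} gives $|\sqrt{t^2-\|x\|^2}-\sqrt{s^2-\|y\|^2}|\le c\max\{\sqrt t,\sqrt s\}\,\sd_\VV\le c\,\sd_\VV$, so no absorption against $\sqrt{t}\sqrt{s}$ is needed.
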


\begin{proof}
Let $I(x,t,y,s)$ be defined as in the proof of the previous proof. Using the lower bound of $\xi$ in \eqref{eq:1-xiV}, 
we see that the left-hand side of \eqref{eq:kernelV} is bounded by
$$
\frac{1}{  (1+  n \sd_{\VV}((x,t),(y,s)))^{\b- 4\mu-2\g-d-1} } I(x,t,y,s).
$$
In order to estimate $I(x,t,y,s)$ we need a refined lower bound for $1- \xi(x,t,y,s;u,v)$. Let $x = t x'$ 
and $y = s y'$ with $x', y' \in \BB^d$. We claim that for $-1 \le v_1, v_2,u \le 1$
\begin{align} \label{eq:1-xi-lbd}
  1 - \xi (x,t,y,s;u,v) \, & \ge   \frac14 \sqrt{t s}(1-v_1) + \sqrt{1-s}\sqrt{1-t} (1-v_2)\\  
      & + \frac14 \sqrt{ts} \sqrt{1-\|x'\|^2} \sqrt{1-\|y'\|^2}(1-u). \notag
\end{align}
To prove this lower bound, we write $A(x',y') = \sqrt{1-\|x'\|^2} \sqrt{1-\|y'\|^2}$ so that the notation can
be further simplified as 
\begin{align*}
\eta(x,t,y,s;u) =\tfrac{1} 2 t s \big(1+\la x',y' \ra + A(x',y') u\big).
\end{align*}
We shall use $\xi$ and $\eta$ without their variables below. Our first step is to write 
\begin{align*}
 1- \xi \, & = 1- \sqrt{ts} -  \sqrt{1-t}\sqrt{1-s} + \sqrt{ts} - \sqrt{\eta} + (1-v_1) \sqrt{\eta} + (1-v_2) \sqrt{1-t}\sqrt{1-s}\\
   & \ge \sqrt{ts} - \sqrt{\eta} + (1-v_1) \sqrt{\eta} + (1-v_2) \sqrt{1-t}\sqrt{1-s}.
\end{align*}
Next, since $\eta(x,t,y,s;u) \le t s$ by the Cauchy's inequality, we obtain 
\begin{align*}
  \sqrt{ts} - \sqrt{\eta} \, &= \frac{ ts - \eta}{ \sqrt{ts} + \sqrt{\eta}} \ge\frac{ ts}{4\sqrt{ts}} \left(1 -\la x',y' \ra - A(x',y')\, u\right)\\
 & =  \frac{ \sqrt{ts}}{4} \big(1 -\la x',y' \ra - A(x',y') + (1- u) A(x',y')\big). 
\end{align*} 
Since $1 -\la x',y' \ra - A(x',y') \ge 0$ and $1 \ge 1- v_1$, this leads to 
\begin{align*}
   1- \xi \, & \ge  B \sqrt{ts} (1-v_1)   
       +   \sqrt{1-s}\sqrt{1-t} (1-v_2)+ \frac{\sqrt{ts}}4 A(x',y') (1-u),
\end{align*}
where $B$ is given by 
\begin{align*}
   B  = \frac14 \big(1 -\la x',y' \ra - A(x',y')\big) +  \sqrt{\tfrac{1} 2 \big(1+\la x',y' \ra + A(x',y') u\big)},
\end{align*}
which can be rewritten to give
\begin{align*}
   B \, & = 1- \frac14 (1-u) A(x',y') - \frac12 \left(1-  \sqrt{\tfrac{1} 2 \big(1+\la x',y' \ra + A(x',y') u\big)}\right)^2\\
      & \ge 1- \f14 - \f12 = \f14. 
\end{align*}
This proves \eqref{eq:1-xi-lbd}. We are now ready to prove \eqref{eq:kernelV}. Denote its left-hand side 
by $J(x,t,u,v)$. Using the inequality \eqref{eq:1-xi-lbd}, we obtain
$$
 J(x,t,y,s) \le c  \int_{[0,1]^3}    
        \frac{(1-u^2)^{\mu-1} (1-v_1^2)^{\mu+\f{d-1}{2}-1}(1-v_2^2)^{\g-\f12}} {\left(1+n\sqrt{B_1 (1-v_1)+B_2(1-v_2)+
            B_3(1-u)}\right)^{4\mu+2\g+d+1}} \d u \d v,       
$$
where $B_1 = \f14\sqrt{t}\sqrt{s}$, $B_2 = \sqrt{1-s}\sqrt{1-t}$ and $B_3 = \frac14 \sqrt{ts} \sqrt{1-\|x'\|^2} \sqrt{1-\|y'\|^2}$.
We can estimate the integral as in the proof of Theorem \ref{thm:kernelV0} by applying \eqref{eq:B+At}, which
we now need to apply three times, and we obtain 
\begin{align*}
  J(x,t,y,s) \, &\le c  \frac{n^{-2\mu}}{B_3^{\mu}} \int_{[0,1]^2}    
        \frac{ (1-v_1^2)^{\mu+\f{d-1}{2}-1}(1-v_2^2)^{\g-\f12}} 
              {\left(1+n\sqrt{B_1 (1-v_1)+B_2(1-v_2)}\right)^{2\mu+2\g+d}}\d v \\  
        &\le c  \frac{n^{-2\mu-2\g-1}}{B_3^{\mu}B_2^{\g+\f12}} \int_0^1
        \frac{ (1-v_1^2)^{\mu+\f{d-1}{2}-1}} {\left(1+n\sqrt{B_1 (1-v_1)}\right)^{2\mu +d-1}}\d v_1 \\
         &\le c  \frac{n^{-4\mu-2\g-d}}{B_3^{\mu}B_2^{\g+\f12}B_1^{\mu+\f{d-1}2}}.
\end{align*}
Combining part of $B_1$ and $B_3$, it follows from $\sqrt{t}\sqrt{s} B_3 =  \f14 \sqrt{t^2-\|x\|^2} \sqrt{s^2-\|y\|^2}$
 that 
\begin{align*}
  & J(x,t,y,s)   \le c \frac{n^{-4\mu-2\g-d}}{\left(\sqrt{t^2-\|x\|^2} \sqrt{s^2-\|y\|^2}\right)^{\mu} 
      \left(\sqrt{1-t}\sqrt{1-s}\right)^{\g+\f12}
       \left(\sqrt{t}\sqrt{s}\right)^{\f{d-1}2}}\\
     & \le c \frac{n^{-4\mu-2\g-d}}{ \left(\sqrt{t^2-\|x\|^2} \sqrt{s^2-\|y\|^2}+n^{-2}\right)^{\mu}
          \left(\sqrt{1-t}\sqrt{1-s}+n^{-2}\right)^{\g+\f12} \left (\sqrt{t}\sqrt{s}+n^{-2}\right)^{\f{d-1}2}}, 
\end{align*}
where the second inequality follows since $I(x,t,y,s) \le 1$ holds trivially.  We can now apply 
\eqref{eq:ab+=a+b+} three times to obtain 
\begin{align*}
  J(x,t,y,s) \le &\, c \frac{n^{-4\mu-2\g-d}}{\sqrt{W_{\mu,\g}(n; x,t)}\sqrt{W_{\mu,\g}(n; y,s)}}  
     \big(1+  n \big|\sqrt{t^2-\|x\|^2}-\sqrt{s^2-\|y\|^2}\big|\big)^{\mu} \\
   & \times \big(1+  n \big|\sqrt{t}-\sqrt{s}\big|\big)^{\f{d-1}2} \big(1+  n \big|\sqrt{1-t}-\sqrt{1-s}\big|\big)^{\g+\f12}.
\end{align*}
Finally, by Lemma \ref{lem:|s-t|V}, we conclude  
$$
  J(x,t,y,s) \le c \frac{n^{-4\mu-2\g-d}(1+  n \sd_{\VV}((x,t),(y,s)))^{\mu + \g+\f{d}{2}}}
            {\sqrt{W_{\g,\mu}(n; x,t)}\sqrt{W_{\g,\mu}(n; y,s)}}
$$
which is what we need to complete the proof. 
\end{proof}
 
The theorem we just proved verifies Assertion 1 for $W_{\g,\mu}$. Our next theorem verifies
Assertion 2 but only for $W_{\g,0}$. Recall that $W_{\g,0}$ depends only on $t$,
$$
    W_{\g,0} (n; x, t) =  \big(t+n^{-2}\big)^{\f{d-1}2} \big(1-t+n^{-2}\big)^{\g+\f12}, 
$$
 
\begin{thm} \label{thm:L-LkernelV}
Let $d\ge 2$, $\g \ge -\f12$. For $(x_i,t_i), (y,s) \in \VV^{d+1}$, 
and $(x_1,t_1) \in \sc ((x_2,t_2), c^* n^{-1})$ with $c^*$ small, and any $\k > 0$, 
\begin{align}\label{eq:L-LkernelV}
&\left |\Lb_n \left(W_{\g,0}; (x_1,t_1), (y,s)\right)-\Lb_n \left(W_{\g,0}; (x_2,t_2), (y,s)\right)\right| \\
  & \qquad\qquad
  \le \frac{c_\k n^{d+2} \sd_{\VV}( (x_1,t_1), (x_2,t_2))}
  {\sqrt{ W_{\g,0} (n; x_2, t_2) }\sqrt{ W_{\g,0} (n; y, s) }  \big(1 + n  \sd_{\VV}( (x_2,t_2), (y,s)) \big)^{\k}}. \notag
\end{align}
\end{thm}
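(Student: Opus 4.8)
The plan is to mirror the proof of Theorem~\ref{thm:L-LkernelV0} for the conic surface, using the addition formula \eqref{eq:PbCone3} for $W_{\g,0}$ and the derivative estimate \eqref{eq:DLn(t,1)} for the Jacobi kernel, but now with the function $\xi(x,t,y,s;u,v)$ of \eqref{eq:xi} replacing $\zeta$. Writing $\partial L(u)=L'(u)$ and using the integral representation of $\Lb_n(W_{\g,0})$ obtained from \eqref{eq:PbCone3} (a sum of two integrals over $[-1,1]^2$, one with $u=1$ and one with $u=-1$), the left-hand side $K$ of \eqref{eq:L-LkernelV} is bounded by
\begin{align*}
 K \le c \sum_{u\in\{1,-1\}}\int_{[-1,1]^2} \big\|\partial L_n^{(\l-\f12,-\f12)}\big(2(\cdot)^2-1\big)\big\|_{L^\infty(I_v)}\,
   |\xi_1(v)^2-\xi_2(v)^2|\,(1-v_1^2)^{\f{d-1}2-1}(1-v_2^2)^{\g-\f12}\,\d v,
\end{align*}
where $\xi_i(v)=\xi(x_i,t_i,y,s;u,v)$, $I_v$ the interval with endpoints $\xi_1(v),\xi_2(v)$, and $\l=\g+d$. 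Since $|\xi(\cdot)|\le1$ we have $|\xi_1(v)^2-\xi_2(v)^2|\le 2|\xi_1(v)-\xi_2(v)|$.

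The heart of the argument is then the analog of the claim \eqref{eq:zeta1-zeta2}: that
\begin{equation*}
 |\xi_1(v)-\xi_2(v)| \le c\,\sd_\VV\big((x_1,t_1),(x_2,t_2)\big)\big(\Sigma_1+\Sigma_2(v_1)+\Sigma_3(v_2)\big),
\end{equation*}
with $\Sigma_1=\sd_\VV((x_i,t_i),(y,s))+\sd_\VV((x_1,t_1),(x_2,t_2))$, $\Sigma_2(v_1)=(1-v_1)\sqrt s$, $\Sigma_3(v_2)=(1-v_2)\sqrt{1-s}$. To get this I would use \eqref{eq:dV=dB+dt} to write $\xi_i(v)$ in the form (for $u=\pm1$) $\cos\sd_\VV$ plus the $(1-v_1)$ and $(1-v_2)$ corrections, exactly as in the surface case; the cosine-difference term is handled by the same trigonometric identity $\cos\a_1-\cos\a_2=2\sin\frac{\a_1-\a_2}2\sin\frac{\a_1+\a_2}2$ and the triangle inequality for $\sd_\VV$, giving the $\Sigma_1$ contribution. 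The $\Sigma_2(v_1)$ term requires bounding $|\sqrt{t_1}\cos\frac12\sd_\BB(x_1',y')-\sqrt{t_2}\cos\frac12\sd_\BB(x_2',y')|$ by $c\,\sd_\VV((x_1,t_1),(x_2,t_2))$, which uses Lemma~\ref{lem:|s-t|V} (for $|\sqrt{t_1}-\sqrt{t_2}|$ and for the ball-distance via $\sd_\BB(x_1',x_2')\lesssim t$-weighted $\sd_\VV$) together with Proposition~\ref{prop:cos-d}; the $\Sigma_3(v_2)$ term again comes straight from Lemma~\ref{lem:|s-t|V}. I expect this claim to be the main obstacle: the appearance of the ball points $x_i'=x_i/t_i$ and the factor $\sqrt{t^2-\|x\|^2}$ makes the geometry more delicate than on the surface, but since $W_{\g,0}$ has no $(t^2-\|x\|^2)$-weight the $u$-dependence only enters through $\eta$ and can be absorbed (the two cases $u=\pm1$ are symmetric in form), so the estimates from Lemma~\ref{lem:|s-t|V} and Proposition~\ref{prop:cos-d} should suffice.

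Once the pointwise bound on $|\xi_1(v)-\xi_2(v)|$ is in hand, I would finish exactly as in the proof of Theorem~\ref{thm:L-LkernelV0}. By \eqref{eq:DLn(t,1)} with $m=1$, $\|\partial L_n^{(\l-\f12,-\f12)}(2(\cdot)^2-1)\|_{L^\infty(I_v)}\le c\,n^{2\l+3}\big[(1+n\sqrt{1-\xi_1(v)^2})^{-\k}+(1+n\sqrt{1-\xi_2(v)^2})^{-\k}\big]$, so $K$ is dominated by $c\,\sd_\VV((x_1,t_1),(x_2,t_2))$ times a sum of integrals of the type estimated in Lemma~\ref{lem:kernelV}. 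For the $\Sigma_1$ piece one uses that $(x_1,t_1)\in\cb((x_2,t_2),c^*n^{-1})$ forces $\Sigma_1\le cn^{-1}(1+n\sd_\VV((x_i,t_i),(y,s)))$, so Lemma~\ref{lem:kernelV} with exponent $\k$ large gives the claimed bound with $n^{d+1}\cdot n = n^{d+2}$ (after replacing $(x_1,t_1)$ by $(x_2,t_2)$ in the output, justified by $W_{\g,0}(n;x_1,t_1)\sim W_{\g,0}(n;x_2,t_2)$ and $\sd_\VV((x_1,t_1),(y,s))+n^{-1}\sim\sd_\VV((x_2,t_2),(y,s))+n^{-1}$, both from Lemma~\ref{lem:|s-t|V}). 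For the $\Sigma_2(v_1)$ and $\Sigma_3(v_2)$ pieces, the extra factor $(1-v_1)$ (resp.\ $(1-v_2)$) raises the power of the Jacobi weight, so one applies Lemma~\ref{lem:kernelV} with $\frac{d-1}2$ replaced by $\frac{d+1}2$ (resp.\ $\g$ replaced by $\g+1$) and then uses $n^{-1}\sqrt s\le(\sqrt{t_2}+n^{-1})(\sqrt s+n^{-1})$ (resp.\ $n^{-1}\sqrt{1-s}\le(\sqrt{1-t_2}+n^{-1})(\sqrt{1-s}+n^{-1})$) to return to $W_{\g,0}(n;\cdot,\cdot)$. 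Summing the three contributions and choosing $\k$ in \eqref{eq:DLn(t,1)} large enough to absorb the fixed loss $5\mu+3\g+\frac{3d}2+1$ from Lemma~\ref{lem:kernelV} (here $\mu=0$) completes the proof.
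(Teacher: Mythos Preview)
Your outline is correct in spirit and matches the paper's argument for the case $u=1$, but it has a genuine gap in the treatment of $u=-1$. You write that you would ``use \eqref{eq:dV=dB+dt} to write $\xi_i(v)$ in the form (for $u=\pm1$) $\cos\sd_\VV$ plus the $(1-v_1)$ and $(1-v_2)$ corrections,'' and that ``the two cases $u=\pm1$ are symmetric in form.'' This is false for $u=-1$: one has $\xi(x,t,y,s;1,\mathbf{1})=\cos\sd_\VV((x,t),(y,s))$, but $\xi(x,t,y,s;-1,\mathbf{1})\ne\cos\sd_\VV((x,t),(y,s))$. Consequently neither the $\Sigma_1$ step (the cosine-difference bound plus the triangle inequality for $\sd_\VV$) nor your expression for the $\Sigma_2(v_1)$ coefficient $\sqrt{t_i}\cos\tfrac12\sd_\BB(x_i',y')$ is available as written when $u=-1$.

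The paper handles $u=-1$ by introducing the auxiliary point $Y_*=(y,-\sqrt{s^2-\|y\|^2})\in\VV_0^{d+2}$ and the function $\sd_*((x,t),(y,s)):=\sd_{\VV_0^{d+2}}((X,t),(Y_*,s))$, so that $\xi(x,t,y,s;-1,\mathbf{1})=\cos\sd_*((x,t),(y,s))$. Since $\sd_{\VV_0^{d+2}}$ is a genuine distance, the triangle inequality gives $|\sd_*((x_1,t_1),(y,s))-\sd_*((x_2,t_2),(y,s))|\le\sd_{\VV_0^{d+2}}((X_1,t_1),(X_2,t_2))=\sd_\VV((x_1,t_1),(x_2,t_2))$, which is exactly what the $\Sigma_1$ argument needs; the entire proof for $u=1$ then repeats with $\sd_*$ in place of $\sd_\VV$. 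At the very end one uses $\la X,Y\ra\ge\la X,Y_*\ra$, hence $\sd_*\ge\sd_\VV$, to replace $(1+n\sd_*)^{-\k}$ by $(1+n\sd_\VV)^{-\k}$ in the denominator. This $\sd_*$ device is the missing idea in your sketch.

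A minor point: once $u$ is fixed at $\pm1$ the relevant integral is a \emph{double} integral in $v$, so the reference you want is Corollary~\ref{cor:kernelV0} (with $d$ shifted to $d+1$, using $W_{\g,0}(n;x,t)=\sw_{\g,d+1}(n;t)$) rather than the triple-integral Lemma~\ref{lem:kernelV}; the paper does exactly this after invoking \eqref{eq:1-xi-lbd} with $u=1$ to get $1-\xi_i(1,v)\ge\tfrac14\sqrt{t_is}(1-v_1)+\sqrt{1-t_i}\sqrt{1-s}(1-v_2)$.
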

 
\begin{proof}
Let $K$ denote the left-hand side of \eqref{eq:L-LkernelV}. As in the proof of Theorem \ref{thm:L-LkernelV0}, 
we use the integral expression of $\Lb_n (W_{\g,0})$ and \eqref{eq:PbCone3} to obtain $K \le K_1 + K_{-1}$, 
where   
\begin{align*}
 K_u & \le 2  \int_{[-1,1]^2} \big\| \partial L_n^{\g+d-\f12,-\f12} \big(2(\cdot)^2-1\big)\big\|_{L^\infty(I_{u,v})} 
     \big |\xi_1(u,v)^2 - \xi_2(u,v)^2| \\  
  & \qquad \qquad\qquad
         \times  (1-v_1^2)^{\frac{d-1}{2}-1} (1-v_2^2)^{\g-\f12} \d v,  
\end{align*}
where $u = 1$ or $-1$, $\partial f = f'$, $\xi_i(u,v) = \xi (x_i,t_i,y,s;u,v)$ and $I_{u,v}$ is the interval with end 
points $\xi_1(u,v)$ and $\xi_2(u,v)$. We first consider the case $u =1$. Clearly $|\xi_1(1,v)^2 - \xi_2(1,v)^2| \le 2 |\xi_1(1,v)- \xi_2(1,v)|$. 
We claim that it has the upper bound 
\begin{align} \label{eq:zeta1-zeta2V}
 |\xi_1(u,v)- \xi_2(u,v)| \le  c\, \sd_{\VV}\big ((x_1,t_1),(x_2,t_2)\big)
       \big[ \Sigma_1 + \Sigma_2(v_1) + \Sigma_3 (v_2)\big], 
\end{align}
where 
\begin{align*}
  \Sigma_1 \, & = \sd_{\VV}\big((x_i,t_i),(y,s)\big)+\sd_{\VV}\big((x_1,t_1),(x_2,t_2)\big),  \\
  \Sigma_2(v_1) \, & = (1-v_1)\sqrt{s}, \\
   \Sigma_3 (v_2)\, & = (1 - v_2)\sqrt{1-s}. 
\end{align*}
To see this, we follow the notation used in the proof of Lemma \ref{lem:kernelV}, writing
$\xi_i = v_1 \sqrt{\eta_i (1)} + v_2 \sqrt{1-t}\sqrt{1-s}$, where $\eta_i(1) =\eta(x_i,t_i,s_i; 1)$. 
Using \eqref{eq:dV=dB+dt} and writing $x_i = t_i x_i'$ and $y = s y'$, we obtain 
\begin{align}\label{eq:xi1-xi2V}
 \xi_1(1,v)- \xi_2(1,v) \, & = \xi_1(1,\mathbf{1}) - \xi_2(1,\mathbf{1}) \\
 &  +  (1-v_1) \big(\sqrt{\eta_2(1)} -  \sqrt{\eta_1(1)}\big)  \notag\\
      &   +  (1- v_2) \big(\sqrt{1- t_2} - \sqrt{1-t_1}\big) \sqrt{1-s}, \notag
\end{align}
where $\mathbf{1} = (1,1)$. For $u = 1$, as in Proposition \ref{prop:cos-d}, 
$$
    \eta_i(1) = t_i s  \cos \frac{\sd_{\BB}(x_i', y')}{2} \quad\hbox{and}\quad 
     \xi_i (1,\mathbf{1}) =  \cos \sd_{\VV}\big((x_i,t_i),(y,s)\big).
$$
Substituting these into \eqref{eq:xi1-xi2V}, the resulted identity is similar to the corresponding expression
in the proof of Theorem \ref{thm:L-LkernelV0} and can be analogously estimated using Proposition \ref{prop:cos-d} 
and Lemma \ref{lem:|s-t|V}, so that \eqref{eq:zeta1-zeta2V} follows.

As in the proof of Theorem \ref{thm:L-LkernelV0}, it follows from \eqref{eq:DLn(t,1)} with $m =1$  that 
\begin{align*}
&\big\| \partial L_n^{\l-\f12,-\f12} \big(2(\cdot)^2-1\big)\big\|_{L^\infty(I_{1,v})}  \\
 & \qquad\qquad  \le  c \left[  \frac{n^{2 \l + 3}}{\big(1+n\sqrt{1-\xi_1(1,v)^2} \big)^{\k}} 
     +  \frac{n^{2 \l + 3}}{\big(1+n\sqrt{1-\xi_2(1,v)^2}\big)^{\k}} \right].
\end{align*}
Hence, $K_1$ is bounded by, with $\l = \g+d$, 
\begin{align*}
K_1\le   c\, \sd_{\VV}\big((x_1,t_1),(x_2,t_2)\big) & \int_{[-1,1]^2}\left[  \frac{n^{2 \l + 3}}
    {\big(1+n\sqrt{1-\xi_1(1,v)^2} \big)^{\k}}  +  \frac{n^{2 \l + 3}}{\big(1+n\sqrt{1-\xi_2(1,v)^2}\big)^{\k}} \right ]  \\
      & \times \big(\Sigma_1+\Sigma_2(v_1) + \Sigma_3(v_2) \big) (1-v_1^2)^{\frac{d-1}{2}-1} (1-v_2^2)^{\g-\f12} \d v.
\end{align*}
Using the assumption that $(x_1,t_1) \in \sc ((x_2,t_2), c^* n^{-1})$, the integral that contains $\Sigma_1$ 
term can be handled as in the proof of Theorem \ref{thm:L-LkernelV0}, which leads to 
\begin{align*}
   \int_{[-1,1]^2}  &  \frac{n^{2 \l + 3}} {\big(1+n\sqrt{1-\xi_i(1,v)^2} \big)^{\k}}
      \Sigma_1  (1-v_1^2)^{\frac{d-1}{2}-1} (1-v_2^2)^{\g-\f12} \d v \\
      &  \le \frac{n^{2 \l + 2}}{\big(1+n\sd_\VV ((x,t), (y,s)) \big)^{\k - 2\g-d-2}}  
    \int_{[-1,1]^2} \frac{ (1-v_1^2)^{\frac{d-1}{2}-1} (1-v_2^2)^{\g-\f12}} {\big(1+n\sqrt{1-\xi_i(1,v)^2} \big)^{2\g+d+2}}
        \d v. 
\end{align*}
Now,  the inequality \eqref{eq:1-xi-lbd} shows that 
$$
1-\xi_i(1,v) \ge \frac14 \sqrt{t_i s}(1- v_1)+\sqrt{1-t_i}\sqrt{1-s}(1-v_2).
$$
Hence, applying Corollary \ref{cor:kernelV0} with $d-1$ replaced by $d$ and Lemma \ref{lem:|s-t|V}, we
conclude that the integral in the right-hand side is bound by 
\begin{align*}
        \frac{c\, n^{d+2}} {\sqrt{ W_{\g,0} (n; x_i, t_i) }\sqrt{ W_{\g,0} (n; y, s)}
               \big(1 + n  \sd_{\VV}( (x_i,t_i), (y,s)) \big)^{\k(\g,d)}},
\end{align*}
where $\k(\g,d) = \k - 3 \g - \frac{3d+4}{2}$, since $W_{\g,0} (n; x, t) = \sw_{\g, d+1}(n;t)$. Putting together,
we see that the integral containing $\Sigma_1$ has the desired upper bound. The two integrals 
containing $\Sigma_2(v_1)$ and $\Sigma_3(v_2)$ can be estimated similarly and more straightforwardly
by using Corollary \ref{cor:kernelV0}. Moreover, the estimate is parallel to the two corresponding terms in
the proof of Theorem \ref{thm:L-LkernelV0} and can be carried out similarly. This completes the proof 
of the case $u =1$. 

For $u =-1$, we observe that if we define $Y_* = (y, -\sqrt{s^2-\|y\|^2})$ and 
$$
  \sd_*((x,t), (y,s)) = \arccos \left ( \sqrt{\frac{\la X, Y_* \ra + t s}{2} } + \sqrt{1-t}\sqrt{1-s} \right),
$$
then for $u =-1$ and $v = \mathbf{1} = (1,1)$, 
$$
  \xi (x,t,y,s;-1,\mathbf{1})  = \cos \sd_*((x,t), (y,s)).  
$$
Since $(Y_*, s) \in \VV_0^{d+2}$ and $\sd_*((x,t), (y,s)) =  \sd_{\VV_0^{d+2}}( (X,t), (Y_*,s))$, it follows 
that $\sd_*$ is also a distance function and satisfies, in particular, the triangle inequality. As a result, we
can repeat the proof for $u=1$ to estimate $K_{-1}$, so that $K_{-1}$ is bounded by the right-hand side 
of \eqref{eq:L-LkernelV} with $(1+n \sd_*((x,t),(y,s)))^\k$ in place of $(1+n \sd_{\VV}((x,t),(y,s)))^\k$ in
the denominator. Since $\la X,Y\ra \ge \la X,Y_*\ra$, $\cos \sd_{\VV} ((x,t), (y,s)) \ge \cos \sd_*((x,t),(y,s))$
and, consequently, $\sd_\VV ((x,t), (y,s)) \le \sd_* ((x,t), (y,s))$. Hence, the upper bound 
of $K_{-1}$ is bounded by the right-hand side of \eqref{eq:L-LkernelV}. 
This completes the proof. 
\end{proof}

\begin{rem}
It is not clear if this theorem also holds for $W_{\g,\mu}$ when $\mu > 0$. The main task lies in establishing 
an analog of \eqref{eq:1-xi-lbd}, perhaps with a multiple of $1-u$ as one more term. This turns out to 
be elusive, not because of lack of trying, and there appears to be a real obstacle for $(x_i,t_i)$ around the 
apex of the cone. 
\end{rem}

Our next lemma is an analog of Lemma \ref{lem:intLn}, which establishes Assertion 3 when $p=1$ 
for the weight function $W_{\g,\mu}$ on the solid cone. 

\begin{lem}\label{lem:intLnV}
Let $d\ge 2$, $\g> -1$ and $\mu > -\f12$. For $0 < p < \infty$, assume 
$\k > \frac{2d+2}{p} + (\g+\mu+\f{d}{2}) |\f1p-\f12|$. Then for $(x,t) \in \VV^{d+1}$,  
\begin{align}\label{eq:intLnV}
\int_{\VV^{d+1}} \frac{ W_{\g,\mu}(s)}{ W_{\g,\mu} (n; s)^{\f{p}2}
    \big(1 + n \sd_{\VV}( (x,t), (y,s)) \big)^{\k p}}   \d y \d s 
    \le c n^{-d-1}\, W_{\g,\mu} (n; t)^{1-\f{p}{2}}.
\end{align}
\end{lem}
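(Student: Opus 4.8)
By Lemma \ref{lem:CorA3} (applied on the homogeneous space $(\VV^{d+1}, W_{\g,\mu}, \sd_\VV)$, whose doubling index is given by Proposition \ref{prop:capV}), it suffices to establish \eqref{eq:intLnV} in the case $p=2$, i.e.\ to prove
$$
   J := \int_{\VV^{d+1}} \frac{ W_{\g,\mu}(y,s)\, \d y\, \d s}
        { W_{\g,\mu} (n; y, s)\,\big(1 + n \sd_{\VV}( (x,t), (y,s)) \big)^{2\k}} \le c\, n^{-d-1};
$$
indeed the factor $W_{\g,\mu}(n;t)^{1-p/2}$ on the right of \eqref{eq:intLnV} is $1$ when $p=2$, and the passage to general $p$ with the stated restriction on $\k$ is exactly Lemma \ref{lem:CorA3}. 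So the whole task is the $p=2$ estimate, and the restriction $\k>\frac{2d+2}{p}+(\g+\mu+\frac d2)|\frac1p-\frac12|$ is precisely what makes $2\k$ large enough after Lemma \ref{lem:CorA3} absorbs the $\alpha(W_{\g,\mu})$ loss; for $p=2$ one only needs $2\k>2d+2$, which will be used at the very end.

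\textbf{Reduction to a ball-type integral.} First substitute $y = s y'$ with $y'\in\BB^d$, so that $\d y = s^d\,\d y'$ and $W_{\g,\mu}(y,s) = s^{2\mu+d-1}(1-s)^\g(1-\|y'\|^2)^{\mu-\f12}$, while $W_{\g,\mu}(n;y,s)$ becomes $(s+n^{-2})^{\f{d-1}2}(1-s+n^{-2})^{\g+\f12}(s^2(1-\|y'\|^2)+n^{-2})^\mu$. By \eqref{eq:dV=dB+dt}, $\cos\sd_\VV((x,t),(y,s)) = \sqrt t\sqrt s\cos(\tfrac12\sd_\BB(x',y'))+\sqrt{1-t}\sqrt{1-s}$ where $x = t x'$, and using $1-\cos\theta\sim\theta^2$ the denominator factor is comparable to $\big(1+n\sqrt{1-\sqrt t\sqrt s\cos(\tfrac12\sd_\BB(x',y'))-\sqrt{1-t}\sqrt{1-s}}\,\big)^{2\k}$. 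As in the proof of Lemma \ref{lem:intLn}, I will lower bound $1-\sqrt t\sqrt s\cos(\tfrac12\sd_\BB(x',y'))-\sqrt{1-t}\sqrt{1-s}$ from below by a sum of nonnegative pieces: using $\cos(\tfrac12\sd_\BB)\le 1$ and $1-\sqrt{ts}-\sqrt{1-t}\sqrt{1-s}\ge 0$ one gets a term $\sim\sqrt t\sqrt s\,\sd_\BB(x',y')^2$ plus $1-\cos\sd_{[0,1]}(t,s)$; then an argument parallel to \eqref{eq:1-zeta-lbd}/\eqref{eq:1-xi-lbd}. The point is to reduce $J$ to an integral over $\{(y',s): y'\in\BB^d,\ 0\le s\le 1\}$ of an explicit radial-in-$y'$ integrand, and then to an integral over a positive quadrant of a ball as in Lemma \ref{lem:intLn}: first use \eqref{eq:intSS}-type reduction on $\BB^d$ (writing $y'$ in terms of $\|y'\|$ and the angle to $x'$), then change $\|y'\|\mapsto$ radial variable, then $s\mapsto 1-w^2$, and finally an orthogonal change of variables to decouple the $\sqrt t$–$\sqrt{1-t}$ combination, enlarging the domain to a full ball so that the leftover integral is one-dimensional and handled by setting $r = n\sqrt{1-p}$ and invoking $2\k > 2d+2$.

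\textbf{Main obstacle.} The delicate part, exactly as in Lemma \ref{lem:intLn} but one dimension higher and with the extra factor $(s^2(1-\|y'\|^2)+n^{-2})^\mu$ in $W_{\g,\mu}(n;y,s)$, is bounding the weight ratios correctly near the apex $t\approx 0$ and near the singular set $\|x\|\approx t$: one must verify that after the substitutions the powers $s z^{d-2}$, $s^{d/2}$, etc., can be replaced by $(s+n^{-2})$-regularized versions, and that the $\mu$-power in the denominator is dominated by the available factors of $s(1-\|y'\|^2)$ coming from the measure $(1-\|y'\|^2)^{\mu-\f12}\,s^{2\mu+\cdots}$ — this is why one needs $\mu>-\f12$ rather than $\mu\ge 0$, and it is also where the restriction on $\k$ is genuinely used (to keep $2\k$ strictly above the dimensional threshold $2d+2$ after absorbing these regularizations). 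I expect the bookkeeping of these regularizations, together with the orthogonal-transformation step that converts a two-variable integral into a single-variable one, to be the bulk of the work; conceptually, however, the proof is a routine adaptation of Lemma \ref{lem:intLn}, so I would present it compactly, citing \eqref{eq:B+At}, \eqref{eq:ab+=a+b+}, Lemma \ref{lem:|s-t|V} and Lemma \ref{lem:CorA3} at the appropriate moments.
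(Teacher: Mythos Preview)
Your reduction to $p=2$ via Lemma~\ref{lem:CorA3} and the substitution $y=sy'$ match the paper exactly. From there, however, the paper takes a considerably shorter route than you propose. It bounds the weight ratio directly by
\[
\frac{W_{\g,\mu}(y,s)}{W_{\g,\mu}(n;y,s)}\le \frac{c\,\sw_{-1,\g}(s)}{\sw_{\g,d+1}(n;s)\,\sqrt{1-\|y'\|^2}}
\]
(the $\mu$-dependent factors combine via $A^\mu/(A+n^{-2})^\mu\le c$ with $A=s^2(1-\|y'\|^2)$), and then applies the ball-to-hemisphere identity \eqref{eq:BB-SS+}: the factor $(1-\|y'\|^2)^{-1/2}$ is precisely the Jacobian of the lift $y'\mapsto(y',\sqrt{1-\|y'\|^2})\in\SS^d_+$, and under this lift $\sd_\BB(x',y')$ becomes the geodesic distance on $\SS^d$. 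What remains is then literally the integral already estimated in the proof of Lemma~\ref{lem:intLn}, with $d$ replaced by $d+1$, so one simply cites that computation.

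Your plan of polar coordinates $y'=\rho\eta$ on $\BB^d$ followed by an \eqref{eq:intSS}-reduction over $\eta$ could be made to work, but it introduces an extra radial variable $\rho$ that must be integrated out separately before the orthogonal-transformation step applies; the hemisphere lift sidesteps this entirely and reuses the conic-surface lemma wholesale. Note also that neither \eqref{eq:B+At} nor \eqref{eq:ab+=a+b+} nor Lemma~\ref{lem:|s-t|V} is used in the paper's argument here or in Lemma~\ref{lem:intLn}---those are kernel-estimate tools, not integral-estimate tools---so you can drop them from your list.
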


\begin{proof}
Let $J_{p}$ denote the left-hand side of \eqref{eq:intLnV}. Using the doubling property of $W_{\b,\g}$ and
by Lemma \ref{lem:CorA3}, it is sufficient to estimate $J_{2,\k}$. Setting $y = s y'$, $y' \in \BB^d$, we obtain
\begin{align*}
J_2  =   \int_0^1 s^d \int_{\BB^d} \frac{ W_{\g,\mu}(s y',s)}{W_{\g,\mu} (n; s y', s) 
      (1 + n \sd_{\VV}( (x,t), (s y',s)) )^{2\k}}  \d y' \d s . 
\end{align*} 
Using $\sw_{\g,d} (n; t)$ defined in \eqref{eq:w(n;t)}, it follows readily that 
$$
   \frac{W_{\g,\mu}(y,s)}{W_{\g,\mu}(n; y,s)} = c \frac{s \sw_{-1,\g}(s)}{(s^2-\|y\|^2+n^{-2})^{\f12} \sw_{\g,d+1}(n;s)} 
       \le c \frac{ \sw_{-1,\g} (n; s)}{\sw_{\g,d+1}(n;s) \sqrt{1-\|y'\|^2}}, 
$$ 
which leads to 
\begin{align*}
J_2  \le c  \int_0^1 s^d \int_{\BB^d} \frac{\sw_{-1,\g} (n; s)}{\sw_{\g,d+1}(n;s) (1 + n \sd_{\VV}( (x,t), (s y',s)) )^{2\k}}
    \frac{\d y'} {\sqrt{1-\|y'\|^2} }\d s. 
\end{align*} 
Setting $x= t x'$, $X = (x', \sqrt{1-\|x'\|^2})$ and $Y = (y', \sqrt{1-\|y'\|^2})$, so that $\sd_{\BB^d}(x',y') 
= \sd_{\SS^d}(X,Y)$, we use the identity 
\begin{equation}\label{eq:BB-SS+}
   \int_{\BB^d} g\left(y', \sqrt{1-\|y'\|^2} \right) \frac{d y'}{\sqrt{1-\|y'\|^2}} =  \int_{\SS_+^d} g(y) \d \s(y),
\end{equation}
where $\SS_+^d$ denotes the upper hemisphere of $\SS^d$, which allows us to follow the proof of 
Proposition \ref{prop:intLn} to obtain
\begin{align*}
  J \, & \le c   \int_0^1 \int_{-1}^1 \frac{s^{d} \sw_{-1,\g}(s)(1-u^2)^{\f{d-2}{2}} } 
      {\sw_{\g,d+1}(n; s) \left(1 + n \arccos \left(\sqrt{t s} \sqrt{\frac{1+u}{2}}+ \sqrt{1-t}\sqrt{1-s}\right) \right)^{2 \k}} \d u\d s.
\end{align*}
Comparing with the proof of Proposition \ref{prop:intLn}, we see that the above integral with $d$ replaced by $d-1$ 
has already appeared in that proof, so that it is bounded by $c n^{-d-1}$ as the proof of Proposition \ref{prop:intLn}
shows. The proof is completed. 
\end{proof}

\begin{prop}\label{prop:intLnV}
Let $d\ge 2$, $\mu \ge 0$ and $\g \ge -\f12$. For $0<p<\infty$ and $(x,t) \in \VV^{d+1}$,  
\begin{equation*}
   \int_{\VV^{d+1}} \left| \Lb_n\big(W_{\g,\mu};(x,t),(y,s)\big) \right| ^p W_{\g,\mu}(y,s) \d y \d s 
     \le c \left(\frac{n^{d+1}}{W_{\g,\mu}(n;x,t)}\right)^{p-1}.
\end{equation*}
\end{prop}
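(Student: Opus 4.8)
The plan is to follow the pattern of Proposition \ref{prop:intLn} on the conic surface, combining the pointwise highly localized estimate for the kernel with the integral bound just established. First I would invoke Theorem \ref{thm:kernelV} with an exponent $\k$ chosen large enough (depending on $p$, $d$, $\g$, $\mu$) to control the subsequent integration. This gives, for any $\k > 0$,
\begin{equation*}
\left|\Lb_n\big(W_{\g,\mu};(x,t),(y,s)\big)\right|^p
 \le \frac{c_\k\, n^{(d+1)p}}{W_{\g,\mu}(n;x,t)^{p/2}\, W_{\g,\mu}(n;y,s)^{p/2}
   \big(1+n\sd_\VV((x,t),(y,s))\big)^{\k p}}.
\end{equation*}

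Next I would multiply by $W_{\g,\mu}(y,s)$ and integrate over $\VV^{d+1}$, pulling the $(x,t)$-dependent factors out of the integral:
\begin{equation*}
\int_{\VV^{d+1}}\left|\Lb_n\big(W_{\g,\mu};(x,t),(y,s)\big)\right|^p W_{\g,\mu}(y,s)\,\d y\,\d s
 \le \frac{c_\k\, n^{(d+1)p}}{W_{\g,\mu}(n;x,t)^{p/2}}
   \int_{\VV^{d+1}}\frac{W_{\g,\mu}(y,s)\,\d y\,\d s}{W_{\g,\mu}(n;y,s)^{p/2}\big(1+n\sd_\VV((x,t),(y,s))\big)^{\k p}}.
\end{equation*}
The remaining integral is exactly the left-hand side of Lemma \ref{lem:intLnV}, so choosing $\k$ so that $\k > \frac{2d+2}{p} + (\g+\mu+\tfrac{d}{2})\,|\tfrac1p-\tfrac12|$ (which is always possible) bounds it by $c\, n^{-d-1}\, W_{\g,\mu}(n;x,t)^{1-p/2}$. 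Substituting this estimate gives
\begin{equation*}
\int_{\VV^{d+1}}\left|\Lb_n\big(W_{\g,\mu};(x,t),(y,s)\big)\right|^p W_{\g,\mu}(y,s)\,\d y\,\d s
 \le \frac{c\, n^{(d+1)p}\, n^{-d-1}\, W_{\g,\mu}(n;x,t)^{1-p/2}}{W_{\g,\mu}(n;x,t)^{p/2}}
 = c\left(\frac{n^{d+1}}{W_{\g,\mu}(n;x,t)}\right)^{p-1},
\end{equation*}
which is the claimed bound.

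Since both ingredients — Theorem \ref{thm:kernelV} (Assertion 1 for $W_{\g,\mu}$) and Lemma \ref{lem:intLnV} — are already in hand, there is no genuine obstacle here; the only point requiring a word of care is the order of quantifiers in choosing $\k$. One fixes $p$ first, then picks $\k$ large enough for Lemma \ref{lem:intLnV} to apply, and only then invokes Theorem \ref{thm:kernelV} with that $\k$; the constant $c$ in the final inequality then depends on $p$, $d$, $\g$, $\mu$ and the cut-off function $\wh a$, as usual. This mirrors verbatim the proof of Proposition \ref{prop:intLn} on $\VV_0^{d+1}$, so the exposition can be kept to a single sentence noting that the result is immediate from Theorem \ref{thm:kernelV} and Lemma \ref{lem:intLnV}.
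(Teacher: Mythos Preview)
Your proof is correct and follows exactly the approach the paper takes: the paper's own proof is a single sentence stating that the result follows immediately from applying Lemma~\ref{lem:intLnV} to the estimate of Theorem~\ref{thm:kernelV}, and you have simply spelled out the details of that application. Your remark on the order of choosing $\k$ after fixing $p$ is the only point requiring care, and you have handled it correctly.
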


This follows immediately from applying Lemma \ref{lem:intLnV} on (i) of Theorem \ref{thm:kernelV}.

\begin{cor}
For $d\ge 2$ and $\g \ge -\f12$, the space $(\VV^{d+1}, W_{\g,0}, \sd_{\VV})$ is a localizable 
homogeneous space. 
\end{cor}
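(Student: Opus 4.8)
The plan is to unwind the definition of a localizable homogeneous space: by Definition \ref{def:Assertions} and the definition following it, one must check that $W_{\g,0}$ is a doubling weight on $\VV^{d+1}$ with respect to $\sd_{\VV}$ and that the kernels $\Lb_n(W_{\g,0};\cdot,\cdot)$ satisfy Assertions 1, 2 and 3. Every ingredient is already in hand in this section, and the only bookkeeping is the dictionary between the two normalizations $W_{\g,0}(\cb((x,t),n^{-1}))$ and $W_{\g,0}(n;x,t)$. The doubling property is Proposition \ref{prop:capV} applied with $\b=\mu=0$ (the hypotheses $0>-d$, $\g\ge-\f12>-1$, $\mu=0\ge 0$ all hold), which gives $\a(W_{\g,0})=2d+2\g+1$ and, evaluated at radius $r=n^{-1}$,
\begin{equation*}
   W_{\g,0}\big(\cb((x,t),n^{-1})\big)\sim n^{-(d+1)}\big(t+n^{-2}\big)^{\f{d-1}2}\big(1-t+n^{-2}\big)^{\g+\f12}=n^{-(d+1)}W_{\g,0}(n;x,t).
\end{equation*}
I will use this comparison freely to pass back and forth between the two normalizations.

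For Assertion 1 I would simply invoke Theorem \ref{thm:kernelV} with $\mu=0$ and substitute $n^{d+1}/W_{\g,0}(n;\cdot,\cdot)\sim 1/W_{\g,0}(\cb(\cdot,n^{-1}))$ from the comparison above; the result is exactly the bound required. Assertion 3 (the case $p=1$) is Lemma \ref{lem:intLnV} with $\mu=0$, whose hypotheses $\g>-1$ and $\mu>-\f12$ are met and in which $\k$ may be taken arbitrarily large, again after the same substitution; Proposition \ref{prop:intLnV} then records the consequent $L^p$ bound and needs no separate argument.

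Assertion 2 is the only point with any real content, and it is precisely where the choice $\mu=0$ cannot be relaxed (cf. the Remark after Theorem \ref{thm:L-LkernelV}). Theorem \ref{thm:L-LkernelV} furnishes, for $(x_1,t_1)\in\cb((x_2,t_2),c^*n^{-1})$ with $c^*$ small,
\begin{equation*}
  \big|\Lb_n(W_{\g,0};(x_1,t_1),(y,s))-\Lb_n(W_{\g,0};(x_2,t_2),(y,s))\big|\le\frac{c_\k\,n^{d+2}\,\sd_{\VV}((x_1,t_1),(x_2,t_2))}{\sqrt{W_{\g,0}(n;x_2,t_2)}\sqrt{W_{\g,0}(n;y,s)}\,\big(1+n\,\sd_{\VV}((x_2,t_2),(y,s))\big)^{\k}}.
\end{equation*}
To put this in the exact shape of Assertion 2 I would take $\delta_0=c^*$, use Lemma \ref{lem:|s-t|V} to note $W_{\g,0}(n;x_1,t_1)\sim W_{\g,0}(n;x_2,t_2)$ once $(x_1,t_1)\in\cb((x_2,t_2),c^*n^{-1})$, write $n^{d+2}=n\cdot n^{d+1}$, and absorb the factor $\sqrt{W_{\g,0}(n;x_2,t_2)/W_{\g,0}(n;y,s)}\lesssim(1+n\,\sd_{\VV}((x_2,t_2),(y,s)))^{\a(W_{\g,0})/2}$ — supplied by the doubling estimate \eqref{eq:w(x)/w(y)} — into the free exponent $\k$. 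Since $\k$ is arbitrary this leaves an inequality of exactly the form of Assertion 2. Combining this with Assertions 1 and 3 and the doubling property completes the proof. I do not expect any obstacle within this step: all the delicate estimation — in particular the lower bound \eqref{eq:1-xi-lbd} for $1-\xi$, which is what forces $\mu=0$ — has already been carried out inside Theorems \ref{thm:kernelV} and \ref{thm:L-LkernelV}, so what remains is purely the assembly just described.
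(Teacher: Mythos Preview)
Your proposal is correct and matches the paper's approach: the paper states this corollary without proof, treating it as the immediate consequence of Proposition \ref{prop:capV} (doubling), Theorem \ref{thm:kernelV} (Assertion 1), Theorem \ref{thm:L-LkernelV} (Assertion 2), and Lemma \ref{lem:intLnV} (Assertion 3), exactly as you assemble them. Your extra bookkeeping --- the comparison $W_{\g,0}(\cb((x,t),n^{-1}))\sim n^{-(d+1)}W_{\g,0}(n;x,t)$ and the use of \eqref{eq:w(x)/w(y)} to trade $\sqrt{W_{\g,0}(n;y,s)}$ for $\sqrt{W_{\g,0}(n;x_2,t_2)}$ at the cost of a power of $(1+n\,\sd_{\VV})$ --- is precisely what is needed to reconcile the form of Theorem \ref{thm:L-LkernelV} with the exact statement of Assertion 2, and the paper leaves this implicit.
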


\subsection{Maximal $\ve$-separated sets and MZ inequality} \label{sec:ptsV}

We provide a construction of maximal $\ve$-separated sets on the solid cone as defined in 
Definition \ref{defn:separated-pts}.  
 
Our construction follows the approach in Subsection \ref{sec:ptsV0}. Instead of starting with
$\ve$-separated sets on the unit sphere, we now need such sets on the unit ball $\BB^d$. 
We adopt the following notation. For $\ve > 0$, we denote by $\Xi_{\BB}(\ve)$ a maximal 
$\ve$-separated set on the unit ball $\BB^d$ and we let $\BB_u(\ve)$ be the subsets in $\BB^d$ 
so that the collection $\{\BB_u(\ve): u \in \Xi_\BB(\ve)\}$ is a partition of $\BB^d$, and we assume
\begin{equation}\label{eq:ptsB1}
  \cb_{\BB}(u, c_1 \ve) \subset \BB_u(\ve) \subset \cb_{\BB}(u, c_2 \ve), \qquad u \in \Xi_{\BB}(\ve),
\end{equation} 
where $\cb_{\BB}(u,\ve)$ denotes the ball centered at $u$ with radius $\ve$ in $\BB^d$, $c_1$ and 
$c_2$ depend only on $d$. It is known (see, for example, \cite{PX2}) that such a $\Xi_\BB(\ve)$ 
exists for all $\ve > 0$ and its cardinality satisfies  
\begin{equation}\label{eq:ptsB2}
c_d' \ve^{-d} \le \# \Xi_{\BB}(\ve) \le c_d \ve^{-d}.
\end{equation} 
 
On the solid cone $\VV^{d+1}$ we denote  by $\Xi_{\VV} = \Xi_{\VV}(\ve)$ a maximum 
$\ve$-separated set and denote by $\{\VV(u,t): (tu,t) \in \Xi_{\VV}\}$ a partition of $\VV^{d+1}$. 
We give a construction of these sets. 

Let $\ve > 0$ and let $N = \lfloor \frac{\pi}{2}\ve^{-1} \rfloor$. We define $t_j = \sin^2 \frac{\t_j}2$, 
$t_j^+$ and $t_j^-$, $1 \le j \le N$, as in Subsection \ref{sec:ptsV0}, so that 
$$
   \VV^{d+1} =  \bigcup_{j=1}^N \VV^{(j)}, \quad \hbox{where}\quad \VV^{(j)}:= 
        \left\{(x,t) \in \VV^{d+1}:   t_j^- < t \le t_j^+ \right \}.  
$$
Let $\ve_j := (2 \sqrt{t_j})^{-1} \pi \ve$. Then $\Xi_\BB(\ve_j)$ is the maximal 
$\ve_j$-separated set of $\BB^d$ such that, for each $j \ge 1$, $\{\BB_u(\ve_j):  u \in \Xi_\BB(\ve_j)\}$
is a partition of $\BB^d$ and 
$
   \# \Xi_\BB(\ve_j) \sim \ve_j^{-d}.
$
For each $j =1,\ldots, N$, we decompose $\VV^{(j)}$ by 
$$
 \VV^{(j)} =  \bigcup_{u \in  \Xi_\BB(\ve_j)} \VV(u,t_j), \quad \hbox{where}\quad 
 \VV(u,t_j):= \left\{(t v,t):  t_j^- < t \le t_j^+, \, v \in \BB_u(\ve_j) \right\}.
$$
Finally, we define the set $\Xi_{\VV}$ of $\VV^{d+1}$ by
$$
   \Xi_{\VV} = \big\{(t_j u, t_j): \,  u \in \Xi_\BB(\ve_j), \, 1\le j \le N \big\}. 
$$

\begin{prop} \label{prop:subsetV}
Let $\ve > 0$ and $N = \lfloor \frac{\pi}{2} \ve^{-1} \rfloor$. Then $\Xi_{\VV}$ is a maximal $\ve$-separated 
set of $\VV^{d+1}$ and $\{\VV(t_j u, t_j): u \in \Xi_\BB(\ve_j), \, 1\le j \le N \}$ is a partition 
$$
   \VV^{d+1} =  \bigcup_{j=1}^N \bigcup_{u \in \Xi_\BB(\ve_j)} \VV(u,t_j).
$$
Moreover, there are positive constants $c_1$ and $c_2$ depending only on $d$ such that 
\begin{equation}\label{eq:incluVcap}
      \cb \big((t_j u,t_j), c_1 \ve\big) \subset \VV(u,t_j) \subset \cb \big( (t_j u,t_j), c_2 \ve\big), \qquad (t_j u, t_j) \in
       \Xi_\VV, 
\end{equation}
and $c_d'$ and $c_d$  depending only on $d$ such that 
$$
c_d' \ve^{-d-1} \le \# \Xi_{\VV} \le c_d \ve^{-d-1}. 
$$
\end{prop}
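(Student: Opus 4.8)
The plan is to follow the template of Proposition~\ref{prop:subsetV0} verbatim, replacing the role of the unit sphere $\sph$ by the unit ball $\BB^d$ and the identities \eqref{eq:d=d+d}--\eqref{eq:d2=d2+d2} for $\sd_{\VV_0}$ by their counterparts in Proposition~\ref{prop:cos-d} for $\sd_{\VV}$. First I would verify that $\Xi_{\VV}$ is $\ve$-separated. Take two distinct points $(t_j u, t_j)$ and $(t_k w, t_k)$ in $\Xi_{\VV}$. If $j \ne k$, then by \eqref{eq:dV=dB+dt} (or directly from the definition of $\sd_{\VV}$), $\sd_{\VV}\big((t_j u, t_j),(t_k w, t_k)\big) \ge \sd_{[0,1]}(t_j,t_k) = \tfrac12|\t_j-\t_k| \ge \tfrac{\pi}{2N} \ge \ve$. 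If $j = k$, then $u,w$ are distinct elements of $\Xi_\BB(\ve_j)$, so $\sd_{\BB}(u,w) \ge \ve_j$, and Proposition~\ref{prop:cos-d} together with $\tfrac2\pi\phi \le \sin\phi \le \phi$ gives $\sd_{\VV}\big((t_j u,t_j),(t_j w,t_j)\big) \ge \tfrac2\pi \sqrt{t_j}\,\sd_{\BB}(u,w) \ge \tfrac2\pi \sqrt{t_j}\,\ve_j = \ve$. For the cardinality, $\#\Xi_\BB(\ve_j) \sim \ve_j^{-d}$ by \eqref{eq:ptsB2} and $\ve_j \sim \ve/\t_j$, whence
$$
  \#\Xi_{\VV} = \sum_{j=1}^N \#\Xi_\BB(\ve_j) \sim \sum_{j=1}^N \ve_j^{-d} \sim \ve^{-d}\sum_{j=1}^N \t_j^{d} \sim \ve^{-d} N \sim \ve^{-d-1},
$$
using $\sum_{j=1}^N \t_j^{d} \sim N$.

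Next I would establish the cap inclusions \eqref{eq:incluVcap}, which also gives maximality (the right-hand inclusion forces $\VV^{d+1} = \bigcup \cb((t_j u,t_j), c_2\ve)$ and hence \eqref{eq:def-pts2}). The argument is exactly as in Proposition~\ref{prop:subsetV0}: one first records that for $0 < \delta < \pi/2$,
$$
  \sc_{[0,1]}(t_j,\delta/N) \subset \{s : (s v,s) \in \VV(u,t_j)\} \subset \sc_{[0,1]}(t_j,\pi/N),
$$
then shows that $s \in \sc_{[0,1]}(t_j,\delta/N)$ with $4\delta(1+\delta) < \tfrac12$ implies $|t_j - s| \le \tfrac12 t_j$, hence $\tfrac12 t_j \le s \le \tfrac32 t_j$, using $N^{-1} \le 2\sqrt{t_j}$ as in the surface case; similarly $s \in \sc_{[0,1]}(t_j,\pi/N)$ implies $s \le c_* t_j$. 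Combining this with $\cb_{\BB}(u,b_1\ve_j) \subset \BB_u(\ve_j) \subset \cb_{\BB}(u,b_2\ve_j)$ from \eqref{eq:ptsB1} and the two-sided equivalence $\sd_{[0,1]}(t,s) + (ts)^{1/4}\sd_{\BB}(x',y') \sim \sd_{\VV}((x,t),(y,s))$ of Proposition~\ref{prop:cos-d}, one checks both inclusions in \eqref{eq:incluVcap} for suitable $c_1 < \delta$ and $c_2 > b_2$: if $\sd_{\VV}\big((tu',t),(t_j u,t_j)\big) \le c_1\ve$, then $\sd_{[0,1]}(t,t_j) \le c_1\ve \le \delta/N$ so $t \ge t_j/2$, and $(t t_j)^{1/4}\sd_{\BB}(u',u) \le c\,c_1\ve$, whence $\sd_{\BB}(u',u) \le 2^{1/4} c\, c_1\ve/\sqrt{t_j} \le b_1\ve_j$ once $c_1$ is small; the reverse inclusion is the mirror argument with the upper bound $t \le c_* t_j$.

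There is essentially no hard part here: the statement is a mechanical transcription of Proposition~\ref{prop:subsetV0}, and every geometric input has already been supplied—Proposition~\ref{prop:cos-d} is the exact analog of Proposition~\ref{eq:cos-dist}, and $\BB^d$ carries maximal $\ve$-separated sets with the needed cap structure and cardinality \eqref{eq:ptsB1}--\eqref{eq:ptsB2}, just as $\sph$ did via \eqref{eq:ptsV01}--\eqref{eq:ptsV02}. The only point requiring a moment's care is that the fibre dimension has shifted by one: $\BB^d$ has (real) dimension $d$ whereas $\sph$ has dimension $d-1$, which is precisely why the cardinality here is $\ve^{-d-1}$ rather than $\ve^{-d}$ and why the exponent in the summation $\sum \t_j^d$ (rather than $\sum \t_j^{d-1}$) still evaluates to $\sim N \sim \ve^{-1}$, giving the advertised count. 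I would write the proof in the same style and length as that of Proposition~\ref{prop:subsetV0}, citing Proposition~\ref{prop:cos-d}, Lemma~\ref{lem:|s-t|V}, and \eqref{eq:ptsB1}--\eqref{eq:ptsB2} at the appropriate places.
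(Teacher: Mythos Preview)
Your proposal is correct and matches the paper's approach exactly: the paper states that the proof is parallel to that of Proposition~\ref{prop:subsetV0} and ``can be carried out almost verbatim under obvious modifications such as replacing $\sd_\SS$ by $\sd_\BB$ and $\VV_0$ by $\VV$,'' which is precisely what you outline. Your observation about the dimension shift (fibre dimension $d$ versus $d-1$, giving $\ve^{-d-1}$ and the sum $\sum \t_j^{d}$) is the only point requiring attention, and you have it right.
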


\begin{proof}
The proof is parallel to that of Proposition \ref{prop:subsetV0}. In fact, it can be carried out almost verbatim
under obvious modifications such as replacing $\sd_\SS$ by $\sd_\BB$ and $\VV_0$ by $\VV$.
We shall omit it. 
\end{proof}

Even though we only established Assertion 2 for $W_{\g,0}$ for $\g \ge 0$, it is sufficient for 
applying Theorem \ref{thm:MZinequalityV} to conclude that the Marcinkiewicz-Zygmund inequality
on a maximal $\ve$-separated set holds for any doubling weight on $\VV^{d+1}$. 

\begin{thm} \label{thm:MZinequalityV}
Let $W$ be a doubling weight on $\VV^{d+1}$. Let  $\Xi_{\VV}$ be a maximal $\f \delta n$-separated 
subset of $\VV^{d+1}$ and $0 < \delta \le 1$.
\begin{enumerate}[$(i)$]
\item For all $0<p< \infty$ and $f\in\Pi_m^{d+1}$ with $n \le m \le c n$,
\begin{equation*}
  \sum_{z \in \Xi_{\VV}} \Big( \max_{(x,t)\in \cb((z,r), \f \delta n)} |f(x,t)|^p \Big)
     W\!\left(\cb((z, r), \tfrac \delta n) \right) \leq c_{W,p} \|f\|_{p,W}^p
\end{equation*}
where $c_{W,p}$ depends on $p$ when $p$ is close to $0$ and on the doubling constant $\a(W)$.
\item For $0 < r < 1$, there is a $\delta_r > 0$ such that for $\delta \le \delta_r$, $r \le p < \infty$ and 
$f \in \Pi_n^{d+1}$,  
\begin{align*}
  \|f\|_{p,W}^p \le c_{W,r} \sum_{z \in\Xi}
       \Big(\min_{(x,t)\in \cb\bigl((z,r), \tfrac{\delta}n\bigr)} |f(x,t)|^p\Big)
          W\bigl(\cb((z,r), \tfrac \delta n)\bigr)
\end{align*}
where $c_{W,r}$ depends only on $L(W)$ and on $r$ when $r$ is close to $0$.
\end{enumerate}
\end{thm}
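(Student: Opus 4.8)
The plan is to derive both inequalities as the specialization of the general Marcinkiewicz--Zygmund inequality, Theorem~\ref{thm:MZinequality}, to the domain $\Omega = \VV^{d+1}$ equipped with the intrinsic distance $\sd_{\VV}$, the role of the metric balls $B(x,r)$ there being played by the conic caps $\cb((x,t),r)$. To invoke that theorem one first needs the ambient hypotheses of Section~\ref{sec:HomogSpace}: that $\VV^{d+1}$ admits a localizable homogeneous space; that the Lebesgue measure $\sm$ on $\VV^{d+1}$ is doubling with respect to $\sd_{\VV}$ and satisfies $\min_{(x,t)\in\VV^{d+1}}\sm(\cb((x,t),\ve)) > 0$ for each $\ve>0$; and that maximal $\frac{\delta}{n}$-separated subsets exist. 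The first of these is precisely the Corollary just established, namely that $(\VV^{d+1}, W_{\g,0}, \sd_{\VV})$ is localizable for $\g \ge -\f12$. The second follows from Proposition~\ref{prop:capV} applied with $\b = \g = 0$ and $\mu = \f12$, which gives $\sm(\cb((x,t),r)) \sim r^{d+1}(t+r^2)^{\frac{d-1}{2}}(1-t+r^2)^{\f12}(t^2-\|x\|^2+r^2)^{\f12}$, an expression that is visibly doubling in $r$ and bounded below on $\VV^{d+1}$ for fixed $r$. The third is Proposition~\ref{prop:subsetV}, whose inclusion \eqref{eq:incluVcap} also records that the cells of the associated partition are comparable to caps centered at the points of $\Xi_{\VV}$.

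With these in place, Theorem~\ref{thm:MZinequality} applies verbatim with $\sw = W$ on $\Omega = \VV^{d+1}$. Part~(i) of the statement is then exactly inequality \eqref{eq:MZ1}, read with $\Omega = \VV^{d+1}$, $B(z,\tfrac{\delta}{n}) = \cb((z,r),\tfrac{\delta}{n})$, and $\Xi = \Xi_{\VV}$ the maximal $\frac{\delta}{n}$-separated set of Proposition~\ref{prop:subsetV}; part~(ii) is inequality \eqref{eq:MZ2}, read the same way. The constants retain the stated dependence on the doubling constant $L(W)$ (equivalently on the doubling index $\a(W)$) and degrade only as $p$, respectively $r$, approaches $0$, since that is precisely how they enter Theorem~\ref{thm:MZinequality}.

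Since the argument reduces to a direct citation of the Section~2 machinery, there is no genuine obstacle; the only points deserving a sentence of care are the bookkeeping ones, namely confirming that the caps $\cb((x,t),r)$ indeed serve as the metric balls in the sense of Subsection~\ref{sec:localizableHS}, and that the underlying localizable weight $W_{\g,0}$ (rather than the arbitrary doubling weight $W$) is what supplies the highly localized kernels used inside the proof of Theorem~\ref{thm:MZinequality}. This mirrors exactly the passage from Theorem~\ref{thm:MZinequality} to Theorem~\ref{thm:MZinequalityV0} carried out on the conic surface in the preceding section.
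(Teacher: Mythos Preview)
Your proposal is correct and matches the paper's approach: the paper does not give a separate proof of this theorem but simply observes (in the sentence immediately preceding the statement) that the localizability of $(\VV^{d+1}, W_{\g,0}, \sd_{\VV})$ suffices to invoke the general Marcinkiewicz--Zygmund inequality of Theorem~\ref{thm:MZinequality}, exactly as was done for the conic surface in Theorem~\ref{thm:MZinequalityV0}. Your additional remarks about verifying the doubling of Lebesgue measure via Proposition~\ref{prop:capV} and the existence of maximal separated sets via Proposition~\ref{prop:subsetV} make explicit the ambient hypotheses that the paper leaves implicit.
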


\subsection{Cubature rules and localized tight frames}\label{sec:CFframeV}
We first verify Assertion 4 in Subsection \ref{subsect:CFunction} by constructing fast decaying polynomials 
on the solid cone. 

\begin{lem}\label{lem:A4V}
Let $d\ge 2$. For each $(x,t) \in \VV^{d+1}$, there is a polynomial $\CT_{x,t}$ of degree $n$ that
satisfies
\begin{enumerate} [   (1)]
\item $\CT_{x,t}(x,t) =1$, $\CT_{x,t}(y,s) \ge c > 0$ if $(y,s) \in \cb( (x,t), \f{\delta}{n})$, and for every $\k > 0$,
$$
   0 \le \CT_{x,t}(y,s) \le c_\k \left(1+ \sd_{\VV}\big((x,t),(y,s)\big) \right)^{-\k}, \quad (y,s) \in \VV^{d+1}.
$$
\item there is a polynomial $q$ of degree $n$ such that $q(x,t) \CT_{x,t}$ is a polynomial of degree $3 n$ 
in $(x,t)$ and $1 \le q(x,t) \le c$. 
\end{enumerate}
\end{lem}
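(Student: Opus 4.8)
The plan is to mimic the construction on the conic surface in Lemma~\ref{lem:A4}, exploiting the identification of $\VV^{d+1}$ with a subset of $\VV_0^{d+2}$ that underlies the distance $\sd_\VV$. Recall that for $(x,t)\in\VV^{d+1}$ we set $X=(x,\sqrt{t^2-\|x\|^2})$, so that $(X,t)\in\VV_0^{d+2}$ and $\sd_\VV((x,t),(y,s))=\sd_{\VV_0^{d+2}}((X,t),(Y,s))$ by \eqref{eq:dV=dV0}. The idea is to take the de~la~Vallée-Poussin-type kernel $S_n$ from the proof of Lemma~\ref{lem:A4}, namely $S_n(\cos\t)=\bigl(\sin((m+\f12)\f\t2)/((m+\f12)\sin\f\t2)\bigr)^{2r}$ with $m=\lfloor n/r\rfloor+1$ and $2r\ge\k$, which is an even algebraic polynomial of degree at most $2n$ satisfying $S_n(1)=1$, the fast decay $0\le S_n(\cos\t)\le c(1+n\t)^{-2r}$, and the lower bound $S_n(\cos\t)\ge(2/\pi)^{2r}$ for $0\le\t\le 2\pi/(2m+1)$. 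Then I would define, for fixed $(x,t)\in\VV^{d+1}$,
\[
 \CT_{x,t}(y,s):=\frac{S_n\!\Bigl(\sqrt{\tfrac{\la X,Y\ra+ts}{2}}+\sqrt{1-t}\sqrt{1-s}\Bigr)
    +S_n\!\Bigl(\sqrt{\tfrac{\la X,Y\ra+ts}{2}}-\sqrt{1-t}\sqrt{1-s}\Bigr)}{1+S_n(2t-1)},
\]
which is the direct analogue of the formula in Lemma~\ref{lem:A4}, with the role of $\la x,y\ra$ on the cone $\VV_0^{d+2}$ played by $\la X,Y\ra$.

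The first point to check is that $\CT_{x,t}$ really is a polynomial of degree at most $n$ in the variables $(y,s)\in\VV^{d+1}$. Since $S_n$ is even, it is a sum of even monomials $z^{2k}$, and $(\sqrt a+\sqrt b)^{2k}+(\sqrt a-\sqrt b)^{2k}$ expands by the binomial theorem into a polynomial of degree $k$ in $a$ and $b$. Applying this with $a=\tfrac{\la X,Y\ra+ts}{2}$ and $b=(1-t)(1-s)$, the numerator is a polynomial of degree at most $n$ in $a$ and $b$; and $a$ is an affine function of $\la X,Y\ra=\la x,y\ra+\sqrt{t^2-\|x\|^2}\sqrt{s^2-\|y\|^2}$ and of $ts$. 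The one genuinely new feature compared with $\VV_0^{d+1}$ is the term $\sqrt{t^2-\|x\|^2}\sqrt{s^2-\|y\|^2}$: it is not itself polynomial in $(y,s)$, but $\sqrt{s^2-\|y\|^2}$ appears only to even powers in the combined expansion of $a^j$ after symmetrization, because $S_n$ is even in $z$ and the numerator is symmetric under $Y\mapsto(y,-\sqrt{s^2-\|y\|^2})$; hence each occurrence comes as $(s^2-\|y\|^2)$, which is a polynomial. (This is exactly why one uses $(X,t)\in\VV_0^{d+2}$ rather than working naively; it also forces the degree bound, and the $q(x,t)\CT_{x,t}$ with $q(x,t)=1+S_n(2t-1)$, a polynomial of degree $n$ with $1\le q\le c$, to land in $\Pi_{3n}$, matching the clause~(2) of the statement —- note the extra factor of degree up to $2n$ from the numerator plus $n$ from $q$.) I would spell out this parity/symmetry argument carefully, since it is the crux of the polynomiality claim.

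Granting polynomiality, the remaining estimates transcribe the proof of Lemma~\ref{lem:A4} verbatim. Normalization: $\la X,X\ra=t^2$, so $a=t^2$ and $b=(1-t)^2$ at $(y,s)=(x,t)$, giving $\CT_{x,t}(x,t)=(S_n(1)+S_n(2t-1))/(1+S_n(2t-1))=1$. Lower bound: since $0\le S_n(2t-1)\le1$ and $\sqrt{a}+\sqrt{b}=\cos\sd_\VV((x,t),(y,s))$, one has $\CT_{x,t}(y,s)\ge\tfrac12 S_n(\cos\sd_\VV((x,t),(y,s)))\ge\tfrac12(2/\pi)^{2r}$ whenever $\sd_\VV((x,t),(y,s))\le 2\pi/(2m+1)$, hence $\ge c>0$ on $\cb((x,t),\tfrac\delta n)$ for $\delta$ small. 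Upper bound: since $1-(\sqrt a-\sqrt b)\ge 1-(\sqrt a+\sqrt b)=1-\cos\sd_\VV\sim\sd_\VV^2$ by the elementary $1-\cos\t\sim\t^2$, the fast decay of $S_n$ with $\t\sim\sqrt{1-\cos\t}$ gives $0\le\CT_{x,t}(y,s)\le c(1+n\sqrt{1-\cos\sd_\VV((x,t),(y,s))})^{-2r}\le c_\k(1+n\sd_\VV((x,t),(y,s)))^{-\k}$. The main obstacle is therefore not the estimates, which are routine once the template is in place, but the verification that $\CT_{x,t}$ is a polynomial of the claimed degree — i.e.\ controlling the square roots $\sqrt{t^2-\|x\|^2}$, $\sqrt{s^2-\|y\|^2}$ through the evenness of $S_n$; I would present that step in full and then be brief on the rest, citing the proof of Lemma~\ref{lem:A4}.
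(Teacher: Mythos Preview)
There is a genuine gap exactly at the step you flag as the crux. Your numerator is \emph{not} invariant under $Y\mapsto Y_*=(y,-\sqrt{s^2-\|y\|^2})$. The symmetrization built into the formula of Lemma~\ref{lem:A4} is over the sign of $\sqrt{b}=\sqrt{1-t}\sqrt{1-s}$; together with the evenness of $S_n$ this makes the numerator a polynomial in $a=\tfrac12(\la X,Y\ra+ts)$ and $b=(1-t)(1-s)$. But $a$ itself contains $\sqrt{t^2-\|x\|^2}\,\sqrt{s^2-\|y\|^2}$, and nothing in your expression is even in that quantity: expanding $a^j$ produces terms with odd powers of $\sqrt{s^2-\|y\|^2}$ that do not cancel. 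In other words your $\CT_{x,t}$ is literally $T_{(X,t)}(Y,s)$, which is a polynomial in $(Y,s)\in\VV_0^{d+2}$ but not in $(y,s)\in\VV^{d+1}$.

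The paper repairs this with an additional symmetrization: it sets
\[
  \CT_{x,t}(y,s)=\frac{T_{(X,t)}(Y,s)+T_{(X,t)}(Y_*,s)}{1+T_{(X,t)}(X_*,t)},
\]
which is invariant under $Y\leftrightarrow Y_*$ (and, since $\la X_*,Y\ra=\la X,Y_*\ra$, also under $X\leftrightarrow X_*$), hence polynomial in $(y,s)$ and, after clearing denominators, in $(x,t)$. The new denominator is forced by the normalization $\CT_{x,t}(x,t)=1$, since $T_{(X,t)}(X,t)=1$. Your decay and lower-bound arguments then go through unchanged, with the extra observation $\sd_{\VV_0^{d+2}}((X,t),(Y_*,s))\ge\sd_{\VV_0^{d+2}}((X,t),(Y,s))=\sd_\VV((x,t),(y,s))$ handling the $Y_*$ term. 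For clause~(2), the factor $q(x,t)$ must now also absorb $1+T_{(X,t)}(X_*,t)$, which is a function of $\|x\|$ and $t$; this extra factor accounts for the degree $3n$ in $(x,t)$ in the statement, rather than the $2n$ your denominator $1+S_n(2t-1)$ would have given.
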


\begin{proof}
For $(x,t), (y,s) \in \VV^{d+1}$, we introduce the notation $X = (x, \sqrt{t^2-\|x\|^2})$ and $Y = (y, \sqrt{s^2-\|y\|^2})$.  
Moreover, we also denote $Y_* =  (y, - \sqrt{t^2-\|y\|^2})$. Then $(X,t), (Y,s)$ and $(Y_*, s)$ are all elements of
$\VV_0^{d+2}$. Let $T_{(X,t)}$ denote the polynomial of degree $n$ on $\VV_0^{d+2}$ defined in 
Lemma \ref{lem:A4}. We
now define
$$
   \CT_{x,t}(y,s): = \frac{T_{(X,t)} (Y,s)+ T_{(X,t)} (Y_* ,s)} {1 + T_{(X,t)}(X_*,t)}. 
$$
Since $T_{(X,t)}(X,t) =1$, it follows that $\CT_{(x,t)}(x,t) = 1$. Since $\sqrt{\frac{\la X_*, X\ra+t^2}{2}} = \|x\|$, we have
$$
T_{(X,t)}(X_*,t) = \frac{S_n(\|x\|+ (1-t))+ S_n(\|x\| -(1-t))}{1 + S_n(2 t-1)}. 
$$
By $0 \le S_n(t) \le c$, we see that $0 \le T_{(X,t)}(X_*,s) \le c$. In particular, it follows that
$$
    \CT_{x,t}(y,s) \ge c \,T_{(X,t)} (Y,s) \ge c > 0, \qquad (y,s) \in \cb\big((x,t), \tfrac \delta n\big), 
$$
since $\sd_{\VV}((x,t),(y,s)) = \sd_{\VV_0^{d+2}}((X,t),(Y,s))$. Furthermore, since 
$\cos \sd_{\VV_0} ((X,t), (Y,s)) \ge \cos \sd_{\VV_0} ((X,t), (Y_*,s))$, we obtain 
$$
\sd_{\VV_0} ((X,t), (Y_*,s)) \ge  \sd_{\VV_0} ((X,t), (Y,s)) = \sd_{\VV}((x,t),(y,s)). 
$$
Hence, using the estimate of $T_{(X,t)}$ in Lemma \ref{lem:A4}, we conclude that
\begin{align*}
 0 \le  \CT_{(x,t)}(y,s) \,& \le c\left[\big(1+n \sd_{\VV_0}((X,t),(Y,s) ) \big)^{-\k} 
      + \big(1+n \sd_{\VV_0}((X,t),(Y_*,s)) \big)^{-\k}\right] \\
      \,& \le c \big(1+n \sd_{\VV}((x,t),(y,s)) \big)^{-\k}. 
\end{align*} 
Finally, let $q(x,t) = (1+S_n(2t-1)) T_{(X,t)}(X_*,t)$. Then $q(x,t)$ is a polynomial of degree at most $2n$, so
that $q(x,t) \CT_{x,t}$ is a polynomial of degree at most $3 n$ and $1 \le q(x,t) \le c$. This completes the
proof.
\end{proof}

By Propositions \ref{prop:ChristF1} and \ref{prop:ChristF2} and using Theorem \ref{thm:kernelV}, we have 
established the following result on the Christoffel function $\l_n(W)$. 

\begin{cor} \label{cor:ChristFV}
Let $W$ be a doubling weight function on $\VV^{d+1}$. Then 
\begin{equation*}  
   \lambda_n \big(W; (x,t) \big)  \le c \, W\!\left(\cb\left((x,t), \tfrac1n \right) \right).
\end{equation*}
Moreover, for $\g \ge -\f12$ and $\mu \ge 0$, 
$$
\lambda_n \big(W_{\g,\mu}; (x,t) \big)  \ge c \,W_{\g,\mu} \left(\cb\left((x,t), \tfrac1n \right) \right) 
   = c n^{-d-1} W_{\g,\mu}(n; t).
$$
\end{cor}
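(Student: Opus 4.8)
The plan is to deduce Corollary~\ref{cor:ChristFV} directly from the two general propositions on the Christoffel function proved in Section~\ref{subsect:CFunction}, namely Proposition~\ref{prop:ChristF1} (the upper bound under Assertion~4) and Proposition~\ref{prop:ChristF2} (the lower bound under Assertion~1), together with the results just established for the solid cone. For the upper bound, the only thing needed is that Assertion~4 holds on $\VV^{d+1}$ for the distance $\sd_\VV$; this is precisely the content of Lemma~\ref{lem:A4V}, which constructs the fast-decaying polynomial $\CT_{x,t}$ with $\CT_{x,t}(x,t)=1$, $\CT_{x,t}\ge c>0$ on $\cb((x,t),\tfrac\delta n)$, the decay bound $0\le\CT_{x,t}(y,s)\le c_\k(1+n\sd_\VV((x,t),(y,s)))^{-\k}$, and the auxiliary polynomial $q$ of degree $n$ with $q(x,t)\CT_{x,t}$ of degree $\le 3n$ and $1\le q\le c$ — exactly the two items required by Assertion~4 (the degree bound $rn$ with $r=3$ is harmless, since Proposition~\ref{prop:ChristF1} only uses that the products have a degree controlled by a fixed multiple of $n$). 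Hence Proposition~\ref{prop:ChristF1} applies verbatim to any doubling weight $W$ on $\VV^{d+1}$ and gives $\l_n(W;(x,t))\le c\,W(\cb((x,t),\tfrac1n))$.

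For the lower bound, I would invoke Proposition~\ref{prop:ChristF2}, which requires only that $W$ satisfies Assertion~1 of Definition~\ref{def:Assertions}. That assertion for $W_{\g,\mu}$ with $\g\ge-\tfrac12$ and $\mu\ge 0$ is exactly Theorem~\ref{thm:kernelV}: the bound there reads
$$
|\Lb_n(W_{\g,\mu};(x,t),(y,s))|\le \frac{c_\k n^{d+1}}{\sqrt{W_{\g,\mu}(n;x,t)}\sqrt{W_{\g,\mu}(n;y,s)}}\big(1+n\sd_\VV((x,t),(y,s))\big)^{-\k},
$$
and since by definition $W_{\g,\mu}(n;x,t)=n^{d+1}W_{\g,\mu}(\cb((x,t),n^{-2}))$, this is precisely the form $|\Lb_n|\le c_\k/\big(\sqrt{W_{\g,\mu}(B(x,n^{-1}))}\sqrt{W_{\g,\mu}(B(y,n^{-1}))}(1+n\sd)^\k\big)$ once one rescales $n\mapsto n$ and uses $\cb((x,t),n^{-1})$ in place of $\cb((x,t),n^{-2})$; the doubling property of $W_{\g,\mu}$ (Proposition~\ref{prop:capV}) absorbs the difference between radius $n^{-1}$ and $n^{-2}$ into the constants. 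Thus Proposition~\ref{prop:ChristF2} yields $\l_n(W_{\g,\mu};(x,t))\ge c\,W_{\g,\mu}(\cb((x,t),\tfrac1n))$, and by Proposition~\ref{prop:capV} the right-hand side equals, up to constants, $n^{-d-1}W_{\g,\mu}(n;t)$ — where I should note that the notation $W_{\g,\mu}(n;t)$ in the statement is shorthand for $W_{\g,\mu}(n;x,t)$ when the latter happens to depend only on $t$, which is the case exactly when $\mu=0$; for $\mu>0$ the correct reading is $W_{\g,\mu}(n;x,t)$, and I would state it that way to avoid ambiguity.

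Concretely, the write-up is short: first cite Lemma~\ref{lem:A4V} to record that Assertion~4 holds on $\VV^{d+1}$, then apply Proposition~\ref{prop:ChristF1} to get the upper bound for arbitrary doubling $W$; next cite Theorem~\ref{thm:kernelV} to record that $W_{\g,\mu}$ ($\g\ge-\tfrac12$, $\mu\ge0$) satisfies Assertion~1, then apply Proposition~\ref{prop:ChristF2} to get the lower bound, and finally invoke Proposition~\ref{prop:capV} (equivalently the definition of $W_{\g,\mu}(n;x,t)$) to rewrite $W_{\g,\mu}(\cb((x,t),\tfrac1n))\sim n^{-d-1}W_{\g,\mu}(n;x,t)$. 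I do not anticipate a genuine obstacle here, since all the hard analytic work — the kernel estimates of Theorem~\ref{thm:kernelV} and the construction of fast-decaying polynomials in Lemma~\ref{lem:A4V} — has already been done; the only point requiring a line of care is matching the normalizations ($\cb((x,t),n^{-1})$ versus $\cb((x,t),n^{-2})$ in the definition of $W_{\g,\mu}(n;x,t)$, and the fact that the general propositions are phrased with balls of radius $n^{-1}$), which is handled by the doubling property. If anything deserves a sentence of justification it is the observation that the degree bound $3n$ appearing in Lemma~\ref{lem:A4V} (rather than $2n$) still fits Assertion~4, which it does since Assertion~4 allows any fixed multiple $rn$.
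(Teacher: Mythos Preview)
Your proposal is correct and follows exactly the paper's approach: the upper bound comes from Proposition~\ref{prop:ChristF1} via Assertion~4 (Lemma~\ref{lem:A4V}), and the lower bound from Proposition~\ref{prop:ChristF2} via Assertion~1 (Theorem~\ref{thm:kernelV}), with Proposition~\ref{prop:capV} giving the final identification. Your concern about radii $n^{-1}$ versus $n^{-2}$ is unnecessary: the sentence introducing $W_{\g,\mu}(n;x,t)$ contains a typo, but the actual definition $W_{\g,\mu}(n;x,t)=(t+n^{-2})^{\frac{d-1}{2}}(1-t+n^{-2})^{\g+\frac12}(t^2-\|x\|^2+n^{-2})^\mu$ matches Proposition~\ref{prop:capV} with $r=n^{-1}$ directly, so no doubling argument is needed there.
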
 
 
Since the lower bound of the Christoffel function holds for all doubling weight, we see that the cubature
rules in Theorem \ref{thm:cubature} holds for all doubling weights on the solid cone. 

\begin{thm}\label{thm:cubatureV}
Let $W$ be a doubling weight on $\VV^{d+1}$. Let $\Xi$ be a maximum $\frac{\delta}{n}$-separated 
subset of $\VV^{d+1}$. There is a $\delta_0 > 0$ such that for $0 < \delta < \delta_0$ there exist positive numbers 
$\l_{z,r}$, $(z,r) \in \Xi$, so that 
\begin{equation}\label{eq:CFV}
    \int_{\VV^{d+1}} f(x) W(x,t)  \d x \d t = \sum_{(z,r) \in \Xi }\l_{z,r} f(z,r), \qquad 
            \forall f \in \Pi_n^{d+1}. 
\end{equation}
Moreover, $\l_{z,r} \sim W\!\left(\cb((z,r), \tfrac{\delta}{n})\right)$ for all $(z,r) \in \Xi$. 
\end{thm}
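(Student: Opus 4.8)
The plan is to derive Theorem~\ref{thm:cubatureV} as a direct special case of the abstract cubature result Theorem~\ref{thm:cubature}, now that all the structural hypotheses have been verified on the solid cone. First I would recall that $(\VV^{d+1}, W_{\g,0}, \sd_{\VV})$ is a localizable homogeneous space (the Corollary following Proposition~\ref{prop:intLnV}), so there is a weight function, namely $\varpi = W_{\g,0}$, admitting highly localized kernels on $\VV^{d+1}$ with respect to $\sd_{\VV}$, and the Lebesgue measure $\d x\,\d t$ is itself doubling with respect to $\sd_{\VV}$ by Proposition~\ref{prop:capV} (the case $\b=\g=0$, $\mu=\tfrac12$, or more simply the observation that $\cb((x,t),r)$ has Lebesgue volume comparable to $r^{d+1}(t+r^2)^{1/2}$). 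Second, I would observe that Assertion~4 has just been verified on $\VV^{d+1}$ in Lemma~\ref{lem:A4V}: for each $(x,t)$ the polynomial $\CT_{x,t}$ of degree $n$ satisfies $\CT_{x,t}(x,t)=1$, is bounded below on $\cb((x,t),\tfrac\delta n)$, decays faster than any polynomial rate in $\sd_{\VV}$, and there is $q$ with $1\le q\le c$ such that $q(x,t)\CT_{x,t}$ is a polynomial of degree $3n$ in $(x,t)$ — which is exactly the form of Assertion~4 with $r=3$.

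With these two facts in hand, the proof is essentially one sentence: apply Theorem~\ref{thm:cubature} with $\Omega = \VV^{d+1}$, the doubling weight $\sw = W$, the intrinsic distance $\sd = \sd_{\VV}$, and the localizable weight $\varpi = W_{\g,0}$. That theorem produces, for $\delta$ below a threshold $\delta_0$, positive numbers $\l_{z,r}$ indexed by the maximal $\tfrac\delta n$-separated set $\Xi \subset \VV^{d+1}$ such that the quadrature \eqref{eq:CFV} holds for all $f\in\Pi_n(\VV^{d+1})$, with $\l_{z,r}\ge c_1 W(\cb((z,r),\tfrac\delta n))$ unconditionally and, since Assertion~4 holds, the matching upper bound $\l_{z,r}\le c_2 W(\cb((z,r),\tfrac\delta n))$, giving $\l_{z,r}\sim W(\cb((z,r),\tfrac\delta n))$. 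I would also note that the existence of maximal $\tfrac\delta n$-separated subsets of $\VV^{d+1}$ is guaranteed by Proposition~\ref{prop:subsetV}, and that the constant $c_d$ in the finite-overlap bound \eqref{eq:def-pts2} for such sets is controlled by the doubling geometry established in Proposition~\ref{prop:capV}.

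The only minor point requiring a word of care — and the closest thing to an obstacle — is that Theorem~\ref{thm:cubature}, Lemma~\ref{lem:Ass4Q}, and Proposition~\ref{prop:ChristF1} are phrased in terms of "the Lebesgue measure $\sm$ on $\Omega$" being doubling and of balls $B(x,r)$ defined via an abstract intrinsic distance; here $\Omega=\VV^{d+1}$ sits in $\RR^{d+1}$, the measure is genuine $(d+1)$-dimensional Lebesgue measure $\d x\,\d t$, and $\sd_{\VV}$ is the distance of \eqref{eq:distV}. So I would spend one sentence confirming that $\sm = \d x\,\d t$ is doubling with respect to $\sd_{\VV}$ and that $\min_{(x,t)} \sm(\cb((x,t),\ve))\ge c_\ve>0$ — both immediate from Proposition~\ref{prop:capV} with $\mu = \tfrac12$, $\b=\g=0$ (equivalently, from the proof technique there applied directly to the constant weight), so the abstract framework applies verbatim. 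Everything else is a citation. I would therefore present the proof as: "By Proposition~\ref{prop:capV}, the Lebesgue measure on $\VV^{d+1}$ is doubling with respect to $\sd_{\VV}$; by Lemma~\ref{lem:A4V}, Assertion~4 holds on $\VV^{d+1}$; and $(\VV^{d+1}, W_{\g,0}, \sd_{\VV})$ is a localizable homogeneous space. Hence Theorem~\ref{thm:cubature} applies and yields the stated cubature rule with $\l_{z,r}\sim W(\cb((z,r),\tfrac{\delta}{n}))$."
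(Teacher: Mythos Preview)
Your proposal is correct and takes essentially the same approach as the paper: the paper presents Theorem~\ref{thm:cubatureV} as an immediate consequence of the general Theorem~\ref{thm:cubature}, justified by the sentence preceding it, once the localizable homogeneous space structure (for $W_{\g,0}$) and Assertion~4 (Lemma~\ref{lem:A4V}) are in place on $\VV^{d+1}$. Your parenthetical formula for the Lebesgue volume of $\cb((x,t),r)$ is not quite right---Proposition~\ref{prop:capV} with $\mu=\tfrac12$, $\b=\g=0$ gives $r^{d+1}(t+r^2)^{(d-1)/2}(1-t+r^2)^{1/2}(t^2-\|x\|^2+r^2)^{1/2}$---but this does not affect the argument, since the doubling property is all that is needed and follows directly from that proposition.
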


We can now state our local frame on the solid cone. For $j =0,1,\ldots,$ let $\Xi_j$ be a maximal 
$ \frac \delta {2^{j}}$-separated subset in $\VV^{d+1}$, so that 
\begin{equation*}
  \int_{\VV^{d+1}} f(x,t) W(x,t)  \d x \d t = \sum_{(z,r) \in \Xi_j} \l_{(z,r),j} f(z,r), 
     \qquad f \in \Pi_{2^j}^{d+1}.  
\end{equation*}
Denote by $\Lb_n(W)*f$ the near best approximation operator defined by 
$$
\Lb_n(W)*f (x,t) =  \int_{\VV^{d+1}}f(y,s) \Lb_n(W; (x,t),(y,s)) W(y,s)  \d y \d s.
$$
For $j= 0,1,\ldots,$ define the operator $F_j(W)$ by
$$
    F_j(W) * f = \Lb_{2^{j-1}}(W) * f 
$$
and define the frame elements $\psi_{(z,r),j}$ for $(z,r) \in \Xi_j$ by 
$$ 
       \psi_{(z,r),j}(x,t):= \sqrt{\l_{(z,r),j}} F_j((x,t), (z,r)), \qquad (x,t) \in \VV^{d+1}. 
$$ 
Then $\Phi =\{ \psi_{(z,r),j}: (z,r) \in \Xi_j, \, j =1,2,3,\ldots\}$ is a tight frame. 
 
\begin{thm}\label{thm:frameV}
Let $W$ be a doubling weight on $\VV^{d+1}$. If $f\in L^2(\VV^{d+1}, W)$, then
$$ 
   f =\sum_{j=0}^\infty \sum_{(z,r) \in\Xi_j}
            \langle f, \psi_{(z,r), j} \rangle_W \psi_{(z,r),j}  \qquad\mbox{in $L^2(\VV^{d+1}, W)$}
$$  
and
$$ 
\|f\|_{2,W}  = \Big(\sum_{j=0}^\infty \sum_{(z,r) \in \Xi_j} \left|\langle f, \psi_{(z,r),j} \rangle_W\right|^2\Big)^{1/2}.
$$ 
Furthermore, for $W_{\g,\mu}$ with $\g \ge -\f12$ and $\mu \ge 0$, the frame is highly localized in the sense
that, for every $\k >0$, there exists a constant $c_\k >0$ such that 
\begin{equation} \label{eq:needleV}
   |\psi_{(z,r),j}(x,t)| \le c_\s \frac{2^{j(d+1)/2}}{\sqrt{ W_{\g,\mu}(2^{j}; x,t)} (1+ 2^j \sd_{\VV}((x,t),(z,r)))^\k}, 
     \quad (x,t)\in \VV^{d+1}.
\end{equation}
\end{thm}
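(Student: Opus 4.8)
\textbf{Proof plan for Theorem~\ref{thm:frameV}.}
The identity for $f$ and the Parseval-type equality are an immediate application of the abstract frame result Theorem~\ref{thm:frame}, once we know that $(\VV^{d+1}, W_{\g,0}, \sd_\VV)$ is a localizable homogeneous space (Corollary after Proposition~\ref{prop:intLnV}) and that Assertion~4 holds on $\VV^{d+1}$, which is exactly Lemma~\ref{lem:A4V}. Indeed, Theorem~\ref{thm:cubatureV} then supplies, for each $j$, positive weights $\l_{(z,r),j}$ with $\l_{(z,r),j}\sim W(\cb((z,r),2^{-j}))$ realizing the cubature rule exact on $\Pi_{2^j}^{d+1}$, and the cut-off $\wh a$ is taken of type (b) satisfying \eqref{eq:a-frame}–\eqref{eq:a3}. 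With these ingredients the semi-discrete Calder\'on decomposition \eqref{eq:f=F*F*f} holds in $L^2(\VV^{d+1},W)$ and the proof of Theorem~\ref{thm:frame} goes through verbatim in the conic setting: discretizing $F_j*F_j$ by \eqref{eq:CFV} gives $F_j*F_j(x,y)=\sum_{(z,r)\in\Xi_j}\psi_{(z,r),j}(x)\psi_{(z,r),j}(y)$, hence \eqref{eq:f=frame}, and the partial-sum computation $\la f, S_Rf\ra_W=\sum_{j\le R}\sum_{(z,r)}|\la f,\psi_{(z,r),j}\ra_W|^2$ together with $R\to\infty$ yields the tight-frame norm identity. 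So the first two displays require only the remark that all hypotheses of Theorem~\ref{thm:frame} have been checked on $\VV^{d+1}$.

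For the localization estimate \eqref{eq:needleV} the plan is to follow the one-line argument behind Theorem~\ref{p:localization}. By definition $\psi_{(z,r),j}(x,t)=\sqrt{\l_{(z,r),j}}\,F_j(W_{\g,\mu};(x,t),(z,r))$, and for $j\ge 1$ we have $F_j(W_{\g,\mu})=\Lb_{2^{j-1}}(W_{\g,\mu})$, whose kernel is controlled by Theorem~\ref{thm:kernelV}: for every $\k>0$,
\begin{equation*}
|\Lb_{2^{j-1}}(W_{\g,\mu};(x,t),(z,r))|\le \frac{c_\k\,2^{j(d+1)}}{\sqrt{W_{\g,\mu}(2^{j};x,t)}\sqrt{W_{\g,\mu}(2^{j};z,r)}\,(1+2^j\sd_\VV((x,t),(z,r)))^\k}.
\end{equation*}
On the other hand, Corollary~\ref{cor:ChristFV} and Theorem~\ref{thm:cubatureV} give $\l_{(z,r),j}\sim W_{\g,\mu}(\cb((z,r),2^{-j}))\sim 2^{-j(d+1)}W_{\g,\mu}(2^{j};z,r)$, where the last equivalence is just the definition of $W_{\g,\mu}(n;x,t)$ evaluated at $n=2^j$. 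Multiplying $\sqrt{\l_{(z,r),j}}$ into the kernel bound, the factor $\sqrt{W_{\g,\mu}(2^{j};z,r)}$ cancels and one power $2^{j(d+1)/2}$ survives, producing precisely the stated bound \eqref{eq:needleV}. The case $j=0$ is trivial since $F_0\equiv 1$ and $\psi_{(z,r),0}$ is a constant, which is dominated by the right-hand side of \eqref{eq:needleV} with $j=0$.

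There is no genuine obstacle here — the statement is a packaging of earlier results — but the one point that must be handled with a little care is consistency of the weight used for the cubature/frame and the weight whose localized kernels are invoked: Assertion~2 (and hence the full localizable structure) is only established for $W_{\g,0}$, so the frame \emph{identities} \eqref{eq:f=frame}, \eqref{eq:tight-frame} are proved by running Theorem~\ref{thm:frame} with $\varpi=W_{\g,0}$ as the localizable weight while allowing $W$ in the frame to be an arbitrary doubling weight (exactly as in \S2.7). For the \emph{localization} estimate \eqref{eq:needleV}, however, one needs the sharp two-sided control of $\l_{(z,r),j}$ and the kernel decay for the \emph{same} weight $W_{\g,\mu}$; this is legitimate because Theorem~\ref{thm:kernelV} gives Assertion~1 for all $\mu\ge 0$ and Proposition~\ref{prop:intLnV} gives the corresponding Assertion~3, which is all that is used in Propositions~\ref{prop:ChristF1}–\ref{prop:ChristF2} to pin down $\l_n(W_{\g,\mu};\cdot)\sim W_{\g,\mu}(\cb(\cdot,1/n))$ and in the kernel estimate itself. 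Thus the proof splits cleanly: the abstract frame machinery is applied with $\varpi=W_{\g,0}$, and the quantitative decay of the needlets is read off from the $W_{\g,\mu}$-specific estimates, with $\mu\ge 0$ throughout.
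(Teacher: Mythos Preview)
Your proposal is correct and follows essentially the same approach as the paper: the frame identities are deduced from the abstract Theorem~\ref{thm:frame} via Theorem~\ref{thm:cubatureV}, and the localization \eqref{eq:needleV} is obtained by combining the kernel decay of Theorem~\ref{thm:kernelV} with $\l_{(z,r),j}\sim 2^{-j(d+1)}W_{\g,\mu}(2^j;z,r)$ from Corollary~\ref{cor:ChristFV} and \eqref{eq:capV}. Your additional care in distinguishing the roles of $W_{\g,0}$ (the localizable weight driving the abstract machinery) and $W_{\g,\mu}$ (for which only Assertions~1 and~3 are needed in the localization step) is a clarification the paper leaves implicit.
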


The frame elements are well defined for all doubling weight by Theorem \ref{thm:cubatureV}. 
The decomposition is the consequence of Theorem \ref{thm:frame}. Moreover, the localization 
\eqref{eq:needleV} follows from Theorem \ref{thm:kernelV} and $\l_{(z,r),j} \sim 
2^{- j(d+1)} W_{\g,\mu}(2^j;t)$ that holds for $W_{\g,\mu}$ as see from 
Corollary \ref{cor:ChristFV} and \eqref{eq:capV}.

\subsection{Characterization of best approximation} 

For $f\in L^p(\VV^{d+1}, W)$, we denote by $\Eb_n(f)_{p, W}$ the error of best approximation to $f$ 
from $\Pi_n^{d+1}$, the space of polynomials of degree at most $n$, in the norm $\|\cdot\|_{p, W}$, 
$$
    \Eb_n(f)_{p, W}:= \inf_{g \in \Pi_n^{d+1}} \|f - g\|_{p,  W}, \qquad 1 \le p \le \infty.
$$
We give a characterization of this quantity in terms of the modulus of smoothness defined via the operator 
$\sS_{\t,W}$ and the $K$-functional defined via the differential operator $\fD_{\g,\mu}$ for $W_{\g,\mu}$. 

For $f\in L^p(\VV^{d+1}, W_{\g,\mu})$ and $r > 0$, the modulus of smoothness is defined by
$$
  \o_r(f; \rho)_{p,W_{\g,\mu}} = \sup_{0 \le \t \le \rho} 
       \left\| \left(I - \Sb_{\t,W_{\g,\mu}}\right)^{r/2} f\right\|_{p,W_{\g,\mu}}, \quad 1 \le p \le \infty, 
$$
where the operator $\Sb_{\t,W_{\g,\mu}}$ is defined by, for $n = 0,1,2,\ldots$ and $\l = 2\mu+\g+d$, 
$$
 \proj_n(W_{\g,\mu}; \Sb_{\t,W_{\g,\mu}}f) = R_n^{(\l-\f12, -\f12)} (\cos \t) \proj_n(W_{\g,\mu}; f).
$$
Moreover, in terms of the fractional differential operator $(-\fD_{\g,\mu})^{\f r 2}$, the $K$-functional
is defined for a weight $W$ on $\VV^{d+1}$ by 
$$
   \Kb_r(f,\rho)_{p,W} : = \inf_{g \in \CW_p^r(\VV^{d+1}, W)}
      \left \{ \|f-g\|_{p,W} + \rho^r\left\|(-\fD_{\g,\mu})^{\f r 2}f \right\|_{p,W} \right \},
$$
where $\CW_p^r(\VV^{d+1},W)$ denotes the Sobolev space consisting of functions in $L^p(\VV^{d+1}, W)$
with finite $\left\|(-\fD_{\g,\b})^{\f r 2}f \right\|_{p,W}$. 

The weight function $W_{\g,\mu}$ admits Assertion 1 and 3 by Theorem \ref{thm:kernelV} and 
Lemma \ref{lem:intLnV}. We now verify that the Assertion 5 in Subsection \ref{set:Bernstein} holds. 
By Theorem \ref{thm:Delta0V0}, the kernel $L_n^{(r)}(\varpi)$ in Assertion 5 becomes
$$
    \Lb_n^{(r)}\big(W_{\g,\mu}; (x,t),(y,s)\big)=\sum_{k=0}^\infty \wh a\left(\frac{k}{n} \right) (k(k+2\mu+\g+d))^{\f r 2} 
       \Pb_k\big(W_{\g,\mu}; (x,t),(y,s)\big).
$$

\begin{lem}
Let $\g \ge -\f12$ and $\mu \ge 0$. Let $\k > 0$. Then, for $r > 0$ and $(x,t), (y,s) \in \VV^{d+1}$, 
$$
 |  \Lb_n^{(r)}\big(W_{\g,\mu}; (x,t),(y,s)\big)| \le c_\k  \frac{n^{r+d}}{\sqrt{ W_{\g,\mu} (n;x,t) }\sqrt{ W_{\g,\mu} (n; y,s)}
\left(1 + n \sd_{\VV}( (x,t), (y,s)) \right)^{\k}}.
$$
\end{lem}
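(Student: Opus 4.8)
The plan is to imitate, step for step, the proof of the preceding lemma for $\sL_n^{(r)}$ on $\VV_0^{d+1}$, replacing the eigenvalue relation and the addition formula of the conic surface by their solid--cone counterparts, namely \eqref{eq:cone-eigen} of Theorem \ref{thm:Delta0V} and \eqref{eq:PbCone2} of Theorem \ref{thm:PnCone2}. First I would combine these two identities with the quadratic transform \eqref{eq:Jacobi-Gegen} to write, with $\a=\mu+\f{d-1}2$ and $\l=2\mu+\g+d$,
\begin{align*}
 \Lb_n^{(r)}\big(W_{\g,\mu};(x,t),(y,s)\big) = c_{\mu,\g,d}\int_{[-1,1]^3} & L_{n,r}\big(2\xi(x,t,y,s;u,v)^2-1\big)\\
 & \times (1-u^2)^{\mu-1}(1-v_1^2)^{\a-1}(1-v_2^2)^{\g-\f12}\,\d u\,\d v,
\end{align*}
where $\xi$ is the function in \eqref{eq:xi} and
$$
 L_{n,r}(\cdot) = \sum_{k=0}^\infty \wh a\Big(\f kn\Big)\big(k(k+2\mu+\g+d)\big)^{\f r2}\frac{P_k^{(\l-\f12,-\f12)}(1)\,P_k^{(\l-\f12,-\f12)}(\cdot)}{h_k^{(\l-\f12,-\f12)}}.
$$

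Next I would estimate the one--dimensional kernel $L_{n,r}$ by applying \eqref{eq:DLn(t,1)} with $m=0$ to the modified cut-off $\eta(t)=\wh a(t)\big(t\,(t+n^{-1}(2\mu+\g+d))\big)^{r/2}$, which gives, for every $\ell$,
$$
 |L_{n,r}(t)| \le c\,n^{r}\,\frac{n^{2\l+1}}{(1+n\sqrt{1-t})^{\ell}},\qquad -1\le t\le 1.
$$
Substituting this bound into the integral representation above, with $\ell=\k+5\mu+3\g+\f{3d}2+1$, produces an estimate of $\big|\Lb_n^{(r)}(W_{\g,\mu};(x,t),(y,s))\big|$ that, apart from the extra factor $n^{r}$, is identical to the intermediate bound on $\big|\Lb_n(W_{\g,\mu};(x,t),(y,s))\big|$ obtained inside the proof of Theorem \ref{thm:kernelV}. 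From here I would finish exactly as there: use $ts+\la x,y\ra\ge 0$ together with the lower bound \eqref{eq:1-xiV}, that is $1-\xi(x,t,y,s;u,v)\ge 1-\cos\sd_{\VV}((x,t),(y,s))\ge \tfrac2{\pi^2}\,\sd_{\VV}((x,t),(y,s))^2$, to pull out the factor $\big(1+n\,\sd_{\VV}((x,t),(y,s))\big)^{-\k}$, and then invoke Lemma \ref{lem:kernelV} with $\b=\k+5\mu+3\g+\f{3d}2+1$ to bound the remaining integral by a constant times $n^{-(4\mu+2\g+d)}\big/\big(\sqrt{W_{\g,\mu}(n;x,t)}\sqrt{W_{\g,\mu}(n;y,s)}\,(1+n\,\sd_{\VV})^{\mu+\g+\f d2}\big)$. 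Collecting the resulting powers of $n$ then yields the claimed inequality, the only difference from Theorem \ref{thm:kernelV} being the factor $n^{r}$.

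I do not anticipate a genuine difficulty: once the addition formula \eqref{eq:PbCone2} is in hand, the argument is the same routine reduction to the Jacobi kernel that is used throughout Sections 4 and 5. The one point I would state explicitly rather than leave implicit is the verification that the replacement cut-off $\eta(t)=\wh a(t)\big(t\,(t+n^{-1}(2\mu+\g+d))\big)^{r/2}$ still meets the hypotheses of \eqref{eq:DLn(t,1)} --- that it lies in the required smoothness class and vanishes to sufficiently high order at the origin, so that the extra weight $\big(t\,(t+n^{-1}(2\mu+\g+d))\big)^{r/2}$ does not degrade the decay in $(1+n\sqrt{1-t})^{-\ell}$ --- which is handled precisely as in the corresponding $\sL_n^{(r)}$ estimate on $\VV_0^{d+1}$, using that $\wh a$ is admissible and that the extra factor is bounded on $[0,2]$.
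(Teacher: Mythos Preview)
Your proposal is correct and follows essentially the same route as the paper: write $\Lb_n^{(r)}(W_{\g,\mu};\cdot,\cdot)$ via the addition formula \eqref{eq:PbCone2} and the quadratic transform as an integral of $L_{n,r}(2\xi^2-1)$, bound $L_{n,r}$ by applying \eqref{eq:DLn(t,1)} with the modified cut-off $\eta(t)=\wh a(t)\bigl(t(t+n^{-1}(2\mu+\g+d))\bigr)^{r/2}$, and then feed the result into the existing estimate for $\Lb_n(W_{\g,\mu})$ from Theorem \ref{thm:kernelV} and Lemma \ref{lem:kernelV}. The paper's own proof does exactly this, in fact more tersely; your explicit remark about checking the hypotheses on $\eta$ is a point the paper leaves implicit.
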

\begin{proof}
By \eqref{eq:PbCone2}, the kernel can be written as 
\begin{align*}
 \Lb_n^{(r)}\big(W_{\g,\mu};(x,t),(y,s)) = \, & c_{\mu,\g,d} \int_{[-1,1]^3}  L_{n,r}\left(2 \xi(x,t,y,s;u,v)^2-1\right) \\
    & \times (1-u^2)^{\mu-1}(1-v_1^2)^{\a -1} (1-v_2^2)^{\g-\f12} \d u \d v,
\end{align*}
in which $\a = \mu + \f{d-1}{2}$ and $L_{n,r}$ is defined by, with $\l = 2 \a + \g+1$, 
$$
 L_{n,r}(t) = \sum_{k=0}^\infty \wh a\left(\frac{k}{n} \right) (k(k+\g+d-1))^{\f r 2} 
     \frac{P_n^{(\l-\f12, -\f12)}(1)P_n^{(\l-\f12, -\f12)}(t)}{h^{(\l-\f12,-\f12)}}.
$$
Applying \eqref{eq:DLn(t,1)} with $\eta(t) = \wh a(t) \left( t( t + n^{-1} (2\mu+\g+d))\right)^{\f r 2}$ and $m=0$, 
it follows that 
$$
 \left| L_{n,r}(t) \right| \le c n^{r} \frac{n^{2\l+1}}{(1+n\sqrt{1-t})^\ell}.  
$$
Using this estimate, we can then deduce the proof to one that has already appeared for $\Lb_n(W_{\g,\mu})$
in the proof of Theorem \ref{thm:kernelV0}. 
\end{proof}
 
With Assertions 1, 3 and 5 verified for $W_{\g,\mu}$, the characterization of the best approximation by 
polynomials in Subsection \ref{sec:bestapp} holds on the solid cone, which we state below. 

\begin{thm}
Let $f \in L^p(\VV^{d+1}, W)$ if $1 \le p < \infty$ and $f\in C(\VV^{d+1})$ if $p = \infty$. 
Le $r > 0$ and $n =1,2,\ldots$. For $W = W_{\g,\mu}$ with $\g \ge -\f12$ and $\mu \ge 0$, there holds 
\begin{enumerate} [   (i)]
\item direct estimate
$$
  \Eb_n(f)_{p,W_{\g,\mu}} \le c \, \Kb_r (f;n^{-1})_{p,W_{\g,\mu}}.
$$
\item inverse estimate, for $\mu = 0$,  
$$
   \Kb_r(f;n^{-1})_{p,W_{\g,0}} \le c n^{-r} \sum_{k=0}^n (k+1)^{r-1}\Eb_k(f)_{p, W_{\g,0}}.
$$
\end{enumerate}
\end{thm}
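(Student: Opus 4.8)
The plan is to deduce this final theorem directly from the general framework established in Section 3, specifically Theorem \ref{thm:Enf-Kfunctional}, by verifying that the weight function $W_{\g,\mu}$ on the solid cone $\VV^{d+1}$ satisfies all the hypotheses required there. Recall that Theorem \ref{thm:Enf-Kfunctional} requires the weight $\varpi$ to admit Assertions 1, 3, and 5, together with the two structural properties from Subsection \ref{sec:two-properties}: a second order differential operator having orthogonal polynomials as eigenfunctions (Definition \ref{def:LBoperator}) and an addition formula for the reproducing kernels (Definition \ref{defn:additionF}). So the proof is essentially a bookkeeping exercise: I would assemble the references to the facts already proved in Section 5.

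First, I would note that the addition formula of Definition \ref{defn:additionF} holds for $W_{\g,\mu}$ by Theorem \ref{thm:PnCone2}: the formula \eqref{eq:PbCone2} expresses $\Pb_n(W_{\g,\mu};(x,t),(y,s))$ as an integral of $Z_{2n}^{2\a+\g+1}$ against a probability measure, which by the quadratic transform \eqref{eq:Jacobi-Gegen0} is of the form $Z_n^{(\a',\b')}$ with $\a' = 2\a+\g+\f12$ and $\b' = -\f12$, matching the required template with $m=3$. Second, the differential operator $\fD_{\g,\mu}$ of Theorem \ref{thm:Delta0V} plays the role of $\fD_\varpi$, with eigenvalue $\mu(n) = n(n+2\mu+\g+d)$, a nonnegative quadratic polynomial in $n$. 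Third, Assertion 1 is verified by Theorem \ref{thm:kernelV} for all $\mu \ge 0$, Assertion 3 (the case $p=1$) is verified by Lemma \ref{lem:intLnV}, and Assertion 5 is verified by the Lemma immediately preceding the statement, which bounds $\Lb_n^{(r)}(W_{\g,\mu})$ exactly as required. With all these in hand, part (i), the direct estimate, follows verbatim from part (i) of Theorem \ref{thm:Enf-Kfunctional} applied with $\varpi = W_{\g,\mu}$, since that part only requires Assertions 1, 3, 5 and the two structural properties, all of which hold for every $\mu \ge 0$.

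For part (ii), the inverse estimate, I would again appeal to part (ii) of Theorem \ref{thm:Enf-Kfunctional}, but here there is a subtlety: the inverse estimate in the general theorem uses the Bernstein inequality of Theorem \ref{thm:BernsteinLB}, whose proof invokes Proposition \ref{prop:Tfw-norm}, which in turn relies on the Marcinkiewicz-Zygmund inequalities and on Assertion 4 via Lemma \ref{lem:Ass4Q}. Assertion 4 for $W_{\g,0}$ holds by Lemma \ref{lem:A4V}, and the MZ inequalities on maximal $\ve$-separated sets hold for all doubling weights on $\VV^{d+1}$ by Theorem \ref{thm:MZinequalityV}. Crucially, however, tracing back through Theorem \ref{thm:BernsteinLB}, the Bernstein inequality requires $\varpi$ to admit Assertions 1--5, and Assertion 2 for the solid cone is established only for $W_{\g,0}$ (Theorem \ref{thm:L-LkernelV}), \emph{not} for general $\mu>0$ — this is exactly why the theorem restricts part (ii) to $\mu = 0$. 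So for $\mu = 0$, with all five assertions in force, part (ii) of Theorem \ref{thm:Enf-Kfunctional} applies directly with $\sw = \varpi = W_{\g,0}$ and gives the stated inverse estimate.

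The main obstacle — or rather, the main point requiring care — is precisely this asymmetry between the two parts: one must keep careful track of which assertions each ingredient uses, and ensure that the $\mu > 0$ case is excluded from part (ii) only where genuinely necessary (namely because Assertion 2, hence the localizable homogeneous space property, hence the Bernstein inequality, is available only for $\mu = 0$), while confirming that part (i) genuinely needs nothing beyond Assertions 1, 3, 5 and therefore survives for all $\mu \ge 0$. I expect no new computation is needed; the proof is a short paragraph citing Theorems \ref{thm:kernelV}, \ref{thm:Delta0V}, \ref{thm:PnCone2}, Lemma \ref{lem:intLnV}, the preceding Lemma, and then invoking Theorem \ref{thm:Enf-Kfunctional}.
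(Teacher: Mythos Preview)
Your proposal is correct and follows essentially the same approach as the paper: verify that the requisite assertions and structural properties hold for $W_{\g,\mu}$ on $\VV^{d+1}$, then invoke the general Theorem \ref{thm:Enf-Kfunctional}. Your account is in fact more detailed than the paper's own two-sentence proof, and you correctly pinpoint that the restriction to $\mu=0$ in part (ii) stems from Assertion 2 (Theorem \ref{thm:L-LkernelV}) being available only in that case, which is needed for the Bernstein inequality underlying the inverse estimate.
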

 
\begin{proof}
The direct estimate follows from Theorem \ref{thm:nearbest}, which requires only Assertions 1, 3, 5 and holds
for $W_{\g,\mu}$ by Theorem \ref{thm:kernelV} and Lemma \ref{lem:intLnV}. The inverse estimate follows 
from Theorem \ref{thm:Enf-Kfunctional}, which requires one weigh function that admits all Assertions 1--3 and 5
and that holds for $W_{\g,0}$ on the cone.
\end{proof}

Both the direct and the inverse estimates hold for the weight function $W_{\g,\mu}$ in the above theorem. 
However, it should be noted that the inverse estimate uses the $K$-functional 
$$
   \Kb_r(f,\rho)_{p,W} = \inf_{g \in \CW_p^r(\VV^{d+1}, W)}
      \left \{ \|f-g\|_{p,W} + \rho^r\left\|(-\fD_{\g,0})^{\f r 2}f \right\|_{p,W} \right \}
$$
defined via the operator $\fD_{\g,0}$ for the weight $W_{\g,0}$. 

For $W = W_{\g,\mu}$, both direct and inverse estimates can be given via the modulus of 
smoothness, since it is equivalent to the $K$-functional. 

\begin{thm} \label{thm:K=omegaV}
Let  $\g \ge -\f12$,  $\mu\ge 0$ and $f \in L_p^r(\VV^{d+1}, W_{\g,\mu})$,  $1 \le p \le \infty$. Then
for $0 < \t \le \pi/2$ and $r >0$ 
$$
   c_1 \Kb_r(f; \t)_{p,W_{\g,\mu}} \le \o_r(f;\t)_{p,W_{\g,\mu}} \Kb_r(f;\t)_{p,W_{\g,\mu}}.
$$
\end{thm}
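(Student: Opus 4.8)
The plan is to reduce Theorem \ref{thm:K=omegaV} to the abstract equivalence of the $K$-functional and the modulus of smoothness proved on a general localizable homogeneous space, namely Theorem \ref{thm:K=omega}. That abstract theorem applies to any weight function $\varpi$ which admits Assertions 1, 3, and 5 and for which the two characteristic properties of Subsection \ref{sec:two-properties} hold: a second order differential operator $\fD_\varpi$ having the orthogonal polynomials as eigenfunctions, and an addition formula for the reproducing kernel of the prescribed form. So the first step is to verify that $W_{\g,\mu}$ on $\VV^{d+1}$ fits into this framework. The differential operator is $\fD_{\g,\mu}$ from Theorem \ref{thm:Delta0V}, with eigenvalue $\mu(n) = n(n+2\mu+\g+d)$, a nonnegative quadratic polynomial in $n$, so Definition \ref{def:LBoperator} is satisfied. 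The addition formula is Theorem \ref{thm:PnCone2}: it expresses $\Pb_n(W_{\g,\mu};\cdot,\cdot)$ as an integral of $Z_{2n}^{2\a+\g+1}$ over $[-1,1]^3$ against a probability measure, and via the quadratic transform \eqref{eq:Jacobi-Gegen0} this is exactly of the form in Definition \ref{defn:additionF} with $m=3$, $\a = 2\mu+\g+d-\tfrac12$, $\b=-\tfrac12$, and $\xi(x,t,y,s;u,v)$ the function in \eqref{eq:xi}. Here $\a \ge \b \ge -\tfrac12$ holds since $\mu\ge 0$ and $\g\ge -\tfrac12$.

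The second step is to check the three Assertions that Theorem \ref{thm:K=omega} requires. Assertion 1 for $W_{\g,\mu}$ (all $\mu\ge 0$) is Theorem \ref{thm:kernelV}, where one identifies $W_{\g,\mu}(n;x,t) \sim n^{d+1} W_{\g,\mu}(\cb((x,t),n^{-1}))$, which is exactly the quantity playing the role of $\varpi(B(\cdot,n^{-1}))$ by Proposition \ref{prop:capV}. Assertion 3 with $p=1$ is the $p=1$ case of Lemma \ref{lem:intLnV}; for $\k$ large enough the stated hypothesis on $\k$ is met, and the bound produced is $c n^{-d-1} W_{\g,\mu}(n;t) = c\, W_{\g,\mu}(\cb((x,t),n^{-1}))$, which is precisely the form of Assertion 3 after cancelling the common $\sqrt{W_{\g,\mu}(n;x,t)}$ and using $\varpi(B(y,n^{-1}))$ in the denominator. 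Assertion 5 is the content of the lemma immediately preceding the theorem statement (the bound on $|\Lb_n^{(r)}(W_{\g,\mu};\cdot,\cdot)|$), whose proof reduces to the estimate \eqref{eq:DLn(t,1)} for the Jacobi kernel and then to the very same integral estimate as in Theorem \ref{thm:kernelV}. All three Assertions therefore hold for every $\mu\ge 0$, which is the reason the modulus–$K$-functional equivalence, unlike the tight-frame statement, does not need $\mu=0$.

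The third and final step is simply to invoke Theorem \ref{thm:K=omega}: with $\varpi = W_{\g,\mu}$, $\fD_\varpi = \fD_{\g,\mu}$, and the addition formula having parameters $\a = 2\mu+\g+d-\tfrac12 \ge \b = -\tfrac12$, the operator $\Sb_{\t,W_{\g,\mu}}$ of the modulus is exactly the operator $S_{\t,\varpi}$ of \eqref{eq:Stheta} since $R_n^{(\a,\b)}(\cos\t) = R_n^{(\l-\f12,-\f12)}(\cos\t)$ with $\l = 2\mu+\g+d$, and $\Kb_r$ is exactly the $K$-functional $K_r$ built from $(-\fD_{\g,\mu})^{r/2}$. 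Hence for $0<\t\le\pi/2$ and $r>0$ we get $c_1\Kb_r(f;\t)_{p,W_{\g,\mu}} \le \o_r(f;\t)_{p,W_{\g,\mu}} \le c_2\Kb_r(f;\t)_{p,W_{\g,\mu}}$, which is the assertion (note the statement as typed is missing a "$\le c_2$" before the last term; the intended inequality is the two-sided one).

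I do not expect a serious obstacle here: all the analytic work — the kernel estimates, the addition formula, the Bernstein inequality via Assertion 5 — has already been done in the preceding subsections and in Section 3. The only mildly delicate points are bookkeeping: matching the normalization $W_{\g,\mu}(n;x,t) \sim n^{d+1}W_{\g,\mu}(\cb((x,t),n^{-1}))$ so that Assertions 1 and 3 are literally in the abstract form, and confirming that the hypotheses on $\k$ in Lemma \ref{lem:intLnV} and in the Assertion 5 lemma can be met simultaneously with the $\k$ needed in Theorem \ref{thm:K=omega} (they can, since in each case one is free to take $\k$ as large as desired). If anything is "hard," it is only verifying that the two technical lemmas underlying Theorem \ref{thm:K=omega}, namely Lemmas \ref{lem:Rus1} and \ref{lem:Rus2}, apply with $\a = 2\mu+\g+d-\tfrac12$ and $\b=-\tfrac12$; but those lemmas are stated for all $\a\ge\b\ge-\tfrac12$, so this is automatic.
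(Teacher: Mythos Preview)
Your proposal is correct and is exactly the paper's intended approach: the paper gives no separate proof of Theorem \ref{thm:K=omegaV} because it is meant to follow from the abstract Theorem \ref{thm:K=omega} once Assertions 1, 3, 5, the addition formula (Theorem \ref{thm:PnCone2}), and the differential operator (Theorem \ref{thm:Delta0V}) have been verified for $W_{\g,\mu}$, which is precisely what you do. One tiny bookkeeping slip: Assertion 3 corresponds to the $p=2$ case of Lemma \ref{lem:intLnV} (the denominator there has $W_{\g,\mu}(n;s)^{p/2}$, and $\varpi(B(\cdot,n^{-1})) \sim n^{-(d+1)}W_{\g,\mu}(n;\cdot)$), not $p=1$; this does not affect the argument.
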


\end{document}